\tikzset{tab/.style={matrix of math nodes,column sep=-.35, row sep=-.35,text height=7pt,text width=7pt,align=center,inner sep=2,font=\footnotesize}}
\newcommand{\inner}[2]{\left\langle #1, #2 \right\rangle}
\newcommand{\Gr}{\operatorname{Gr}}  
\newcommand{\schur}{\kappa} 
\newcommand{\abs}[1]{\left\lvert #1 \right\rvert}
\newcommand{\bra}[1]{\langle #1 \rvert}
\newcommand{\ket}[1]{\lvert #1 \rangle}
\newcommand{\braket}[2]{\langle #1 | #2 \rangle}
\newcommand{\prob}{\mathsf{P}}
\newcommand{\ds}{/\!\!/}  
\newcommand{\bbb}{\mathsf{b}}
\newcommand{\field}{\mathbf{k}}
\newcommand{\G}{G}
\newcommand{\dG}{g}
\newcommand{\wG}{J}
\newcommand{\dwG}{j}
\DeclareMathOperator{\wt}{wt} 
\newcommand{\mcA}{\mathcal{A}}
\newcommand{\mcB}{\mathcal{B}}
\newcommand{\mcC}{\mathcal{C}}
\newcommand{\mcF}{\mathcal{F}}
\newcommand{\mcG}{\mathcal{G}}
\newcommand{\mcP}{\mathcal{P}}
\newcommand{\mcT}{\mathcal{T}}
\newcommand{\fG}{\mathfrak{G}}
\newcommand{\ta}{\mathsf{a}}
\newcommand{\tb}{\mathsf{b}}
\newcommand{\tc}{\mathsf{c}}
\newcommand{\ii}{\mathbf{i}}
\newcommand{\bt}{\mathbf{t}}
\newcommand{\uu}{\mathbf{u}}
\newcommand{\xx}{\mathbf{x}}
\newcommand{\yy}{\mathbf{y}}
\newcommand{\UU}{\mathbf{U}}
\newcommand{\bG}{\mathbf{G}}
\newcommand{\bal}{\boldsymbol{\alpha}}
\newcommand{\bbe}{\boldsymbol{\beta}}
\newcommand{\bpi}{\boldsymbol{\pi}}
\newcommand{\brho}{\boldsymbol{\rho}}
\newcommand{\bnu}{\boldsymbol{\nu}}
\newcommand{\ZZ}{\mathbb{Z}}
\newcommand{\QQ}{\mathbb{Q}}
\newcommand{\CC}{\mathbb{C}}
\definecolor{darkred}{rgb}{0.7,0,0} 
\newcommand{\defn}[1]{{\color{darkred}\emph{#1}}} 
\definecolor{UQgold}{RGB}{196, 158, 54} 
\definecolor{UQpurple}{RGB}{73, 7, 94} 
\definecolor{UMNgold}{RGB}{255,200,46} 
\definecolor{UMNmaroon}{RGB}{106,0,50} 
\definecolor{OCUenji}{RGB}{153,0,51} 
\definecolor{OCUsapphire}{RGB}{0,51,102} 
\definecolor{TUblue}{RGB}{0,77,255} 
\lstdefinelanguage{Sage}[]{Python}
{morekeywords={False,sage,True},sensitive=true}
\definecolor{dblackcolor}{rgb}{0.0,0.0,0.0}
\definecolor{dbluecolor}{rgb}{0.01,0.02,0.7}
\definecolor{dgreencolor}{rgb}{0.2,0.4,0.0}
\definecolor{dgraycolor}{rgb}{0.30,0.3,0.30}
\theoremstyle{plain}
\newtheorem{thm}{Theorem}[section]
\newtheorem{lemma}[thm]{Lemma}
\newtheorem{prop}[thm]{Proposition}
\newtheorem{cor}[thm]{Corollary}
\theoremstyle{definition}
\newtheorem{dfn}[thm]{Definition}
\newtheorem{ex}[thm]{Example}
\newtheorem{remark}[thm]{Remark}
\numberwithin{equation}{section}
\begin{document}
\title[Probability and Schubert Kalculus]{Free fermionic probability theory and K-theoretic Schubert calculus}

\author[S.~Iwao]{Shinsuke Iwao}
\address[S.~Iwao]{Faculty of Business and Commerce, Keio University, Hiyosi 4--1--1, Kohoku-ku, Yokohama-si, Kanagawa 223-8521, Japan}
\email{iwao-s@keio.jp}

\author[K.~Motegi]{Kohei Motegi}
\address[K.~Motegi]{Faculty of Marine Technology, Tokyo University of Marine Science and Technology, Etchujima 2--1--6, Koto-Ku, Tokyo 135-8533, Japan}
\email{kmoteg0@kaiyodai.ac.jp}
\urladdr{https://sites.google.com/site/motegikohei/home}

\author[T.~Scrimshaw]{Travis Scrimshaw}
\address[T.~Scrimshaw]{Department of Mathematics, Hokkaido University, 5 Ch\=ome Kita 8 J\=onishi, Kita Ward, Sapporo, Hokkaid\=o 060-0808, Japan}
\email{tcscrims@gmail.com}
\urladdr{https://tscrim.github.io/}

\keywords{Grothendieck polynomial, Schur operator, last passage percolation, particle process}
\subjclass[2010]{05E05, 60K35, 14M15, 82B23, 05A19, 60B20}

\thanks{
S.I.~was partially supported by Grant-in-Aid for Scientific Research (C) 19K03605, 22K03239, 23K03056.
K.M.~was partially supported by Grant-in-Aid for Scientific Research (C) 21K03176, 20K03793.
T.S.~was partially supported by Grant-in-Aid for JSPS Fellows 21F51028 and for Scientific Research for Early-Career Scientists 23K12983.
}

\begin{abstract}
For each of the four particle processes given by Dieker and Warren, we show the $n$-step transition kernels are given by the (dual) (weak) refined symmetric Grothendieck functions up to a simple overall factor.
We do so by encoding the particle dynamics as the basis of free fermions first introduced by the first author, which we translate into deformed Schur operators acting on partitions.
We provide a direct combinatorial proof of this relationship in each case, where the defining tableaux naturally describe the particle motions.
\end{abstract}

\maketitle
\setcounter{tocdepth}{1}  
\tableofcontents

\section{Introduction}
\label{sec:introduction}

An asymmetric simple exclusion process (ASEP) is a probabilistic model for particles on a lattice, typically one dimensional, domain such that each position can be occupied by at most one particle.
As such, it has been used as a simple model for a diverse range of natural processes, such as in transportation through microscopic channels~\cite{CL99}, vehicle traffic moving in a single lane~\cite{CSS00}, or the dynamics of ribosomes along RNA~\cite{MGP68} (the earliest known publication as far as the authors are aware).
The study of such particle systems is an active area of research, with some recent mathematical articles being~\cite{AGLS23,AMM23,AMM22,AN22,BB21,BLSZ22,CMP19,CMW22,CZ22,DW23,PS22,QS23}. 

We will focus on the case when the particles only move in one direction (here, to the right) on a $\ZZ$ lattice.
This is known as a totally asymmetric simple exclusion process (TASEP) on a line.
Given that TASEP can be interpreted as a model for electrons moving down a wire, in this introduction we will be representing states using the description of free fermions.
The question becomes how to encode the dynamics of the TASEP considered in terms of operators acting on the free fermions.

The versions of TASEP we will focus on are the four variations that were studied by Dieker and Warren~\cite{DW08}, where the particles will all lie on $\ZZ$ starting from the step initial condition, where the $j$-th particle starts at site $-j$ for all $j \geq 1$, and move in discrete time.
(In~\cite{DW08}, they used a ``bosonic'' formulation that can easily be translated into the fermionic description we use in the introduction; see Section~\ref{sec:particle_processes} for a precise relationship.)
These TASEP variations have been studied before by various authors and sometimes using different models; we refer the reader to~\cite{DW08} for further connections and references.
The particles will stay in order, so we can identify a partition $\lambda$ with the positions of particles by having the $j$-th particle be at position $\lambda_j - j$.
All four variations are based on random matrices $[w_{ji}]_{i,j}$ with $w_{ji}$ either being Bernoulli or geometric random variables and taking either a (zero temperature) first or last passage percolation model.
Translating this to the motion of the particles, the $w_{ji}$ specifies how many steps the $j$-th particle wants to move at time $i$, and the first (resp.\ last) passage percolation corresponds to the particles either being blocked by smaller particles (resp.\ pushing the smaller particles).
(See Section~\ref{sec:particle_processes} for a precise description.)
In order to encode the dynamics using free fermions, we will show the transition probabilities can be described using symmetric functions coming from the K-theory of a classical algebraic variety, the Grassmannian.

In more detail, the Grassmannian $\Gr(k, n)$ is the set of $k$-dimensional subspaces of $\CC^n$.
Next, this has a natural action of the group of invertable upper triangular $n \times n$ matrices $B$, which acts with finitely many orbits on $\Gr(k, n)$ indexed by partitions $\lambda$ inside a $k \times (n-k)$ rectangle.
The closures of these orbits (under the Zariski topology) are known as Schubert varieties, and they give a CW decomposition of $\Gr(k, n)$.
Hence, they give rise to a basis for the cohomology ring $H^{\bullet}(\Gr(k, n), \ZZ)$, where under Borel's isomorphism~\cite{Borel53} the cohomology class indexed by $\lambda$ corresponds to the Schur function $s_{\lambda}(\xx)$.
This construction be extended to the (connective) K-theory ring of $\Gr(k, n)$ by using the Bott--Samelson resolution of Schubert varieties, where now the K-theory class indexed by $\lambda$ corresponds~\cite{LS82,LS83} to the (symmetric\footnote{We henceforth drop the word ``symmetric'' for simplicity as we will not consider the ``nonsymmetric'' Grothendieck polynomials coming from the K-theory of the complete flag variety, which are analogous to Schubert polynomials.}) Grothendieck function $\G_{\lambda}(\xx; \beta)$.
When working with symmetric functions, it is natural from representation theory to consider the Schur functions as an orthonormal basis as they are characters of the irreducible representations of the Lie group of all invertible $n \times n$ matrices.
Therefore, we can take the dual basis $\{\dG_{\lambda}(\xx; \beta)\}_{\lambda}$ to $\{\G_{\lambda}(\xx; \beta)\}_{\lambda}$ in (a completion of) the ring of symmetric functions over all partitions $\lambda$.
Additionally, there is an algebra involution $\omega$ defined by $\omega s_{\lambda} = s_{\lambda'}$, where $\lambda'$ is the conjugate partition of $\lambda$. that defines the ``weak'' versions $\wG_{\lambda}(\xx; \beta) := \omega \G_{\lambda}(\xx; \beta)$ and $\dwG_{\lambda}(\xx; \beta) = \omega \dG_{\lambda}(\xx; \beta)$.

These symmetric functions have combinatorial descriptions using variations of the classical semistandard tableaux description of $s_{\lambda}(\xx)$ (see, \textit{e.g.},~\cite{ECII}).
The Schur decomposition of $G_{\lambda}(\xx; \beta)$ was shown by Lenart~\cite{Lenart00}, and expressing $\G_{\lambda}(\xx; \beta)$ as a generating function of set-valued tableaux was given by Buch~\cite{Buch02}.
Lam and Pylyavskyy subsequently gave~\cite[Thm.~9.15, Prop.~9.22]{LamPyl07} a combinatorial interpretation of $\wG_{\lambda}(\xx; \beta)$, $\dG_{\lambda}(\xx; \beta)$, and $\dwG_{\lambda}(\xx; \beta)$ as multiset-valued tableaux, reverse plane partitions, and valued-set tableaux, respectively.
Later work of Galashin, Grinberg, and Liu~\cite{GGL16} then refined the parameter $\beta$ into a family $\bbe$ for $\dG_{\lambda}(\xx; \bbe)$, and the dual version $\G_{\lambda}(\xx; \bbe)$ was introduced by Chan and Pflueger~\cite{CP21}.
In a different direction, Yeliussizov~\cite{Yel17} introduced a combination of $\G_{\lambda}(\xx; \beta)$ and $\wG_{\lambda}(\xx; \beta)$ as the canonical Grothendieck polynomials and their duals (along with a combinatorial interpretation), and the refined version $\G_{\lambda}(\xx; \bal, \bbe)$ and the duals $\dG_{\lambda}(\xx; \bal, \bbe)$ were introduced by Hwang \textit{et al.}~\cite{HJKSS24,HJKSS25} (also with a combinatorial formulas).
The Schur function decomposition allows these combinatorial objects to be deconstructed into pairs of tableaux by RSK-type algorithms~\cite{LamPyl07,HS20,PPPS20}.

The first connection between the TASEP models and the K-theoretic Schubert calculus was noted by Yeliussizov~\cite{Yel20} when $\xx = \bbe = \sqrt{q}$, where he showed that the geometric last passage percolation --- Case~A in~\cite{DW08} --- probabilities equaled a dual Grothendieck polynomial up to an overall simple factor.
This was later generalized to $w_{ji}$ having its parameter $\pi_j x_i$ in~\cite{MS20}.
We can also connect this to a more classical version of TASEP in discrete time on $\ZZ$ starting from the step initial condition, where the $i$-th particle moving to the right with probability $x_i$ if the site is free, and $w_{ji}$ records how long waits from it could move (see, \textit{e.g.},~\cite{Johansson00} or~\cite[App.~A]{MS20}).
Taking the obvious time-dependent refinement of~\cite{DW08}, the determinant formula is readily seen to be the Jacobi--Trudi formula for the natural skew version $\dG_{\lambda/\mu}(\xx; \bbe)$~\cite{AY20,Kim20,Kim20II,HJKSS24,HJKSS25}.
By applying the $\omega$ involution (see also specialized versions of the Jacobi--Trudi formulas of~\cite{HJKSS24,HJKSS25}), we also obtain the Bernoulli last passage percolation~\cite[Case~D]{DW08}.

However, we are interested in trying to understand  the relationship at the level of local dynamics; in particular, to address the question of describing the TASEP dynamics using free fermions.
To make this precise, we want to describe the $n$-step transition kernel $\prob_{X,n}(\lambda|\mu)$ in Case~X (for $X = A,B,C,D$ as given in~\cite{DW08}) from the particles starting at positions $\mu$ and ending at positions $\lambda$.
Building upon the work of the first author~\cite{Iwao19,Iwao21,Iwao20}, our previous work~\cite{IMS22} introduced a free fermion description of $\G_{\lambda\ds\mu}(\xx; \bal, \bbe)$ and $\dG_{\lambda/\mu}(\xx; \bal, \bbe)$.
This lead to a Jacobi--Trudi formula~\cite[Thm.~4.1]{IMS22} for $\G_{\lambda\ds\mu}(\xx; \bal, \bbe)$, which then appropriate specializations give formulas (up to an overall simple factor) for the transition probabilities of the first passage percolation cases~\cite[Case~B,C]{DW08}.
Likewise, the generalized Schur operators in~\cite{Iwao19}, which are acting on partitions and come from the current operators, behave exactly like the TASEP pushing and blocking dynamics.
Therefore, our main goal in this paper is to introduce refined versions of these operators from~\cite{Iwao19} and show 
the correspondence between transition probabilities of four types of discrete time TASEP and refined (dual) Grothendieck polynomials.
To state the correspondence,  here we introduce the four types of TASEP.
Let the Weyl chamber $\Omega_\ell$ be
\begin{align}
\Omega_\ell=\{ (z_1,z_2,\dots,z_\ell) \in \mathbb{Z}^\ell: z_1>z_2>\cdots>z_\ell \},
\end{align}
for $\ell \ge 2$.

We introduce four types of evolution of particles $X_t^{\mathrm{A}}$, $X_t^{\mathrm{B}}$, $X_t^{\mathrm{C}}$ and $X_t^{\mathrm{D}}$ on $\Omega_\ell$.
Each process corresponds to case $A,B,C$ and $D$ of the TASEP version of the model by Dieker--Warren \cite{DW08}.
We consider the discrete time evolution from time zero to time $n$ with time step one.
\begin{subequations}
\label{eq:particle_transitions}

\medskip
\noindent \textbf{(A)} Geometric distribution with pushing behavior:

From time $t$ to time $t+1$,
the evolution of the particle system $X_t^\mathrm{A} \in \Omega_\ell$ is defined as
\begin{align}
X_{t+1}^\mathrm{A}(k)=\max(X_t^\mathrm{A}(k),X_{t+1}^\mathrm{A}(k+1)+1)+\xi^\mathrm{A}(k,t+1),
\end{align}
for $k=1,\dots,\ell-1$ and $X_{t+1}^\mathrm{A}(\ell)=X_t^\mathrm{A}(\ell)+\xi^\mathrm{A}(\ell,t+1)$.
$\xi^\mathrm{A}(k,t)$ are random variables satisfying
$\mathsf{P}(\xi^\mathrm{A}(k,t)=r)=(1-\pi_k x_t) (\pi_k x_t)^{r}$, $r \in \mathbb{Z}_{\ge 0}$, where $\pi_k, x_t$ are real numbers satisfying $0 < \pi_k x_t < 1$ for all $k,t$.

\medskip 
\noindent \textbf{(B)} Bernoulli distribution with blocking behavior:

From time $t$ to time $t+1$,
the evolution of the particle system $X_t^\mathrm{B} \in \Omega_\ell$ is defined as
\begin{align}
X_{t+1}^\mathrm{B}(k)=\min(X_t^\mathrm{B}(k)+\xi^\mathrm{B}(k,t+1) ,X_{t+1}^\mathrm{B}(k-1)-1),
\end{align}
for $k=2,\dots,\ell$ and $X_{t+1}^\mathrm{B}(1)=X_t^\mathrm{B}(1)+\xi^\mathrm{B}(1,t+1)$.
$\xi^\mathrm{B}(k,t)$ are random variables satisfying
$\displaystyle \mathsf{P}(\xi^\mathrm{B}(k,t)=1)=\frac{\rho_k x_t}{1+\rho_k x_t}$, 
$\displaystyle \mathsf{P}(\xi^\mathrm{B}(k,t)=0)=\frac{1}{1+\rho_k x_t}$, where $\rho_k, x_t$ are real numbers satisfying $0 < \rho_k x_t$ for all $k,t$.

\medskip 
\noindent \textbf{(C)} Geometric distribution with blocking behavior:

From time $t$ to time $t+1$,
the evolution of the particle system $X_t^\mathrm{C} \in \Omega_\ell$ is defined as
\begin{align}
X_{t+1}^\mathrm{C}(k)=\min(X_t^\mathrm{C}(k)+\xi^\mathrm{C}(k,t+1),X_{t}^\mathrm{C}(k-1)-1),
\end{align}
for $k=2,\dots,\ell$ and $X_{t+1}^\mathrm{C}(1)=X_t^\mathrm{C}(1)+\xi^\mathrm{C}(1,t+1)$.
$\xi^\mathrm{C}(k,t)$ are random variables satisfying
$\mathsf{P}(\xi^\mathrm{C}(k,t)=r)=(1-\pi_k x_t) (\pi_k x_t)^{r}$, $r \in \mathbb{Z}_{\ge 0}$, where $\pi_k, x_t$ are real numbers satisfying $0 < \pi_k x_t < 1$ for all $k,t$.

\medskip 
\noindent \textbf{(D)} Bernoulli distribution with pushing behavior:

From time $t$ to time $t+1$,
the evolution of the particle system $X_t^\mathrm{D} \in \Omega_\ell$ is defined as
\begin{align}
X_{t+1}^\mathrm{D}(k)=\max(X_t^\mathrm{D}(k)+\xi^\mathrm{D}(k,t+1) ,X_{t+1}^\mathrm{D}(k+1)+1),
\end{align}
for $k=1,\dots,\ell-1$ and $X_{t+1}^\mathrm{D}(\ell)=X_t^\mathrm{D}(\ell)+\xi^\mathrm{D}(\ell,t+1)$.
$\xi^\mathrm{D}(k,t)$ are random variables satisfying
$\displaystyle \mathsf{P}(\xi^\mathrm{D}(k,t)=1)=\frac{\rho_k x_t}{1+\rho_k x_t}$, 
$\displaystyle \mathsf{P}(\xi^\mathrm{D}(k,t)=0)=\frac{1}{1+\rho_k x_t}$, where $\rho_k, x_t$ are real numbers satisfying $0 < \rho_k x_t$ for all $k,t$.
\end{subequations}

For a sequence of integers $\lambda=(\lambda_1,\lambda_2,\dots,\lambda_\ell)$ $(\lambda_1 \ge \lambda_2 \ge \cdots \ge \lambda_\ell)$,
we write $X=\tilde{\lambda}$ if $X \in \Omega_\ell$ satsifies $X(j)=\lambda_j-j$, $j=1,2,\dots,\ell$.

\begin{thm}
\label{thm:transition_prob}
Suppose $\ell(\lambda), \lambda_1 \leq \ell$.
Suppose $\pi_j x_i \in (0, 1)$ and $\rho_j x_i > 0$ for all $i$ and $j$.
Set $\alpha_j = \rho_{j+1}$ and $\beta_j = \pi_{j+1}$.
The transition probabilities of the  four particle systems $X=X^\mathrm{A},X^{\mathrm{B}}, X^\mathrm{C}, X^\mathrm{D}$
from the initial configuration $X_0=\tilde{\mu}$ at time zero to the final configuration $X_n=\tilde{\lambda}$ at time $n$ are given by
\begin{align*}
\mathsf{P}(X^\mathrm{A}_n=\tilde{\lambda}|X^\mathrm{A}_0=\tilde{\mu})
 & = \prod_{j=1}^{\ell} \prod_{i=1}^n (1 - \pi_j x_i) \bpi^{\lambda/\mu} \dG_{\lambda/\mu}(\xx_n; \bpi^{-1}) =:\prob_{A,n}(\lambda | \mu) ,
\\
\mathsf{P}(X^\mathrm{B}_n=\tilde{\lambda}|X^\mathrm{B}_0=\tilde{\mu})
& = \frac{\brho^{\lambda/\mu}}{\displaystyle \prod_{i=1}^n (1 + \rho_1 x_i)} \wG_{\lambda' \ds \mu'}(\xx_n; \bal)=:\prob_{B,n}(\lambda | \mu) ,
\\
\mathsf{P}(X^\mathrm{C}_n=\tilde{\lambda}|X^\mathrm{C}_0=\tilde{\mu})
 & = \prod_{i=1}^n (1 - \pi_1 x_i) \bpi^{\lambda/\mu} \G_{\lambda \ds \mu}(\xx_n; \bbe)
=:\prob_{C,n}(\lambda | \mu)
,
\\
\mathsf{P}(X^\mathrm{D}_n=\tilde{\lambda}|X^\mathrm{D}_0=\tilde{\mu})
 & = \frac{\brho^{\lambda/\mu}}{\displaystyle\prod_{j=1}^{\ell} \prod_{i=1}^n (1 + \rho_j x_i)} \dwG_{\lambda'/\mu'}(\xx_n; \brho^{-1})
=:\prob_{D,n}(\lambda | \mu)
.
\end{align*}

\end{thm}

Here, $\bpi^{\lambda/\mu}=\prod_{j=1}^\ell \pi_j^{\lambda_j-\mu_j} $
and defined similarly for $\brho^{\lambda/\mu}$.
See Section~\ref{sec:Ksymfunc} for combinatorial definitions of the skew refined Grothendieck polynomials, and/or its dual/weak version. 
There are several expressions and methods to derive determinant forms
of the skew refined (dual) Grothendieck polynomials.
One type of determinant forms involve a summation for each of the matrix elements
which is often referred to as the Jacobi--Trudi determinant formulas.
See \cite{AY20,Kim20,Kim20II} and~\cite[Thm.~6.1]{HJKSS24} for the canonical refined version derived by combinatorial arguments.
An algebraic approach using the free fermion technique is introduced in \cite{Iwao19,Iwao21,Iwao20} and the canonical refined version is given in \cite[Thm.~4.1]{IMS22}.
This Jacobi--Trudi type determinants together with the overall factors and specialized to $\xx_n=1$ correspond to determinant forms of transition probabilities for time homogeneous case in~\cite[Thm.~1]{DW08} and~\cite[Thm.~2.1]{Johansson10}.
For example, the Jacobi--Trudi determinant for the dual Grothendieck polynomials is given by~\cite[Thm.~6.1]{HJKSS24}, \cite[Thm.~4.1]{IMS22}, and~\cite[Thm.~4.15]{MS20}:
\begin{align}
g_{\lambda/\mu}(\xx_n;\bt)
=
\mathrm{det}
\Bigg[\sum_{m \ge 0} h_{\lambda_i-\mu_j-i+j-m}(\mathbf{x}_n ) \alpha_{m}^{ij}(\bt) 
\Bigg]_{i,j=1}^{\ell},
\end{align}
where $\alpha_m^{ij}(\bt)=h_m(t_j,\dots,t_{i-1})$ (a homogeneous symmetric function) for $i \geq j$
and $\alpha_m^{ij}(\bt)=e_m(-t_i,\dots,-t_{j-1})$ (an elementary symmetric function) for $i<j$.
Multiplying by the normalization factor $\prod_{j=1}^{\ell} \prod_{i=1}^n (1 - \pi_j x_i) \bpi^{\lambda/\mu}$ gives
the determinant form for transition probabilities for Case~A \cite[Cor. 4.18]{MS20},
which becomes the expression for the time homogeneous version
\cite[Thm.~1.A]{DW08} and \cite[Thm.~2.1]{Johansson10} by setting
 $\xx_n=1$ (specializing all $\xx$-variables to 1).
The Jacobi--Trudi determinant expression for the Grothendieck polynomials is \cite[Thm.~4.1]{IMS22}
\begin{align}
G_{\lambda \ds \mu}(\xx_n;\bt)
=
\mathrm{det}
\Bigg[\sum_{m \ge 0} h_{\lambda_i-\mu_j-i+j+m}(\mathbf{x}_n ) \beta_{m}^{ij}(\bt) 
\Bigg]_{i,j=1}^{\ell},
\end{align}
where $\beta_m^{ij}(\bt)=h_m(t_i,\dots,t_{j-1})$ for $i \leq j$
and $\beta_m^{ij}(\bt)=e_m(-t_j,\dots,-t_{i-1})$ for $i>j$,
and the determinants for the weak version $j_{\lambda \ds \mu}(\xx_n;\bt)$, $J_{\lambda \ds \mu}(\xx_n;\bt)$
are obtained from $g_{\lambda \ds \mu}(\xx_n;\bt)$, $G_{\lambda \ds \mu}(\xx_n;\bt)$ respectively,
by applying the $\omega$-involution on the $\xx$-variables, which interchanges $h_j(\xx_n)$ and $e_j(\xx_n)$.

Rewriting each of the sum in the matrix elements of the Jacobi--Trudi determinants as an integral, the determinant forms become the discrete time version of Sch\"utz type formulas.
For example, we have \cite[Thm.~4.19]{IMS22}
\begin{align}
g_{\lambda/\mu}(\xx_n;\bt)
=
\mathrm{det}
\Bigg[
\frac{1}{2 \pi \mathbf{i}} \oint_{\gamma}
\frac{\prod_{k=1}^{j-1}(1-t_k z) }{\prod_{k=1}^{i-1}(1-t_k z) }
\frac{1}{\prod_{k=1}^n (1-x_k z) z^{\lambda_i-\mu_j-i+j+1}}
dz
\Bigg]_{i,j=1}^{\ell},
\end{align}
where $\gamma$ is a small circle centered at the origin, and changing the integration variable to its inverse and multiplying overall factors gives~\cite[Thm.~1]{JR20}.
This type of determinant forms is the starting point to finally obtain the Fredholm determinant expressions for multipoint distributions in several papers.
See \cite{JR20} for Case~A and more generally by \cite{MatetskiRemenik23,MatetskiRemenik23b} for all cases which they further generalized to more generic models of sequential and parallel updates.
See also \cite{BLSZ22} corresponding to Case~B, where a nonintersecting lattice path construction of transition probabilities was given which further lead them to the Fredholm determinant expressions.
There are several formulas for the refined Grothendieck polynomials and its variants derived in~\cite{HJKSS24,HJKSS25,IMS22} for example, which may be useful for further studies on transition probabilities.
We discuss some applications in Section~\ref{sec:multipoint}.

We also make a comment that a different version of inhomogeneous TASEP (space inhomogeneous version) was studied in~\cite{Assiotis20,Petrov20}, and correlation kernels were obtained as determinants with matrix elements involving double integrals.
In \cite{Assiotis20}, this was done by introducing and generalizing the analysis of the process on Gelfand--Tsetlin patterns originally due to \cite{BF14}, and in \cite{Petrov20} this was derived in a different way by identifying with some certain Schur process.
It is an open question to derive these types of determinants directly from the approach used in this paper.

Later, rather than the TASEP version, we use the equivalent bosonic version by Dieker--Warren in~\cite{DW08}, which is more suitable for the proofs given in this paper, that where the $i$-th particle is shifted by $i$ steps to the right.
See Section~\ref{sec:particle_processes} for the precise statement.


Our proof shows these generalized Schur operators satisfy the Knuth relations, which has proven useful in the study of symmetric functions, such as in~\cite{FG98,Fomin95}, and we then use the Markov property to reduce the equivalence to a computation for $n = 1$.
We give two extensions that could be used to build the K-theoretic symmetric functions, but only the ones for $\G_{\lambda\ds\mu}(\xx; \bal, \bbe)$ satisfy the Knuth relations (Theorem~\ref{thm:noncommutative_blocking} and Example~\ref{ex:dual_non_Knuth}).
This allows us to describe the transition functions for a new particle process in Section~\ref{sec:canonical_process} that involves $\bal$ acting as ``local current'' parameters, where the rate depends on the position of the particle.
We prove the analogous result to Theorem~\ref{thm:transition_prob} for this new particle process in Theorem~\ref{thm:canonical_process} with $\G_{\lambda\ds\mu}(\xx; \bal, \bbe)$.
We also describe extensions of this process to the other cases.
However, this process can only be described ``bosonically,'' hence it cannot be applied to the slow bond problem (which requires a ``fermionic'' presentation; see Remark~\ref{rem:boson_pos_param}).
When $\pi_i = 0$, our new model becomes a special case of the model from~\cite{KPS19} (see Remark~\ref{rem:canonical_KPS_comparison} for the precise relationship).

We also give another proof of Theorem~\ref{thm:transition_prob} in Section~\ref{sec:bijection} through a direct combinatorial bijection between the tableaux description using more refined conditional probabilities.
Essentially, it is given by showing the branching rules~\cite[Prop.~4.5]{IMS22} corresponds to the Markov property and showing the result directly for $n = 1$.
Our proof can be seen as analogous to using the (dual) RSK bijection and the Schur decomposition in Case~A and Case~D that was described in~\cite{MS20} (compare also the intertwining kernels of~\cite{DW08} with~\cite[Thm.~4.4]{IMS22}).
As a consequence, we can explicitly describe how the tableaux encode the movement of the particles.

Let us mention one technical point about the Case~C result in Theorem~\ref{thm:transition_prob}.
We need to take the Taylor series expansion of $f(\zeta) = (1 + \zeta)^{-1}$ (around $\zeta = 0$) in order to obtain the equality with the combinatorial formula with $\wG_{\lambda \ds \mu}(\xx_n; \bal)$.
Thus, strictly speaking, to do the Taylor series expansion we should require that $\rho_i x_j \in (0, 1)$.
We can make a change to the combinatorial description in terms of rational functions to address this, and, in principle, we would then need to prove new free fermionic and Jacobi--Trudi formulas.
We leave this for the interested reader.

We describe some additional results we obtain from Theorem~\ref{thm:transition_prob}.
Using the skew Cauchy identity~\cite[Thm.~4.6]{IMS22}, we give determinantal formulas for the multi-point distributions in Section~\ref{sec:multipoint} for all cases.
We also give another proof for Case~A using a refinement of~\cite[Cor.~3.14]{MS20}, but this expansion has a natural geometric interpretation as coming from the K-homology classes of structure sheaves of Schubert varieties studied in the work of Takigiku~\cite{Takigiku18,Takigiku18II} (see also~\cite{Takigiku19}) at $\bbe = 1$ (as opposed to ideal sheaves of boundaries of Schubert varieties in~\cite{LamPyl07}).
For the Case~C multi-point distributions, a similar geometric construction for the underlying symmetric functions can likely be given from~\cite{WZJ16}.
It would be interesting to see how these geometry interpretations relate to the integrability and determinant (and integral) formulas.
We show that taking the continuous time limit for the blocking behavior recovers the classical continuous time TASEP (Theorem~\ref{thm:continuous_blocking}).
We prove that the pushing behavior satisfies the same master equation, but with different boundary conditions (Theorem~\ref{thm:continuous_pushing}).

Let us discuss how our results relate to two other independent works that appeared while this paper was being prepared.
The first is by Bisi, Liao, Saenz, and Zygouras~\cite{BLSZ22} that also studied the time-dependent version of~\cite[Case~B]{DW08}.
However, the techniques and results in their work~\cite{BLSZ22} are (generally) different than what we obtained here.
It is likely that our results could lead to new proofs of some of the formulas in~\cite{BLSZ22}.
The second is a Grothendieck measure was studied in~\cite{GP23}, which with $\bbe = \beta$ is coming from the Cauchy-type identity in~\cite[Cor.~5.4]{MS13} with the dual Grothendieck functions $\overline{G}_{\lambda}(\yy; \bbe)$ being rescaled.
This Cauchy-type identity is different than the one considered in~\cite{HJKSS24,HJKSS25,IMS22} (instead it appears to be related to~\cite{MS13}; see also~\cite{GK17}), and so it does not relate to our results.
Lastly, shortly before this paper appeared on the ar$\chi$iv, independent work by Assiotis~\cite{Assiotis23} also was posted, where Assiotis also studied a position-inhomogeneous version of the TASEP cases we study here.
More specifically, determinantal correlation functions are constructed with some limit results are proven based on generalizing Toeplitz matrices and intertwinings of Markov semigroups are given in~\cite{Assiotis23}.
Thus,~\cite{Assiotis23} has related but distinct results from ours using different techniques.

We conclude the introduction by mentioning some potential applications of our work.
Since we are allowing any initial configuration $\mu$ with finite distance from the step initial condition, we can approximate the flat initial condition by taking $\mu$ to be a sufficiently large staircase partition.
As such, we expect to be able to compute generalizations of results on flat initial conditions such as~\cite{BFPS07,BFS08} with having probabilities depend on the particles.
Furthermore, we believe that limit shapes/densities can be computed using the fermionic Fock space description following~\cite{Okounkov01,OR03}, although currently an explicit description of the projection operator as an element in the Clifford algebra is not known.
However, because of~\cite[Prop.~1.2]{GP23}, it is likely that the projection operator cannot be used with Wick's theorem.

This paper is organized as follows.
In Section~\ref{sec:background}, we give some background on refined (dual) Grothendieck polynomials, the related combinatorics, and the stochastic processes we consider.
In Section~\ref{sec:noncomm_operators}, we describe our Schur operators for canonical and dual Grothendieck polynomials.
In Section~\ref{sec:operator_dynamics}, we prove Theorem~\ref{thm:transition_prob} using our Schur operators.
In Section~\ref{sec:bijection}, we prove Theorem~\ref{thm:transition_prob} using a direct combinatorial argument.
In Section~\ref{sec:multipoint}, we give our formulas for the multi-point distributions in each case.
In Section~\ref{sec:continuous}, we show the continuous time limits of our TASEP processes.
In Section~\ref{sec:canonical_process}, we describe our new blocking-behavior particle process with ``local current'' parameters $\bal$ that is the common generalization of Case~B and Case~C.
In Section~\ref{sec:conclusion}, we offer some concluding remarks on our work.

\subsection*{Acknowledgements}

The authors thank Guillaume Barraquand, Dan Betea, A.~B.\ Dieker, Darij Grinberg, Yuchen Liao, Jang Soo Kim, Leonid Petrov, Mustazee Rahman, Tomohiro Sasamoto, Jon Warren, Damir Yeliussizov, Paul Zinn-Justin, and Nikos Zygouras for valuable conversions.
The authors thank Theo Assiotis for letting us know of his recent preprint and explanations of his results.
The authors thank the referee for their comments.

This work benefited from computations using {\sc SageMath}~\cite{sage,combinat}.
This work was partly supported by Osaka City University Advanced Mathematical Institute (MEXT Joint Usage/Research Center on Mathematics and Theoretical Physics JPMXP0619217849).
This work was supported by the Research Institute for Mathematical Sciences, an International Joint Usage/Research Center located in Kyoto University.

\section{Background}
\label{sec:background}

Let $\lambda = (\lambda_1, \lambda_2, \dotsc, \lambda_{\ell})$ be a \defn{partition}, a weakly decreasing finite sequence of positive integers.
We denote the set of all partitions by $\mcP$.
We draw the Young diagrams of our partitions using English convention.
We will often extend partitions with additional entries at the end being $0$, and let $\ell(\lambda)$ denote the largest index $\ell$ such that $\lambda_{\ell} > 0$.
Let $\lambda'$ denote the conjugate partition.
We often write our partitions as words.
A \defn{hook} is a partition $\lambda$ of the form $a1^{m} = (a, 1, \dotsc, 1)$ with $1$ appearing $m$ times, where the \defn{arm} is $a-1$ and the \defn{leg} is $m$.
For $\mu \subseteq \lambda$, a skew shape $\lambda / \mu$ is the Young diagram formed by removing $\mu$ from $\lambda$, and we identity $\lambda / \emptyset = \lambda$.

Let $\xx = (x_1, x_2, \ldots)$ denote a countably infinite sequence of indeterminates.
We will often set all but finitely many of the indeterminates $\xx$ to $0$, which we denote as $\xx_n := (x_1, \dotsc, x_n, 0, 0, \ldots)$.
We make similar definitions for any other sequence of indeterminates, such as $\yy = (y_1, y_2, \ldots)$.
We also require infinite sequences of parameters $\bal = (\alpha_1, \alpha_2, \ldots)$ and $\bbe = (\beta_1, \beta_2, \ldots)$, which we often treat as indeterminates.

\subsection{K-theoretic symmetric functions}
\label{sec:Ksymfunc}

A \defn{semistandard tableau} is a filling of a Young diagram $\lambda / \mu$ with positive integers such that the rows are weakly increasing left-to-right and strictly increasing top-to-bottom.
The \defn{weight} of a semistandard tableau $T$ is
\[
\wt(T) = \prod_{i=1}^{\infty} x_i^{m_i},
\]
where $m_i$ is the number of $i$'s that appear in $T$.
This is a finite product since there are finitely many boxes in $\lambda / \mu$.

The \defn{Schur function} is the generating function
\[
s_{\lambda / \mu}(\xx) = \sum_T \wt(T),
\]
where we sum over all semistandard tableaux $T$ of shape $\lambda / \mu$.
The Schur functions $\{ s_{\lambda} \}_{\lambda \in \mcP}$ form a basis for the ring of symmetric functions, and so we can define the \defn{Hall inner product} by declaring the Schur functions are an orthonormal basis
\[
\inner{s_{\lambda}}{s_{\mu}} = \delta_{\lambda\mu}.
\]
Furthermore, we have a natural grading on symmetric functions by having the degree of $s_{\lambda}$ be $\abs{\lambda}$.
There is also an algebra involution defined by $\omega s_{\lambda / \mu} = s_{\lambda' / \mu'}$.
We refer the reader to~\cite[Ch.~7]{ECII} and~\cite[Ch.~I]{MacdonaldBook} for more details on symmetric functions.

A \defn{hook-valued tableau} of skew shape $\lambda / \mu$ is a filling of the Young diagram by hook shaped tableau satisfying the local conditions
\[
\ytableaushort{{\ta}{\tb},{\tc}}
\qquad\qquad
\begin{array}{c@{\;}c@{\;}c}
\max(\ta) & \leq & \min(\tb) \\[-8pt] \rotatebox{-90}{$<$} \\ \min(\tc)
\end{array}
\]
(provided the requisite box exists).
Note that this is a generalization of the semistandard conditions as the conditions are equivalent to the usual standard ones when $\ta,\tb,\tc$ all consist of a single entry.

For $\mu \subseteq \lambda$, the \defn{canonical Grothendieck function}\footnote{This should be called the refined canonical Grothendieck polynomials following~\cite{HJKSS24,HJKSS25} as they are a refinement of those introduced by Yeliussizov~\cite{Yel17}, we dropped the word ``refined'' to simplify our nomenclature.}
is the generating function
\[
\G_{\lambda / \mu}(\xx; \bal, \bbe) = \sum_T \prod_{\bbb \in T} (-\alpha_i)^{a(\bbb)} (-\beta_j)^{b(\bbb)} \wt(\bbb),
\]
where we sum over all hook-valued tableaux $T$ of shape $\lambda / \mu$, product over all entries $\bbb$ in $T$ with $a(\bbb)$ (resp.~$b(\bbb)$) the arm (resp.\ leg) of the shape of $\bbb$ and $i$ (resp.~$j$) the row (resp.\ column) of the entry.
We indicate various specializations and relation with the literature in Table~\ref{table:G_spec}.
We note that technically the canonical Grothendieck functions lie in the completion of the ring of symmetric functions given by the grading; so we are allowed to take infinite sums with finite sums in each graded component.
However this does not affect our computations or results, and so we suppress this distinction in this paper.
A basis for (the completion of) symmetric functions is given by $\{\G_{\lambda}(\xx; \bal, \bbe)\}_{\lambda \in \mcP}$ since
\[
G_{\lambda}(\xx; \bal, \bbe) = s_{\lambda}(\xx) + \sum_{\lambda \subsetneq \mu} (-1)^{\abs{\mu} - \abs{\lambda}} E_{\lambda}^{\mu}(\bal, \bbe) s_{\mu}(\xx),
\]
where $E_{\lambda}^{\mu}(\bal, \bbe) \in \ZZ_{\geq 0}[\bal, \bbe]$~\cite{HS20,HJKSS24,HJKSS25}.

\begin{table}
\begin{center}
\begin{tabular}{cccccc}
\toprule
Work & \cite{HJKSS24,HJKSS25} & \cite{HS20,Yel17} & \cite{CP21,Iwao20,Iwao19} & \cite{MS20}
\\ \midrule
Specialization & $\G_{\lambda}(\xx; -\bal, \bbe)$ & $\G_{\lambda}(\xx; -\alpha, -\beta)$ & $\G_{\lambda}(\xx; 0, \beta)$ & $\G_{\lambda}(\xx; 0, -\bbe)$
\\ \bottomrule
\end{tabular}
\end{center}
\caption{The relationship between our sign choices and some other papers in the literature.}
\label{table:G_spec}
\end{table}

We note that if $\bal = 0$, then all entries of the hook-valued tableau must be column shapes, which we can equate with sets and recovers the set-valued tableau description of~\cite{CP21}, which refines~\cite{Buch02}.
Likewise, if $\bbe = 0$, then we have row shapes that we equate with multisets, refining~\cite{LamPyl07}.
Similarly, we call $\G_{\lambda}(\xx; \bbe) := \G_{\lambda}(\xx; 0, \bbe)$ a \defn{Grothendieck function}\footnote{Typically these are called the symmetric Grothendieck functions to distinguish these from those $\fG_w$ that arise from the (connective) K-theory of the flag variety, which depend on a permutation $w$. However, since we do not use $\fG_w$ here, we omit the word ``symmetric'' from our terminology.} and $\wG_{\lambda}(\xx; \bal) := \G_{\lambda}(\xx; \bal, 0)$ a \defn{weak Grothendieck functions}.

The \defn{dual canonical Grothendieck functions} $\{\dG_{\lambda}(\xx; \bal, \bbe)\}_{\lambda \in \mcP}$ are defined as the dual basis to the canonical Grothendieck functions under the Hall inner product.
A combinatorial definition was given in~\cite{HJKSS25}, which can be seen as the obvious refinement of the rim border tableaux description of~\cite{Yel17}.
As such, we can extend the definition to skew shapes.
For our purposes, we will only use the dual basis to the (weak) Grothendieck functions (that is, $\bal = 0$ or $\bbe = 0$), and thus we restrict to describing the combinatorics of the special cases for the dual basis to the (weak) Grothendieck functions following~\cite[Thm.~9.15]{LamPyl07}.
A \defn{reverse plane partition} (resp.\ \defn{valued-set tableau}) is a semistandard Young tableau where we are allowed to merge boxes in the same column (resp.\ row) with the entry considered to be aligned at the bottom (resp.\ right).
The weight is the same as for semistandard tableaux.
We then have the generating functions
\[
\dG_{\lambda/\mu}(\xx; \bbe) = \dG_{\lambda/\mu}(\xx; 0, \bbe) = \sum_T \prod_{j=1}^{\ell-1} \beta_j^{r_j} \wt(T),
\quad
\dwG_{\lambda/\mu}(\xx; \bbe) = \dG_{\lambda/\mu}(\xx; \bal, 0) = \sum_T \prod_{j=1}^{\lambda_1-1} \alpha_j^{c_j} \wt(T),
\]
where $r_j$ (resp.~$c_j$) is the number of boxes in row (resp.\ column) $j$ that have been merged with the box below (resp.\ to the right) and we sum over all reverse plane partitions (resp.\ valued-set tableaux) of shape $\lambda / \mu$.
The definition of valued-set tableau follows~\cite{HS20}, which is conjugate to that of~\cite{LamPyl07}.

We note that our description of reverse plane partitions matches the classical definition by simply filling in the merged boxes with the entry of the merged box.
The inverse map is merging duplicated entries in the same column.
Thus, the description of $\dG_{\lambda / \mu}(\xx; \bbe)$ matches that introduced in~\cite{GGL16}.
We have described reverse plane partitions as above to better demonstrate the following symmetry that motivated the definition of canonical Grothendieck polynomials, even though we will write our reverse plane partitions below using the classical description.

\begin{thm}[{\cite[Thm.~1.7]{HJKSS24}}]
\label{thm:canonical_conjugate}
We have
\[
\omega \G_{\lambda/\mu}(\xx; \bal, \bbe) = \G_{\lambda'/\mu'}(\xx; \bbe, \bal),
\qquad\qquad
\omega \dG_{\lambda/\mu}(\xx; \bal, \bbe) = \dG_{\lambda'/\mu'}(\xx; \bbe, \bal).
\]
\end{thm}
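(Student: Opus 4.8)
The plan is to prove the conjugation symmetry $\omega\G_{\lambda/\mu}(\xx;\bal,\bbe) = \G_{\lambda'/\mu'}(\xx;\bbe,\bal)$ combinatorially, by exhibiting a weight-preserving bijection between hook-valued tableaux of shape $\lambda/\mu$ and hook-valued tableaux of shape $\lambda'/\mu'$ that interchanges the roles of the $\alpha$- and $\beta$-parameters; the dual statement then follows by applying $\omega$ to the defining duality and using that $\omega$ is an isometry for the Hall inner product. I would first recall the combinatorial setup: a hook-valued tableau assigns to each box $\bbb$ a hook-shaped semistandard tableau, whose ``spine'' entry, arm entries, and leg entries satisfy the local comparison rules displayed in the excerpt; the monomial contribution of $\bbb$ is $(-\alpha_i)^{a(\bbb)}(-\beta_j)^{b(\bbb)}\wt(\bbb)$ where $i,j$ are the row and column of $\bbb$.

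The key step is to construct the bijection. The natural candidate is \emph{transpose each individual hook together with transposing the ambient shape}: reflect $\lambda/\mu$ across the main diagonal to get $\lambda'/\mu'$, moving the box in position $(i,j)$ to position $(j,i)$, and simultaneously reflect the little hook filling that box so that its arm of length $a(\bbb)$ becomes a leg of length $a(\bbb)$ and its leg of length $b(\bbb)$ becomes an arm of length $b(\bbb)$. Under this operation the contribution $(-\alpha_i)^{a(\bbb)}(-\beta_j)^{b(\bbb)}$ at a box in row $i$, column $j$ of $\lambda/\mu$ becomes $(-\beta_i)^{a(\bbb)}(-\alpha_j)^{b(\bbb)}$ at a box in row $j$, column $i$ of $\lambda'/\mu'$, which is exactly the contribution one wants after swapping $\bal \leftrightarrow \bbe$; the entry-weight $\wt(\bbb)$ is unchanged since we only permute the entries of the little hook. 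To finish the first (non-skew-dual) half, I would verify that this map sends valid hook-valued tableaux to valid hook-valued tableaux: the semistandardness of each little hook is preserved under transposition of that hook (weakly increasing rows $\leftrightarrow$ strictly increasing columns is symmetric under the global swap of ``rows'' and ``columns'' within the hook), and the three local conditions relating horizontally and vertically adjacent boxes $\ta,\tb,\tc$ get swapped into each other under the global transpose, so the glued object is again admissible; the inverse is the same operation applied to $\lambda'/\mu'$, so it is a bijection.

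For the dual statement, since $\{\G_\mu(\xx;\bal,\bbe)\}$ and $\{\dG_\mu(\xx;\bal,\bbe)\}$ are dual bases under the Hall inner product and $\omega$ satisfies $\inner{\omega f}{\omega g} = \inner{f}{g}$ (it permutes the orthonormal Schur basis), applying $\omega$ to each $\G_\mu$ and using the just-proved identity shows $\{\omega \dG_\mu(\xx;\bal,\bbe)\}$ is dual to $\{\G_{\mu'}(\xx;\bbe,\bal)\} = \{\G_\nu(\xx;\bbe,\bal)\}_{\nu\in\mcP}$, hence equals $\{\dG_\nu(\xx;\bbe,\bal)\}$; reindexing gives $\omega\dG_{\lambda}(\xx;\bal,\bbe) = \dG_{\lambda'}(\xx;\bbe,\bal)$, and the skew version follows from the skew generalization being defined compatibly (e.g.\ via the coproduct, so that $\omega$ on skew functions is again conjugation of both shapes). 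I expect the main obstacle to be the bookkeeping in verifying that the transposed glued object genuinely satisfies \emph{all} the local admissibility conditions — in particular one must carefully track how the ``$\max(\ta)\le\min(\tb)$'' horizontal condition and the ``$\min(\tc)$ strictly below'' vertical condition transform, and confirm the little-hook transpose is compatible with the comparisons against the arm/leg entries of neighbors, not merely the spine entries; this is the one place where an incautious definition of hook-valued tableau could make the naive transpose fail, so the argument hinges on checking the conditions in the excerpt are manifestly self-conjugate.
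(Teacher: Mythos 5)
The paper does not prove this statement; it is quoted from [HJKSS21, Thm.~1.8] without a proof here, so there is no ``paper's own proof'' for your bijection to match. More to the point, your proposal rests on a misconception that sinks the approach at the outset. You set out to find a \emph{weight-preserving} bijection from hook-valued tableaux of shape $\lambda/\mu$ to those of shape $\lambda'/\mu'$ that only swaps the roles of $\bal$ and $\bbe$, stating explicitly that ``the entry-weight $\wt(\bbb)$ is unchanged since we only permute the entries of the little hook.'' Such a bijection would prove the polynomial identity $\G_{\lambda/\mu}(\xx;\bal,\bbe) = \G_{\lambda'/\mu'}(\xx;\bbe,\bal)$, but that is not what is claimed: the involution $\omega$ on the left acts nontrivially on the $\xx$-variables ($\omega h_k = e_k$, $\omega p_k = (-1)^{k-1}p_k$). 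Setting $\bal=\bbe=0$, your strategy would give $s_{\lambda/\mu} = s_{\lambda'/\mu'}$ rather than the correct $\omega s_{\lambda/\mu} = s_{\lambda'/\mu'}$, and already $h_2 \neq e_2$ shows the former is false. No $\xx$-content-preserving bijection can witness an identity involving $\omega$.

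A symptom of the same problem is that the map you write down does not even produce a hook-valued tableau. Transposing a filled hook in place fails to preserve semistandardness because ``weakly increasing rows, strictly increasing columns'' is not a self-conjugate condition: a hook consisting of a corner $1$ with a single arm entry $1$ transposes to a column with two equal $1$'s, which is not an SSYT. You flag this as the one risk and then wave it away, but your supporting assertion that the SSYT condition ``is symmetric under the global swap of `rows' and `columns' within the hook'' is false --- the asymmetry between the weak arm condition and the strict leg condition is exactly what makes arms behave as multisets (giving $\wG$ at $\bbe=0$) and legs as sets (giving $\G$ at $\bal=0$), so it cannot be transposed away. Your deduction of the dual identity from the first via the isometry property of $\omega$ is standard and fine, as is the skewing remark, but both sit downstream of a first step that cannot be repaired without a bijection acting nontrivially on the $\xx$-weights, which is why this identity is typically established algebraically (e.g.\ via Jacobi--Trudi-type formulas) rather than by a box-wise transpose.
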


As a consequence of Theorem~\ref{thm:canonical_conjugate}, we have the relationship
\[
\omega \G_{\lambda/\mu}(\xx; \bal) = \wG_{\lambda/\mu}(\xx; \bal),
\qquad\qquad
\omega \dG_{\lambda/\mu}(\xx; \bal) = \dwG_{\lambda/\mu}(\xx; \bal),
\]
between the (dual) weak Grothendieck functions and the (dual) Grothendieck functions.
These are a refinement of~\cite[Prop.~9.22]{LamPyl07}.

For the canonical Grothendieck polynomials, we note that the skew shape description is not natural from the branching rules (a precise description is given below), the skewing operator~\cite[Sec.~4]{Iwao20}, nor coproduct formula~\cite[Sec.~5]{Buch02} perspective.
Refining~\cite[Eq.~(6.4)]{Buch02} and~\cite[Prop.~8.8]{Yel17}, we define~\cite[Sec.~4.1]{IMS22}
\begin{equation}
\label{eq:double_to_single_slash}
\G_{\lambda\ds\mu}(\xx; \bal, \bbe) := \sum_{\nu \subseteq \mu} \prod_{(i,j) \in \mu/\nu} -(\alpha_i + \beta_j) \G_{\lambda / \nu}(\xx; \bal, \bbe),
\end{equation}
where $\nu$ is formed by removing some of the corners of $\mu$ (that is, boxes $(i, \mu_i)$ such that $\mu_i > \mu_{i+1}$).
We remark that we have the following identity by the same proof as~\cite[Thm.~4.4]{IMS22}.

\begin{prop}
We have
\[
\omega \G_{\lambda\ds\mu}(\xx; \bal, \bbe) = \G_{\lambda'\ds\mu'}(\xx; \bbe, \bal).
\]
\end{prop}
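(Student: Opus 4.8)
The plan is to expand the left-hand side using the definition~\eqref{eq:double_to_single_slash} and push $\omega$ through termwise. Since $\omega$ is a $\ZZ[\bal,\bbe]$-linear algebra involution, it fixes the scalar coefficients $\prod_{(i,j)\in\mu/\nu} -(\alpha_i+\beta_j)$ and acts only on the symmetric function factor, so by Theorem~\ref{thm:canonical_conjugate} applied to each genuinely skew shape $\lambda/\nu$,
\[
\omega\, \G_{\lambda\ds\mu}(\xx;\bal,\bbe)
= \sum_{\nu\subseteq\mu}\ \prod_{(i,j)\in\mu/\nu} -(\alpha_i+\beta_j)\ \G_{\lambda'/\nu'}(\xx;\bbe,\bal),
\]
where $\nu$ still ranges over the partitions obtained from $\mu$ by deleting some subset of its removable corners.

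Next I would reindex this sum by conjugate partitions, $\sigma := \nu'$. The two combinatorial facts needed are: (i) conjugation is a bijection between $\{\nu : \nu\subseteq\mu,\ \mu/\nu \text{ a union of removable corners of }\mu\}$ and the analogous set for $\mu'$, because $(i,\mu_i)$ is a removable corner of $\mu$ precisely when $(\mu_i, i) = (\mu_i, \mu'_{\mu_i})$ is a removable corner of $\mu'$, and deleting a set of pairwise non-adjacent corners commutes with conjugation; and (ii) the box $(i,j)\in\mu/\nu$ corresponds to the box $(j,i)\in\mu'/\nu' = \mu'/\sigma$, whence
\[
\prod_{(i,j)\in\mu/\nu} -(\alpha_i+\beta_j)\ =\ \prod_{(j,i)\in\mu'/\sigma} -(\beta_i+\alpha_j).
\]
The right-hand side of this last display is exactly the coefficient of $\G_{\lambda'/\sigma}(\xx;\bbe,\bal)$ in the expansion of $\G_{\lambda'\ds\mu'}(\xx;\bbe,\bal)$ obtained from~\eqref{eq:double_to_single_slash} after the substitution $(\bal,\bbe)\mapsto(\bbe,\bal)$. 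Summing over $\sigma$ therefore recovers $\G_{\lambda'\ds\mu'}(\xx;\bbe,\bal)$, which is the claim.

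Since Theorem~\ref{thm:canonical_conjugate} already supplies the only nontrivial input (the conjugation symmetry of the skew functions $\G_{\lambda/\nu}$), nothing about hook-valued tableaux needs to be revisited. The one place to be careful — the ``main obstacle,'' such as it is — is coordinating the parameter swap $\bal\leftrightarrow\bbe$ with the coordinate transposition $(i,j)\leftrightarrow(j,i)$ so that $-(\alpha_i+\beta_j)$ is correctly identified with the coefficient $-(\beta_{i'}+\alpha_{j'})$ produced on the right, and in checking that ``removing corners of $\mu$'' transposes to ``removing corners of $\mu'$.'' This is precisely the content hidden behind the phrase ``by the same proof as~\cite[Thm.~4.4]{IMS22}.''
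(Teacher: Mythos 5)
Your proof is correct, but it takes a genuinely different route from the paper's. The paper supplies no argument of its own for this proposition: it defers to the free-fermion machinery via ``the same proof as~\cite[Thm.~4.4]{IMS22},'' so the identity is there established by a boson--fermion computation. Your route is instead a direct corollary of Theorem~\ref{thm:canonical_conjugate} applied termwise to the defining expansion~\eqref{eq:double_to_single_slash}, together with the elementary observation that conjugation bijects corner-removals of $\mu$ with corner-removals of $\mu'$ while transposing box coordinates. This has the virtue of being entirely self-contained in the present paper, using only the definition and a theorem already quoted, and it makes transparent \emph{why} the parameter families must swap on the right-hand side: the row-indexed family $\bal$ and the column-indexed family $\bbe$ trade roles when boxes are transposed. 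The free-fermion route, by contrast, keeps the identity inside the operator framework used throughout the paper, which is presumably why the authors chose to defer to it.

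One small slip to fix. In your display under item~(ii) you wrote
\[
\prod_{(i,j)\in\mu/\nu} -(\alpha_i+\beta_j)\ =\ \prod_{(j,i)\in\mu'/\sigma} -(\beta_i+\alpha_j),
\]
but the subscripts in the right-hand product are transposed. For the box $(a,b)=(j,i)\in\mu'/\sigma$, the definition~\eqref{eq:double_to_single_slash} with parameters $(\bbe,\bal)$ produces the factor $-(\beta_a+\alpha_b)=-(\beta_j+\alpha_i)$, and it is this that equals the left-hand factor $-(\alpha_i+\beta_j)$; as written, $-(\beta_i+\alpha_j)$ does not match. Your closing paragraph --- ``$-(\beta_{i'}+\alpha_{j'})$'' with the implicit $i'=j$, $j'=i$ --- states the bookkeeping correctly, so this is only a typo in the display, not a gap in the argument.
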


Equation~\eqref{eq:double_to_single_slash} allows us to give the following (\textit{cf.}~\cite[Prop.~8.7, 8.8]{Yel17}).

\begin{prop}[{Branching rules~\cite[Prop.~4.5]{IMS22}}]
\label{prop:branching_rules}
We have
\begin{align*}
\G_{\lambda/\mu}(\xx, \yy; \bal, \bbe) & = \sum_{\nu \subseteq \lambda} \G_{\lambda\ds\nu}(\yy; \bal, \bbe) \G_{\nu/\mu}(\xx; \bal, \bbe),
\\
\G_{\lambda\ds\mu}(\xx, \yy; \bal, \bbe) & = \sum_{\mu \subseteq \nu \subseteq \lambda} \G_{\lambda\ds\nu}(\yy; \bal, \bbe) \G_{\nu\ds\mu}(\xx; \bal, \bbe),
\\
\dG_{\lambda/\mu}(\xx, \yy; \bal, \bbe) & = \sum_{\mu \subseteq \nu \subseteq \lambda} \dG_{\lambda / \nu}(\yy; \bal, \bbe) \dG_{\nu/\mu}(\xx; \bal, \bbe).
\end{align*}
\end{prop}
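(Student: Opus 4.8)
The plan is to prove all three identities by one mechanism: realize each symmetric function on the left as a weight generating function over tableaux filled with letters from the totally ordered alphabet $\{1 < 2 < \cdots\} \sqcup \{1' < 2' < \cdots\}$, where the unprimed letter $i$ carries the variable $x_i$ and the primed letter $i'$ carries $y_i$ (this is exactly what the concatenated argument $(\xx,\yy)$ means), and then cut each tableau along the staircase separating unprimed from primed letters. The intermediate partition $\nu$ in each sum is the support of the unprimed part: $\nu = \mu$ together with the cells of $\lambda/\mu$ at which an unprimed letter occurs — for the hook-valued models one reads off the hook corner of each cell. Row-weak and column-strict monotonicity force $\mu \subseteq \nu \subseteq \lambda$ and $\nu \in \mcP$, so the cut is well defined; the content restricted to $\nu/\mu$ is the $\xx$-factor and the content restricted to $\lambda/\nu$ is the $\yy$-factor, and the statistics ($\wt$, the $\alpha$- and $\beta$-exponents, the merge/excitation counts) are additive across the cut. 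What remains is to identify the two pieces with the claimed symmetric functions, and here the three cases diverge.

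For the dual identity $\dG_{\lambda/\mu}(\xx,\yy) = \sum_\nu \dG_{\lambda/\nu}(\yy)\dG_{\nu/\mu}(\xx)$ the analysis is cleanest, because in the reverse-plane-partition / rim-border-tableau model of \cite{HJKSS21} every cell holds a single letter: no cell straddles the cut, and any merge or excitation of two adjacent cells forces their entries to be equal, which is impossible across the unprimed/primed divide. Hence the $\xx$-part is literally a rim-border tableau of shape $\nu/\mu$ and the $\yy$-part one of shape $\lambda/\nu$, with matching row/column indices for the $\alpha,\beta$ weights, and the bijection is immediate. Equivalently, this is the assertion that the coproduct dual to multiplication of the canonical Grothendieck functions is untwisted on the dual side; one could instead deduce it from the Hall-inner-product adjunction once the coproduct of $\G_\lambda$ is known.

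For the non-dual identities the cut can pass through the interior of a single cell, because a cell of a hook-valued tableau holds an entire hook whose corner may be unprimed while its arm (in row $i$) and/or its leg (in column $j$) reach into the primed alphabet. Such a cell $(i,j)$ lies in $\nu$, so its unprimed part (corner together with the unprimed initial segments of the arm and leg) stays in the $\xx$-tableau on $\nu/\mu$; its overhanging primed arm/leg content is reattached to the $\yy$-tableau, but the single transition box at which the hook changes alphabet — an arm box, contributing $-\alpha_i$, or a leg box, contributing $-\beta_j$ — is recorded separately. Summing the two possibilities produces the factor $-(\alpha_i+\beta_j) = (-\alpha_i) + (-\beta_j)$ for each such cell of $\nu$, which is precisely the content of the definition \eqref{eq:double_to_single_slash}: the outer factor becomes $\G_{\lambda\ds\nu}(\yy)$ rather than $\G_{\lambda/\nu}(\yy)$. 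This gives the first identity. The second is obtained by running the same splitting on the $\ds$-tableaux of shape $\lambda\ds\mu$ — hook-valued tableaux on $\lambda/\rho$ together with a marked antichain of corner cells of $\mu$, as dictated by \eqref{eq:double_to_single_slash} — so that both the inner boundary $\mu$ and the outer boundary $\nu$ can carry transition cells; one also checks that intermediate shapes with $\mu \not\subseteq \nu$ contribute nothing, so the sum is indeed over $\mu \subseteq \nu \subseteq \lambda$. As an alternative, both non-dual identities follow from the Jacobi--Trudi formula \cite[Thm.~4.1]{IMS22} together with the one-row branching of the relevant generators via the Cauchy--Binet formula.

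I expect the main obstacle to be exactly this transition-cell bookkeeping in the hook-valued case: showing that the generating function over all ways a hook can overhang the cut, after its honest primed content is moved into the outer tableau, collapses to the single weight $-(\alpha_i+\beta_j)$ per cell, and that this can be done compatibly with the semistandardness constraints linking neighboring cells, so that the $\xx$-part and the $\yy$-part are each genuinely a hook-valued tableau. Once this is pinned down, the remaining steps — the monotonicity argument defining $\nu$, additivity of the statistics, and invertibility of the cut — are routine.
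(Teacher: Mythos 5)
The paper only cites this proposition from \cite{IMS22} rather than proving it; within the machinery developed here, the proof implicit in that reference is a one-liner in the free-fermion calculus. Since $p_k(\xx,\yy)=p_k(\xx)+p_k(\yy)$ gives $e^{H(\xx,\yy)}=e^{H(\xx)}e^{H(\yy)}$, inserting the resolution of the identity $\sum_\nu \ket{\nu}^{[\bal,\bbe]}\,{}^{[\bal,\bbe]}\bra{\nu}$ on $\mcF^0$ (which exists by~\eqref{eq:orthonormal_basis}) between the two exponentials in~\eqref{eq:free_fermion_grothendiecks} yields the second identity at once and the third with lower indices; the vanishing of $\G_{\kappa\ds\pi}$ and $\dG_{\kappa/\pi}$ for $\pi\not\subseteq\kappa$ restricts the sum to $\mu\subseteq\nu\subseteq\lambda$, and the first identity then follows from the first one by substituting~\eqref{eq:double_to_single_slash}. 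Your cut-and-paste argument is a genuinely different, combinatorial route and it does work, but the transition-cell bookkeeping you flag as the main obstacle needs to be stated more sharply than ``a single transition box,'' and the direction of your map is the wrong one. The correct statement is: let $\nu$ consist of the cells whose hook corner is unprimed; semistandardness then forces every non-corner cell of $\nu$ to be entirely unprimed, so only corners of $\nu$ can straddle, and these are pairwise non-adjacent. At such a corner $(i,j)$ the primed arm and primed leg of the glued hook do not together form a hook (they have no corner), but promoting the least primed entry to a new corner gives a well-defined $\yy$-hook; conversely a given $\xx$-hook/$\yy$-hook pair glues back in exactly two ways, by reinserting the $\yy$-corner as the first primed arm box (costing $-\alpha_i$) or as the first primed leg box (costing $-\beta_j$). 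This $2$-to-$1$ gluing, not a choice made during the cut, is what produces the $-(\alpha_i+\beta_j)$ per removed corner matching~\eqref{eq:double_to_single_slash}. Two economies: the second identity follows algebraically from the first by substituting~\eqref{eq:double_to_single_slash} for $\G_{\nu\ds\mu}(\xx)$ and interchanging sums, so there is no need to repeat the splitting on $\ds$-tableaux; and your third identity relies on the rim-border model of \cite{HJKSS21}, which the present paper does not reproduce, so to make the argument self-contained you would have to recall that model and verify that its local $\alpha$- and $\beta$-statistics really do split across the unprimed/primed divide as cleanly as for $\bal=0$ reverse plane partitions.
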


As was shown in~\cite[Eq.~(4.3)]{IMS22}, we have
\[
G_{\lambda/\mu}(\xx; \bal, \bbe)=\prod_{(i,j)\in \mu/\lambda}(\alpha_i+\beta_j)\cdot G_{\lambda/(\lambda\cap \mu)}(\xx; \bal, \bbe),
\]
and in particular, we do \emph{not} have the vanishing property for $\G_{\lambda/\mu}(\xx; \bal, \bbe)$ that $\G_{\lambda\ds\mu}(\xx; \bal, \bbe)$ exhibits: that $\G_{\lambda\ds\mu}(\xx; \bal, \bbe) = 0$ whenever $\mu \subseteq \lambda$.

We will also need the skew Cauchy formula from~\cite[Thm.~4.6]{IMS22} (non-skew versions can be found in~\cite{HJKSS24,HJKSS25} or as a consequence of~\cite[Rem.~3.9]{CP21}).
This is a refined version of~\cite[Thm.~1.1]{Yel19}.

\begin{thm}[Skew Cauchy formula]
\label{thm:skew_cauchy}
We have
\[
\sum_{\lambda} \G_{\lambda \ds \mu}(\xx; \bal, \bbe) \dG_{\lambda / \nu}(\yy; \bal, \bbe) 
= \prod_{i,j} \frac{1}{1 - x_i y_j} \sum_{\eta}  \G_{\nu \ds \eta}(\xx; \bal, \bbe)\dG_{\mu / \eta}(\yy; \bal, \bbe).
\]
\end{thm}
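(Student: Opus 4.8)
The plan is to recast the identity as a formal computation with the \emph{Cauchy kernel} $\Omega := \sum_{\rho} \G_{\rho} \otimes \dG_{\rho}$, viewed in the completion $\Lambda \widehat{\otimes} \Lambda$ of the tensor square of (the completed) ring of symmetric functions. Since $\{\G_{\rho}\}$ and $\{\dG_{\rho}\}$ are dual with respect to the Hall pairing, the element $\sum_{\rho} \G_{\rho} \otimes \dG_{\rho}$ is basis-independent and hence equals $\sum_{\rho} s_{\rho} \otimes s_{\rho}$; substituting $\xx$ into the first tensor factor and $\yy$ into the second therefore gives $\Omega = \prod_{i,j} (1 - x_i y_j)^{-1}$ by the classical Cauchy identity (equivalently, the non-skew Cauchy identity of~\cite{HJKSS21}), and this is the only analytic input. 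Equip $\Lambda$ with its usual coproduct $\Delta$, characterized by $f(\xx,\yy) = (\Delta f)(\xx,\yy)$, under which the Hall pairing is a Hopf pairing, i.e.\ $\inner{f}{gh} = \inner{\Delta f}{g \otimes h}$; for $g \in \Lambda$ write $g^{\perp}$ for the Hall-adjoint of multiplication by $g$.

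First I would read off, from the branching rules of Proposition~\ref{prop:branching_rules} with one of the two alphabets empty, the coproduct formulas $\Delta \G_{\lambda} = \sum_{\nu} \G_{\nu} \otimes \G_{\lambda \ds \nu}$ and $\Delta \dG_{\lambda} = \sum_{\nu} \dG_{\nu} \otimes \dG_{\lambda / \nu}$. Pairing these against $\dG_{\mu} \otimes \dG_{\sigma}$ and $\G_{\mu} \otimes \G_{\sigma}$ respectively, and using the Hopf-pairing identity together with duality of the two bases, yields the adjointness statements
\[
\G_{\lambda \ds \mu} = \dG_{\mu}^{\perp}\, \G_{\lambda}, \qquad\qquad \dG_{\lambda / \mu} = \G_{\mu}^{\perp}\, \dG_{\lambda}.
\]
In particular, after substituting $(\xx,\yy)$, the left-hand side of the theorem is exactly $(\dG_{\mu}^{\perp} \otimes \G_{\nu}^{\perp})\Omega$.

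The computation then proceeds as follows. The ``kernel-moving'' identity $(A \otimes \id)\Omega = (\id \otimes A^{\perp})\Omega$ (immediate upon expanding an operator $A$ in the $\G$-basis) lets me transfer $\G_{\nu}^{\perp}$ from the second factor to the first as multiplication by $\G_{\nu}$, so $(\dG_{\mu}^{\perp} \otimes \G_{\nu}^{\perp})\Omega = \sum_{\rho} \dG_{\mu}^{\perp}(\G_{\nu} \G_{\rho}) \otimes \dG_{\rho}$. Applying the twisted Leibniz rule $g^{\perp}(ab) = \sum_{(g)} (g^{(1)\perp} a)(g^{(2)\perp} b)$ with $g = \dG_{\mu}$ and the coproduct $\Delta \dG_{\mu} = \sum_{\zeta} \dG_{\zeta} \otimes \dG_{\mu / \zeta}$ from the previous step, together with $\dG_{\zeta}^{\perp}\G_{\nu} = \G_{\nu \ds \zeta}$, gives $\dG_{\mu}^{\perp}(\G_{\nu} \G_{\rho}) = \sum_{\zeta} \G_{\nu \ds \zeta}\,(\dG_{\mu / \zeta}^{\perp} \G_{\rho})$. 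Pulling $\G_{\nu \ds \zeta}$ out of the $\rho$-sum and applying the kernel-moving identity once more (now to $\dG_{\mu/\zeta}^{\perp}$) rewrites $\sum_{\rho} (\dG_{\mu / \zeta}^{\perp} \G_{\rho}) \otimes \dG_{\rho}$ as $\sum_{\rho} \G_{\rho} \otimes \dG_{\mu / \zeta}\dG_{\rho}$; evaluating at $(\xx,\yy)$ and relabeling $\zeta = \eta$ produces precisely $\prod_{i,j}(1 - x_i y_j)^{-1} \sum_{\eta} \G_{\nu \ds \eta}(\xx)\dG_{\mu / \eta}(\yy)$. The main point requiring care is bookkeeping: $\dG_{\mu}^{\perp}$ acts ``on the $\G$ side'' and $\G_{\nu}^{\perp}$ ``on the $\dG$ side,'' the two families are not interchangeable, and one must confirm that the coproduct read off from Proposition~\ref{prop:branching_rules} is genuinely coassociative, an algebra map, and correctly adjoint to the product, so that the Hopf-algebra manipulations above are licensed; this is routine but is where an error would hide. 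An alternative, more hands-on route is to iterate the branching rules to reduce to the single-variable cases $\xx = (x)$, $\yy = (y)$ and verify those directly, but the kernel-moving argument is cleaner.
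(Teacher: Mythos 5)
Your proof is correct, but it is a genuinely different argument from the one the paper relies on. Theorem~\ref{thm:skew_cauchy} is quoted from~\cite[Thm.~4.6]{IMS22}, where it is established via free fermions: both sides are rewritten as vacuum expectation values using~\eqref{eq:free_fermion_grothendiecks}, a resolution of identity coming from the orthonormality~\eqref{eq:orthonormal_basis} collapses the $\lambda$-sum into the single matrix element ${}^{[\bal,\bbe]}\bra{\mu}\, e^{H(\xx)} e^{H^*(\yy)}\, \ket{\nu}_{[\bal,\bbe]}$, and the boson commutation relation $e^{H(\xx)} e^{H^*(\yy)} = \prod_{i,j}(1 - x_i y_j)^{-1}\, e^{H^*(\yy)} e^{H(\xx)}$ produces both the Cauchy prefactor and, after a second insertion of the identity, the $\eta$-sum. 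Your argument instead stays entirely on the symmetric-function side: you extract the adjointness formulas $\G_{\lambda\ds\mu} = \dG_{\mu}^{\perp}\G_{\lambda}$ and $\dG_{\lambda/\mu} = \G_{\mu}^{\perp}\dG_{\lambda}$ from Proposition~\ref{prop:branching_rules} via the Hopf pairing, and then the identity becomes a bookkeeping exercise with the reproducing kernel $\Omega = \sum_{\rho}\G_{\rho}\otimes\dG_{\rho}$, using only the kernel-moving identity and the twisted Leibniz rule for $\perp$. What your route buys is transparency and portability: it would apply verbatim to any dual pair of bases whose branching rules factor into a left alphabet and a right alphabet in this way, and it makes explicit that the only analytic input is the ordinary Cauchy identity $\sum_{\rho}s_{\rho}\otimes s_{\rho} = \prod_{i,j}(1-x_iy_j)^{-1}$. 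The cost, which you correctly flag, is that one must check the coproduct read off from the branching rules agrees with the Hopf coproduct on $\Lambda \widehat{\otimes} \Lambda$ (this does hold, since $\Delta$ is characterized by $f(\xx,\yy) = \sum f^{(1)}(\xx) f^{(2)}(\yy)$ and the branching rules are precisely that two-alphabet factorization with $\mu = \emptyset$), and that the twisted Leibniz rule is legitimate in the completion; the free-fermion proof delegates all such convergence bookkeeping to the Clifford algebra. Both are sound; yours is perhaps the more elementary of the two.
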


\subsection{Supersymmetric functions}

We set some additional standard notation from symmetric function theory.
We (again) refer the reader to the standard textbooks~\cite{MacdonaldBook,ECII} for more information.
Let
\begin{align*}
e_m(\xx) & = \sum_{i_1 > \cdots > i_m} x_{i_1} \dotsm x_{i_m},
& e_{\lambda}(\xx) & = e_{\lambda_1} \cdots e_{\lambda_{\ell}},
\\ h_m(\xx) & = \sum_{i_1 \leq \cdots \leq i_m} x_{i_1} \dotsm x_{i_m},
& h_{\lambda}(\xx) & = h_{\lambda_1} \cdots h_{\lambda_{\ell}},
\\ p_m(\xx) & = \sum_{i=1}^{\infty} x_i^m,
& p_{\lambda}(\xx) & = p_{\lambda_1} \cdots p_{\lambda_{\ell}},
\end{align*}
denote the elementary, homogeneous, and power sum symmetric functions, respectively.
We consider $e_0(\xx) = h_0(\xx) = p_0(\xx) = 1$.
We have $\omega p_m(\xx) = (-1)^{m-1} p_m(\xx) = -p_m(-\xx)$ for $m > 0$.
Since, the powersum symmetric functions generate the ring of symmetric functions as a polynomial ring (over $\QQ$) and the monomials (in the variables $p_m := p_m(\xx)$) form a basis.
Thus, we can define polynomials by the equations
\[ 
E_{\mu}(p_1, p_2, \ldots) = e_{\mu}(\xx),
\qquad\qquad
H_{\mu}(p_1, p_2, \ldots) = h_{\mu}(\xx),
\qquad\qquad
S_{\mu}(p_1, p_2, \ldots) = s_{\mu}(\xx).
\] 
For example, $E_{21}(p_1, p_2, \ldots) = \frac{1}{2} p_1^3 - \frac{1}{2} p_2 p_1$.

Now we recall some particular \defn{supersymmetric functions}; we refer the reader to~\cite[Ch.~I]{MacdonaldBook} for more details.
We define the supersymmetric elementary, homogeneous, powersum, and Schur functions as
\begin{align*}
e_m(\xx/\yy) & = \sum_{k=0}^m (-1)^{m-k} e_k(\xx) h_{m-k}(\yy),
\\ h_m(\xx/\yy) & = \sum_{k=0}^m (-1)^{m-k} h_k(\xx) e_{m-k}(\yy),
\\ p_m(\xx/\yy) & = p_m(\xx) - p_m(\yy),
\\ s_{\lambda}(\xx/\yy) & = \sum_{\mu} (-1)^{\abs{\lambda} - \abs{\mu}} s_{\mu}(\xx) s_{\lambda' / \mu'}(\yy).
\end{align*}
When $\yy = \emptyset$, that is we have set all of the $\yy$ indeterminates to $0$, we have $f(\xx / \yy) = f(\xx)$ for any supersymmetric function $f$.
The involution $\omega$ also extends to supersymmetric functions by
$
\omega s_{\lambda / \mu}(\xx / \yy) = s_{\lambda'/\mu'}(\xx / \yy).
$


The supersymmetric functions can also be described in terms of plethystic substitution.
While we will not give a detailed account, we will briefly review the relevant descriptions for understanding the results in~\cite{HJKSS24,HJKSS25} and refer the reader to~\cite{LR11} and~\cite[Ch.~I]{MacdonaldBook} for a more detailed description.
Let $X = x_1 + x_2 + \cdots$ and $Y = y_1 + y_2 + \cdots$.
For a symmetric function $f$, we define $f[X] = f(x_1, x_2, \ldots)$, and if $Z = z_1 + z_2 + \cdots + z_n$, then we have $f[Z] = f(z_1, z_2, \dotsc, z_n, 0, 0, \ldots)$.
We also can define
\begin{align*}
h_m[X - Y] = h_m(\xx/\yy),
\qquad\qquad
e_m[X - Y] = e_m(\xx/\yy),
\qquad\qquad
p_m[X - Y] = p_m(\xx/\yy).
\end{align*}
As a consequence, we have that $h_m[-Y] = (-1)^m e_m(\yy)$ and $e_m[-Y] = (-1)^m h_m(\yy)$.
Furthermore, we have the well-known plethystic identities
\begin{align*}
h_m\bigl( (\xx \sqcup \xx')/(\yy \sqcup \yy') \bigr) & = h_m[X + X' - Y - Y'] = \sum_{a+b=m} h_a(\xx/\yy) h_b(\xx'/\yy'),
\\
e_m\bigl( (\xx \sqcup \xx')/(\yy \sqcup \yy') \bigr) & = e_m[X + X' - Y - Y'] = \sum_{a+b=m} e_a(\xx/\yy) e_b(\xx'/\yy'),
\end{align*}
(see, \textit{e.g.},~\cite[Prop.~2.1]{HJKSS24}).
Next, we recall the notation given in~\cite[Def.~2.3]{HJKSS24}:
\[
h_m[X \ominus Y] := \sum_{a-b=m} h_a[X] h_b[Y],
\qquad\qquad
e_m[X \ominus Y] := \sum_{a-b=m} e_a[X] e_b[Y].
\]
We note that these can have infinite nonzero terms and be nonzero even when $m$ is negative.

In order to avoid confusion with the plethystic negative and negating the variables, we will not use plethystic notation, and instead follow~\cite{IMS22}, where we write $h_m(\xx \ds \yy) := h_m[X \ominus Y]$ and $e_m(\xx \ds \yy) := e_m[X \ominus Y]$.

Additionally, note the ordering of the variables for the elementary symmetric functions.
We have chosen this so that the definitions extend to the \defn{noncommutative symmetric functions} introduced by Fomin and Greene~\cite{FG98}.
We summarize the results that we need as follows.
For a sequence of linear operators $\{\tau_i \colon \field[\mcP] \to \field[\mcP] \}_{i=1}^{\infty}$, the \defn{weak Knuth relations} are
\begin{subequations}
\label{eq:knuth_relations}
\begin{align}
\tau_j \tau_i \tau_k & = \tau_j \tau_k \tau_i & &\text{for all } i \geq j > k, \quad i - k \geq 2, \label{eq:knuth_right}
\\ \tau_i \tau_k \tau_j & = \tau_k \tau_i \tau_j & &\text{for all } i > j \geq k, \quad i - k \geq 2, \label{eq:knuth_left}
\\ (\tau_i + \tau_{i+1}) \tau_{i+1} \tau_i & = \tau_{i+1} \tau_i (\tau_i + \tau_{i+1}) & & \text{for all } i. \label{eq:weak_relation}
\end{align}
\end{subequations}
The (strong) Knuth relations are formed by removing the requirement $i - k \geq 2$ from~\eqref{eq:knuth_relations}.

\begin{thm}[{\cite{FG98}}]
Let $\{\tau_i \colon \field[\mcP] \to \field[\mcP] \}_{i=1}^{\infty}$ denote a sequence of linear operators that satisfy the weak Knuth relations.
Then the Schur functions $\{ s_{\lambda}(\tau_1, \tau_2, \ldots) \}_{\lambda \in \mcP}$ commute and $s_{\nu/\lambda}(\tau_1, \tau_2, \ldots) = \sum_{\mu} c_{\lambda,\mu}^{\nu} s_{\mu}(\tau_1, \tau_2, \ldots)$, where $c_{\lambda,\mu}^{\nu}$ are the usual Littlewood--Richardson coefficients.
\end{thm}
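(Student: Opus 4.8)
The plan is to isolate the single genuinely noncommutative input --- that the ``elementary symmetric functions'' built from the $\tau_i$ pairwise commute --- and then deduce everything else by transporting the classical Schur identities along an evaluation homomorphism from the ring of symmetric functions.

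\textbf{Setup.} Put $e_m(\tau) := \sum_{i_1 > i_2 > \dotsb > i_m} \tau_{i_1}\tau_{i_2}\dotsm\tau_{i_m}$ (the decreasing ordering of factors is the one fixed in the paragraph preceding the theorem) and let $E(t) := \sum_{m \geq 0} t^m e_m(\tau)$ be the corresponding formal ``descending product'' $\dotsm(1+t\tau_2)(1+t\tau_1)$, read degree by degree. Define $s_\lambda(\tau)$ and the skew version $s_{\nu/\lambda}(\tau)$ by the dual Jacobi--Trudi determinants $\det\bigl(e_{\lambda'_i-i+j}(\tau)\bigr)_{1\leq i,j\leq\lambda_1}$ and $\det\bigl(e_{\nu'_i-\lambda'_j-i+j}(\tau)\bigr)_{i,j}$, where $e_m(\tau)=0$ for $m<0$; these are the usual noncommutative analogues, and the determinants become unambiguous once we know the entries commute.

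\textbf{The main point (and the obstacle).} I would first prove the Fomin--Greene commutation lemma: \emph{if $\{\tau_i\}$ satisfies the weak Knuth relations \eqref{eq:knuth_relations}, then $e_a(\tau)e_b(\tau)=e_b(\tau)e_a(\tau)$ for all $a,b$}, equivalently $E(s)E(t)=E(t)E(s)$. This is the only hard step. The argument is word-combinatorial: fixing a monomial $\tau_{c_1}\dotsm\tau_{c_N}$ and comparing its coefficient on the two sides of $E(s)E(t)=E(t)E(s)$ reduces to constructing a weight-preserving involution on the ways of writing a word as a shuffle of two strictly decreasing subwords, one ``coloured'' by $s$ and one by $t$; the relations \eqref{eq:knuth_right}--\eqref{eq:weak_relation} are exactly what makes this switching involution well defined. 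The distance-$\geq 2$ relations \eqref{eq:knuth_right}--\eqref{eq:knuth_left} handle letters differing by at least $2$, and \eqref{eq:weak_relation} repairs the adjacent case: adding the two ``strong'' relations $\tau_i\tau_{i+1}\tau_i=\tau_{i+1}\tau_i\tau_i$ and $\tau_{i+1}\tau_{i+1}\tau_i=\tau_{i+1}\tau_i\tau_{i+1}$ (which is what \eqref{eq:knuth_right}--\eqref{eq:knuth_left} would yield were the constraint $i-k\geq 2$ dropped) produces precisely \eqref{eq:weak_relation}, so it is the minimal available substitute. A sanity check with two operators: $e_1(\tau)=\tau_1+\tau_2$, $e_2(\tau)=\tau_2\tau_1$, and $e_1(\tau)e_2(\tau)=e_2(\tau)e_1(\tau)$ is literally relation \eqref{eq:weak_relation}.

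\textbf{Conclusion.} Granting the lemma, the $e_m(\tau)$ commute, so there is a ring homomorphism $\varphi\colon\Lambda\to\mathrm{End}(\field[\mcP])$ from the ring of symmetric functions $\Lambda=\ZZ[e_1,e_2,\dotsc]$ with $\varphi(e_m)=e_m(\tau)$; it is well defined because $\Lambda$ is a free polynomial ring on the $e_m$, so $\varphi$ is nothing but polynomial evaluation. Since $\varphi$ is a ring homomorphism and the $e_m(\tau)$ commute, it carries the Jacobi--Trudi determinant for $s_\lambda$ (resp.\ $s_{\nu/\lambda}$) to $s_\lambda(\tau)$ (resp.\ $s_{\nu/\lambda}(\tau)$). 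The elements $s_\lambda(\tau)$ then all lie in the commutative subalgebra $\varphi(\Lambda)$, hence pairwise commute; and applying $\varphi$ to the classical identity $s_{\nu/\lambda}=\sum_\mu c_{\lambda,\mu}^{\nu}s_\mu$ in $\Lambda$ gives $s_{\nu/\lambda}(\tau)=\sum_\mu c_{\lambda,\mu}^{\nu}s_\mu(\tau)$. Thus the entire theorem rests on the commutativity of the $e_m(\tau)$, whose verification via the weak Knuth relations is the delicate combinatorial heart.
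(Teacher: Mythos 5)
The paper states this result with a citation to~\cite{FG98} and gives no proof of its own, so the comparison is against the cited Fomin--Greene argument. Your proposal is a correct and faithful reconstruction of that argument: isolate the commutativity of the noncommutative elementary symmetric functions $e_m(\tau)$ (equivalently $E(s)E(t)=E(t)E(s)$) as the one genuinely noncommutative input, observe that this furnishes a ring homomorphism $\varphi\colon\Lambda\to\operatorname{End}(\field[\mcP])$ with $\varphi(e_m)=e_m(\tau)$ because $\Lambda$ is free polynomial on the $e_m$, and transport the classical Jacobi--Trudi and Littlewood--Richardson identities through $\varphi$. Your two checks --- that for two operators $e_1(\tau)e_2(\tau)=e_2(\tau)e_1(\tau)$ is literally~\eqref{eq:weak_relation}, and that~\eqref{eq:weak_relation} is the sum of the two strong adjacent Knuth moves --- are both accurate and illuminating.

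The one lacuna is that you never actually prove the commutation lemma $E(s)E(t)=E(t)E(s)$: you correctly name it as the hard step and sketch the involution on shuffles, but leave the construction as a black box. That involution is genuinely delicate --- since~\eqref{eq:weak_relation} identifies only \emph{sums} of monomials rather than individual words, the coefficient of $\tau_{c_1}\dotsm\tau_{c_N}$ cannot be matched term-by-term, and one needs a careful case analysis on the positions of adjacent letters to see that relations~\eqref{eq:knuth_right}--\eqref{eq:weak_relation} furnish exactly the required exchanges. (A small imprecision: \eqref{eq:knuth_right}--\eqref{eq:knuth_left} are three-letter witnessed-commutation relations, not two-letter locality $\tau_i\tau_k=\tau_k\tau_i$ for $|i-k|\geq 2$, and the involution must be organized around the middle ``witness'' letter.) Since the paper itself defers exactly this detail to~\cite{FG98}, your level of detail matches the paper's; but a self-contained proof would need the involution carried out in full.
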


\subsection{Free fermions and Schur operators}

We describe the free-fermion presentation of the (dual) canonical Grothendieck polynomials from~\cite{IMS22}.
For more details, we refer the reader to~\cite{AZ13,kac90,MJD00}.
Let $\field$ be a field of characteristic $0$.
We consider the unital associative $\field$-algebra of \defn{free-fermions} $\mcA$ is generated by $\{\psi_n, \psi_n^* \mid n \in \ZZ\}$ with relations
\[
\psi_m \psi_n + \psi_n \psi_m = \psi_m^* \psi_n^* + \psi_n^* \psi_m^* = 0,
\qquad\qquad
\psi_m \psi_n^* + \psi_n^* \psi_m = \delta_{m,n},
\]
known as the canonical anti-commuting relations.
This is a Clifford algebra arising from the canonical bilinear form of an infinite dimensional vector space $V$ with a basis $\{v_i\}_{i \in \ZZ}$ with its (restricted) dual space $V^*$ spanned by $\{v_i^*\}_{i \in \ZZ}$.
As such, there is an anti-algebra involution on $\mcA$ defined by $\psi_n \leftrightarrow \psi_n^\ast$; that is $(xy)^\ast=y^\ast x^\ast$ for any $x,y\in \mathcal{A}$.
We will also define the fields
\[
\psi(z) = \sum_{n \in \ZZ} \psi_n z^n,
\qquad\qquad
\psi^*(w) = \sum_{n \in \ZZ} \psi_n w^{-n}.
\]

The \defn{current operators}\footnote{We have omitted the normal ordering as we will not consider the current operator $a_0$ in this text. See, \textit{e.g.},~\cite[Sec.~2]{AZ13} and~\cite[Sec.~5.2]{MJD00} for more details.} are defined as
\[
a_k := \sum_{i \in \ZZ} \psi_i \psi_{i+k}^*,
\]
and satisfy the Heisenberg algebra relations and duality
\[
[a_m, a_k] = m \delta_{m,-k},
\qquad\qquad
a_k^* = a_{-k}.
\]
We will use the \defn{Hamiltonian operator}
\[
H(\xx / \yy) := \sum_{k > 0} \frac{p_k(\xx/\yy)}{k} a_k,
\]
and the corresponding \defn{half vertex operator} $e^{H(\xx/\yy)}$.
These satisfy the relations (see, \textit{e.g.},~\cite[Eq.~(17), Eq.~(18)]{Iwao21})
\begin{subequations}
\label{eq:eH_relations}
\begin{align}
\label{eq:eH_commute}
e^{H(\xx/\yy)} \psi_k e^{-H(\xx/\yy)}& = \sum_{i=0}^{\infty} h_i(\xx/\yy) \psi_{k-i},
\\
\label{eq:seH_commute}
e^{-H(\xx/\yy)} \psi^*_k e^{H(\xx/\yy)}& =  \sum_{i=0}^{\infty} h_i(\xx/\yy) \psi^*_{k+i}.
\end{align}
\end{subequations}
Note that $-H(\xx/\yy) = H(\yy/\xx)$.
Let $H^*(\xx/\yy) = (H(\xx/\yy))^*$ denote the \defn{dual Hamiltonian operator}.
For $k > 0$, we have the relations (see, \textit{e.g.},~\cite[Eq.~(2.4)]{AZ13})
\begin{equation}
\label{eq:ea_commute}
[a_{-k}, e^{H(t)}] = -t^k e^{H(t)},
\qquad\qquad
[e^{H^*(t)}, a_k] = -t^k e^{H^*(t)},
\end{equation}
and $[a_k, e^{H(t)}] = [e^{H^*(t)}, a_{-k}] = 0$.
We will also use the \defn{transposed Hamiltonian operator}
\[
J(\xx/\yy) := \omega H(\xx / \yy) = -H(-\xx / -\yy).
\]
Therefore, we can write
\begin{equation}
\label{eq:exp_current_sum}
e^{H(\xx / \yy)} = \sum_{k=0}^{\infty} H_k(a_1, a_2, \ldots) p_k(\xx / \yy),
\qquad\qquad
e^{J(\xx / \yy)} = \sum_{k=0}^{\infty} E_k(a_1, a_2, \ldots) p_k(\xx / \yy).
\end{equation}

We will consider the spinor representation $\mcF$ of semi-infinite wedge products subject to a finiteness condition, but we will not present this here in detail (see, \textit{e.g.},~\cite{kac90,KRR13,MJD00} for more information).
This is sometimes referred to as \defn{fermionic Fock space}.
Instead, we will realize $\mcF$ as the cyclic $\mcA$-representation generated by the \defn{vacuum vector} $\ket{0}$ that satisfies the relations
\[
\psi_n \ket{0} = \psi_m^* \ket{0} = 0, \qquad\qquad n < 0, \quad m \geq 0.
\]
Therefore, we can describe the basis as the vectors
\[
\psi_{n_1} \psi_{n_2} \cdots \psi_{n_r} \psi_{m_1}^* \psi_{m_2}^* \cdots \psi_{m_s}^* \ket{0},
\qquad
(r,s \geq 0, n_1 > \cdots n_r \geq 0 > m_s > \cdots > m_1).
\]
We define the \defn{shifted vacuum vectors} as
\[
\ket{m} = \begin{cases} \psi_{m-1} \dotsm \psi_0 \ket{0} & \text{if } m \geq 0, \\ \psi_m^* \dotsm \psi_{-1}^* \ket{0} & \text{if } m < 0, \end{cases}
\qquad\qquad
\bra{m} = \begin{cases} \bra{0} \psi_0^* \dotsm \psi_{m-1}^* & \text{if } m \geq 0, \\ \bra{0} \psi_{-1} \dotsm \psi_m & \text{if } m < 0. \end{cases}
\]
Note that
\[
e^{H(\xx/\yy)} \ket{m} = \ket{m},
\qquad\qquad
\bra{m} e^{H^*(\xx/\yy)} = \bra{m},
\]
for all $m$.
For finitely many (noncommutative) expressions $\Psi_1,\dots,\Psi_\ell$, we will use the notation
\[
\prod_{1\leq i\leq \ell}^{\rightarrow} \Psi_i=\Psi_1\Psi_2\cdots \Psi_\ell,
\qquad\qquad
\prod_{1\leq i\leq \ell}^{\leftarrow} \Psi_i=\Psi_\ell\cdots \Psi_2\Psi_1,
\]
to indicate the order of multiplication.
We will use the vectors
\begin{align*}
\ket{\lambda}^{\sigma}_{[\bal,\bbe]} 
& := \prod^{\rightarrow}_{1 \leq i \leq \ell} \left( e^{-H(A_{\sigma_i-1})} \psi_{\lambda_i-i} e^{H(\beta_i)} e^{H(A_{\sigma_i-1})} \right) \ket{-\ell},
\\
\ket{\lambda}_{\sigma}^{[\bal,\bbe]} 
& :=
\prod^{\rightarrow}_{1 \leq i \leq \ell} \left( e^{H^*(A_{\sigma_i})} \psi_{\lambda_i-i} e^{-H^*(\beta_i)} e^{-H^*(A_{\sigma_i})} \right) 
{
e^{H^\ast(A_{\sigma_{\ell}})}}
\ket{-\ell},
\end{align*}
where $\sigma = (\sigma_1, \sigma_2, \dotsc, \sigma_{\ell}) \in \ZZ_{\geq 0}^{\ell}$ and $A_k = -\bal_k = (-\alpha_1, \dotsc, - \alpha_k)$.
When $\sigma = \lambda$, we simply write $\ket{\lambda}_{[\bal,\bbe]} := \ket{\lambda}^{\lambda}_{[\bal,\bbe]}$ and $\ket{\lambda}^{[\bal,\bbe]} := \ket{\lambda}_{\lambda}^{[\bal,\bbe]}$.
We use the notation
\[
\ket{\lambda}_{[\bbe]} :=  \ket{\lambda}^\sigma_{[0,\bbe]},
\qquad\qquad
\ket{\lambda}^{[\bbe]} := \ket{\lambda}_\sigma^{[0,\bbe]},
\qquad\qquad
\ket{\lambda} := \ket{\lambda}_{\sigma}^{[0,0]} = \ket{\lambda}^{\sigma}_{[0,0]}.
\]
(note these are independent of $\sigma$) for brevity.
We will typically restrict ourselves to the subspace $\mcF^0$, which we describe as the span of either of the bases~\cite[Thm.~3.10]{IMS22}
\[
\{\ket{\lambda}_{[\bal,\bbe]}\}_{\lambda \in \mcP},
\qquad\qquad
\{\ket{\lambda}^{[\bal,\bbe]}\}_{\lambda \in \mcP}.
\]

There is also the dual $\mcA$-representation $\mcF^*$, which has a canonical bilinear pairing called the \defn{vacuum expectation value} that satisfies
\[
\braket{k}{m} = \delta_{km},
\qquad\qquad
(\bra{w} X) \ket{v} = \bra{w} (X \ket{v}),
\]
for all $k,m \in \ZZ$, $X \in \mcA$, $\bra{w} \in \mcF^*$, and $\ket{v} \in \mcF$.
We abbreviate $\langle X \rangle = \bra{0} X \ket{0}$, and note that $\ket{k}^* = \bra{k}$.
Define
\[
{}_{[\bal,\bbe]}\bra{\lambda}_{\sigma} = (\ket{\lambda}_{\sigma}^{[\bal,\bbe]})^*,
\qquad\qquad
{}^{[\bal,\bbe]}\bra{\lambda}^{\sigma} = (\ket{\lambda}^{\sigma}_{[\bal,\bbe]})^*,
\]
and similar abbreviations as above.
With respect to this inner product, we have the orthonormal bases~\cite[Thm.~3.10]{IMS22}
\begin{equation}
\label{eq:orthonormal_basis}
{}_{[\bal,\bbe]}\braket{\lambda}{\mu}_{[\bal,\bbe]} = {}^{[\bal,\bbe]}\braket{\lambda}{\mu}^{[\bal,\bbe]} = \delta_{\lambda\mu}.
\end{equation}
Moreover, there is an isomorphism from $\mcF^0$ to symmetric functions defined by $\ket{v} \mapsto \bra{0} e^{H(\xx/\yy)} \ket{v}$, which satisfies~\cite[Cor.~4.2, Eq.~(4.1)]{IMS22}
\begin{equation}
\label{eq:free_fermion_grothendiecks}
\G_{\lambda \ds \mu}(\xx; \bal, \bbe) = {}^{[\bal,\bbe]} \bra{\mu} e^{H(\xx)} \ket{\lambda}^{[\bal,\bbe]},
\qquad\qquad
\dG_{\lambda / \mu}(\xx; \bal, \bbe) = {}_{[\bal,\bbe]} \bra{\mu} e^{H(\xx)} \ket{\lambda}_{[\bal,\bbe]}.
\end{equation}
From~\cite[Thm.~4.3]{IMS22}, we also have
\begin{equation}
\label{eq:dual_free_fermion_grothendiecks}
\G_{\lambda' \ds \mu'}(\xx; \bal, \bbe) = {}^{[\bal,\bbe]} \bra{\mu} e^{J(\xx)} \ket{\lambda}^{[\bal,\bbe]},
\qquad\qquad
\dG_{\lambda' / \mu'}(\xx; \bal, \bbe) = {}_{[\bal,\bbe]} \bra{\mu} e^{J(\xx)} \ket{\lambda}_{[\bal,\bbe]}.
\end{equation}

\subsection{Particle processes}
\label{sec:particle_processes}

We describe the four different versions of the discrete \defn{totally asymmetric simple exclusion process} (TASEP) given in~\cite{DW08}.
All of these processes will be considered using a bosonic presentation following~\cite{DW08}, where they are given by particles labeled $1, \dotsc, \ell$ that can occupy the same sites on the lattice $\ZZ_{\geq 0}$.
The position of these particles will remain in order and only move to the right (\textit{i.e.}, increase the value of their position), and so we can index states by partitions $\lambda$, where $\lambda_i$ corresponds to the position of the $i$-th particle.
To obtain a fermionic presentation and justifying calling this TASEP, simply move the $i$-th particle from position $\lambda_i$ to $\lambda_i - i$, and hence, we can identity the shape $\lambda = \emptyset$ with the step initial condition:
\[
\begin{tikzpicture}[scale=0.7,baseline=0cm]
\draw (2.7, 0) node {$\cdots$};
\draw[-] (0,0) -- (2.2,0);
\foreach \x in {0,...,2} {
  \draw[-] (\x,-.1) -- (\x,+.1);
  \draw (\x, 0) node[below] {\tiny $\x$};
}
\fill[darkred] (0,0.1) circle (0.1);
\fill[blue] (0,0.3) circle (0.1);
\fill[UQpurple] (0,0.5) circle (0.1);
\fill[dgreencolor] (0,0.7) circle (0.1);
\end{tikzpicture}
\; \text{(bosonic)}
\quad \longleftrightarrow \quad
\ket{\emptyset} = \ket{0} =
\begin{tikzpicture}[scale=0.7,baseline=0cm]
\draw (-4.7, 0) node {$\cdots$};
\draw[-] (-4.2,0) -- (2.2,0);
\foreach \x in {-4,...,2} {
  \draw[-] (\x,-.1) -- (\x,+.1);
  \draw (\x, 0) node[below] {\tiny $\x$};
}
\draw (2.7, 0) node {$\cdots$};
\fill[darkred] (-1,0.1) circle (0.1);
\fill[blue] (-2,0.1) circle (0.1);
\fill[UQpurple] (-3,0.1) circle (0.1);
\fill[dgreencolor] (-4,0.1) circle (0.1);
\end{tikzpicture}
\; \text{(fermionic)}.
\]
Unless otherwise noted, we will consider our particle configurations using the bosonic description.

\begin{ex}
We identify $\lambda=(3,3,1,0)$ with the (bosonic) particle distribution
\[
\ytableausetup{boxsize=1em}
\ydiagram{3,3,1} \qquad \longleftrightarrow \qquad
\begin{tikzpicture}[scale=0.7,baseline=0cm]
\draw[-] (-0.2,0) -- (4.2,0);
\foreach \x in {0,...,4} {
  \draw[-] (\x,-.1) -- (\x,+.1);
  \draw (\x, 0) node[below] {\tiny $\x$};
}
\draw (4.7, 0) node {$\cdots$};
\fill[darkred] (3,0.1) circle (0.1);
\fill[blue] (3,0.3) circle (0.1);
\fill[UQpurple] (1,0.1) circle (0.1);
\fill[dgreencolor] (0,0.1) circle (0.1);
\end{tikzpicture}
\]
with the first and second particles are in position $3$, the third particle is in position 1, and the fourth particle is in position $0$.
In terms of the fermionic presentation, we have
\[
\ket{\lambda} = \psi_2 \psi_1 \psi_{-2} \psi_{-4} \ket{-5} = \psi_2 \psi_1 \psi_{-2} \ket{-4} =
\begin{tikzpicture}[scale=0.7,baseline=0cm]
\draw[-] (-4.2,0) -- (3.2,0);
\foreach \x in {-4,...,3} {
  \draw[-] (\x,-.1) -- (\x,+.1);
  \draw (\x, 0) node[below] {\tiny $\x$};
}
\draw (-4.7, 0) node {$\cdots$};
\draw (3.7, 0) node {$\cdots$};
\foreach \x/\c in {2/darkred, 1/blue, -2/UQpurple, -4/dgreencolor} {
  \draw[color=\c,thick] (\x+0.15,0.15) -- (\x-0.15,-0.15);
  \draw[color=\c,thick] (\x-0.15,0.15) -- (\x+0.15,-0.15);
}
\end{tikzpicture}
\]
\end{ex}

The $j$-th particle at time $i$ will attempt to move $w_{ji}$ (which we take as a random variable) steps to the right according to either the geometric or Bernoulli distribution
\begin{gather*}
\prob_{Ge}(w_{ji} = k) = (1 - \pi_j x_i) (\pi_j x_i)^k \quad (k \in \ZZ_{\geq 0}),
\\
\prob_{Be}(w_{ji} = 1) = \frac{\rho_i x_j}{1 + \rho_j x_i}
\quad \text{and} \quad 
\prob_{Be}(w_{ji} = 0) = (1 + \rho_j x_i)^{-1},
\end{gather*}
respectively, where $\pi_j x_i \in (0, 1)$ and $\rho_j x_i > 0$.
We can assemble all of these random variables, ranging over all particles and time, into a (random) matrix $W = [w_{ji}]_{i,j}$.
Since we are working in discrete time, we need a rule to determine the behavior when two particles decide to move simultaneously that results in a conflict.
There are two natural ways to resolve this.
\begin{description}
\item[\defn{Pushing}] The larger particle pushes the smaller particle.
\item[\defn{Blocking}] The smaller particle blocks the larger particle.
\end{description}

For the geometric (resp.\ Bernoulli) distribution, we will update the particles from largest-to-smallest (resp.\ smallest-to-largest).
As such, we obtain four different variations of discrete TASEP, which we organize using the same indexing as~\cite{DW08}:
\begin{enumerate}[(A)]
\item Geometric distribution with pushing behavior.
\item Bernoulli distribution with blocking behavior.
\item Geometric distribution with blocking behavior.
\item Bernoulli distribution with pushing behavior.
\end{enumerate}
Table~\ref{table:cases_summary} provides a summary of these cases and Figure~\ref{fig:jump_behavior_example} gives an example of the blocking versus pushing behavior.
We also give a recursive description of these processes following~\eqref{eq:particle_transitions}.
Here we give the equivalent bosonic version.
We introduce $
\Omega_\ell^b:=\{ (z_1,z_2,\dots,z_\ell) \in \mathbb{Z}_{ \geq 0}^\ell: z_1 \geq z_2 \geq \cdots \geq z_\ell \}
$
for $\ell \ge 2$.
We introduce four types of evolution of particles $Y_t^{\mathrm{A}}$, $Y_t^{\mathrm{B}}$, $Y_t^{\mathrm{C}}$ and $Y_t^{\mathrm{D}}$ on $\Omega_\ell^b$.
\begin{dfn}
\label{defn:recursive_description}
The four particle processes whose positions at time $t+1$ are given recursively as
\begin{align}
Y_{t+1}^\mathrm{A}(\ell)&=Y_t^\mathrm{A}(\ell)+\xi^\mathrm{A}(\ell,t+1), \ \ \
Y_{t+1}^\mathrm{B}(1)=Y_t^\mathrm{B}(1)+\xi^\mathrm{B}(1,t+1), \nonumber \\
Y_{t+1}^\mathrm{C}(1)&=Y_t^\mathrm{C}(1)+\xi^\mathrm{C}(1,t+1), \ \ \
Y_{t+1}^\mathrm{D}(\ell)=Y_t^\mathrm{D}(\ell)+\xi^\mathrm{D}(\ell,t+1), \nonumber
\end{align}
and
\begin{subequations}
\begin{align}
Y_{t+1}^\mathrm{A}(k) &= \max(Y_t^\mathrm{A}(k), Y_{t+1}^\mathrm{A}(k+1))+\xi^\mathrm{A}(k,t+1), \label{eq:recursive_A_defn}
\\
Y_{t+1}^\mathrm{B}(k) &= \min(Y_t^\mathrm{B}(k)+\xi^\mathrm{B}(k,t+1) ,Y_{t+1}^\mathrm{B}(k-1)),
\\
Y_{t+1}^\mathrm{C}(k) &= \min(Y_t^\mathrm{C}(k)+\xi^\mathrm{C}(k,t+1),Y_{t}^\mathrm{C}(k-1)),
\\
Y_{t+1}^\mathrm{D}(k) &= \max(Y_t^\mathrm{D}(k)+\xi^\mathrm{D}(k,t+1) ,Y_{t+1}^\mathrm{D}(k+1)),
\end{align}
\end{subequations}
for $k=1,\dots,\ell-1$ for Case (A), (D)
and $k=2,\dots,\ell$ for Case (B), (C),
where $\xi^\mathrm{D}(k,t+1)$ and $\xi^\mathrm{C}(k,t+1)$ (resp/\ $\xi^\mathrm{B}(k,t+1)$ and $\xi^\mathrm{D}(k,t+1)$) are random variables has distribution $\prob_{Ge}$ (resp.\ $\prob_{Be}$).
\end{dfn}

For a sequence of integers $\lambda=(\lambda_1,\lambda_2,\dots,\lambda_\ell)$ $(\lambda_1 \ge \lambda_2 \ge \cdots \ge \lambda_\ell)$,
we write $Y=\lambda$ if $Y \in \Omega_\ell^b$ satisifies $Y(j)=\lambda_j$, $j=1,2,\dots,\ell$.

The bosonic version of Theorem~\ref{thm:transition_prob} is that
the transition probabilities of the four particle systems $Y=Y^\mathrm{A},Y^{\mathrm{B}}, Y^\mathrm{C}, Y^\mathrm{D}$
from the initial configuration $Y_0=\mu$ at time zero to the final configuration $Y_n=\lambda$ at time $n$ are given by
\begin{align*}
\mathsf{P}(Y^\mathrm{A}_n=\lambda|Y^\mathrm{A}_0=\mu)
 & =\prob_{A,n}(\lambda | \mu), \ \mathsf{P}(Y^\mathrm{B}_n=\lambda|Y^\mathrm{B}_0=\mu)
 =\prob_{B,n}(\lambda | \mu), \\
\mathsf{P}(Y^\mathrm{C}_n=\lambda|Y^\mathrm{C}_0=\mu)
 & =\prob_{C,n}(\lambda | \mu), \ \mathsf{P}(Y^\mathrm{D}_n=\lambda|Y^\mathrm{D}_0=\mu)
 =\prob_{D,n}(\lambda | \mu).
\end{align*}
Recall that $\prob_{A,n}(\lambda | \mu)$, $\prob_{B,n}(\lambda | \mu)$, $\prob_{C,n}(\lambda | \mu)$, $\prob_{D,n}(\lambda | \mu)$
are Grothendieck polynomials with overall factors multiplied, explicitly given in Theorem~\ref{thm:transition_prob}.
By abuse of notation, we also denote transition probabilities using these notations.
In Section~\ref{sec:operator_dynamics}, where we give a proof, we regard these notations as transition probabilities.

\begin{table}
\begin{center}
\begin{tabular}{c|c|c|}
\multicolumn{1}{c}{} & \multicolumn{1}{c}{Pushing} & \multicolumn{1}{c}{Blocking} \\ \cline{2-3}
Geometric & A & C \\ \cline{2-3}
Bernoulli & D & B \\ \cline{2-3}
\end{tabular}
\end{center}
\medskip
\caption{Summary of the four cases of discrete TASEP that we consider in this paper.}
\label{table:cases_summary}
\end{table}

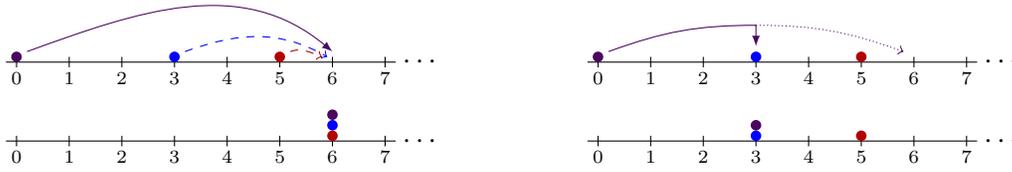
\begin{figure}
\[
\begin{tikzpicture}[scale=0.7,baseline=0]
\foreach \y [count=\t from 0] in {0,-1.5} {
  \draw[-] (-0.2,\y) -- (7.2,\y);
  \foreach \x in {0,...,7} {
    \draw[-] (\x,\y-.1) -- (\x,\y+.1);
    \draw (\x, \y) node[below] {\tiny $\x$};
  }
  \draw (7.7, \y) node {$\cdots$};
}
\fill[darkred] (5,0.1) circle (0.1);
\fill[blue] (3,0.1) circle (0.1);
\fill[UQpurple] (0,0.1) circle (0.1);
\draw[->,UQpurple,>=latex] (0.2,.2) to[out=20,in=140] (6,.2);
\draw[->,blue,dashed] (3.2,.2) to[out=20,in=150] (5.9,.1);
\draw[->,darkred,dashed] (5.2,.2) to[out=20,in=160] (5.8,.1);
\fill[darkred] (6,0.1-1.5) circle (0.1);
\fill[blue] (6,0.3-1.5) circle (0.1);
\fill[UQpurple] (6,0.5-1.5) circle (0.1);
\end{tikzpicture}
\hspace{50pt}
\begin{tikzpicture}[scale=0.7,baseline=0]
\foreach \y [count=\t from 0] in {0,-1.5} {
  \draw[-] (-0.2,\y) -- (7.2,\y);
  \foreach \x in {0,...,7} {
    \draw[-] (\x,\y-.1) -- (\x,\y+.1);
    \draw (\x, \y) node[below] {\tiny $\x$};
  }
  \draw (7.7, \y) node {$\cdots$};
}
\fill[darkred] (5,0.1) circle (0.1);
\fill[blue] (3,0.1) circle (0.1);
\fill[UQpurple] (0,0.1) circle (0.1);
\draw[->,UQpurple,>=latex] (0.2,.2) to[out=20,in=180] (3,.7) -- (3,.3);
\draw[->,UQpurple,densely dotted] (3,.7) to[out=0,in=160] (5.8,.2);
\fill[darkred] (5,0.1-1.5) circle (0.1);
\fill[blue] (3,0.1-1.5) circle (0.1);
\fill[UQpurple] (3,0.3-1.5) circle (0.1);
\end{tikzpicture}
\]
\caption{Examples of the third particle making a jump of $6$ steps with the pushing (left) and blocking (right) behaviors.}
\label{fig:jump_behavior_example}
\end{figure}

Let us give an equivalent two-dimensional description.
Let $G(j, i)$ denote the position of the $j$-th particle at time $i$ in the bosonic formulation.
For the pushing behavior, we can further realize it as a directed last-passage percolation model on $W$.
To see the last-passage percolation, define
\[
G(k, n) = \max_{\Pi} \sum_{(j,i) \in \Pi} w_{ji},
\qquad\qquad
\bG(n) = \bigl(G(1, n), G(2, n), \dotsc, G(\ell, n) \bigr),
\]
where the maximum is taken over a certain set of paths from $(\ell, 1)$ to $(k, n)$.
The paths are given in the natural matrix coordinates $(r, c)$ being the $r$-th row and $c$-th column.
For the geometric distribution, the paths use unit steps to the right or up; that is, starting at position $(j_a, i_a)$, then either $(j_{a+1}, i_{a+1}) = (j_a, i_a+1), (j_a-1, i_a)$.
Translating this into the update rule described in~\cite{DW08} (\textit{cf}.~\eqref{eq:recursive_A_defn}), we have
\[
G(j,i) = \max(G(j,i-1), G(j+1,i)) + w_{ji}.
\]
Note $G(j,i)$ corresponds to $Y_i(j)$ and $w_{ji}$ corresponds to $\xi(j,i)$ in the previous bosonic particle process description.

On the other hand, the Bernoulli distribution uses paths such that for every time $w_{j_a i_a} = 1$, we must have the next step move right $(j_{a+1}, i_{a+1}) = (j_a, i_a+1)$, and we also allow paths to end at $(k', n)$ for some $k \leq k' \leq \ell$.
Then from a simple argument using conditional probability, we see that the position of the particles at time $n$ is given by $\bG(n)$ (see, \textit{e.g.},~\cite{DW08,Johansson00,MS20}).
To obtain the positions of the particles for the blocking behavior, this becomes a first-passage percolation model as we replace $\max$ by $\min$, but we have to make some other changes.
The first is that we shift the indices so that the $w_{ji}$ correspond to the weights on the horizontal edges $(i,j-1) \to (i, j)$ with the other edges being SE diagonal (resp.\ vertical) edges with weight $0$ for the geometric (resp.\ Bernoulli) case.
The other is that we instead start at $(0, 0)$.

\begin{ex}
Consider $\ell = 3$ and $n = 4$.
Then the correspondence between a random matrix and the motion of particles in Case~A, resp.\ Case~C, is given by
\[
[w_{ji}]_{ji} = \begin{bmatrix}
2 & 1 & 0 & 0 \\
0 & 1 & 3 & 0 \\
1 & 1 & 0 & 2 \\
\end{bmatrix},
\quad
\begin{tikzpicture}[scale=0.7,baseline=-1.75cm]
\foreach \y [count=\t from 0] in {0,-1.2,...,-4.8} {
  \draw[-] (-0.2,\y) -- (6.2,\y);
  \foreach \x in {0,...,6} {
    \draw[-] (\x,\y-.1) -- (\x,\y+.1);
    \draw (\x, \y) node[below] {\tiny $\x$};
  }
  \draw (6.7, \y) node {$\cdots$};
  \draw (-1, \y) node {$t = \t$};
}
\fill[darkred] (0,0.1) circle (0.1);
\fill[blue] (0,0.3) circle (0.1);
\fill[UQpurple] (0,0.5) circle (0.1);
\fill[darkred] (3,0.1-1.2) circle (0.1);
\fill[blue] (1,0.1-1.2) circle (0.1);
\fill[UQpurple] (1,0.3-1.2) circle (0.1);
\draw[->,darkred] (0.2,.1-1.2) -- (0.8,.1-1.2);
\draw[->,blue] (0.2,.3-1.2) -- (0.8,.3-1.2);
\fill[darkred] (4,0.1-2.4) circle (0.1);
\fill[blue] (3,0.1-2.4) circle (0.1);
\fill[UQpurple] (2,0.1-2.4) circle (0.1);
\draw[->,blue] (1.2,.1-2.4) -- (1.8,.1-2.4);
\fill[darkred] (6,0.1-3.6) circle (0.1);
\fill[blue] (6,0.3-3.6) circle (0.1);
\fill[UQpurple] (2,0.1-3.6) circle (0.1);
\draw[->,darkred] (4.2,.1-3.6) -- (5.8,.1-3.6);
\fill[darkred] (6,0.1-4.8) circle (0.1);
\fill[blue] (6,0.3-4.8) circle (0.1);
\fill[UQpurple] (4,0.1-4.8) circle (0.1);
\end{tikzpicture}
\quad
\begin{tikzpicture}[scale=0.7,baseline=-1.75cm]
\foreach \y [count=\t from 0] in {0,-1.2,...,-4.8} {
  \draw[-] (-0.2,\y) -- (4.2,\y);
  \foreach \x in {0,...,4} {
    \draw[-] (\x,\y-.1) -- (\x,\y+.1);
    \draw (\x, \y) node[below] {\tiny $\x$};
  }
  \draw (4.7, \y) node {$\cdots$};
  \draw (-1, \y) node {$t = \t$};
}
\fill[darkred] (0,0.1) circle (0.1);
\fill[blue] (0,0.3) circle (0.1);
\fill[UQpurple] (0,0.5) circle (0.1);
\fill[darkred] (2,0.1-1.2) circle (0.1);
\fill[blue] (0,0.1-1.2) circle (0.1);
\fill[UQpurple] (0,0.3-1.2) circle (0.1);
\draw[->,UQpurple] (0.5,.3-1.2) -- (0.2,.3-1.2);
\fill[darkred] (3,0.1-2.4) circle (0.1);
\fill[blue] (1,0.1-2.4) circle (0.1);
\fill[UQpurple] (0,0.1-2.4) circle (0.1);
\draw[->,UQpurple] (0.5,.1-2.4) -- (0.2,.1-2.4);
\fill[darkred] (3,0.1-3.6) circle (0.1);
\fill[blue] (3,0.3-3.6) circle (0.1);
\fill[UQpurple] (0,0.1-3.6) circle (0.1);
\draw[->,blue] (3.5,.3-3.6) -- (3.2,.3-3.6);
\fill[darkred] (3,0.1-4.8) circle (0.1);
\fill[blue] (3,0.3-4.8) circle (0.1);
\fill[UQpurple] (2,0.1-4.8) circle (0.1);
\end{tikzpicture}
\]
\end{ex}

First note that all of these transition probabilities $\prob_X(\lambda | \mu) := \prob_X(\bG(1) = \lambda | \bG(0) = \mu)$ in Case~X are independent of the value of $\ell$ provided $\ell(\lambda) \leq \ell$.
Indeed, the $j$-th particle for $j > \ell(\lambda)$ must be fixed in place, which occurs with probability $1$.
Note that by taking $\ell \to \infty$, we can effectively ignore the value of $\ell$ if desired and identify states with elements of the fermionic Fock space $\mcF$ (considered with the shifted positions $\lambda_j - j$).
Furthermore, the step initial condition becomes $\ket{0}$, and this is sometimes known as the Dirac sea.

\section{Schur operators for canonical Grothendieck polynomials}
\label{sec:noncomm_operators}

We begin by briefly reviewing the Fomin--Greene theory of symmetric functions in noncommutative variables.
Let $\field [\mcP]$ denote the $\field$-module with basis indexed by $\mcP$, the set of all partitions.

\subsection{Noncommutative blocking operators}

We denote $\schur_i \colon \field[\mcP] \to \field[\mcP]$ the $i$-th (row) \defn{Schur operator} that adds a box to the $i$-th row of a partition $\lambda$ if $\lambda_i < \lambda_{i-1}$ (that is, we can add the box and obtain a partition) and is $0$ otherwise.
The Schur operators satisfy the Knuth relations.

We define the linear operator $U_i^{(\bal, \bbe)}$ by
\[
U_i^{(\bal, \bbe)} := \schur_i + \Theta_i,\quad \mbox{where}\quad
\Theta_i \cdot \lambda := \begin{cases}
-\alpha_{\lambda_i} \lambda & \text{if } \lambda_i < \lambda_{i-1}, \\
\beta_{i-1} \lambda & \text{if } \lambda_i = \lambda_{i-1},
\end{cases}
\]
for any $\lambda \in \mcP$.
We consider $\lambda_0 = \infty$ and $\alpha_0 = 0$ (although our proofs could have $\alpha_0$ be an arbitrary parameter).
When there is no ambiguity in the parameters, we will simply write $U_i := U_i^{(\bal,\bbe)}$.
When $\bal = 0$ and $\bbe = \beta$, the operators $\{U_i^{(0, \beta)}\}_{i=1}^{\infty}$ are the operators introduced in~\cite[Sec.~6]{Iwao19}.

\begin{ex}
\label{ex:not_strong_knuth}
We compute
\begin{align*}
\ytableausetup{boxsize=0.7em}
U_2 U_1 U_1 \cdot \ydiagram{1,1} & = U_2 U_1 \left( \ydiagram{2,1} - \alpha_1 \ydiagram{1,1} \right)
\\ & = U_2 \left( \ydiagram{3,1} - (\alpha_1 + \alpha_2) \ydiagram{2,1} + \alpha_1^2 \ydiagram{1,1} \right)
\\ & = \ydiagram{3,2} - \alpha_1 \ydiagram{3,1} - (\alpha_1 + \alpha_2) \ydiagram{2,2} + \alpha_1 (\alpha_1 + \alpha_2) \ydiagram{2,1} + \alpha_1^2 \beta_1 \ydiagram{1,1}\,,
\\
U_1 U_2 U_1 \cdot \ydiagram{1,1} & = U_1 U_2 \left( \ydiagram{2,1} - \alpha_1 \ydiagram{1,1} \right)
\\ & = U_1 \left( \ydiagram{2,2} - \alpha_1 \ydiagram{2,1} - \beta_1 \alpha_1 \ydiagram{1,1} \right)
\\ & = \ydiagram{3,2} - \alpha_1 \ydiagram{3,1} - \alpha_2 \ydiagram{2,2} + (\alpha_1 \alpha_2 - \alpha_1 \beta_1) \ydiagram{2,1} + \alpha_1^2 \beta_1 \ydiagram{1,1}\,.
\end{align*}
As a consequence, the operators $U_i$ do not satisfy the (strong) Knuth relations.
However, we verify they satisfy the weak Knuth relation~\eqref{eq:weak_relation} $U_1 U_2 U_1 + U_2 U_2 U_1 = U_2 U_1 U_1 + U_2 U_1 U_2$ as
\begin{align*}
U_2 U_1 U_2 \cdot \ydiagram{1,1} 
& = \beta_1 \ydiagram{2,2} - \alpha_1 \beta_1 \ydiagram{2,1}- \alpha_1 \beta_1^2 \ydiagram{1,1}\,,
\\
U_2 U_2 U_1 \cdot \ydiagram{1,1} 
& = (\beta_1 - \alpha_1) \ydiagram{2,2} + \alpha_1^2 \ydiagram{2,1} - \alpha_1 \beta_1^2 \ydiagram{1,1}\,.
\end{align*}
\end{ex}

\begin{lemma}\label{lemma:weak_Knuth}
The operators $\UU = \{U_i\}_{i=1}^{\infty}$ satisfy the weak Knuth relations.
\end{lemma}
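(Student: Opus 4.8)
The plan is to verify the three families of relations in~\eqref{eq:knuth_relations} in turn, writing $U_i = \schur_i + \Theta_i$ throughout. Relations~\eqref{eq:knuth_right} and~\eqref{eq:knuth_left} will reduce at once to a commutation statement, so the substance lies entirely in~\eqref{eq:weak_relation}.

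First I would record that $U_i U_k = U_k U_i$ whenever $\abs{i-k}\geq 2$. This is immediate from locality: the Schur operator $\schur_i$ and the diagonal operator $\Theta_i$ each depend only on the two parts $\lambda_{i-1},\lambda_i$ of $\lambda$, and each alters at most $\lambda_i$; so when $\abs{i-k}\geq 2$ the index sets $\{i-1,i\}$ and $\{k-1,k\}$ are disjoint, the two diagonal operators commute, and $\schur_i$ commutes with both $\schur_k$ and $\Theta_k$ (and symmetrically). Granting this, \eqref{eq:knuth_right} reads $\tau_j(\tau_i\tau_k) = \tau_j(\tau_k\tau_i)$ and holds because $i-k\geq 2$; \eqref{eq:knuth_left} holds the same way, since it too only swaps the outer two factors.

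For~\eqref{eq:weak_relation}, i.e.\ $(U_i + U_{i+1})U_{i+1}U_i = U_{i+1}U_i(U_i + U_{i+1})$, I would reduce in two steps. Expanding every $U$ as $\schur+\Theta$, the purely-Schur part of the asserted identity is $(\schur_i+\schur_{i+1})\schur_{i+1}\schur_i = \schur_{i+1}\schur_i(\schur_i+\schur_{i+1})$, which holds because the Schur operators satisfy the (strong) Knuth relations; hence it suffices to cancel the sum of all remaining monomials, those carrying at least one factor $\Theta_i$ or $\Theta_{i+1}$. Next, since $U_i$ and $U_{i+1}$ depend only on rows $i-1,i,i+1$, change only rows $i,i+1$, and only ever increase $\lambda_i,\lambda_{i+1}$ (so no partition inequality outside those rows is ever at stake), it suffices to check the identity applied to an arbitrary partition with everything tracked through $a:=\lambda_{i-1}\geq b:=\lambda_i\geq c:=\lambda_{i+1}$. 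Thus the claim localizes to a two-site system with state $(b,c)$ and ceiling $a$, on which $\schur_i$ raises $b$ (when $b<a$), $\schur_{i+1}$ raises $c$ (when $c<b$), and the diagonal weights are $-\alpha_b$ or $\beta_{i-1}$ according as $b<a$ or $b=a$, and $-\alpha_c$ or $\beta_i$ according as $c<b$ or $c=b$.

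One then computes both sides of~\eqref{eq:weak_relation} on each state $(b,c)$ and matches the coefficients of the finitely many output states $(b',c')$ with $b\le b'\le b+2$, $c\le c'\le c+2$. The case split is organized by which of $b<a$, $c<b$ hold at the start and by whether the (up to two) raisings of each row reach the ceiling $a$ or close the gap $b-c$; Example~\ref{ex:not_strong_knuth} already carries out the representative boundary instance $a=\infty$, $b=c$. I expect the main obstacle to be exactly the bookkeeping in these boundary subcases: when a raising forces $\lambda_i=\lambda_{i-1}$ (so $\schur_i$ vanishes and $\Theta_i$ jumps from its $-\alpha_{\lambda_i}$ branch to its $\beta_{i-1}$ branch), or $\lambda_{i+1}=\lambda_i$, one must verify that the degree-two $\alpha$-terms and the mixed $\alpha\beta$-terms still agree on the two sides; away from the boundaries the match is routine. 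No input beyond the strong Knuth relations for $\schur_i$ and the explicit local action of $\Theta_i$ is needed.
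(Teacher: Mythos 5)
Your proposal is correct and would succeed, but it takes a genuinely different route from the paper's. Both proofs dispatch~\eqref{eq:knuth_right} and~\eqref{eq:knuth_left} by the non-local commutativity $U_iU_k = U_kU_i$ for $\abs{i-k}\geq 2$, which follows from locality exactly as you say. The difference is entirely in~\eqref{eq:weak_relation}. The paper introduces an auxiliary operator $T_i$ (which acts as $\schur_i$ when $\lambda_{i+1}=\lambda_i$ and as $0$ otherwise), proves five short commutation identities among $\schur$, $\Theta$, and $T$, and then reduces $[U_i + U_{i+1}, U_{i+1}U_i]=0$ to a three-line algebraic manipulation using those identities. You instead propose a direct local case analysis: after correctly observing that the cubic's action on any partition depends only on $(\lambda_{i-1},\lambda_i,\lambda_{i+1})$ and modifies only the last two, and after using the strong Knuth relation for the Schur operators to discharge the pure-$\schur$ monomials, you would match coefficients on a two-site system $(b,c)$ with ceiling $a$. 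Your localization is sound, and your identification of where the work lies is exactly right: the boundary subcases where a raising saturates $\lambda_i=\lambda_{i-1}$ or closes $\lambda_{i+1}=\lambda_i$, flipping $\Theta$ between its $-\alpha$ branch and its $\beta$ branch. The trade-off is clear: your route is more elementary and requires no clever auxiliary operator, but the case analysis --- which you outline rather than carry out, and of which Example~\ref{ex:not_strong_knuth} verifies only the single instance $a=\infty$, $b=c$ --- is precisely the tedious part, with several more boundary subcases (e.g.\ $b=a$, $c<b$; or $b<a$ with $b+1=a$, etc.) still to be written out. The paper's factorization via $T_i$ and the five commutators packages the same local checks into reusable one-line lemmas and makes the cancellation structurally transparent rather than a coefficient-by-coefficient verification.
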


\begin{proof}
Since $[\Theta_i,\Theta_j]=0$ for any $i,j$ and
\begin{equation}\label{eq:kappa_Theta}
[\kappa_i,\Theta_j]=0\quad \text{for } i\neq j-1,j,
\end{equation}
the ``non-local commutativity" 
$U_iU_j=U_jU_i$ for $|i-j|\geq 2$ immediately follows from $\kappa_i\kappa_j=\kappa_j\kappa_i$ for $|i-j|\geq 2$.
This fact implies (\ref{eq:knuth_right}) and (\ref{eq:knuth_left}).

Define a linear operator $T_i \colon \field[\mcP] \to \field[\mcP]$ by
\[
T_i\cdot \lambda:=
\begin{cases}
0 & \text{if } \lambda_{i+1} < \lambda_{i}, \\
\kappa_i\cdot \lambda & \text{if } \lambda_{i+1} = \lambda_{i}
\end{cases}\qquad
\text{for any } \lambda\in\mcP.
\]
Note that, for any partition $\lambda$, $T_i\cdot\lambda\neq 0$ if and only if $\lambda_{i+1}=\lambda_i<\lambda_{i-1}$.
We need the following commutation relations to prove (\ref{eq:weak_relation}):
\begin{subequations}
\begin{align}
T_i \kappa_i & = \kappa_{i+1}^2T_i=0,\label{eq:comm_1}\\
\Theta_{i+1} \kappa_{i+1}T_i &= \beta_i \kappa_{i+1}T_i.\label{eq:comm_2}\\
[\kappa_i, \kappa_{i+1}] & = -\kappa_{i+1}T_i,\label{eq:comm_3}\\
\Theta_{i+1} T_i & = T_i\Theta_i, \label{eq:comm_4}\\
[\kappa_i, \Theta_{i+1}] & = \beta_i T_i - \Theta_{i+1}T_i.\label{eq:comm_5}
\end{align}
\end{subequations}
Equation (\ref{eq:comm_1}) follows from the facts that (i) $\mu:=\kappa_i\cdot \lambda$ satisfies $\mu_{i+1}<\mu_{i}$ whenever $\mu\neq 0$ and that (ii) $\nu:=T_i\cdot \lambda$ satisfies $\nu_{i}=\nu_{i+1}+1$ whenever $\nu\neq 0$ for any $\lambda\in \mcP$.
Equation (\ref{eq:comm_2}) follows from the fact that $\zeta:=\kappa_{i+1}T_i\cdot \lambda$ satisfies $\zeta_{i+1}=\zeta_i$ whenever $\zeta\neq 0$.
Equation (\ref{eq:comm_3}) is proved by noting that $\lambda_{i+1}<\lambda_{i}\Rightarrow \zeta = [\kappa_i,\kappa_{i+1}]\cdot \lambda= 0$ and $\lambda_{i+1}=\lambda_{i}\Rightarrow \zeta=[\kappa_i,\kappa_{i+1}]\cdot \lambda=-\kappa_{i+1}\kappa_i\cdot \lambda$.
To prove (\ref{eq:comm_4}), it suffices to check that $\Theta_{i+1}T_i\cdot \lambda=T_i\Theta_{i}\cdot \lambda=\alpha_{\lambda_{i}}\kappa_i\cdot \lambda$ whenever $T_i\cdot \lambda\neq 0$.
To prove (\ref{eq:comm_5}), we consider the following three cases: (a) If $T_i\cdot \lambda\neq 0$ ($\Leftrightarrow$ $\lambda_{i+1}=\lambda_i<\lambda_{i-1}$ $\Leftrightarrow$ $T_i\cdot \lambda=\kappa_i\cdot \lambda\neq 0$), we have $\kappa_i\Theta_{i+1}\cdot \lambda=\beta_i\kappa_i\cdot \lambda=\beta_iT_i\cdot \lambda$.
(b) If $\lambda_{i+1}<\lambda_i$, we have 
$\kappa_i\Theta_{i+1}\cdot \lambda=\Theta_{i+1}\kappa_i\cdot \lambda=\beta_i\kappa_i\cdot \lambda$.
(c) If $\lambda_{i}=\lambda_{i-1}$, we have 
$\kappa_i\Theta_{i+1}\cdot \lambda=\Theta_{i+1}\kappa_i\cdot \lambda=0$.
In each case, (\ref{eq:comm_5}) holds.

Thus we have
\begin{align*}
[U_i,U_{i+1}]=[\kappa_i+\Theta_i,\kappa_{i+1}+\Theta_{i+1}]
\stackrel{(\ref{eq:kappa_Theta})}{=}
[\kappa_i,\kappa_{i+1}]+[\kappa_{i},\Theta_{i+1}]
\stackrel{(\ref{eq:comm_3}),(\ref{eq:comm_5})}{=}(\beta_i-U_{i+1})T_i,
\end{align*}
\begin{align*}
[U_i,U_{i+1}]U_i
=(\beta_i-U_{i+1})T_iU_i
\stackrel{(\ref{eq:comm_1})}{=}(\beta_i-U_{i+1})T_i\Theta_i
\stackrel{(\ref{eq:comm_4})}{=}(\beta_i-U_{i+1})\Theta_{i+1}T_i.
\end{align*}
and
\begin{align*}
U_{i+1}[U_{i},U_{i+1}]
=U_{i+1}(\beta_i-U_{i+1})T_i
=(\beta_i-U_{i+1})U_{i+1}T_i
=(\beta_i-U_{i+1})(\kappa_{i+1}+\Theta_{i+1})T_i.
\end{align*}
Therefore, we obtain
\begin{align*}
[U_i+U_{i+1},U_{i+1}U_i]
&=[U_i,U_{i+1}]U_i-U_{i+1}[U_{i},U_{i+1}]
=(\beta_i-U_{i+1})\kappa_{i+1}T_i\\
&
\stackrel{(\ref{eq:comm_1})}{=}(\beta_i-\Theta_{i+1})\kappa_{i+1}T_i
\stackrel{(\ref{eq:comm_2})}{=}0,
\end{align*}
which implies (\ref{eq:weak_relation}).
\end{proof}

\begin{thm}
\label{thm:noncommutative_blocking}
We have
\begin{equation}\label{eq:to_prove}
{}^{[\bal,\bbe]} \bra{\lambda} S_{\mu}(a_1, a_2, \ldots) = {}^{[\bal,\bbe]} \bra{s_{\mu}(\UU/\bbe) \cdot \lambda}.
\end{equation}
\end{thm}

To prove Theorem~\ref{thm:noncommutative_blocking}, we need the following computations.
For simplicity, let $v(\lambda; \sigma) := {}^{[\bal,\bbe]}\bra{\lambda}^{\sigma}$.
Let $\epsilon_k$ be the $k$-th standard basis vector in $\ZZ^N$ for some $N \gg 1$.
For any subset $K = \{k_1. \dotsc, k_m \} \subseteq [N]$, let $\epsilon_K = \epsilon_{k_1} + \cdots + \epsilon_{k_m}$.

\begin{lemma}
\label{lemma:big_rectify_sigma}
For any sequences $\lambda$ and $\sigma$ (not necessarily partitions), we have
\[
v(\lambda + \epsilon_k; \sigma) = v(\lambda + \epsilon_k; \sigma + \epsilon_k) - \alpha_{\sigma_k} v(\lambda; \sigma).
\]
\end{lemma}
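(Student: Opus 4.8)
The plan is to establish the identity first at the level of the ket vectors $\ket{\lambda}^{\sigma}_{[\bal,\bbe]}$ and then dualize via the anti-involution $*$. Write $\ket{\mu}^{\tau}_{[\bal,\bbe]} = \prod^{\rightarrow}_{1 \leq i \leq \ell} F_i(\mu, \tau)\, \ket{-\ell}$ with $F_i(\mu, \tau) := e^{-H(A_{\tau_i - 1})} \psi_{\mu_i - i} e^{H(\beta_i)} e^{H(A_{\tau_i - 1})}$. For the three vectors $\ket{\lambda + \epsilon_k}^{\sigma}$, $\ket{\lambda + \epsilon_k}^{\sigma + \epsilon_k}$, and $\ket{\lambda}^{\sigma}$, all factors $F_i$ with $i \neq k$ coincide across the three (both $\lambda$ and $\lambda + \epsilon_k$ have the same entries off position $k$, and likewise $\sigma$ and $\sigma + \epsilon_k$), as does the seed $\ket{-\ell}$. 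Hence the whole statement reduces to the single operator identity
\[
F_k(\lambda + \epsilon_k, \sigma + \epsilon_k) = F_k(\lambda + \epsilon_k, \sigma) + \alpha_{\sigma_k}\, F_k(\lambda, \sigma),
\]
after which re-inserting the common factors and dualizing gives the claim up to rearrangement.

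For this identity I would use that $p_m$ is additive over disjoint alphabets, so that $A_{\sigma_k} = A_{\sigma_k - 1} \sqcup (-\alpha_{\sigma_k})$ yields $H(A_{\sigma_k}) = H(A_{\sigma_k - 1}) + H\bigl((-\alpha_{\sigma_k})\bigr)$; moreover, since the current operators $a_m$ with $m > 0$ commute pairwise, each of $e^{\pm H((-\alpha_{\sigma_k}))}$ commutes with $e^{\pm H(\beta_k)}$ and $e^{\pm H(A_{\sigma_k - 1})}$. Setting $t = -\alpha_{\sigma_k}$ and sliding the single-letter exponentials toward the centre, one rewrites
\[
F_k(\lambda + \epsilon_k, \sigma + \epsilon_k) = e^{-H(A_{\sigma_k - 1})} \bigl( e^{-H((t))} \psi_{\lambda_k + 1 - k} e^{H((t))} \bigr) e^{H(\beta_k)} e^{H(A_{\sigma_k - 1})}.
\]
Since $-H((t)) = H(\emptyset/(t))$ and $h_0(\emptyset/(t)) = 1$, $h_1(\emptyset/(t)) = -t$, $h_i(\emptyset/(t)) = 0$ for $i \geq 2$, relation~\eqref{eq:eH_commute} gives $e^{-H((t))} \psi_m e^{H((t))} = \psi_m - t\, \psi_{m-1} = \psi_m + \alpha_{\sigma_k}\, \psi_{m-1}$. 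Expanding the bracket linearly then produces exactly $F_k(\lambda + \epsilon_k, \sigma) + \alpha_{\sigma_k}\, F_k(\lambda, \sigma)$.

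Re-inserting the common factors yields $\ket{\lambda + \epsilon_k}^{\sigma + \epsilon_k}_{[\bal,\bbe]} = \ket{\lambda + \epsilon_k}^{\sigma}_{[\bal,\bbe]} + \alpha_{\sigma_k}\, \ket{\lambda}^{\sigma}_{[\bal,\bbe]}$, and applying $*$ (which is additive and fixes the scalars $\alpha_{\sigma_k}$) and rearranging gives the stated identity for $v(\,\cdot\,;\,\cdot\,) = {}^{[\bal,\bbe]}\bra{\,\cdot\,}^{\,\cdot\,}$. I do not expect a genuine obstacle; the only point needing care is tracking the extra alphabet letter $-\alpha_{\sigma_k}$ through the commutations and pinning down the sign $h_1(\emptyset/(t)) = -t$, which is precisely what converts the $+$ in the ket identity into the $-$ in the lemma. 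The degenerate case $\sigma_k = 0$ is handled by the conventions $\alpha_0 = 0$ and $A_{-1} = A_0 = \emptyset$, under which both sides obviously agree.
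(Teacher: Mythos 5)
Your proof is correct and uses the same key mechanism as the paper's (peeling off the single letter $-\alpha_{\sigma_k}$ from $A_{\sigma_k}$ and applying the one-letter case of~\eqref{eq:eH_commute}); the only cosmetic difference is that you carry out the commutation on the ket side and then dualize, whereas the paper works with the $\psi^*$ relation directly in $v(\lambda+\epsilon_k;\sigma)$. Both routes reduce to the identical algebraic identity, so this counts as essentially the same approach.
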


\begin{proof}
This follows applying to the definition of $v(\lambda + \epsilon_k; \sigma)$ the relation
\[
\psi_{\lambda_k +1 - k}^* e^{H^*(\gamma)} = e^{H^*(\gamma)} \psi_{\lambda_k + 1 - k}^* + \gamma \psi^*_{\lambda_k - k} e^{H^*(\gamma)},
\]
for $\gamma = -\alpha_{\sigma_k}$, which follows from the $\ast$ version of Equation~\eqref{eq:eH_commute}.
\end{proof}

\begin{lemma}
\label{lemma:big_rectify_lambda}
Suppose $\lambda_k = \lambda_{k-1}$ and $\sigma_k = \sigma_{k-1}$, then
\[
v(\lambda + \epsilon_k; \sigma) = \beta_{k-1} v(\lambda; \sigma).
\]
\end{lemma}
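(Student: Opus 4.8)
The strategy is to mimic the proof of Lemma~\ref{lemma:big_rectify_sigma}: expand $v(\lambda+\epsilon_k;\sigma)$ into its defining product of fermions and half vertex operators and show that the extra box in row $k$ is absorbed by the factor $e^{H^*(\beta_{k-1})}$ coming from row $k-1$, yielding the scalar $\beta_{k-1}$. We may assume $2 \le k \le \ell$, since otherwise the hypothesis $\lambda_k = \lambda_{k-1}$ is vacuous (recall $\lambda_0 = \infty$); the argument will work for arbitrary sequences $\lambda,\sigma$, not just partitions. Applying the anti-involution $\ast$ to the definition of $\ket{\cdot}^{\sigma}_{[\bal,\bbe]}$ gives $v(\mu;\sigma) = \bra{-\ell}\,\Phi_{\ell}\Phi_{\ell-1}\cdots\Phi_1$ with $\Phi_i = e^{H^*(A_{\sigma_i-1})} e^{H^*(\beta_i)} \psi^*_{\mu_i-i} e^{-H^*(A_{\sigma_i-1})}$, so $\Phi_k$ and $\Phi_{k-1}$ are adjacent. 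The hypothesis $\sigma_k = \sigma_{k-1}$ collapses the intervening pair $e^{-H^*(A_{\sigma_k-1})} e^{H^*(A_{\sigma_{k-1}-1})}$ to $1$, and with $\lambda_k = \lambda_{k-1} =: a$ and $m := a+1-k$, the segment $\Phi_k\Phi_{k-1}$ contributes $\psi^*_m\, e^{H^*(\beta_{k-1})}\, \psi^*_m$ when $\mu = \lambda+\epsilon_k$ (both fermion indices equal $m$) and $\psi^*_{m-1}\, e^{H^*(\beta_{k-1})}\, \psi^*_m$ when $\mu = \lambda$; all remaining sub-factors of $\Phi_k\Phi_{k-1}$, and all the other $\Phi_i$, depend only on $i$ and $\sigma$ and so agree for the two inputs.

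Thus the statement reduces to the Clifford-algebra identity $\psi^*_m\, e^{H^*(\beta_{k-1})}\, \psi^*_m = \beta_{k-1}\, \psi^*_{m-1}\, e^{H^*(\beta_{k-1})}\, \psi^*_m$. Moving the left $\psi^*_m$ past $e^{H^*(\beta_{k-1})}$ via $\psi^*_m e^{H^*(\gamma)} = e^{H^*(\gamma)}\psi^*_m + \gamma\, \psi^*_{m-1} e^{H^*(\gamma)}$ --- the same relation (the $\ast$-version of~\eqref{eq:eH_commute}) used in the proof of Lemma~\ref{lemma:big_rectify_sigma}, now with $\gamma = \beta_{k-1}$ --- produces $e^{H^*(\beta_{k-1})}\psi^*_m\psi^*_m + \beta_{k-1}\psi^*_{m-1}e^{H^*(\beta_{k-1})}\psi^*_m$, and the first term vanishes because $\psi^*_m\psi^*_m = 0$. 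Plugging this back in gives $v(\lambda+\epsilon_k;\sigma) = \beta_{k-1}\, v(\lambda;\sigma)$.

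The only step needing genuine care is the index bookkeeping in the first paragraph: checking that $\lambda_k = \lambda_{k-1}$ makes the two fermions flanking the single surviving $e^{H^*(\beta_{k-1})}$ equal for $\lambda+\epsilon_k$ and consecutive for $\lambda$, that $\sigma_k = \sigma_{k-1}$ is exactly what is needed to eliminate the $e^{\pm H^*(A_{\sigma-1})}$ between rows $k-1$ and $k$, and keeping track of the product reversal introduced by $\ast$. Once the factors are lined up the rest is the one-line computation above, so the write-up should be about as short as that of Lemma~\ref{lemma:big_rectify_sigma}.
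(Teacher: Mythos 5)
Your proof is correct and follows the same route as the paper: expand $v(\cdot;\sigma)$ via the $\ast$ anti-involution, use $\sigma_k=\sigma_{k-1}$ to cancel the $e^{\mp H^*(A_{\sigma-1})}$ pair between rows $k-1$ and $k$, and reduce the claim to a one-line Clifford identity obtained from the $\ast$-version of~\eqref{eq:eH_commute} together with $(\psi^*_m)^2 = 0$. (Minor bookkeeping note: the paper's displayed identity carries $e^{-H^*(\beta_{k-1})}$ and keeps the fermion factors in ket order rather than the bra-reversed order; your version with $e^{+H^*(\beta_{k-1})}$ and the reversed product is what the expansion of ${}^{[\bal,\bbe]}\bra{\lambda}^\sigma$ actually produces, and both are instances of the same commutation relation.)
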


\begin{proof}
This follows from the definition of $v(\lambda; \sigma)$ and the rectification lemma~\cite[Lemma~3.6]{IMS22}.
Indeed, similar to the previous lemma, we have
\[
\psi^*_{\lambda_k - (k-1)} e^{-H^*(\beta_{k-1})} \psi^*_{\lambda_k + 1 - k} = \beta_{k-1} \psi^*_{\lambda_k - (k-1)} e^{-H^*(\beta_{k-1})} \psi^*_{\lambda_k - k}
\]
by the $\ast$ version of Equation~\eqref{eq:eH_commute} and recalling $(\psi^*_{\lambda_k - k})^2 = 0$.
\end{proof}

\begin{lemma}
\label{lemma:big_rectify_subset}
Let $\lambda$ be a partition.
If $K = \{k_1 < k_2 < \dotsc < k_m\}$, then
\[
{}^{[\bal,\bbe]} \bra{\lambda + \epsilon_K}_{\lambda} = \sum_{\mu} C_{\lambda}^{\mu} \cdot {}^{[\bal,\bbe]} \bra{\mu},
\]
where the coefficients $C_{\lambda}^{\mu}$ are given by
\[
U_{k_n} \dotsm U_{k_2} U_{k_1} \cdot \lambda = \sum_{\mu} C_{\lambda}^{\mu} \cdot \mu.
\]
\end{lemma}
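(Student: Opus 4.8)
The plan is to prove the statement by induction on $m = \abs{K}$, peeling off the \emph{smallest} index $k_1 \in K$ at each step. Throughout set $K' = K \setminus \{k_1\} = \{k_2 < \cdots < k_m\}$, recall the abbreviation $v(\lambda;\sigma) = {}^{[\bal,\bbe]}\bra{\lambda}^{\sigma}$ and that ${}^{[\bal,\bbe]}\bra{\mu} = v(\mu;\mu)$. The base case $m = 0$ is immediate, as the empty product of the $U_i$ fixes $\lambda$ and the left-hand side is ${}^{[\bal,\bbe]}\bra{\lambda}$. For the inductive step, the key observation is that since $k_1 = \min K$ we have $k_1 - 1 \notin K$, so $\epsilon_{K'}$ has zero component in rows $k_1 - 1$ and $k_1$; hence $(\lambda + \epsilon_{K'})_{k_1} = \lambda_{k_1}$ and $(\lambda + \epsilon_{K'})_{k_1 - 1} = \lambda_{k_1 - 1}$. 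We then split according to the two cases in the definition of $U_{k_1} = \schur_{k_1} + \Theta_{k_1}$.

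If $\lambda_{k_1} = \lambda_{k_1 - 1}$, then applying Lemma~\ref{lemma:big_rectify_lambda} with base $\lambda + \epsilon_{K'}$, with $\sigma = \lambda$, and at index $k_1$ — whose hypotheses hold by the observation above together with $\sigma = \lambda$ — gives $v(\lambda + \epsilon_K;\lambda) = \beta_{k_1 - 1}\, v(\lambda + \epsilon_{K'};\lambda)$. By the inductive hypothesis, $v(\lambda + \epsilon_{K'};\lambda)$ has the same coefficients in the basis $\{{}^{[\bal,\bbe]}\bra{\mu}\}_{\mu}$ as $U_{k_m}\cdots U_{k_2}\cdot\lambda$ does in $\field[\mcP]$; since $U_{k_1}\cdot\lambda = \beta_{k_1-1}\lambda$ in this case, multiplying by $\beta_{k_1-1}$ reproduces exactly the coefficients of $U_{k_m}\cdots U_{k_2}U_{k_1}\cdot\lambda$.

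If instead $\lambda_{k_1} < \lambda_{k_1 - 1}$, then $\lambda + \epsilon_{k_1}$ is again a partition, and Lemma~\ref{lemma:big_rectify_sigma} (with base $\lambda + \epsilon_{K'}$, with $\sigma = \lambda$, at index $k_1$, noting $\sigma_{k_1} = \lambda_{k_1}$) gives
\[
v(\lambda + \epsilon_K;\lambda) = v\bigl(\lambda + \epsilon_K;\, \lambda + \epsilon_{k_1}\bigr) - \alpha_{\lambda_{k_1}}\, v(\lambda + \epsilon_{K'};\lambda).
\]
Rewriting the first term as $v\bigl((\lambda + \epsilon_{k_1}) + \epsilon_{K'};\, \lambda + \epsilon_{k_1}\bigr)$, the inductive hypothesis applied to the partition $\lambda + \epsilon_{k_1}$ and the set $K'$ identifies its coefficients with those of $U_{k_m}\cdots U_{k_2}\cdot(\lambda + \epsilon_{k_1})$, while the inductive hypothesis applied to $\lambda$ and $K'$ handles the second term. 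Summing, the coefficients of $v(\lambda + \epsilon_K;\lambda)$ are those of $U_{k_m}\cdots U_{k_2}\cdot\bigl((\lambda + \epsilon_{k_1}) - \alpha_{\lambda_{k_1}}\lambda\bigr) = U_{k_m}\cdots U_{k_2}U_{k_1}\cdot\lambda$, completing the induction.

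I do not expect a serious obstacle: the content of the argument is just the bookkeeping that matches the two-case definition of $U_{k_1}$ with the two rectification Lemmas~\ref{lemma:big_rectify_sigma} and~\ref{lemma:big_rectify_lambda}. The only points demanding care are verifying the hypotheses of those lemmas at the index $k_1$ — in particular that inserting the boxes $\epsilon_{K'}$ does not disturb the comparison of rows $k_1 - 1$ and $k_1$, which is precisely why one must peel off the \emph{smallest} element of $K$ (peeling the largest runs into trouble when $k_m - 1 \in K$) — and noting that $\lambda + \epsilon_{k_1}$ really is a partition whenever $\lambda_{k_1} < \lambda_{k_1 - 1}$, so that the inductive hypothesis legitimately applies to it.
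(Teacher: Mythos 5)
Your proof is correct and takes the same approach as the paper's, which cites exactly the two rectification Lemmas and says to apply them "repeatedly" without spelling out the bookkeeping. You supply the missing detail — the induction on $\lvert K\rvert$, the choice to peel off the smallest index $k_1$ so that $k_1-1 \notin K$ guarantees the hypotheses of Lemmas~\ref{lemma:big_rectify_sigma} and \ref{lemma:big_rectify_lambda} are checked against the original $\lambda$, and the observation that $\lambda + \epsilon_{k_1}$ remains a partition in the $\lambda_{k_1} < \lambda_{k_1-1}$ case so the inductive hypothesis applies — but the underlying mechanism is identical.
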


\begin{proof}
This follows by repeatedly applying Lemma~\ref{lemma:big_rectify_sigma} and Lemma~\ref{lemma:big_rectify_lambda} and noting that these two cases precisely encode the action of the operator $U_k$.
\end{proof}

\begin{proof}[Proof of Theorem~\ref{thm:noncommutative_blocking}]
Because the operators $\{U_i\}_{i=1}^{\infty}$ satisfy the weak Knuth relations (Lemma~\ref{lemma:weak_Knuth}), Fomin-Greene's theorem~\cite{FG98} implies that the noncommutative Schur functions $s_{\lambda}(\UU / \bbe)$ are a commutative algebra (under the natural multiplication) as it is generated (as an algebra) by $e_{i}(\UU / \bbe)$ for $i=0,1,2,\dots$.
Therefore, to prove the theorem, it suffices to show \eqref{eq:to_prove} for the case when $s_\lambda=e_i$:
\[
{}^{[\bal,\bbe]}\bra{\lambda} E_i(a_1, a_2, \ldots) = {}^{[\bal,\bbe]}\bra{e_{i}(\UU / \bbe) \cdot \lambda}.
\]
By applying the commutator relation in~\eqref{eq:ea_commute}, we have
\[
{}^{[\bal, \bbe]}\bra{\lambda} a_i = \sum_{k=1}^{\infty} {}^{[\bal,\bbe]}\bra{\lambda + i \epsilon_k}_{\lambda} - \sum_{k=1}^{\infty} \beta_k^i \cdot {}^{[\bal,\bbe]}\bra{\lambda} = \sum_{k=1}^{\infty} {}^{[\bal,\bbe]}\bra{\lambda + i \epsilon_k}_{\lambda} - p_i(\bbe) \cdot {}^{[\bal,\bbe]}\bra{\lambda}.
\]
Next, write $E_i(p_1, p_2, \ldots; \bbe) = e_i(\xx \sqcup \bbe)$, and by noting the above equation comes from $p_i(\xx \sqcup \bbe)$ under the identification $p_i(\xx) \equiv a_i$, we have
\begin{equation}
\label{eq:U_ops_only}
{}^{[\bal,\bbe]}\bra{\lambda} E_i(a_1, a_2, \ldots; \bbe) = \sum_{\substack{K \subseteq \ZZ \\ \abs{K} = i}} {}^{[\bal,\bbe]}\bra{\lambda + \epsilon_K}_{\lambda} = {}^{[\bal,\bbe]}\bra{e_i(\UU) \cdot \lambda},
\end{equation}
where the last equality is by Lemma~\ref{lemma:big_rectify_subset}.
By expanding the plethsym for the left hand side and solving for ${}^{[\bal,\bbe]}\bra{\lambda} E_i(a_1, a_2, \ldots)$, we obtain the desired result.
\end{proof}

\subsection{Noncommutative pushing operators}

We define an operator $u_j^{(\bal,\bbe)}$ recursively as follows.
Consider a partition $\lambda$, and let $k$ be minimal such that $\lambda_k = \lambda_j$.
Let $\nu := \overline{\mu + \epsilon_j}$ be the smallest partition that contains $\mu + \epsilon_j$ (a box added to row $j$); that is, we have added a box to all rows $k \leq i \leq j$.
Then
\begin{equation}
\label{eq:uj_action_expansion}
u_j^{(\bal,\bbe)} \cdot \mu = \beta_k \cdots \beta_{j-1} \nu + \alpha_{\mu_j+1} \sum_{i=k}^j \prod_{a=k}^{i-1} (\alpha_{\mu_j+1} + \beta_a) \prod_{a=i}^{j-1} \beta_a u_i^{(\bal,\bbe)} \cdot \nu.
\end{equation}
Note that the result is well-defined in the completion (by the degree) $\field [\![ \mcP ]\!]$ since each partition only has finitely many contributions.
As we will always be looking for a specific term in this sum, working in the completion will not be consequential.
We will show the operators $\uu^{(\bal,\bbe)} = (u_1^{(\bal,\bbe)}, u_2^{(\bal,\bbe)}, \ldots)$ corresponds to separating out the action of the current operator $a_1$ on ${}_{[\bal,\bbe]} \bra{\mu}$.

\begin{lemma}
For any sequences $\mu$ and $\sigma$ (not necessarily a partition), we have
\begin{equation}
\label{eq:sigma_rectification}
{}_{[\bal,\bbe]} \bra{\mu}_{\sigma} = {}_{[\bal,\bbe]}\bra{\mu}_{\sigma + \epsilon_j} + \alpha_{\sigma_j+1} {}_{[\bal,\bbe]}\bra{\mu + \epsilon_j}_{\sigma + \epsilon_j}.
\end{equation}
\end{lemma}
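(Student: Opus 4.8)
The plan is to establish \eqref{eq:sigma_rectification} by the same mechanism used for Lemma~\ref{lemma:big_rectify_sigma}: pass to the $*$-adjoint, localize the claim to the single $j$-th factor of the product defining $\ket{\mu}_\sigma^{[\bal,\bbe]}$, and resolve that factor-level identity by sliding one half-vertex operator past one fermion.

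First I would apply the anti-involution $*$. Since ${}_{[\bal,\bbe]}\bra{\nu}_\tau = (\ket{\nu}_\tau^{[\bal,\bbe]})^*$ and $*$ is additive and involutive, \eqref{eq:sigma_rectification} is equivalent to the dual identity
\[
\ket{\mu}_\sigma^{[\bal,\bbe]} = \ket{\mu}_{\sigma+\epsilon_j}^{[\bal,\bbe]} + \alpha_{\sigma_j+1}\,\ket{\mu+\epsilon_j}_{\sigma+\epsilon_j}^{[\bal,\bbe]}.
\]
Denote by $\Psi_i^{(\sigma)} := e^{H^*(A_{\sigma_i})}\psi_{\mu_i-i}e^{-H^*(\beta_i)}e^{-H^*(A_{\sigma_i})}$ the $i$-th factor in the definition of $\ket{\mu}_\sigma^{[\bal,\bbe]}$. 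The sequences $\sigma$ and $\sigma+\epsilon_j$ agree outside coordinate $j$, so all factors $\Psi_i^{(\sigma)}$ with $i\neq j$ — and, for interior $j$, also the trailing operator $e^{H^*(A_{\sigma_\ell})}$ — coincide in the three vectors above. Consequently it suffices to prove the operator identity
\[
\Psi_j^{(\sigma)} = e^{H^*(A_{\sigma_j+1})}\psi_{\mu_j-j}e^{-H^*(\beta_j)}e^{-H^*(A_{\sigma_j+1})} + \alpha_{\sigma_j+1}\,e^{H^*(A_{\sigma_j+1})}\psi_{\mu_j+1-j}e^{-H^*(\beta_j)}e^{-H^*(A_{\sigma_j+1})},
\]
the two summands on the right being exactly the $j$-th factors occurring in $\ket{\mu}_{\sigma+\epsilon_j}^{[\bal,\bbe]}$ and in $\ket{\mu+\epsilon_j}_{\sigma+\epsilon_j}^{[\bal,\bbe]}$.

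Second, because $A_{\sigma_j+1} = A_{\sigma_j}\sqcup(-\alpha_{\sigma_j+1})$ and all the operators $H^*(\cdot)$ mutually commute (they are built from $\{a_{-m}\}_{m>0}$, which commute), I would set $\eta := -\alpha_{\sigma_j+1}$, factor $e^{\pm H^*(A_{\sigma_j+1})} = e^{\pm H^*(A_{\sigma_j})}e^{\pm H^*(\eta)}$, and move $e^{-H^*(\beta_j)}$ past $e^{-H^*(\eta)}$. This reduces the factor-level identity to the purely local statement
\[
e^{H^*(\eta)}\bigl(\psi_{\mu_j-j} - \eta\,\psi_{\mu_j+1-j}\bigr)e^{-H^*(\eta)} = \psi_{\mu_j-j},
\]
which is precisely the $\psi$-analogue of the $\psi^*$-commutation used in Lemma~\ref{lemma:big_rectify_sigma}: by the single-variable specialization of the half-vertex relations \eqref{eq:eH_relations} one has $e^{-H^*(\eta)}\psi_k e^{H^*(\eta)} = \psi_k - \eta\,\psi_{k+1}$, equivalently $e^{H^*(\eta)}\psi_k e^{-H^*(\eta)} = \sum_{n\geq 0}\eta^n\psi_{k+n}$, so the two-term combination telescopes. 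Reassembling the product and applying $*$ once more recovers \eqref{eq:sigma_rectification}.

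The one point that genuinely needs care is the bookkeeping at the right-hand end of the product: when $j$ equals the last index $\ell$, the trailing operator $e^{H^*(A_{\sigma_\ell})}$ in the definition of $\ket{\mu}_\sigma^{[\bal,\bbe]}$ also changes, so one must fold it into the $\ell$-th factor and carry the extra $e^{H^*(\eta)}$ all the way onto $\ket{-\ell}$ before running the same argument. I expect this to be the only place requiring an extra line of computation; everything else is the routine half-vertex manipulation parallel to Lemma~\ref{lemma:big_rectify_sigma}.
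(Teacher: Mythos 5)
For $j<\ell$, your argument is the paper's proof restated on the ket side via the anti-involution $*$: the factorization $e^{\pm H^*(A_{\sigma_j+1})}=e^{\pm H^*(A_{\sigma_j})}e^{\pm H^*(\eta)}$ with $\eta=-\alpha_{\sigma_j+1}$ and the telescope $e^{H^*(\eta)}(\psi_k-\eta\psi_{k+1})e^{-H^*(\eta)}=\psi_k$ are exactly the $*$-image of the $\psi^*$-identity that the paper substitutes into the $j$-th factor of the bra, so the content is the same and your factor-level computation is correct.

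Your handling of $j=\ell$, however, would fail as described. Carrying the extra $e^{H^*(\eta)}$ ``all the way onto $\ket{-\ell}$'' does not remove it: the vacuum absorbs $e^{H(\cdot)}$ acting from the left on kets and $e^{H^*(\cdot)}$ acting from the right on bras, but $e^{H^*(\eta)}\ket{-\ell}\neq\ket{-\ell}$ because the raising modes $a_{-k}$ ($k>0$) do not annihilate $\ket{-\ell}$. After telescoping you would be comparing $\psi_{\mu_\ell-\ell}\,e^{H^*(\eta)}e^{-H^*(\beta_\ell)}\ket{-\ell}$ with $\psi_{\mu_\ell-\ell}\,e^{-H^*(\beta_\ell)}\ket{-\ell}$, and these genuinely differ: already for $\ell=1$, $\mu=\sigma=(0)$, $\beta_1=0$, one computes $\psi_{-1}e^{H^*(\eta)}\ket{-1}=\ket{0}+\eta^2\psi_0\psi^*_{-2}\ket{0}+O(\eta^3)\neq\ket{0}$. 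The correct way to dispose of this case is not an extra line of algebra but a choice of convention: in every application of the lemma one may, and should, take $\ell$ strictly larger than the index $j$ being acted upon, so that the trailing $e^{H^*(A_{\sigma_\ell})}$ is never touched. That is what the paper's terse ``and so the claim follows'' is tacitly assuming, and your $j<\ell$ argument then suffices.
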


\begin{proof}
We rewrite~\eqref{eq:seH_commute} as
\[
\psi_{\mu_j-j}^* = e^{-H(-\alpha_{\sigma_j+1})} \psi^*_{\mu_j-j} e^{H(-\alpha_{\sigma_j+1})} + \alpha_{\sigma_j+1} e^{-H(-\alpha_{\sigma_j+1})} \psi^*_{\mu_j+1-j} e^{H(-\alpha_{\sigma_j+1})},
\]
and so the claim follows.
\end{proof}

\begin{lemma}
For any sequences $\mu$ and $\sigma$ such that $\mu_j = \mu_{j-1}$ and $\sigma_j = \sigma_{j-1}$, we have
\begin{equation}
\label{eq:push_rectification}
{}_{[\bal,\bbe]} \bra{\mu + \epsilon_j}_{\sigma} = \beta_{j-1} \cdot {}_{[\bal,\bbe]}\bra{\mu + \epsilon_{j-1} + \epsilon_j}_{\sigma}.
\end{equation}
\end{lemma}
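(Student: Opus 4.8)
The plan is to follow the proof of Lemma~\ref{lemma:big_rectify_lambda}, but working with the $\psi^*$-operators that make up ${}_{[\bal,\bbe]}\bra{\mu}_{\sigma}$ rather than those of ${}^{[\bal,\bbe]}\bra{\lambda}^{\sigma}$. First I would apply the $\ast$-involution to the defining product of $\ket{\mu}_{\sigma}^{[\bal,\bbe]}$, using $(e^{\pm H^*(\gamma)})^* = e^{\pm H(\gamma)}$ and $(\psi_n)^* = \psi^*_n$, to obtain
\[
{}_{[\bal,\bbe]}\bra{\mu}_{\sigma} = \bra{-\ell}\, e^{H(A_{\sigma_\ell})}\, \prod^{\leftarrow}_{1 \leq i \leq \ell} Y_i,
\qquad
Y_i := e^{-H(A_{\sigma_i})}\, e^{-H(\beta_i)}\, \psi^*_{\mu_i - i}\, e^{H(A_{\sigma_i})}.
\]
Since $\epsilon_j$ and $\epsilon_{j-1} + \epsilon_j$ differ only in rows $j-1$ and $j$, and since the $j$-th rows of $\mu + \epsilon_j$ and of $\mu + \epsilon_{j-1} + \epsilon_j$ both equal $\mu_j + 1$, every factor $Y_i$ except $Y_{j-1}$ is unchanged when passing between the two sides of~\eqref{eq:push_rectification}; hence it suffices to compare the operators $Y_j Y_{j-1}$ in the two configurations, everything else being identical.

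Next I would use the hypothesis. From $\sigma_j = \sigma_{j-1}$ we get $A_{\sigma_j} = A_{\sigma_{j-1}}$, so the $e^{H(A_{\sigma_j})}$ ending $Y_j$ cancels the $e^{-H(A_{\sigma_{j-1}})}$ beginning $Y_{j-1}$, leaving
\[
Y_j Y_{j-1} = e^{-H(A_{\sigma_j})}\, e^{-H(\beta_j)}\, \psi^*_{p}\, e^{-H(\beta_{j-1})}\, \psi^*_{q}\, e^{H(A_{\sigma_{j-1}})},
\]
where $p = (\mu_j + 1) - j$ in both cases (using $\mu_{j-1} = \mu_j$), while $q = p$ for the configuration $\mu + \epsilon_j$ and $q = p + 1$ for $\mu + \epsilon_{j-1} + \epsilon_j$. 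So the whole claim reduces to the local identity
\[
\psi^*_{p}\, e^{-H(\beta_{j-1})}\, \psi^*_{p} = \beta_{j-1}\, \psi^*_{p}\, e^{-H(\beta_{j-1})}\, \psi^*_{p+1}.
\]
This follows from the $\ast$-version of~\eqref{eq:eH_commute}, i.e.\ from~\eqref{eq:seH_commute}: taken with the single indeterminate $\beta_{j-1}$, so that $h_i(\beta_{j-1}) = \beta_{j-1}^i$, it gives (after rearranging) $e^{-H(\beta_{j-1})}\,\psi^*_k = \bigl(\sum_{i \geq 0}\beta_{j-1}^i\,\psi^*_{k+i}\bigr)\,e^{-H(\beta_{j-1})}$. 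Inserting this with $k = p$ and using $(\psi^*_p)^2 = 0$ to kill the $i = 0$ term, the left-hand side becomes $\bigl(\sum_{i \geq 1}\beta_{j-1}^i\,\psi^*_p\psi^*_{p+i}\bigr)\,e^{-H(\beta_{j-1})}$; extracting a factor of $\beta_{j-1}$ and reindexing $i \mapsto i+1$ turns this into $\beta_{j-1}\,\psi^*_p\bigl(\sum_{i \geq 0}\beta_{j-1}^i\,\psi^*_{p+1+i}\bigr)\,e^{-H(\beta_{j-1})} = \beta_{j-1}\,\psi^*_p\,e^{-H(\beta_{j-1})}\,\psi^*_{p+1}$, as desired. (This is the exact analogue, with $H$ in place of $H^*$, of the identity already used in the proof of Lemma~\ref{lemma:big_rectify_lambda}.)

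The only real content is the one-line local identity above; the rest is bookkeeping, and the main thing to be careful about --- more a matter of diligence than of difficulty --- is handling the $\ast$-involution correctly (so that $\psi^*$- and $e^{\pm H}$-operators, not $\psi$- and $e^{\pm H^*}$-operators, appear in ${}_{[\bal,\bbe]}\bra{\mu}_{\sigma}$), checking that the $A_{\sigma}$-factors genuinely telescope away under $\sigma_j = \sigma_{j-1}$, and confirming that the surviving right-hand $\psi^*$-index shifts by exactly one. As noted just before the lemma, all these expressions are interpreted in the completion $\field [\![ \mcP ]\!]$, so the infinite sums appearing above are harmless.
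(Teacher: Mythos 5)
Your proof is correct and follows essentially the same route as the paper's: the paper simply cites the rectification lemma of the IMS22 reference (the $\ast$-version), and your argument is precisely a self-contained derivation of what that lemma does in this situation, namely telescoping the $e^{\pm H(A_\sigma)}$ factors using $\sigma_j = \sigma_{j-1}$ and then applying the commutation relation~\eqref{eq:seH_commute} together with $(\psi^*_p)^2 = 0$ to produce the extra $\beta_{j-1}$ and shift the index by one. The only difference is the level of detail --- you unwind the citation into an explicit computation --- and you correctly flag the sign/direction difference between the $e^{-H}$ case here (index shifts up) and the $e^{-H^*}$ case in Lemma~\ref{lemma:big_rectify_lambda} (index shifts down).
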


\begin{proof}
The claim follows from (the star version of) the rectification lemma~\cite[Lemma~3.6]{IMS22}.
\end{proof}

\begin{lemma}
\label{lemma:dual_op_action}
We have
\[
{}_{[\bal, \bbe]} \bra{\mu + \epsilon_j} = {}_{[\bal, \bbe]} \bra{u_j^{(\bal,\bbe)} \cdot \mu}.
\]
\end{lemma}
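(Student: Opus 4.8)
The plan is to show that the recursive definition of $u_j^{(\bal,\bbe)}$ is exactly what is produced by the two rectification lemmas~\eqref{eq:sigma_rectification} and~\eqref{eq:push_rectification} when one computes ${}_{[\bal,\bbe]}\bra{\mu+\epsilon_j}$, which by definition equals ${}_{[\bal,\bbe]}\bra{\mu+\epsilon_j}_{\mu+\epsilon_j}$ (the subscript and superscript sequences both being the partition). First I would set $v^*(\mu;\sigma) := {}_{[\bal,\bbe]}\bra{\mu}_{\sigma}$ and argue, as in Lemma~\ref{lemma:big_rectify_subset}, that the quantity we want is $v^*(\mu+\epsilon_j; \mu+\epsilon_j)$, whereas we start from $v^*(\mu; \mu)$ (which is the basis vector ${}_{[\bal,\bbe]}\bra{\mu}$). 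So the task is to rectify both the shape and the ``$\sigma$'' decoration from $(\mu;\mu)$ up to $(\mu+\epsilon_j;\mu+\epsilon_j)$ using only~\eqref{eq:sigma_rectification} and~\eqref{eq:push_rectification}, and to check the bookkeeping matches the stated formula for $u_j^{(\bal,\bbe)}$.

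The key steps, carried out by induction on $j - k$ where $k$ is minimal with $\mu_k = \mu_j$ (equivalently, induction on the degree, so that the recursion in $\field[\![\mcP]\!]$ is well-founded): Start with $v^*(\mu; \mu)$. Apply~\eqref{eq:sigma_rectification} at position $j$ with $\sigma = \mu$ (so $\sigma_j = \mu_j$), giving $v^*(\mu;\mu) = v^*(\mu; \mu+\epsilon_j) + \alpha_{\mu_j+1} v^*(\mu+\epsilon_j; \mu+\epsilon_j)$. Solving, $v^*(\mu+\epsilon_j;\mu+\epsilon_j) = \alpha_{\mu_j+1}^{-1}\bigl(v^*(\mu;\mu) - v^*(\mu;\mu+\epsilon_j)\bigr)$ --- but $\alpha_{\mu_j+1}$ in a denominator is awkward, so instead I would run the rectification the other way: we want to express ${}_{[\bal,\bbe]}\bra{\mu+\epsilon_j}$ (a partition vector, so $\sigma = \mu+\epsilon_j$) in the basis $\{{}_{[\bal,\bbe]}\bra{\nu}\}$, and to do that we must lower the $\sigma$-decoration back down to equal the shape. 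Concretely, in $v^*(\mu+\epsilon_j; \mu+\epsilon_j)$ the shape already has the extra box in row $j$ but the $\sigma$-entry in row $j$ is $\mu_j+1$ while rows $k,\dots,j-1$ still have $\sigma$-entry equal to $\mu_j$; the point is that $\bra{\mu+\epsilon_j}$ as a basis vector does not actually depend on $\sigma$ (it is the partition vector ${}_{[\bal,\bbe]}\bra{\mu+\epsilon_j}$), so I may freely choose whichever presentation of it is convenient and then re-expand. The cleanest route: expand ${}_{[\bal,\bbe]}\bra{\mu+\epsilon_j}$ by noting $\mu+\epsilon_j$ may fail to be a partition if $\mu_j = \mu_{j-1}$, and use~\eqref{eq:push_rectification} to absorb a column: $\bra{\mu+\epsilon_j}_{\mu+\epsilon_j}$-type reasoning produces the factor $\beta_{j-1}$ and moves to $\mu+\epsilon_{j-1}+\epsilon_j$, iterating down to row $k$ where $\mu_k < \mu_{k-1}$, yielding the leading term $\beta_k\cdots\beta_{j-1}\,\nu$ with $\nu = \overline{\mu+\epsilon_j}$. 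The correction terms come from the $\alpha_{\mu_j+1}$-branches in~\eqref{eq:sigma_rectification} applied at the intermediate rows $i = k,\dots,j$: each such branch contributes the box to rows $k,\dots,i-1$ via the product $\prod_{a=k}^{i-1}(\alpha_{\mu_j+1}+\beta_a)$ (combining an $\alpha$-branch with the $\beta$-absorption at those rows) times $\prod_{a=i}^{j-1}\beta_a$ for the rows still absorbed as columns, times a residual $u_i^{(\bal,\bbe)}\cdot\nu$ which is handled by the inductive hypothesis since $i \leq j$ and the relevant ``block'' is strictly shorter. Assembling these gives precisely the stated recursion.

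The main obstacle is the bookkeeping in the mixed region: tracking which rows among $k,\dots,j$ receive their extra box from the ``straight'' $\kappa$-like move (contributing a $\beta_a$) versus from an $\alpha$-branch of~\eqref{eq:sigma_rectification} (contributing an $\alpha_{\mu_j+1}$ or an $(\alpha_{\mu_j+1}+\beta_a)$), and verifying that summing over the splitting point $i$ reproduces the coefficients $\alpha_{\mu_j+1}\prod_{a=k}^{i-1}(\alpha_{\mu_j+1}+\beta_a)\prod_{a=i}^{j-1}\beta_a$ exactly. This is essentially the same combinatorial identity that underlies~\eqref{eq:double_to_single_slash} (the $-(\alpha_i+\beta_j)$ corner-removal factors), and I expect it to reduce to a telescoping/partial-fractions check. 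A secondary subtlety is justifying that the recursion terminates in the completion $\field[\![\mcP]\!]$ --- but this is already remarked after the definition of $u_j^{(\bal,\bbe)}$, since each target partition $\mu'$ receives contributions only from the finitely many $\mu$ with $|\mu'| \geq |\mu|$ at bounded depth, so induction on degree is legitimate. I would also double-check the base case $k = j$ (i.e.\ $\mu_j < \mu_{j-1}$), where $\nu = \mu+\epsilon_j$ is already a partition, the empty products are $1$, and the formula collapses to $u_j^{(\bal,\bbe)}\cdot\mu = \mu+\epsilon_j + \alpha_{\mu_j+1}\,u_j^{(\bal,\bbe)}\cdot(\mu+\epsilon_j)$, matching a single application of~\eqref{eq:sigma_rectification} at position $j$ with the $\sigma$-decoration already rectified in the lower rows.
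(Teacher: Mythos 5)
Your combinatorial mechanism---push-rectify a column to collect $\beta$-factors, then $\sigma$-rectify the block $[k,j]$ to produce the $\alpha$- and $(\alpha+\beta)$-factors, peeling off a residual $u_i\cdot\nu$ handled by recursion---is the same as the paper's proof. But your framing of the starting point contains a genuine error. You read ${}_{[\bal,\bbe]}\bra{\mu+\epsilon_j}$ as $v^*(\mu+\epsilon_j;\mu+\epsilon_j)$ and then justify switching to a more convenient decoration by asserting that ``$\bra{\mu+\epsilon_j}$ as a basis vector does not actually depend on $\sigma$.'' That assertion is false: \eqref{eq:sigma_rectification} itself gives $v^*(\lambda;\sigma)=v^*(\lambda;\sigma+\epsilon_i)+\alpha_{\sigma_i+1}v^*(\lambda+\epsilon_i;\sigma+\epsilon_i)$, so $v^*(\lambda;\sigma)\neq v^*(\lambda;\sigma+\epsilon_i)$ whenever $\alpha_{\sigma_i+1}\neq 0$. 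The quantity the lemma refers to is $v^*(\mu+\epsilon_j;\mu)$, with the decoration held at the original partition $\mu$, exactly as the explicit subscript in Lemma~\ref{lemma:big_rectify_subset}'s left-hand side ${}^{[\bal,\bbe]}\bra{\lambda+\epsilon_K}_{\lambda}$ indicates (there too $\sigma=\lambda$, not $\lambda+\epsilon_K$).

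This matters in two concrete ways. First, with $\sigma=\mu+\epsilon_j$ you cannot invoke \eqref{eq:push_rectification} at row $j$ because its hypothesis $\sigma_j=\sigma_{j-1}$ fails: one has $\sigma_j=\mu_j+1\neq\mu_j=\sigma_{j-1}$, so your ``absorb a column'' step is not legitimate as you have set it up; it works precisely because $\sigma=\mu$ has $\sigma_k=\cdots=\sigma_j=\mu_j$. Second, in your own base case $k=j$ (so that $\mu+\epsilon_j$ is already a partition), your reading makes the left-hand side a single basis vector, while $u_j^{(\bal,\bbe)}\cdot\mu=(\mu+\epsilon_j)+\alpha_{\mu_j+1}\,u_j^{(\bal,\bbe)}\cdot(\mu+\epsilon_j)$ is generically an infinite series; the equality you are trying to prove would then be false. (With $\mu=\emptyset$, $j=1$: $u_1\cdot\emptyset=\sum_{m\geq 1}\alpha_1\cdots\alpha_{m-1}\,(m)$, which equals $v^*((1);\emptyset)$ by repeated \eqref{eq:sigma_rectification} but is certainly not the single vector ${}_{[\bal,\bbe]}\bra{(1)}=v^*((1);(1))$.) Once you replace the starting point by $v^*(\mu+\epsilon_j;\mu)$ and drop the spurious $\sigma$-independence claim, the rest of your sketch, including the bookkeeping over the splitting index $i$ and the degree induction, lines up with the paper's proof.
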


\begin{proof}
Consider $k \leq j$ minimal such that $\mu_j = \mu_k$ and $\sigma_j = \sigma_m$ for all $k \leq m \leq j$.
Recall that for $X = \{i_1, \dotsc, i_m\}$, we set $\epsilon_X = \epsilon_{i_1} + \cdots + \epsilon_{i_m}$.
Let $\nu = \mu + \epsilon_{[j,k]} = \overline{\mu + \epsilon_j}$ and $v(\mu; \sigma) := {}_{[\bal,\bbe]} \bra{\mu}_{\sigma}$.
By repeated applications of~\eqref{eq:push_rectification}, we have
\[
v(\mu + \epsilon_j; \mu) = \prod_{i=k}^{j-1} \beta_i v(\nu; \mu).
\]
Then by applying~\eqref{eq:sigma_rectification} to each $i \in [k, j]$ and then using~\eqref{eq:push_rectification}, we compute
\begin{align*}
v(\mu + \epsilon_j; \mu) & = \prod_{i=k}^{j-1} \beta_i \sum_{X \subseteq [k,j]} \alpha_{\mu_j+1}^{\abs{X}} v(\nu + \epsilon_X; \nu)
\\ & = \prod_{i=k}^{j-1} \beta_i \sum_{X \subseteq [k,j]} \alpha_{\mu_j+1}^{\abs{X}} \prod_{i \in [k,j) \setminus X} \beta_i v(\overline{\nu + \epsilon_X}; \nu)
\\ & = \prod_{i=k}^{j-1} \beta_i \cdot \nu + \prod_{a=k}^{j-1} \beta_a \cdot \alpha_{\mu_j+1} \sum_{i=k}^j \prod_{m=k}^{i-1} (\alpha_{\mu_j+1} + \beta_m) v(\overline{\nu + \epsilon_i}; \nu)
\\ & = \prod_{i=k}^{j-1} \beta_i \cdot \nu + \alpha_{\mu_j+1} \sum_{i=k}^j \prod_{m=k}^{i-1} (\alpha_{\mu_j+1} + \beta_m) \prod_{a=i}^{j-1} \beta_a v(\nu + \epsilon_i; \nu).
\end{align*}
However, this is precisely the recursion formula defining $u_j^{(\bal,\bbe)}$, and the claim follows.
\end{proof}

For brevity, we will simply write $u_j := u_j^{(\bal,\bbe)}$ until noted otherwise.

\begin{ex}
\ytableausetup{boxsize=.6em}%
We compute the action of $u_3$ on the empty partition using the operator definition:
\[
u_3 \cdot \emptyset = \beta_1 \beta_2 \mu + \alpha_1 \beta_1 \beta_2 u_1 \cdot \mu + \alpha_1 (\alpha_1 + \beta_1) \beta_2 u_2 \cdot \mu + \alpha_1 (\alpha_1 + \beta_1) (\alpha_1 + \beta_2) u_3 \cdot \mu.
\]
Now we want to compare it to ${}_{[\bal,\bbe]} \bra{\emptyset + \epsilon_3}_{\emptyset} = {}_{[\bal,\bbe]} \bra{\epsilon_3}_{\emptyset}$.
By repeated applications of~\eqref{eq:sigma_rectification} and~\eqref{eq:push_rectification}, we compute
\begin{align*}
{}_{[\bal,\bbe]} \bra{\epsilon_3}_{\emptyset} & = \beta_1 \beta_2 \Bigl( {}_{[\bal,\bbe]} \bra{\mu} + \alpha_1 \cdot {}_{[\bal,\bbe]} \bra{\mu+\epsilon_1}_{\mu} + \alpha_1 (\alpha_1 + \beta_1) \cdot {}_{[\bal,\bbe]} \bra{\mu+\epsilon_{\{1,2\}}}_{\mu}
\\ & \hspace{50pt} + \alpha_1 (\alpha_1 + \beta_1) (\alpha_1 + \beta_2) \cdot {}_{[\bal,\bbe]} \bra{\mu+\epsilon_{\{1,2,3\}}}_{\mu} \Bigr)
\end{align*}
Now by using~\eqref{eq:push_rectification} (in reverse) and letting $\mu = (1,1,1) = \ydiagram{1,1,1}$, we can rewrite
\begin{align*}
{}_{[\bal,\bbe]} \bra{\epsilon_3} & = \beta_1 \beta_2 \cdot {}_{[\bal,\bbe]} \bra{\mu} + \alpha_1 \beta_1 \beta_2 \cdot {}_{[\bal,\bbe]} \bra{\mu + \epsilon_1}
+ \alpha_1 (\alpha_1 + \beta_1) \beta_2 \cdot {}_{[\bal,\bbe]} \bra{\mu + \epsilon_2}
\\ & \hspace{20pt} + \alpha_1 (\alpha_1 + \beta_1) (\alpha_1 + \beta_2) \cdot {}_{[\bal,\bbe]} \bra{\mu + \epsilon_3}
\end{align*}
\end{ex}

\begin{ex}
\label{ex:dual_non_Knuth}
\ytableausetup{boxsize=.6em}%
Now we look at the Knuth relations of the operators $\uu^{(\bal,\bbe)}$; specifically we will consider $u_2 u_3 u_1 \cdot \emptyset$ and $u_2 u_1 u_3 \cdot \emptyset$.
For simplicity we will truncate our computation below for any partition that contains more than 6 boxes.
First, $u_1 \cdot \emptyset = \sum_{i=1}^{\infty} \prod_{j=1}^{i-1} \alpha_j (i) = \ydiagram{1} + \alpha_1 \ydiagram{2} + \alpha_1 \alpha_2 \ydiagram{3} + \cdots$.
Next, we will restrict ourselves to partitions of size at most $6$ in the computation of $u_2 u_3 u_1 \cdot \emptyset$, which is given as follows:
\begin{align*}
u_3 \cdot \ydiagram{1} & = \beta_2 \ydiagram{1,1,1} + \alpha_1 \beta_2 u_2 \ydiagram{1,1,1} + \alpha_1 (\alpha_1 + \beta_2) u_3 \ydiagram{1,1,1}
\\ & = \beta_2 \ydiagram{1,1,1} + \alpha_1 \beta_2 \left( \beta_1 \ydiagram{2,2,1} + \alpha_2 \beta_1 u_1 \ydiagram{2,2,1} + \alpha_2 (\alpha_2 + \beta_1) u_2 \ydiagram{2,2,1} \right) + \alpha_1 (\alpha_1 + \beta_2) \beta_1 \beta_2 \ydiagram{2,2,2} + \cdots
\\ & = \beta_2 \ydiagram{1,1,1} + \alpha_1 \beta_1 \beta_2 \ydiagram{2,2,1} + \alpha_1 \alpha_2 \beta_1 \beta_2 \ydiagram{3,2,1} + \alpha_1 (\alpha_1 + \beta_2) \beta_1 \beta_2 \ydiagram{2,2,2} + \cdots,
\allowdisplaybreaks \\
u_3 \cdot \ydiagram{2} & = \beta_2 \ydiagram{2,1,1} + \alpha_1 \beta_2 u_2 \cdot \ydiagram{2,1,1} + \alpha_1 (\alpha_1 + \beta_2) u_3 \cdot \ydiagram{2,1,1} + \cdots
\\ & = \beta_2 \ydiagram{2,1,1} + \alpha_1 \beta_2 \ydiagram{2,2,1} + \alpha_1 (\alpha_1 + \beta_2) \beta_2 \ydiagram{2,2,2} + \cdots,
\allowdisplaybreaks \\
u_3 \cdot \ydiagram{3} & = \beta_2 \ydiagram{3,1,1} + \alpha_1 \beta_2 \ydiagram{3,2,1} + \cdots,
\qquad\qquad
u_3 \cdot \ydiagram{4} = \beta_2 \ydiagram{4,1,1} + \cdots.
\end{align*}
Putting this together, we have
\begin{align*}
u_2 u_3 u_1 \cdot \emptyset & = u_2 \cdot \Bigl( \beta_2 \ydiagram{1,1,1} + \alpha_1 \beta_2 \ydiagram{2,1,1} + \alpha_1 (\alpha_1 + \beta_1) \beta_2 \ydiagram{2,2,1} + \alpha_1 (\alpha_1 + \beta_2) (\alpha_1 + \beta_1) \beta_2 \ydiagram{2,2,2}
\\ & \hspace{50pt} + \alpha_1 \alpha_2  (\alpha_1 + \beta_1) \beta_2 \ydiagram{3,2,1} + \alpha_1 \alpha_2 \alpha_3 \beta_2 \ydiagram{4,1,1} + \cdots \Bigr)
\\ & = \beta_1 \beta_2 \ydiagram{2,2,1} + \alpha_2 \beta_1 \beta_2 \ydiagram{3,2,1} + \alpha_1 \beta_2 \ydiagram{2,2,1} + \cdots
\end{align*}

Compare with
\begin{align*}
u_2 u_1 u_3 \cdot \emptyset & = u_2 u_1 \cdot \left( \beta_1 \beta_2 \ydiagram{1,1,1} + \alpha_1 \beta_1 \beta_2 u_1 \cdot \ydiagram{1,1,1} + \alpha_1 (\alpha_1 + \beta_1) \beta_2 u_2 \cdot \ydiagram{1,1,1} + \alpha_1 (\alpha_1 + \beta_1) (\alpha_1 + \beta_2) u_3 \cdot \ydiagram{1,1,1} \right)
\\ & = u_2 u_1 \cdot \Bigl( \beta_1 \beta_2 \ydiagram{1,1,1} + \alpha_1 \beta_1 \beta_2 \left(\ydiagram{2,1,1} + \alpha_2 \ydiagram{3,1,1} + \alpha_2 \alpha_3 \ydiagram{4,1,1} \right)
\\ & \hspace{50pt} + \alpha_1 (\alpha_1 + \beta_1) \beta_2 \left( \beta_1 \ydiagram{2,2,1} + \alpha_2 \beta_1 \ydiagram{3,2,1} \right)
\\ & \hspace{50pt} + \alpha_1 (\alpha_1 + \beta_1) (\alpha_1 + \beta_2) \beta_1 \beta_2 \cdot \ydiagram{2,2,2} + \cdots \Bigr)
\allowdisplaybreaks
\\ & = u_2 \cdot \Bigl( \beta_1 \beta_2 \left( \ydiagram{2,1,1} + \alpha_2 \ydiagram{3,1,1} + \alpha_2 \alpha_3 \ydiagram{4,1,1} \right) + \alpha_1 \beta_1 \beta_2 \left(\ydiagram{3,1,1} + (\alpha_2 + \alpha_3) \ydiagram{4,1,1} \right)
\\ & \hspace{50pt} + \alpha_1 (\alpha_1 + \beta_1) \beta_1 \beta_2 \ydiagram{3,2,1} + \cdots \Bigr)
\\ & = u_2 \cdot \Bigl( \beta_1 \beta_2 \ydiagram{2,1,1} + (\alpha_1 + \alpha_2) \beta_1 \beta_2 \ydiagram{3,1,1} + (\alpha_2 \alpha_3 + \alpha_1 \alpha_2 + \alpha_1 \alpha_3) \beta_1 \beta_2 \ydiagram{4,1,1}
\\ & \hspace{50pt} + \alpha_1 (\alpha_1 + \beta_1) \beta_1 \beta_2 \ydiagram{3,2,1} + \cdots \Bigr)
\allowdisplaybreaks
\\ & = \beta_1 \beta_2 \ydiagram{2,2,1} + (\alpha_1 + \alpha_2) \beta_1 \beta_2 \ydiagram{3,2,1} + \cdots.
\end{align*}
As a consequence, we see that the (weak) Knuth relations do not hold for $\uu$.
\end{ex}

\begin{ex}
Let $\mu = \emptyset$ and $\ell(\lambda) \leq 2$.
We directly compute
\begin{align*}
{}_{[\bal,\bbe]} \bra{\mu} a_1 \ket{\lambda}_{[\bal,\bbe]} & =
\bra{\emptyset} a_1 e^{-H(A_{\lambda_1-1})} \psi_{\lambda_1-1} e^{H(\beta_1)} e^{H(A_{\lambda_1-1})} e^{-H(A_{\lambda_2-1})} \psi_{\lambda_2-2} e^{H(\beta_2)} e^{H(A_{\lambda_2-1})} \ket{-2}
\\ & = \bra{\emptyset} a_1 e^{-H(A_{\lambda_1-1})} \psi_{\lambda_1-1} e^{H(\beta_1)} e^{H(A_{[\lambda_2, \lambda_1)})} \psi_{\lambda_2-2}\ket{-2}
\allowdisplaybreaks
\\ & = \sum_{k=0}^{\infty} h_k(\beta_1, A_{[\lambda_2,\lambda_1)}) \cdot \bra{\emptyset} e^{-H(A_{\lambda_1-1})} \psi_{\lambda_1-1} \psi_{\lambda_2-3-k} \ket{-2}
\\ & \hspace{20pt} + \sum_{k=0}^{\infty} h_k(\beta_1, A_{[\lambda_2,\lambda_1)}) \cdot  \bra{\emptyset} e^{-H(A_{\lambda_1-1})} \psi_{\lambda_1-2} \psi_{\lambda_2-2-k}\ket{-2}
\allowdisplaybreaks
\\ & = \sum_{k=0}^{\infty} h_k(\beta_1, A_{[\lambda_2,\lambda_1)}) \cdot s_{(\lambda_1,\lambda_2-k-1)'}(-A_{\lambda_1-1})  
\\ & \hspace{20pt} + \sum_{k=0}^{\infty} h_k(\beta_1, A_{[\lambda_2,\lambda_1)}) \cdot s_{(\lambda_1-1,\lambda_2-k)'}(-A_{\lambda_1-1})
\allowdisplaybreaks
\\ & = \sum_{k=0}^{\infty} \prod_{i=1}^{\lambda_1-1} \alpha_i \cdot h_k(\beta_1, A_{[\lambda_2,\lambda_1)}) \cdot h_{\lambda_2-k}(\emptyset / A_{\lambda_1-1})
 = \prod_{i=1}^{\lambda_1-1} \alpha_i \cdot h_{\lambda_2}(\beta_1 / A_{\lambda_2-1})
\end{align*}
For $\lambda = (4,3)$, we obtain
\begin{align*}
{}_{[\bal,\bbe]} \bra{\mu} a_1 \ket{\lambda}_{[\bal,\bbe]} & = \alpha_1 \alpha_2 \alpha_3 \alpha_4 \bigl( \beta_1^3 - \beta_1^2 e_1(-\alpha_1, -\alpha_2) + \beta_1 e_2(-\alpha_1, -\alpha_2) \bigr)
\\ & = \alpha_1 \alpha_2 \alpha_3 \alpha_4 \bigl( \beta_1^3 + \beta_1^2 (\alpha_1 + \alpha_2) + \beta_1 \alpha_1 \alpha_2 \bigr)
\\ & = \alpha_1 \alpha_2 \alpha_3 \alpha_4 \beta_1 (\beta_1 + \alpha_1) (\beta_1 + \alpha_2).
\end{align*}
\end{ex}

\begin{remark}
Another way to see $\uu$ corresponds to the action of the current operator $a_i$ is to first note that $a_1 = \frac{d}{dt} [e^{H(t)}] \bigr\rvert_{t=0}$.
Therefore, in the expansion of ${}_{[\bal,\bbe]} \bra{\mu} a_1$, we can compute the coefficient of ${}_{[\bal,\bbe]} \bra{\lambda}$ by using~\eqref{eq:orthonormal_basis} and computing
\[
{}_{[\bal,\bbe]} \bra{\mu} a_1 \ket{\lambda}_{[\bal,\bbe]} = \frac{d}{dt} \left[ {}_{[\bal,\bbe]} \bra{\mu} e^{H(t)} \ket{\lambda}_{[\bal,\bbe]} \right] \Bigr\rvert_{t=0} = \frac{d}{dt} [ \dG_{\lambda/\mu}(t; \bal, \bbe) ] \Bigr\rvert_{t=0}.
\]
We can give a precise formula by using the combinatorial description given in~\cite[Def.~4.1]{HJKSS25} as a single marked reverse plane partition.
Indeed, in order for there to be a nonzero contribution, we can only have a single connected component such that the topmost-rightmost box contributes a $t$.
We leave the details to the interested reader.
\end{remark}

As Example~\ref{ex:dual_non_Knuth} demonstrated, the $\uu^{(\bal,\bbe)}$ operators do not satisfy the (weak) Knuth relations.
However, we can see in Example~\ref{ex:dual_non_Knuth} that $\uu^{(0,\bbe)}$ do, and in fact, this holds in general.
This is a straightforward direct computation that refines~\cite[Sec.~6.1]{Iwao20}.

\begin{lemma}
The operators $\uu^{(0,\bbe)}$ satisfy the Knuth relations.
\end{lemma}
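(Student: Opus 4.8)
The plan is to first make the $\bal=0$ specialization fully explicit, and then to verify each of~\eqref{eq:knuth_right}, \eqref{eq:knuth_left}, and~\eqref{eq:weak_relation} by a direct case analysis on an arbitrary partition. Setting $\bal=0$ in the defining recursion annihilates the entire sum, because every summand carries the factor $\alpha_{\mu_j+1}=0$; hence $u_j := u_j^{(0,\bbe)}$ has the closed form
\[
u_j \cdot \mu = \Bigl( \prod_{a=k}^{j-1} \beta_a \Bigr)\, \overline{\mu+\epsilon_j},
\qquad
k = k(j,\mu) := \min\{ a : \mu_a = \mu_j \},
\]
for every partition $\mu$, where $\overline{\mu+\epsilon_j}$ adjoins one box to each of the rows $k, k+1, \dotsc, j$ --- equivalently, it adjoins the contiguous segment $\{ (a, \mu_j+1) : k \le a \le j \}$ of the single column $\mu_j+1$. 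First I would record the \emph{local weight rule}: the scalar $\prod_{a=k}^{j-1} \beta_a$ equals $\prod\{ \beta_a : \text{a box is adjoined to both row } a \text{ and row } a+1 \}$. Since each $u_j$ carries a partition to a scalar multiple of a single partition, so does every composite of the $u_j$; therefore it suffices to apply both sides of a relation to an arbitrary partition $\mu$ and to check that the output partition and the accumulated scalar agree.

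For~\eqref{eq:knuth_right} and~\eqref{eq:knuth_left} I would fix indices with $i \ge j > k$ (resp.\ $i > j \ge k$) and $i-k \ge 2$, and compute $u_j u_i u_k \cdot \mu$ against $u_j u_k u_i \cdot \mu$ (resp.\ $u_i u_k u_j \cdot \mu$ against $u_k u_i u_j \cdot \mu$). The feature to highlight is that one should \emph{not} expect $u_i u_k \cdot \mu = u_k u_i \cdot \mu$: a column segment inserted by one operator can lengthen a row and thereby enlarge or shrink the maximal block of equal rows that the other operator pushes, and these intermediate partitions genuinely differ in general. What rescues the relation is the remaining factor $u_j$, applied \emph{last} with $k<j\le i$ in~\eqref{eq:knuth_right} and \emph{first} with $k\le j<i$ in~\eqref{eq:knuth_left}, which reconciles the two outputs. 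I would then split into cases according to the relative sizes of the relevant row lengths (whether $\mu_k$ and $\mu_i$ are equal, differ by one, or differ by more) and how far down the block pushed by $u_i$ descends (to row $j$, or past it to row $k$); in each case one lists the column segments that get adjoined, and the local weight rule turns the comparison into an elementary rearrangement of the $\beta_a$. Relation~\eqref{eq:weak_relation} is treated the same way but is much shorter, only rows $i$, $i+1$ and the equal-length blocks directly above them being involved.

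The one genuine difficulty is this case bookkeeping: tracking how each adjoined column segment perturbs the block structure seen by the next operator, and then confirming --- via the local weight rule --- that the multiset $\{ \beta_a : \text{rows } a \text{ and } a+1 \text{ both receive a box}\}$ is the same on the two sides, whence both the resulting partition and the scalar match. This is exactly the computation of~\cite[Sec.~6.1]{Iwao20} for the single-parameter operators $u_j^{(0,\beta)}$, carried out now with $\beta_1, \beta_2, \dotsc$ kept distinct; since the relations are polynomial identities in the $\beta_a$, the refined statement does not formally follow from the specialized one, but no new idea is needed. I expect the verification to be routine though somewhat lengthy, which is why stating the lemma without a written-out proof is reasonable.
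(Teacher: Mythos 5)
Your proposal is correct and follows the same route the paper intends: make the $\bal = 0$ specialization explicit as $u_j \cdot \mu = \bigl(\prod_{a=k}^{j-1}\beta_a\bigr)\,\overline{\mu+\epsilon_j}$ and verify each Knuth relation by a direct case analysis on the block structure of $\mu$, which is precisely what the paper's one-line citation to~\cite[Sec.~6.1]{Iwao20} defers to. Your local weight rule is a useful invariant for reconciling the scalars, and your remark that $u_i$ and $u_k$ may fail to commute even when $|i-k| \geq 2$ correctly isolates the only subtlety worth flagging.
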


\begin{thm}
\label{thm:noncommutative_pushing}
Recall ${}_{[\bbe]} \bra{\lambda} = {}_{[0,\bbe]} \bra{\lambda}$.
We have
\[
{}_{[\bbe]} \bra{\lambda} S_{\mu}(a_1, a_2, \ldots) \equiv {}_{[\bbe]} \bra{s_{\mu}(\uu^{(0,\bbe)}) \cdot \lambda} \mod{M_{[\bbe]}^{\ell\perp}}.
\]
\end{thm}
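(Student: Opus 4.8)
The plan is to run the proof of Theorem~\ref{thm:noncommutative_blocking} essentially verbatim, with $\bal = 0$ and with the dual basis $\ket{\lambda}_{[\bbe]}$ (reverse plane partitions) in place of $\ket{\lambda}^{[\bal,\bbe]}$. By the preceding Lemma the operators $\uu^{(0,\bbe)}$ satisfy the Knuth relations, so the Fomin--Greene theory makes the noncommutative Schur functions $s_\mu(\uu^{(0,\bbe)})$ well defined, pairwise commuting, and subject to the Jacobi--Trudi determinant identity. It therefore suffices to prove the elementary case
\[
{}_{[\bbe]}\bra{\lambda}\, E_i(a_1, a_2, \ldots) \equiv {}_{[\bbe]}\bra{e_i(\uu^{(0,\bbe)}) \cdot \lambda} \pmod{M^{\ell\perp}_{[\bbe]}}, \qquad i \geq 0,
\]
and then to recover general $\mu$ by taking the dual Jacobi--Trudi determinant on both sides: the $E_i(a)$ commute because the positive current operators commute, the $e_i(\uu^{(0,\bbe)})$ commute by Fomin--Greene, and the congruence is preserved along the way because $\uu^{(0,\bbe)}$ never decreases the number of rows of a partition, so $M^{\ell\perp}_{[\bbe]}$ is stable under it. The conceptual difference from the blocking case is worth isolating at the start: unwinding the definition of ${}_{[\bbe]}\bra{\lambda}$ with $\bal = 0$ presents it as a product of operators $\psi^*_{\lambda_k - k}$ separated by half-vertex operators $e^{-H(\beta_k)}$ acting on $\bra{-\ell}$, and by~\eqref{eq:ea_commute} each $e^{-H(\beta_k)}$ commutes with every $a_i$ for $i > 0$. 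Hence \emph{no} $-p_i(\bbe)$ correction term appears, which is exactly why the answer is $s_\mu(\uu^{(0,\bbe)})$ with no ``supersymmetric'' $/\bbe$ (contrast Theorem~\ref{thm:noncommutative_blocking}, whose closing plethystic manipulation is accordingly not needed here).

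For the elementary case I would compute ${}_{[\bbe]}\bra{\lambda}\, a_i$ by commuting $a_i$ leftward through that product, using $[a_i, \psi^*_m] = -\psi^*_{m+i}$ (which follows from~\eqref{eq:seH_commute}) and $[a_i, e^{-H(\beta_k)}] = 0$. Each of the $\ell$ factors $\psi^*_{\lambda_k - k}$ contributes a term $\psi^*_{(\lambda_k + i) - k}$, giving $\sum_{k=1}^{\ell} {}_{[\bbe]}\bra{\lambda + i\epsilon_k}_{\lambda}$; the single leftover term, in which $a_i$ has been carried past all $\ell$ of the $\psi^*$'s onto $\bra{-\ell}$, lies in the $(\ell{+}1)$-particle sector and hence in $M^{\ell\perp}_{[\bbe]}$. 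Since $\bal = 0$ the subscript $\sigma$ is immaterial and $\sigma$-rectification~\eqref{eq:sigma_rectification} is trivial, so iterating the above shows ${}_{[\bbe]}\bra{\lambda}\, E_i(a_1, a_2, \ldots) \equiv \sum_{\abs{K} = i} {}_{[\bbe]}\bra{\lambda + \epsilon_K}_{\lambda}$ modulo $M^{\ell\perp}_{[\bbe]}$ (with no $\bbe$-augmentation, unlike the blocking case). Finally, repeated use of the $\lambda$-rectification~\eqref{eq:push_rectification} and Lemma~\ref{lemma:dual_op_action} identifies this subset sum with ${}_{[\bbe]}\bra{e_i(\uu^{(0,\bbe)}) \cdot \lambda}$ --- the pushing analogue of Lemma~\ref{lemma:big_rectify_subset} --- which is the displayed elementary identity; feeding it into the Fomin--Greene/Jacobi--Trudi step above completes the proof.

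The step I expect to be the main obstacle is this pushing analogue of Lemma~\ref{lemma:big_rectify_subset}: showing that the cascade of rectifications from~\eqref{eq:sigma_rectification} and~\eqref{eq:push_rectification} reproduces, with exactly the right $\beta$-weights, the recursively-defined action of $e_i(\uu^{(0,\bbe)})$. This refines the $\bal = 0$, scalar-$\bbe$ computation of~\cite[Sec.~6.1]{Iwao20}, and the bookkeeping of merged boxes and ribbons --- together with keeping track of precisely which terms fall into $M^{\ell\perp}_{[\bbe]}$ --- is where the care is needed. Two smaller points also have to be checked: that $M^{\ell\perp}_{[\bbe]}$ is genuinely stable under all the operators involved, so that the Jacobi--Trudi passage from the elementary case to general $\mu$ is legitimate, and that the leftover $(\ell{+}1)$-particle term indeed annihilates every $\ket{\mu}_{[\bbe]}$ with $\ell(\mu) \leq \ell$.
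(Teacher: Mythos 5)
Your proposal matches the paper's proof: reduce to the elementary case via Fomin--Greene, compute ${}_{[\bbe]}\bra{\lambda}\,a_i$ by commuting $a_i$ leftward (the $e^{-H(\beta_k)}$ factors commute with $a_i$ for $i>0$, so no $-p_i(\bbe)$ correction arises as in the blocking case), then apply Lemma~\ref{lemma:dual_op_action}. The paper's version is terser --- it simply cites the commutation fact and asserts the conclusion from the Lemma --- so your proposal is a more explicit rendering of the same argument, with the subset-sum and $(\ell{+}1)$-sector bookkeeping you flag for verification being precisely the unstated steps.
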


\begin{proof}
Similarly to the proof of Theorem~\ref{thm:noncommutative_blocking}, it suffices to prove the theorem for the case when $s_\lambda=e_i$ ($i=0,1,2,\dots$):
\[
{}_{[\bbe]}\bra{\lambda} E_i(a_1, a_2, \ldots) = {}_{[\bbe]} \bra{e_i(\uu^{(0,\bbe)}) \cdot \lambda}.
\]
To do so, we first compute
\[
{}_{[\bbe]}\bra{\lambda} a_i = {}_{[\bbe]}\bra{\lambda} P_i(a_1, a_2, \ldots) \equiv \sum_{j=1}^{\ell} v(\lambda + i \epsilon_j, \lambda) \mod{M_{[\bbe]}^{\ell\perp}}
\]
from the fact $[e^{H(\gamma)}, a_i] = 0$ for all $i > 0$.
Hence, Lemma~\ref{lemma:dual_op_action} implies
\[
{}_{[\bbe]}\bra{\lambda} E_i(a_1, a_2, \ldots) = \sum_{j_1 < \cdots < j_i} {}_{[\bbe]} \bra{u_{j_i}^{(0,\bbe)} \cdots u_{j_1}^{(0,\bbe)} \cdot \lambda}, 
\]
and the claim follows.
\end{proof}

\section{Operator dynamics}
\label{sec:operator_dynamics}

In this section, we describe the dynamics particle processes of Dieker and Warren~\cite{DW08} using the deformed Schur operators $U_i^{(0,\bbe)}$ and $u_i^{(0, \bbe)}$ defined in Section~\ref{sec:noncomm_operators} acting on the corresponding state of free fermions.

As we will see below, the geometric distribution will correspond to using homogeneous (noncommutative) symmetric functions in terms of these operators, whereas the Bernoulli distribution uses the elementary symmetric functions.

For this section, our proof of Theorem~\ref{thm:transition_prob} will consist of showing the claim for a single time step.
The general case will follow from the branching rules (Proposition~\ref{prop:branching_rules}) and the Markov property, where both of these say the general case can be described as a product of the single steps summed over all possible routes from $\mu$ to $\lambda$.
In more detail, consider Case~A as an example, but all other cases are similar.
We assume 
\begin{align}
\displaystyle \mathsf{P}(Y_{n}^{\mathrm{A}}=\lambda|Y_{0}^{\mathrm{A}}=\mu
)=\prod_{j=1}^{\ell} \prod_{i=1}^n (1-\pi_j x_i)
\boldsymbol{\pi}^{\lambda/\mu} g_{\lambda/\mu}(x_1,\dots,x_n;\boldsymbol{\pi}^{-1}),
\label{inductionPA}
\end{align}
and show $n$ replaced by $n+1$ holds.
In the next subsection, we show
\begin{align}
\mathsf{P}(Y_k^{\mathrm{A}}=\lambda|Y_{k-1}^{\mathrm{A}}=\mu)=
\prod_{j=1}^\ell (1-\pi_j x_k) \boldsymbol{\pi}^{\lambda/\mu} g_{\lambda/\mu}(x_k;\boldsymbol{\pi}^{-1}),
\label{inductionPAinitial}
\end{align}
which corresponds to $n=1$ case of \eqref{inductionPA}.
By the Markov property, the transition probabilities satisfy
\begin{align}
\mathsf{P}(Y_{n+1}^{\mathrm{A}}=\lambda|Y_{0}^{\mathrm{A}}=\mu
)=
\sum_{ \mu  \subseteq \nu \subseteq \lambda }
\mathsf{P}(Y_{n+1}^{\mathrm{A}}=\lambda|Y_{n}^{\mathrm{A}}=\nu)
\mathsf{P}(Y_n^{\mathrm{A}}=\nu|Y_{0}^{\mathrm{A}}=\mu). \label{PAMarkov}
\end{align}
Inserting \eqref{inductionPA} and \eqref{inductionPAinitial} into the right hand side of \eqref{PAMarkov}
and applying the branching rule for the refined dual Grothendieck polynomials, we have
\begin{align}
\mathsf{P}(Y_{n+1}^{\mathrm{A}}=\lambda|Y_{0}^{\mathrm{A}}=\mu
)=&
\sum_{ \mu  \subseteq \nu \subseteq \lambda }
\prod_{j=1}^\ell (1-\pi_j x_{n+1}) \boldsymbol{\pi}^{\lambda/\nu} g_{\lambda/\nu}(x_{n+1};\boldsymbol{\pi}^{-1})
\nonumber \\
&\times \prod_{j=1}^{\ell} \prod_{i=1}^n (1-\pi_j x_i)
\boldsymbol{\pi}^{\nu/\mu} g_{\nu/\mu}(x_1,\dots,x_n;\boldsymbol{\pi}^{-1}) \nonumber \\
=&\prod_{j=1}^{\ell} \prod_{i=1}^{n+1} (1-\pi_j x_i)  \boldsymbol{\pi}^{\lambda/\mu} 
\sum_{ \mu  \subseteq \nu \subseteq \lambda }
g_{\lambda/\nu}(x_{n+1};\boldsymbol{\pi}^{-1})
g_{\nu/\mu}(x_1,\dots,x_n;\boldsymbol{\pi}^{-1}) \nonumber \\
=&\prod_{j=1}^{\ell} \prod_{i=1}^{n+1} (1-\pi_j x_i)  \boldsymbol{\pi}^{\lambda/\mu} 
g_{\lambda/\mu}(x_1,\dots,x_n,x_{n+1};\boldsymbol{\pi}^{-1}),
\end{align}
which completes the induction.

\subsection{Pushing operators}

Suppose the particles are at positions given by the partition $\mu$.
Then it is easy to see that the action $u_j \cdot \mu$ corresponds to the $j$-th particle trying to move one step to the right at time $i$.
Indeed, if the $j$-th particle is also at a site containing smaller particles, then taking the smallest partition containing $\mu + \epsilon_j$ corresponds to pushing the smaller particles.
More specifically, if this pushes $s$ particles, then we obtain the scalar $\beta_{j-s} \cdots \beta_{j-1}$ (if $s = 0$, then it just is the resulting partition).

\begin{ex}
Consider $4$ particles.
The action
\[
\ytableausetup{boxsize=0.7em}%
u_2 u_4 \cdot \ydiagram{4,1,1,1}
=
\beta_2 \beta_3 u_2 \cdot \ydiagram{4,2,2,2}
=
\beta_2 \beta_3  \cdot \ydiagram{4,3,2,2}
\]
is identified with the particle motion
\[
\begin{tikzpicture}[scale=0.7,baseline=0cm]
\draw[-] (-0.2,0) -- (4.2,0);
\foreach \x in {0,...,4} {
  \draw[-] (\x,-.1) -- (\x,+.1);
  \draw (\x, 0) node[below] {\tiny $\x$};
}
\draw (4.7, 0) node {$\cdots$};
\fill[darkred] (4,0.1) circle (0.1);
\fill[blue] (1,0.1) circle (0.1);
\fill[UQpurple] (1,0.3) circle (0.1);
\fill[dgreencolor] (1,0.5) circle (0.1);
\end{tikzpicture}
\longmapsto
\begin{tikzpicture}[scale=0.7,baseline=0cm]
\draw[-] (-0.2,0) -- (4.2,0);
\foreach \x in {0,...,4} {
  \draw[-] (\x,-.1) -- (\x,+.1);
  \draw (\x, 0) node[below] {\tiny $\x$};
}
\draw (4.7, 0) node {$\cdots$};
\fill[darkred] (4,0.1) circle (0.1);
\fill[blue] (2,0.1) circle (0.1);
\fill[UQpurple] (2,0.3) circle (0.1);
\fill[dgreencolor] (2,0.5) circle (0.1);
\draw[->,blue] (1.2,0+.1) -- (1.8,0+.1);
\draw[->,UQpurple] (1.2,0+.3) -- (1.8,0+.3);
\end{tikzpicture}
\longmapsto
\begin{tikzpicture}[scale=0.7,baseline=0cm]
\draw[-] (-0.2,0) -- (4.2,0);
\foreach \x in {0,...,4} {
  \draw[-] (\x,-.1) -- (\x,+.1);
  \draw (\x, 0) node[below] {\tiny $\x$};
}
\draw (4.7, 0) node {$\cdots$};
\fill[darkred] (4,0.1) circle (0.1);
\fill[blue] (3,0.1) circle (0.1);
\fill[UQpurple] (2,0.1) circle (0.1);
\fill[dgreencolor] (2,0.3) circle (0.1);
\end{tikzpicture},
\]
where the arrows denote the particle being pushed.
\end{ex}

We rewrite the action of our noncommutative operators to match the form of Theorem~\ref{thm:transition_prob}.

\begin{lemma}
\label{lemma:push_op_to_prob}
Let $\bbe = \bpi^{-1}$ and $u_j = u_j^{(0, \pi^{-1})}$.
Then for any $(j_1, j_2, \dotsc, j_k)$, we have
\[
u_{j_1} u_{j_2} \cdots u_{j_k} \cdot \mu = \frac{\pi_{j_1} \pi_{j_2} \cdots \pi_{j_k}}{\bpi^{\lambda / \mu}} \cdot \lambda.
\]
\end{lemma}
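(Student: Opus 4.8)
The plan is to first observe that taking $\bal = 0$ collapses the recursive definition of $u_j^{(\bal,\bbe)}$ to a single term. Indeed, the entire sum $\alpha_{\mu_j+1}\sum_{i=k}^{j}(\cdots)$ in that definition carries the factor $\alpha_{\mu_j+1} = 0$, so
\[
u_j^{(0,\bbe)}\cdot\mu = \Bigl(\prod_{a=k}^{j-1}\beta_a\Bigr)\,\overline{\mu+\epsilon_j},
\]
where $k$ is minimal with $\mu_k = \mu_j$, and the empty product (the case $k=j$, i.e.\ $\mu_j < \mu_{j-1}$) is $1$. In particular $u_j^{(0,\bbe)}$ sends every partition to a scalar multiple of a single partition, so by iteration $u_{j_1}\cdots u_{j_k}\cdot\mu$ is a scalar times the partition $\lambda$ obtained from $\mu$ by successively replacing the current partition $\eta$ by $\overline{\eta+\epsilon_{j_\bullet}}$, working right to left through the indices $j_k,\dotsc,j_1$. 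This $\lambda$ is precisely the partition on the right-hand side of the claimed identity, and in particular $\mu\subseteq\lambda$, so $\bpi^{\lambda/\mu}$ is well defined.

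Next I would identify the one-step scalar after the substitution $\beta_a = \pi_a^{-1}$. Writing $\nu := \overline{\mu+\epsilon_j}$, the skew shape $\nu/\mu$ consists of exactly one box in each of the rows $k, k+1, \dotsc, j$ (the rows of $\mu$ equal to $\mu_j$), so $\bpi^{\nu/\mu} = \prod_{r=k}^{j}\pi_r$, and hence the scalar $\prod_{a=k}^{j-1}\beta_a = \prod_{a=k}^{j-1}\pi_a^{-1}$ equals $\pi_j/\bpi^{\nu/\mu}$. This gives
\[
u_j^{(0,\pi^{-1})}\cdot\mu = \frac{\pi_j}{\bpi^{\nu/\mu}}\,\nu,
\]
which is exactly the $k=1$ case of the lemma.

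The general case then follows by induction on $k$, the only extra ingredient being the multiplicativity $\bpi^{\lambda/\mu} = \bpi^{\lambda/\eta}\,\bpi^{\eta/\mu}$ for any chain $\mu\subseteq\eta\subseteq\lambda$, which holds because the boxes of $\lambda/\mu$ are the disjoint union of those of $\eta/\mu$ and those of $\lambda/\eta$. Concretely, I would apply the inductive hypothesis to obtain $u_{j_2}\cdots u_{j_k}\cdot\mu = \frac{\pi_{j_2}\cdots\pi_{j_k}}{\bpi^{\eta/\mu}}\,\eta$ for the appropriate $\eta$, then apply the one-step formula $u_{j_1}\cdot\eta = \frac{\pi_{j_1}}{\bpi^{\lambda/\eta}}\,\lambda$ with $\lambda = \overline{\eta+\epsilon_{j_1}}$, and combine:
\[
u_{j_1}\cdots u_{j_k}\cdot\mu = \frac{\pi_{j_1}\pi_{j_2}\cdots\pi_{j_k}}{\bpi^{\eta/\mu}\,\bpi^{\lambda/\eta}}\,\lambda = \frac{\pi_{j_1}\pi_{j_2}\cdots\pi_{j_k}}{\bpi^{\lambda/\mu}}\,\lambda.
\]

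I do not expect a genuine obstacle here; the work is purely bookkeeping. The one point that warrants care is that the block $[k,j]$ of equal rows (equivalently, the value of $k$) generally changes from step to step as boxes get filled in, so it is cleanest to reason one operator application at a time rather than trying to describe all the added boxes in a single pass. It is also worth noting explicitly that $\lambda$ on the right-hand side is, by construction, the unique partition supporting $u_{j_1}\cdots u_{j_k}\cdot\mu$, and that the degenerate case $k=j$ (empty product equal to $1$) is consistent with $\bpi^{\nu/\mu} = \pi_j$.
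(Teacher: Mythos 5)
Your proposal is correct and follows essentially the same approach as the paper: reduce to the single-operator case $k=1$ via the multiplicativity $\bpi^{\lambda/\mu}=\bpi^{\lambda/\nu}\bpi^{\nu/\mu}$, then observe that at $\bal=0$ the operator $u_j$ sends $\mu$ to $\beta_{k}\cdots\beta_{j-1}\cdot\overline{\mu+\epsilon_j}$ and match the scalar to $\pi_j/\bpi^{\nu/\mu}$. You are a bit more explicit than the paper about why the recursive term of $u_j^{(\bal,\bbe)}$ vanishes at $\bal=0$, but the substance is identical.
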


\begin{proof}
It is sufficient to prove this when $k = 1$ since $\bpi^{\lambda / \mu} = \bpi^{\lambda/\nu} \bpi^{\nu / \mu}$ for any $\nu$.
By definition, $U_j \cdot \mu = \beta_{j-s} \cdots \beta_{j-2} \beta_{j-1} \cdot \lambda$ for some $s$ (that is determined by $\mu$).
Since $\bpi^{\lambda / \mu} = \pi_{j-s} \cdots \pi_{j-2} \pi_{j-1} \pi_j \cdot \lambda$, we have $u_j \cdot \mu = \frac{\pi_j}{\bpi^{\lambda/\mu}} \cdot \lambda$ as desired.
\end{proof}

Now let us consider the Case~A transition probability for a single time step $\prob_A(\lambda|\mu)$ at time $i$.
We can write this as
\begin{equation}
\label{eq:single_transition_A}
\prob_A(\lambda | \mu) = \pi_{j_1} \pi_{j_2} \cdots \pi_{j_k} x_i^k \prod_{j=1}^{\infty} (1 - \pi_j x_i),
\end{equation}
where $k$ is the number of columns in $\lambda / \mu$ and $j_1 \geq j_2 \geq \cdots \geq j_k$ (which is necessarily unique).
To match the notation in Theorem~\ref{thm:transition_prob}, we specialize $\bbe = \bpi^{-1}$.
Since we update the particles from largest-to-smallest, the above discussion yields that we can write our time evolution with $\bpi = 1$ as
\[
\mcT_A = \sum_{k=0}^{\infty} h_k(x_i \uu) = \sum_{k=0}^{\infty} x_i^k h_k(\uu),
\]
where we have scaled all the $\uu$ operators by $x_i$ to introduce our time-dependent parameters.
Indeed, if we apply $h_k(x_i \uu)$, then in terms of the particle dynamics, we are moving all particle a total number of $k$ steps with probability~\eqref{eq:single_transition_A}.
To see again the particle-dependent parameters for the time evolution, if we bring the $\mcT_A$ inside the matrix coefficient, we must factor out the total position change factors $\bpi^{\lambda/\mu}$ as this cancels with the $\pi_{j_1}^{-1} \cdots \pi_{j_k}^{-1}$ by applying the various $u_j$ to $\mu$ until we get $\lambda$ (see~\eqref{eq:uj_action_expansion}).
As an example, $u_j \cdot \mu = \pi_{j-s} \cdots \pi_j \cdot {}_{[\bpi^{-1}]} \bra{u_j \cdot \mu}$.
Therefore, we can write the transition probability~\eqref{eq:single_transition_A} as
\begin{equation}
\label{eq:prob_A_operator}
\begin{aligned}
\prob_A(\lambda|\mu) & =  \bpi^{\lambda/\mu} \prod_{j=1}^{\infty} (1 - \pi_j x_i) \cdot {}_{[\bpi^{-1}]} \braket{\mcT_A \cdot \mu}{\lambda}_{[\bpi^{-1}]}
\\ & = \bpi^{\lambda/\mu} \prod_{j=1}^{\infty} (1 - \pi_j x_i) \sum_{k=0}^{\infty} x_i^k \cdot {}_{[\bpi^{-1}]} \braket{h_k(\uu) \cdot \mu}{\lambda}_{[\bpi^{-1}]}.
\end{aligned}
\end{equation}
Alternatively we can see~\eqref{eq:prob_A_operator} by noting in the second line, only the term $u_{j_1} u_{j_2} \cdots u_{j_k}$ is nonzero in the pairing by~\eqref{eq:orthonormal_basis} and taking this together with Lemma~\ref{lemma:push_op_to_prob}.
Next, we apply Theorem~\ref{thm:noncommutative_pushing},~\eqref{eq:exp_current_sum}, and~\eqref{eq:free_fermion_grothendiecks} to rewrite Equation~\eqref{eq:prob_A_operator} as
\begin{align*}
\prob_A(\lambda|\mu) & = \bpi^{\lambda/\mu} \prod_{j=1}^{\infty} (1 - \pi_j x_i) \sum_{k=0}^{\infty} x_i^k \cdot {}_{[\bpi^{-1}]} \bra{\mu} H_k(a_1, a_2, \dotsc) \ket{\lambda}_{[\bpi^{-1}]}
\\ & = \bpi^{\lambda/\mu} \prod_{j=1}^{\infty} (1 - \pi_j x_i) \cdot {}_{[\bpi^{-1}]} \bra{\mu} e^{H(x_i)} \ket{\lambda}_{[\bpi^{-1}]}
 = \bpi^{\lambda/\mu} \prod_{j=1}^{\infty} (1 - \pi_j x_i) \dG_{\lambda/\mu}(\xx; \bpi^{-1}).
\end{align*}
This is precisely the claim of Theorem~\ref{thm:transition_prob} for a single time step.

\begin{ex}
\label{ex:geometric_op_push}
Let us consider the case when at most three particles move, so we can restrict ourselves to $\uu_3 = (u_1, u_2, u_3)$.
This is equivalent to setting $\pi_j = 0$ for all $j > 3$ or considering the case with exactly three particles in the system.
So the first noncommutative homogeneous symmetric functions are
\begin{equation}
\label{eq:ncsym_h123}
\begin{gathered}
h_1(\uu_3) = u_1 + u_2 + u_3,
\qquad\qquad
h_2(\uu_3) = u_1^2 + u_1 u_2 + u_1 u_3 + u_2^2 + u_2 u_3 + u_3^2,
\\
h_3(\uu_3) = u_1^{3} + u_1^{2} u_2 + u_1^{2} u_3 + u_1 u_2^{2} + u_1 u_2 u_3 + u_1 u_3^{2} + u_2^{3}+ u_2^{2} u_3 + u_2 u_3^{2} + u_3^{3}.
\end{gathered}
\end{equation}
Let us take $\mu = (1,1) = \ydiagram{1,1}$, and we compute
\begin{align*}
h_1(\uu_3) \cdot \lambda & = \ydiagram{2,1}+ \pi_1^{-1} \ydiagram{2,2} + \ydiagram{1,1,1}\,,
\\
h_2(\uu_3) \cdot \lambda & = \ydiagram{3,1} + \pi_1^{-1} \ydiagram{3,2} + \ydiagram{2,1,1} + \pi_1^{-2} \ydiagram{3,3} + \pi_1^{-1} \ydiagram{2,2,1}  + \pi_1^{-1} \pi_2^{-1} \ydiagram{2,2,2}\,,
\\
h_3(\uu_3) \cdot \lambda & = \ydiagram{4,1} + \pi_1^{-1} \ydiagram{4,2} + \ydiagram{3,1,1} + \pi_1^{-2} \ydiagram{4,3} + \pi_1^{-1} \ydiagram{3,2,1}  + \pi_1^{-1} \pi_2^{-1} \ydiagram{3,2,2}
\\ & \hspace{20pt} + \pi_1^{-3} \ydiagram{4,4} + \pi_1^{-2} \ydiagram{3,3,1} + \pi_1^{-2} \pi_2^{-1} \ydiagram{3,3,2} + \pi_1^{-2} \pi_2^{-2} \ydiagram{3,3,3}\,.
\end{align*}
Next, we apply Theorem~\ref{thm:noncommutative_pushing} and~\eqref{eq:exp_current_sum} to compute
\begin{align*}
{}_{[\bpi^{-1}]} \bra{\mu} e^{H(x_i)} & = {}_{[\bpi^{-1}]} \bra{\mu} H_1(a_1, a_2, \ldots) + x_i \cdot {}_{[\bpi^{-1}]} \bra{\mu} H_1(a_1, a_2, \ldots)
\\ & \hspace{20pt} + x_i^2 \cdot {}_{[\bpi^{-1}]} \bra{\mu} H_2(a_1, a_2, \ldots) + x_i^3 \cdot {}_{[\bpi^{-1}]} \bra{\mu} H_3(a_1, a_2, \ldots) + \cdots
\allowdisplaybreaks
\\ & = {}_{[\bpi^{-1}]} \bra{\mu} + x_i \cdot {}_{[\bpi^{-1}]} \bra{h_1(\uu_3) \cdot \mu} + x_i^2 \cdot {}_{[\bpi^{-1}]} \bra{h_2(\uu_3) \cdot \mu} + x_i^3 \cdot {}_{[\bpi^{-1}]} \bra{h_3(\uu_3) \cdot \mu} + \cdots
\allowdisplaybreaks
\\ & = {}_{[\bpi^{-1}]} \bra{1,1} + x_i \cdot {}_{[\bpi^{-1}]} \bra{2,1} + x_i \cdot {}_{[\bpi^{-1}]} \bra{1,1,1}
\\ & \hspace{20pt} + x_i^2  \cdot {}_{[\bpi^{-1}]} \bra{3,1} + \frac{x_i}{\pi_1} \cdot {}_{[\bpi^{-1}]} \bra{2,2} + x_i^2 \cdot {}_{[\bpi^{-1}]} \bra{2,1,1}
\\ & \hspace{20pt} + x_i^3  \cdot {}_{[\bpi^{-1}]} \bra{4,1} + \frac{x_i^2}{\pi_1} \cdot {}_{[\bpi^{-1}]} \bra{3,2} + x_i^3 \cdot {}_{[\bpi^{-1}]} \bra{3,1,1} + \frac{x_i^2}{\pi_1} \cdot {}_{[\bpi^{-1}]} \bra{2,2,1}
+ \cdots
\allowdisplaybreaks
\\ & = {}_{[\bpi^{-1}]} \bra{1,1} + \frac{\pi_1 x_i }{\bpi^{(2,1)/\mu}} \cdot {}_{[\bpi^{-1}]} \bra{2,1} + \frac{\pi_3 x_i }{\bpi^{(1,1,1)/\mu}} \cdot {}_{[\bpi^{-1}]} \bra{1,1,1}
\\ & \hspace{20pt} + \frac{(\pi_1 x_i)^2}{\bpi^{(3,1)/\mu}}  \cdot {}_{[\bpi^{-1}]} \bra{3,1} + \frac{\pi_2 x_i}{\bpi^{(2,2)/\mu}} \cdot {}_{[\bpi^{-1}]} \bra{2,2} + \frac{(\pi_1 x_i)(\pi_2 x_i)}{\bpi^{(2,1,1)/\mu}} \cdot {}_{[\bpi^{-1}]} \bra{2,1,1}
\\ & \hspace{20pt} + \frac{(\pi_1 x_i)^3}{\bpi^{(4,1)/\mu}}  \cdot {}_{[\bpi^{-1}]} \bra{4,1} + \frac{(\pi_1 x_i)(\pi_2 x_i)}{\bpi^{(3,2)/\mu}} \cdot {}_{[\bpi^{-1}]} \bra{3,2}
\\ & \hspace{20pt} + \frac{(\pi_1 x_i)^2(\pi_3 x_i)}{\bpi^{(3,1,1)/\mu}} \cdot {}_{[\bpi^{-1}]} \bra{3,1,1} + \frac{(\pi_2 x_i)(\pi_3 x_i)}{\bpi^{(2,2,1)/\mu}} \cdot {}_{[\bpi^{-1}]} \bra{2,2,1}
+ \cdots
\end{align*}
Here, we have given all of the terms ${}_{[\bpi^{-1}]}\bra{\lambda}$ with $\abs{\lambda} \leq 5$.
Ignoring the normalization constant $C = \prod_{j=1}^3 (1 - \pi_j x_i)$, we see that all possible configurations and their probabilities (multiplied by $C$) we can obtain from moving three particles from $\mu$ such that the total distance the particles move from the step initial condition is at most $5$ are
\begin{align*}
1 & \cdot 
\begin{tikzpicture}[scale=0.7,baseline=0cm]
\draw[-] (-0.2,0) -- (4.2,0);
\foreach \x in {0,...,4} {
  \draw[-] (\x,-.1) -- (\x,+.1);
  \draw (\x, 0) node[below] {\tiny $\x$};
}
\draw (4.7, 0) node {$\cdots$};
\fill[darkred] (1,0.1) circle (0.1);
\fill[blue] (1,0.3) circle (0.1);
\fill[UQpurple] (0,0.1) circle (0.1);
\end{tikzpicture}
&
\pi_1 x_i & \cdot 
\begin{tikzpicture}[scale=0.7,baseline=0cm]
\draw[-] (-0.2,0) -- (4.2,0);
\foreach \x in {0,...,4} {
  \draw[-] (\x,-.1) -- (\x,+.1);
  \draw (\x, 0) node[below] {\tiny $\x$};
}
\draw (4.7, 0) node {$\cdots$};
\fill[darkred] (2,0.1) circle (0.1);
\fill[blue] (1,0.1) circle (0.1);
\fill[UQpurple] (0,0.1) circle (0.1);
\end{tikzpicture}
\\
\pi_3 x_i & \cdot 
\begin{tikzpicture}[scale=0.7,baseline=0cm]
\draw[-] (-0.2,0) -- (4.2,0);
\foreach \x in {0,...,4} {
  \draw[-] (\x,-.1) -- (\x,+.1);
  \draw (\x, 0) node[below] {\tiny $\x$};
}
\draw (4.7, 0) node {$\cdots$};
\fill[darkred] (1,0.1) circle (0.1);
\fill[blue] (1,0.3) circle (0.1);
\fill[UQpurple] (1,0.5) circle (0.1);
\end{tikzpicture}
&
(\pi_1 x_i)^2 & \cdot 
\begin{tikzpicture}[scale=0.7,baseline=0cm]
\draw[-] (-0.2,0) -- (4.2,0);
\foreach \x in {0,...,4} {
  \draw[-] (\x,-.1) -- (\x,+.1);
  \draw (\x, 0) node[below] {\tiny $\x$};
}
\draw (4.7, 0) node {$\cdots$};
\fill[darkred] (3,0.1) circle (0.1);
\fill[blue] (1,0.1) circle (0.1);
\fill[UQpurple] (0,0.1) circle (0.1);
\end{tikzpicture}
\\
\pi_2 x_i & \cdot 
\begin{tikzpicture}[scale=0.7,baseline=0cm]
\draw[-] (-0.2,0) -- (4.2,0);
\foreach \x in {0,...,4} {
  \draw[-] (\x,-.1) -- (\x,+.1);
  \draw (\x, 0) node[below] {\tiny $\x$};
}
\draw (4.7, 0) node {$\cdots$};
\fill[darkred] (2,0.1) circle (0.1);
\fill[blue] (2,0.3) circle (0.1);
\fill[UQpurple] (0,0.1) circle (0.1);
\draw[->,darkred] (1.2,0+.1) -- (1.8,0+.1);
\end{tikzpicture}
&
(\pi_1 x_i) (\pi_3 x_i) & \cdot 
\begin{tikzpicture}[scale=0.7,baseline=0cm]
\draw[-] (-0.2,0) -- (4.2,0);
\foreach \x in {0,...,4} {
  \draw[-] (\x,-.1) -- (\x,+.1);
  \draw (\x, 0) node[below] {\tiny $\x$};
}
\draw (4.7, 0) node {$\cdots$};
\fill[darkred] (2,0.1) circle (0.1);
\fill[blue] (1,0.1) circle (0.1);
\fill[UQpurple] (1,0.3) circle (0.1);
\end{tikzpicture}
\\
(\pi_1 x_i)^3 & \cdot 
\begin{tikzpicture}[scale=0.7,baseline=0cm]
\draw[-] (-0.2,0) -- (4.2,0);
\foreach \x in {0,...,4} {
  \draw[-] (\x,-.1) -- (\x,+.1);
  \draw (\x, 0) node[below] {\tiny $\x$};
}
\draw (4.7, 0) node {$\cdots$};
\fill[darkred] (4,0.1) circle (0.1);
\fill[blue] (1,0.1) circle (0.1);
\fill[UQpurple] (0,0.1) circle (0.1);
\end{tikzpicture}
&
(\pi_1 x_i) (\pi_2 x_i) & \cdot 
\begin{tikzpicture}[scale=0.7,baseline=0cm]
\draw[-] (-0.2,0) -- (4.2,0);
\foreach \x in {0,...,4} {
  \draw[-] (\x,-.1) -- (\x,+.1);
  \draw (\x, 0) node[below] {\tiny $\x$};
}
\draw (4.7, 0) node {$\cdots$};
\fill[darkred] (3,0.1) circle (0.1);
\fill[blue] (2,0.1) circle (0.1);
\fill[UQpurple] (0,0.1) circle (0.1);
\draw[->,darkred] (1.2,0+.1) -- (1.8,0+.1);
\end{tikzpicture}
\\
(\pi_1 x_i)^2 (\pi_3 x_i) & \cdot 
\begin{tikzpicture}[scale=0.7,baseline=0cm]
\draw[-] (-0.2,0) -- (4.2,0);
\foreach \x in {0,...,4} {
  \draw[-] (\x,-.1) -- (\x,+.1);
  \draw (\x, 0) node[below] {\tiny $\x$};
}
\draw (4.7, 0) node {$\cdots$};
\fill[darkred] (3,0.1) circle (0.1);
\fill[blue] (1,0.1) circle (0.1);
\fill[UQpurple] (1,0.3) circle (0.1);
\end{tikzpicture}
&
(\pi_2 x_i)(\pi_3 x_i) & \cdot 
\begin{tikzpicture}[scale=0.7,baseline=0cm]
\draw[-] (-0.2,0) -- (4.2,0);
\foreach \x in {0,...,4} {
  \draw[-] (\x,-.1) -- (\x,+.1);
  \draw (\x, 0) node[below] {\tiny $\x$};
}
\draw (4.7, 0) node {$\cdots$};
\fill[darkred] (2,0.1) circle (0.1);
\fill[blue] (2,0.3) circle (0.1);
\fill[UQpurple] (1,0.1) circle (0.1);
\draw[->,darkred] (1.2,0+.1) -- (1.8,0+.1);
\end{tikzpicture}
\end{align*}
where again an arrow denotes a particle that was pushed.
\end{ex}

For Case~D, we do the analogous proof using $\prob_D(\lambda|\mu)$ starting with the time evolution at $\brho = 1$
\[
\mcT_D = \sum_{k=0}^{\infty} e_k(x_i \uu) = \sum_{k=0}^{\infty} x_i e_k(\uu),
\]
where here we specialize $\bbe = \rho^{-1}$.
Indeed, after adding back in the particle-dependent parameters like before, we compute
\begin{align*}
\prob_D(\lambda|\mu) & = \frac{\brho^{\lambda/\mu} \cdot {}_{[\brho^{-1}]} \braket{\mcT_D \cdot \mu}{\lambda}_{[\brho^{-1}]}}{\prod_{j=1}^{\infty} (1 + \rho_j x_i)}
= \frac{\brho^{\lambda/\mu}}{\prod_{j=1}^{\infty} (1 + \rho_j x_i)} \sum_{k=0}^{\infty} x_i^k \cdot {}_{[\brho^{-1}]} \bra{\mu} E_k(a_1, a_2, \dotsc) \ket{\lambda}_{[\brho^{-1}]}
\\ & = \frac{\brho^{\lambda/\mu}}{\prod_{j=1}^{\infty} (1 + \rho_j x_i)} \cdot {}_{[\brho^{-1}]} \bra{\mu} e^{J(x_i)} \ket{\lambda}_{[\brho^{-1}]}
 = \frac{\brho^{\lambda/\mu}}{\prod_{j=1}^{\infty} (1 + \rho_j x_i)} \dwG_{\lambda'/\mu'}(\xx; \brho^{-1}).
\end{align*}
Note that the order of the operators from $e_k(\uu)$ is applied smallest-to-largest and matches the update rule.
We can also see the first equality by using Lemma~\ref{lemma:push_op_to_prob}.

\begin{ex}
Like Example~\ref{ex:geometric_op_push}, we consider the case with exactly $3$ particles, or equivalently $\rho_j = 0$ for all $j > 3$.
Therefore, we restrict to $\uu_3 = (u_1, u_2, u_3)$ (thus we consider $u_i = 0$ for all $i > 3$) and only need to consider
\begin{align}
\label{eq:ncsym_e123}
e_1(\uu_3) & = u_1 + u_2 + u_3,
&
e_2(\uu_3) & = u_2 u_1 + u_3 u_1 + u_3 u_2,
&
e_3(\uu_3) & = u_3 u_2 u_1.
\end{align}
Let us consider $\mu = (1,1,0)$, and we compute
\[
e_1(\uu_3) \cdot \mu = \ydiagram{2,1} + \rho_1 \ydiagram{2,2} + \ydiagram{1,1,1}\,,
\qquad
e_2(\uu_3) \cdot \mu = \ydiagram{2,2} + \ydiagram{2,1,1} + \rho_1 \ydiagram{2,2,1}\,,
\qquad
e_3(\uu_3) \cdot \mu = \ydiagram{2,2,1}\,.
\]
Next, by applying~\eqref{eq:exp_current_sum}, we have
\begin{align*}
{}_{[\brho^{-1}]} \bra{\mu} e^{J(x_i)} & = {}_{[\brho^{-1}]}\bra{1,1} + x_i \cdot {}_{[\brho^{-1}]}\bra{2,1} + \rho_1 x_i \cdot {}_{[\brho^{-1}]}\bra{2,2} + x_i \cdot {}_{[\brho^{-1}]}\bra{1,1,1}
\\ & \hspace{20pt} + x_i^2 \cdot {}_{[\brho^{-1}]}\bra{2,2} + x_i^2 \cdot {}_{[\brho^{-1}]}\bra{2,1,1} + \rho_1^{-1} x_i^2 \cdot {}_{[\brho^{-1}]}\bra{2,2,1}
+ x_i^3 \cdot {}_{[\brho^{-1}]}\bra{2,2,1}
\allowdisplaybreaks
\\ & = {}_{[\brho^{-1}]}\bra{1,1} + x_i \cdot {}_{[\brho^{-1}]}\bra{2,1} + (x_i^2 + \rho_1^{-1} x_i ) \cdot {}_{[\brho^{-1}]}\bra{2,2} + x_i \cdot {}_{[\brho^{-1}]}\bra{1,1,1}
\\ & \hspace{20pt} + x_i^2 \cdot {}_{[\brho^{-1}]}\bra{2,1,1} + (x_i^3 + \rho_1^{-1} x_i^2) \cdot {}_{[\brho^{-1}]}\bra{2,2,1}
\allowdisplaybreaks
\\ & = {}_{[\brho^{-1}]}\bra{1,1} + \frac{\rho_1 x_i }{\brho^{(2,1)/\mu}} \cdot {}_{[\brho^{-1}]} \bra{2,1} + \frac{(\rho_1 x_i) (\rho_2 x_i) + \rho_2 x_i}{\brho^{(2,2)/\mu}} \cdot {}_{[\brho^{-1}]}\bra{2,2}
\\ & \hspace{20pt} + \frac{\rho_3 x_i}{\brho^{(1,1,1)/\mu}} \cdot {}_{[\brho^{-1}]}\bra{1,1,1} + \frac{(\rho_1 x_i)(\rho_3 x_i)}{\brho^{(2,1,1)/\mu}} \cdot {}_{[\brho^{-1}]}\bra{2,1,1}
\\ & \hspace{20pt} + \frac{(\rho_1 x_i)(\rho_2 x_i)(\rho_3 x_i) + (\rho_2 x_i)(\rho_3 x_i)}{\brho^{(2,2,1)/\mu}} \cdot {}_{[\brho^{-1}]}\bra{2,2,1}.
\end{align*}
Ignoring the normalization constant $C = \prod_{j=1}^3 (1 + \rho_j x_i)^{-1}$, we see that all possible configurations and their probabilities (multiplied by $C$) we can obtain from moving the three particles from $\mu$ are
\begin{align*}
1 & \cdot 
\begin{tikzpicture}[scale=0.7,baseline=0cm]
\draw[-] (-0.2,0) -- (3.2,0);
\foreach \x in {0,...,3} {
  \draw[-] (\x,-.1) -- (\x,+.1);
  \draw (\x, 0) node[below] {\tiny $\x$};
}
\draw (3.7, 0) node {$\cdots$};
\fill[darkred] (1,0.1) circle (0.1);
\fill[blue] (1,0.3) circle (0.1);
\fill[UQpurple] (0,0.1) circle (0.1);
\end{tikzpicture}
&
\rho_1 x_i & \cdot 
\begin{tikzpicture}[scale=0.7,baseline=0cm]
\draw[-] (-0.2,0) -- (3.2,0);
\foreach \x in {0,...,3} {
  \draw[-] (\x,-.1) -- (\x,+.1);
  \draw (\x, 0) node[below] {\tiny $\x$};
}
\draw (3.7, 0) node {$\cdots$};
\fill[darkred] (2,0.1) circle (0.1);
\fill[blue] (1,0.1) circle (0.1);
\fill[UQpurple] (0,0.1) circle (0.1);
\end{tikzpicture}
\\
\rho_2 x_i & \cdot 
\begin{tikzpicture}[scale=0.7,baseline=0cm]
\draw[-] (-0.2,0) -- (3.2,0);
\foreach \x in {0,...,3} {
  \draw[-] (\x,-.1) -- (\x,+.1);
  \draw (\x, 0) node[below] {\tiny $\x$};
}
\draw (3.7, 0) node {$\cdots$};
\fill[darkred] (2,0.1) circle (0.1);
\fill[blue] (2,0.3) circle (0.1);
\fill[UQpurple] (0,0.1) circle (0.1);
\draw[->,darkred] (1.2,0+.1) -- (1.8,0+.1);
\end{tikzpicture}
&
(\rho_1 x_i)(\rho_2 x_i) & \cdot 
\begin{tikzpicture}[scale=0.7,baseline=0cm]
\draw[-] (-0.2,0) -- (3.2,0);
\foreach \x in {0,...,3} {
  \draw[-] (\x,-.1) -- (\x,+.1);
  \draw (\x, 0) node[below] {\tiny $\x$};
}
\draw (3.7, 0) node {$\cdots$};
\fill[darkred] (2,0.1) circle (0.1);
\fill[blue] (2,0.3) circle (0.1);
\fill[UQpurple] (0,0.1) circle (0.1);
\end{tikzpicture}
\\
\rho_3 x_i & \cdot 
\begin{tikzpicture}[scale=0.7,baseline=0cm]
\draw[-] (-0.2,0) -- (3.2,0);
\foreach \x in {0,...,3} {
  \draw[-] (\x,-.1) -- (\x,+.1);
  \draw (\x, 0) node[below] {\tiny $\x$};
}
\draw (3.7, 0) node {$\cdots$};
\fill[darkred] (1,0.1) circle (0.1);
\fill[blue] (1,0.3) circle (0.1);
\fill[UQpurple] (1,0.5) circle (0.1);
\end{tikzpicture}
&
(\rho_1 x_i)(\rho_3 x_i) & \cdot 
\begin{tikzpicture}[scale=0.7,baseline=0cm]
\draw[-] (-0.2,0) -- (3.2,0);
\foreach \x in {0,...,3} {
  \draw[-] (\x,-.1) -- (\x,+.1);
  \draw (\x, 0) node[below] {\tiny $\x$};
}
\draw (3.7, 0) node {$\cdots$};
\fill[darkred] (2,0.1) circle (0.1);
\fill[blue] (1,0.1) circle (0.1);
\fill[UQpurple] (1,0.3) circle (0.1);
\end{tikzpicture}
\\
(\rho_2 x_i)(\rho_3 x_i) & \cdot 
\begin{tikzpicture}[scale=0.7,baseline=0cm]
\draw[-] (-0.2,0) -- (3.2,0);
\foreach \x in {0,...,3} {
  \draw[-] (\x,-.1) -- (\x,+.1);
  \draw (\x, 0) node[below] {\tiny $\x$};
}
\draw (3.7, 0) node {$\cdots$};
\fill[darkred] (2,0.1) circle (0.1);
\fill[blue] (2,0.3) circle (0.1);
\fill[UQpurple] (1,0.1) circle (0.1);
\draw[->,darkred] (1.2,0+.1) -- (1.8,0+.1);
\end{tikzpicture}
&
(\rho_1 x_i)(\rho_2 x_i)(\rho_3 x_i) & \cdot 
\begin{tikzpicture}[scale=0.7,baseline=0cm]
\draw[-] (-0.2,0) -- (3.2,0);
\foreach \x in {0,...,3} {
  \draw[-] (\x,-.1) -- (\x,+.1);
  \draw (\x, 0) node[below] {\tiny $\x$};
}
\draw (3.7, 0) node {$\cdots$};
\fill[darkred] (2,0.1) circle (0.1);
\fill[blue] (2,0.3) circle (0.1);
\fill[UQpurple] (1,0.1) circle (0.1);
\end{tikzpicture}
\end{align*}
\end{ex}

\subsection{Blocking operators}
\label{sec:block_op_proof}

Suppose the particles are at positions given by the partition $\mu$.
Then it is easy to see that the action $U_j \cdot \mu$ corresponds to the $j$-th particle trying to move one step to the right at time $i$ and keeping the other particles fixed.
If the move is blocked by the $(j-1)$-th particle (being at the same position), then we obtain the scalar $\beta_{j-1}$.
Otherwise the particle moves and we simply obtain the resulting partition.

\begin{ex}
Consider $4$ particles.
The action
\[
U_2 U_4 \cdot \ydiagram{4,2,1,1}
=
\beta_3 U_2 \cdot \ydiagram{4,2,1,1}
=
\beta_3 \cdot \ydiagram{4,3,1,1}
\]
(which also equals $U_4 U_2 \cdot (4, 2, 1, 1)$)
is identified with the particle motion
\[
\begin{tikzpicture}[scale=0.7,baseline=0cm]
\draw[-] (-0.2,0) -- (4.2,0);
\foreach \x in {0,...,4} {
  \draw[-] (\x,-.1) -- (\x,+.1);
  \draw (\x, 0) node[below] {\tiny $\x$};
}
\draw (4.7, 0) node {$\cdots$};
\fill[darkred] (4,0.1) circle (0.1);
\fill[blue] (2,0.1) circle (0.1);
\fill[UQpurple] (1,0.1) circle (0.1);
\fill[dgreencolor] (1,0.3) circle (0.1);
\end{tikzpicture}
\longmapsto
\begin{tikzpicture}[scale=0.7,baseline=0cm]
\draw[-] (-0.2,0) -- (4.2,0);
\foreach \x in {0,...,4} {
  \draw[-] (\x,-.1) -- (\x,+.1);
  \draw (\x, 0) node[below] {\tiny $\x$};
}
\draw (4.7, 0) node {$\cdots$};
\fill[darkred] (4,0.1) circle (0.1);
\fill[blue] (2,0.1) circle (0.1);
\fill[UQpurple] (1,0.1) circle (0.1);
\fill[dgreencolor] (1,0.3) circle (0.1);
\end{tikzpicture}
\longmapsto
\begin{tikzpicture}[scale=0.7,baseline=0cm]
\draw[-] (-0.2,0) -- (4.2,0);
\foreach \x in {0,...,4} {
  \draw[-] (\x,-.1) -- (\x,+.1);
  \draw (\x, 0) node[below] {\tiny $\x$};
}
\draw (4.7, 0) node {$\cdots$};
\fill[darkred] (4,0.1) circle (0.1);
\fill[blue] (3,0.1) circle (0.1);
\fill[UQpurple] (1,0.1) circle (0.1);
\fill[dgreencolor] (1,0.3) circle (0.1);
\end{tikzpicture}.
\]
Note that the fourth particle is blocked but the second particle moves.
\end{ex}

We rewrite the noncommutative operator action to match Theorem~\ref{thm:transition_prob}.

\begin{lemma}
\label{lemma:block_op_to_prob}
Let $\beta_j = \rho_{j+1}$ and $U_j = U_j^{(0,\bbe)}$.
Then for any $(j_1, j_2, \dotsc, j_k)$, we have
\[
U_{j_1} U_{j_2} \cdots U_{j_k} \cdot \mu = \frac{\rho_{j_1} \rho_{j_2} \cdots \rho_{j_k}}{\brho^{\lambda / \mu}} \cdot \lambda.
\]
\end{lemma}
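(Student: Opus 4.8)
The plan is to follow the same route as the proof of Lemma~\ref{lemma:push_op_to_prob}, which in the present blocking situation is in fact simpler. First I would record that the exponent appearing in $\brho^{\lambda/\mu}$ is additive, $\brho^{\lambda/\mu} = \brho^{\lambda/\nu}\,\brho^{\nu/\mu}$ whenever $\mu \subseteq \nu \subseteq \lambda$, since $\brho^{\lambda/\mu} = \prod_i \rho_i^{\lambda_i - \mu_i}$. Combined with the observation that each $U_j^{(0,\bbe)}$ sends a nonzero scalar multiple of a partition to a nonzero scalar multiple of a single partition (it modifies at most row $j$, so at most one factor $\beta_{j-1}$ can appear, in contrast to the pushing operators $u_j$, whose output can carry a whole run of $\beta$'s), a straightforward induction on $k$ reduces the claim to the case $k = 1$.

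For $k = 1$ I would unpack the definition $U_j^{(0,\bbe)} = \schur_j + \Theta_j$ with $\bal = 0$. Since $\mu$ is a partition we have $\mu_j \leq \mu_{j-1}$, so there are exactly two cases. If $\mu_j < \mu_{j-1}$ (the $j$-th particle is unobstructed), then $\Theta_j \cdot \mu = -\alpha_{\mu_j}\mu = 0$ and $\schur_j \cdot \mu = \mu + \epsilon_j =: \lambda$; here $\lambda/\mu$ is the single box in row $j$, so $\brho^{\lambda/\mu} = \rho_j$ and the asserted coefficient $\rho_j / \brho^{\lambda/\mu} = 1$ is correct. If instead $\mu_j = \mu_{j-1}$ (the $j$-th particle is blocked by the $(j-1)$-th), then $\schur_j \cdot \mu = 0$ and $\Theta_j \cdot \mu = \beta_{j-1}\mu$, so $\lambda = \mu$, $\brho^{\lambda/\mu} = 1$, and the asserted coefficient is $\rho_j$, which equals $\beta_{j-1}$ by the hypothesis $\beta_j = \rho_{j+1}$.

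The induction step is then immediate: if $U_{j_2}\cdots U_{j_k}\cdot\mu = \bigl(\rho_{j_2}\cdots\rho_{j_k}/\brho^{\nu/\mu}\bigr)\cdot\nu$, then applying the $k=1$ case to $U_{j_1}\cdot\nu = \bigl(\rho_{j_1}/\brho^{\lambda/\nu}\bigr)\cdot\lambda$ and combining the scalars gives $U_{j_1}\cdots U_{j_k}\cdot\mu = \bigl(\rho_{j_1}\cdots\rho_{j_k}/(\brho^{\nu/\mu}\brho^{\lambda/\nu})\bigr)\cdot\lambda$, which equals $\bigl(\rho_{j_1}\cdots\rho_{j_k}/\brho^{\lambda/\mu}\bigr)\cdot\lambda$ by the additivity of the exponent.

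I do not anticipate any real obstacle here. The only points requiring attention are the index shift $\beta_j = \rho_{j+1}$ (equivalently $\beta_{j-1} = \rho_j$) and the remark that $\lambda$ is exactly the partition obtained by performing the blocking moves in rows $j_k, \dotsc, j_1$ in that order, so that the lemma is really a bookkeeping identity for the scalar accumulated from the blocked steps. The genuine content --- that $U_j^{(0,\bbe)}$ faithfully implements a single-particle blocking move --- lies in the two cases of the base step above, which reproduce the dynamics described in the surrounding text.
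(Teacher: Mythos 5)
Your proposal is correct and follows the same route as the paper: reduce to $k=1$ by additivity of $\brho^{\lambda/\mu}$, then check the two cases (blocked vs.\ unblocked) of the definition of $U_j^{(0,\bbe)}$. You have simply spelled out the bookkeeping (including the vanishing of $\Theta_j$ in the unblocked case because $\bal=0$, and the index shift $\beta_{j-1}=\rho_j$) that the paper leaves implicit.
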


\begin{proof}
Like the proof of Lemma~\ref{lemma:push_op_to_prob}, we can reduce the proof to the case $k = 1$.
By definition, we either have (i) $U_j \cdot \mu = \beta_{j-1} \cdot \mu$ and $\lambda = \mu$ or (ii) $U_j \cdot \mu = \kappa_j \cdot \mu$ and $\lambda = \mu + \epsilon_j$.
In each case, it is clear the claim holds.
\end{proof}

Let us consider the transition probability for a single time step $\prob_B(\lambda' | \mu')$ in Case~B starting at time $i$.
This proof is largely analogous as for Case~A.
Here, we use $U_i := U_i^{(0,\bal)}$ and specialize $\alpha_j = \rho_{j+1}$.
Since we update particles from smallest-to-largest, the above description means one time evolution is given by
\[
\mcT_B = \sum_{k=0}^{\infty} e_k(x_i \UU) = \sum_{k=0}^{\infty} x_i^k e_k(\UU).
\]
From the analogous arguments as in Case~A or the above discussion, we need to multiply by $\brho^{\lambda / \mu}$ to move the time evolution inside the matrix coefficient and account for the movement of all of the particles.
Additionally, we scale each $U_j$ by $x_i$ to introduce the time parameters.
Therefore, we have
\begin{equation}
\label{eq:prob_B_operator}
\prob_B(\lambda' | \mu') = \frac{\brho^{\lambda/\mu} \cdot {}^{[\bal]} \braket{\mcT_B \cdot \mu}{\lambda}^{[\bal]}}{\prod_{j=1}^{\infty} (1 + \rho_j x_i)} = \sum_{k=0}^{\infty} \frac{\brho^{\lambda/\mu} x_i^k \cdot {}^{[\bal]} \braket{ e_k(\UU) \cdot \mu}{\lambda}^{[\bal]}}{\prod_{j=1}^{\infty} (1 + \rho_j x_i)}.
\end{equation}
We can also see Equation~\eqref{eq:prob_B_operator} by using Lemma~\ref{lemma:block_op_to_prob} with the fact for any $\ell \geq \ell(\lambda)$, we have
\[
\prob_B(\lambda | \mu) = \prod_{j=1}^{\infty} (1 + \rho_j x_i) \sum_{k=0}^{\infty} \sum_{\substack{j_1 < \cdots < j_k \\ U_{j_k} \cdots U_{j_1} \cdot \mu' = \ast \cdot \lambda'}} \rho_{j_k} \cdots \rho_{j_1} \cdot x_i^k,
\]
where $\ast$ represents any nonzero constant; note that $U_{j_1} \cdots U_{j_k} \cdot \mu' = \ast \cdot \lambda'$ is simply saying the movement of the particles from $\mu$ to $\lambda$ is given by moving (with blocking) the $j_1, j_2, \dotsc, j_k$ particles (in that order).
Next, we use~\eqref{eq:exp_current_sum} and Theorem~\ref{thm:noncommutative_blocking} to compute
\begin{align*}
\wG_{\lambda'\ds\mu'}(\xx; \bal) = {}^{[\bal]} \bra{\mu} e^{J(x_i)} \ket{\lambda}^{[\bal]}
& = \sum_{m=0}^{\infty} x_i^m \cdot {}^{[\bal]} \bra{\mu} E_m(a_1, a_2, \ldots) \ket{\lambda}^{[\bal]}
\\ & = \sum_{m=0}^{\infty} x_i^m \cdot {}^{[\bal]} \braket{e_m(\UU / \bal) \cdot \mu}{\lambda}^{[\bal]}
\\ & = \sum_{k=0}^{\infty} \sum_{m=k}^{\infty} (-1)^{k-m}  x_i^m h_{m-k}(\bal) \cdot {}^{[\bal]} \braket{e_k(\UU) \cdot \mu}{\lambda}^{[\bal]}
\\ & = \sum_{k=0}^{\infty} x_i^k \prod_{j=1}^{\infty} (1 + \alpha_j x_i)^{-1} \cdot {}^{[\bal]} \braket{e_k(\UU) \cdot \mu}{\lambda}^{[\bal]}
\end{align*}
Therefore, comparing this with Equation~\eqref{eq:prob_B_operator} (recall that $\alpha_j = \rho_{j+1}$), we have Theorem~\ref{thm:transition_prob},
\[
\prob_B(\lambda | \mu) = \prod_{i=1}^n (1 + \rho_1 x_i)^{-1} \brho^{\lambda/\mu} \wG_{\lambda'\ds\mu'}(\xx;\bal),
\]
for one time step.

Alternatively, let us examine Equation~\eqref{eq:prob_B_operator}.
Necessarily we must have $\lambda / \mu$ being a vertical strip (equivalently $\lambda' / \mu'$ being a horizontal strip) as otherwise both sides are $0$, so we now assume $\lambda / \mu$ is a vertical strip.
Suppose $m = \abs{\lambda} - \abs{\mu}$ particles move, and so each term in the sum is $0$ unless $k \geq m$.
Furthermore, when $k >m$, we only get a nonzero contribution from the $j$ such that $\lambda_{j-1} = \mu_j$, where necessarily $j > 1$.
Let $J = \{ j \in \ZZ_{>1} \mid \lambda_{j-1} = \mu_j \}$ be the (infinite) set of all such indices, and so we have
\begin{align*}
\sum_{k=0}^{\infty} x_i^k \cdot {}^{[\bal]} \braket{ e_k(\UU) \cdot \mu}{\lambda}^{[\bal]}
& = \sum_{k=m}^{\infty} x_i^k \sum_{\substack{X \subseteq J \\ \abs{X} = k-m}} \prod_{j \in X} \rho_j
= \sum_{k=m}^{\infty} x_i^k e_{k-m}(\brho_J)
\\
& = x_i^m \sum_{k=0}^{\infty} x_i^k e_k(\brho_J)
= x_i^m \prod_{j \in J} (1 + \rho_j x_i),
\end{align*}
where $\brho_J = \{\rho_j \mid j \in J\}$ (since $e_k$ is a symmetric function, we do not need to worry about the order).
Therefore, we have
\[
\prob_B(\lambda|\mu) = \frac{\brho^{\lambda/\mu} x_i^m}{1 + \rho_1 x_i} \prod_{j \in \widetilde{J}} (1 + \alpha_j x_i)^{-1},
\]
where $\widetilde{J} = \{ j \in \ZZ_{>0} \mid \lambda_j \neq \mu_{j+1} \}$ (note that we have also shifted the indices).
From the combinatorial description of $\wG_{\lambda' \ds \mu'}(\xx_1; \bal)$, we have obtained Theorem~\ref{thm:transition_prob} for a single time step.

\begin{ex}
\label{ex:blocking_B}
As in the previous examples, we consider a system with exactly three particles.
Similarly, we only consider $\UU_3 = (U_1, U_2, U_3)$, which is equivalent to setting $\rho_j = 0$ for all $j > 0$.
We begin by computing for any $\mu$
\begin{align*}
{}^{[\bal]}\bra{e_0(\UU_3 / \bal) \cdot \mu} & = h_0(\bal) \cdot {}^{[\bal]}\bra{e_0(\UU_3) \cdot \mu}
\allowdisplaybreaks
\\ {}^{[\bal]}\bra{e_1(\UU_3 / \bal) \cdot \mu} & = -h_1(\bal) \cdot {}^{[\bal]}\bra{e_0(\UU_3) \cdot \mu} + h_0(\bal) \cdot {}^{[\bal]}\bra{e_1(\UU_3) \cdot \mu}
\allowdisplaybreaks
\\ {}^{[\bal]}\bra{e_2(\UU_3 / \bal) \cdot \mu} & = h_2(\bal) \cdot {}^{[\bal]}\bra{e_0(\UU_3) \cdot \mu} - h_1(\bal) \cdot {}^{[\bal]}\bra{e_1(\UU_3) \cdot \mu} + h_0(\bal) \cdot {}^{[\bal]}\bra{e_2(\UU_3) \cdot \mu}
\allowdisplaybreaks
\\ {}^{[\bal]}\bra{e_k(\UU_3 / \bal) \cdot \mu} & = -h_k(\bal) \cdot {}^{[\bal]}\bra{e_0(\UU_3) \cdot \mu} + h_{k-1}(\bal) \cdot {}^{[\bal]}\bra{e_1(\UU_3) \cdot \mu} - h_{k-2}(\bal) \cdot {}^{[\bal]}\bra{e_2(\UU_3) \cdot \mu}
\\ & \hspace{20pt} + h_{k-3}(\bal) \cdot {}^{[\bal]}\bra{e_3(\UU_3) \cdot \mu}
\end{align*}
for all $k \geq 3$, and so we have
\begin{align*}
{}^{[\bal]} \bra{\mu} e^{J(\xx)}
& = \sum_{k=0}^{\infty} x_i^k {}^{[\bal]}\bra{e_k(\UU_3 / \bal) \cdot \mu}
\\ & = \sum_{m=0}^{\infty} h_m(\bal) x_i^m \Bigl( {}^{[\bal]}\bra{e_0(x_i \UU_3) \cdot \mu} + {}^{[\bal]}\bra{e_1(x_i \UU_3) \cdot \mu}
\\ & \hspace{100pt} + {}^{[\bal]}\bra{e_2(x_i \UU_3) \cdot \mu} + {}^{[\bal]}\bra{e_3(x_i \UU_3) \cdot \mu} \Bigr)
\\ & = \prod_{j=1}^{\infty} (1 + \alpha_j x_i) \cdot {}^{[\bal]}\bra{\mcT_B \cdot \mu}
\\ & = \prod_{j=2}^{\infty} (1 + \rho_j x_i) \cdot {}^{[\bal]}\bra{\mcT_B \cdot \mu}.
\end{align*}
Now we consider the initial positions of the particles to be $\mu = (1,1)$, and from~\eqref{eq:ncsym_e123}, we have
\[
e_1(\UU_3) \cdot \mu = \ydiagram{2,1} + \alpha_1 \ydiagram{1,1} + \ydiagram{1,1,1}\,,
\qquad
e_2(\UU_3) \cdot \mu = \ydiagram{2,2} + \ydiagram{2,1,1} + \alpha_1 \ydiagram{1,1,1}\,,
\qquad
e_3(\UU_3) \cdot \mu = \ydiagram{2,2,1}\,.
\]
Therefore, if we apply $\mcT_B$, we obtain
\begin{align*}
{}^{[\bal]} \bra{\mcT_B \cdot \mu} & = (1 + \alpha_1 x_i) \cdot {}^{[\bal]} \bra{1,1} + x_i \cdot {}^{[\bal]} \bra{2,1} + (x_i + \alpha_1 x_i^2) \cdot {}^{[\bal]} \bra{1,1,1}
\\ & \hspace{20pt} + x_i^2 \cdot {}^{[\bal]} \bra{2,2} + x_i^2 \cdot {}^{[\bal]} \bra{2,1,1} + x_i^3 \cdot {}^{[\bal]} \bra{2,2,1}
\\ & = (1 + \rho_2 x_i) \cdot {}^{[\bal]} \bra{1,1} + \frac{\rho_1 x_i}{\brho^{(2,1) / \mu}} \cdot {}^{[\bal]} \bra{2,1} + \frac{\rho_3 x_i + (\rho_2 x_i)(\rho_3 x_i)}{\brho^{(1,1,1)/\mu}} \cdot {}^{[\bal]} \bra{1,1,1}
\\ & \hspace{20pt} + \frac{(\rho_1 x_i)(\rho_2 x_i)}{\brho^{(2,2)/\mu}} \cdot {}^{[\bal]} \bra{2,2} + \frac{(\rho_1 x_i)(\rho_3 x_i)}{\brho^{(2,1,1)/\mu}} \cdot {}^{[\bal]} \bra{2,1,1}
\\ & \hspace{20pt} + \frac{(\rho_1 x_i)(\rho_2 x_i)(\rho_3 x_i)}{\brho^{(2,2,1)/\mu}} \cdot {}^{[\bal]} \bra{2,2,1}.
\end{align*}
As before, we ignore the normalization constant $C = \prod_{j=1}^3 (1 + \rho_j x_i)^{-1}$ and get the possible states with probabilities
\begin{align*}
1 & \cdot 
\begin{tikzpicture}[scale=0.7,baseline=0cm]
\draw[-] (-0.2,0) -- (3.2,0);
\foreach \x in {0,...,3} {
  \draw[-] (\x,-.1) -- (\x,+.1);
  \draw (\x, 0) node[below] {\tiny $\x$};
}
\draw (3.7, 0) node {$\cdots$};
\fill[darkred] (1,0.1) circle (0.1);
\fill[blue] (1,0.3) circle (0.1);
\fill[UQpurple] (0,0.1) circle (0.1);
\end{tikzpicture}
&
\rho_2 x_i & \cdot 
\begin{tikzpicture}[scale=0.7,baseline=0cm]
\draw[-] (-0.2,0) -- (3.2,0);
\foreach \x in {0,...,3} {
  \draw[-] (\x,-.1) -- (\x,+.1);
  \draw (\x, 0) node[below] {\tiny $\x$};
}
\draw (3.7, 0) node {$\cdots$};
\fill[darkred] (1,0.1) circle (0.1);
\fill[blue] (1,0.3) circle (0.1);
\fill[UQpurple] (0,0.1) circle (0.1);
\draw[->,blue] (1.5,0+.3) -- (1.2,0+.3);
\end{tikzpicture}
\\
\rho_1 x_i & \cdot 
\begin{tikzpicture}[scale=0.7,baseline=0cm]
\draw[-] (-0.2,0) -- (3.2,0);
\foreach \x in {0,...,3} {
  \draw[-] (\x,-.1) -- (\x,+.1);
  \draw (\x, 0) node[below] {\tiny $\x$};
}
\draw (3.7, 0) node {$\cdots$};
\fill[darkred] (2,0.1) circle (0.1);
\fill[blue] (1,0.1) circle (0.1);
\fill[UQpurple] (0,0.1) circle (0.1);
\end{tikzpicture}
&
\rho_3 x_i & \cdot 
\begin{tikzpicture}[scale=0.7,baseline=0cm]
\draw[-] (-0.2,0) -- (3.2,0);
\foreach \x in {0,...,3} {
  \draw[-] (\x,-.1) -- (\x,+.1);
  \draw (\x, 0) node[below] {\tiny $\x$};
}
\draw (3.7, 0) node {$\cdots$};
\fill[darkred] (1,0.1) circle (0.1);
\fill[blue] (1,0.3) circle (0.1);
\fill[UQpurple] (1,0.5) circle (0.1);
\end{tikzpicture}
\\
(\rho_2 x_i) (\rho_3 x_i) & \cdot 
\begin{tikzpicture}[scale=0.7,baseline=0cm]
\draw[-] (-0.2,0) -- (3.2,0);
\foreach \x in {0,...,3} {
  \draw[-] (\x,-.1) -- (\x,+.1);
  \draw (\x, 0) node[below] {\tiny $\x$};
}
\draw (3.7, 0) node {$\cdots$};
\fill[darkred] (1,0.1) circle (0.1);
\fill[blue] (1,0.3) circle (0.1);
\fill[UQpurple] (1,0.5) circle (0.1);
\draw[->,blue] (1.5,0+.3) -- (1.2,0+.3);
\end{tikzpicture}
&
(\rho_1 x_i)(\rho_2 x_i) & \cdot 
\begin{tikzpicture}[scale=0.7,baseline=0cm]
\draw[-] (-0.2,0) -- (3.2,0);
\foreach \x in {0,...,3} {
  \draw[-] (\x,-.1) -- (\x,+.1);
  \draw (\x, 0) node[below] {\tiny $\x$};
}
\draw (3.7, 0) node {$\cdots$};
\fill[darkred] (2,0.1) circle (0.1);
\fill[blue] (2,0.3) circle (0.1);
\fill[UQpurple] (0,0.1) circle (0.1);
\end{tikzpicture}
\\
(\rho_1 x_i)(\rho_3 x_i) & \cdot 
\begin{tikzpicture}[scale=0.7,baseline=0cm]
\draw[-] (-0.2,0) -- (3.2,0);
\foreach \x in {0,...,3} {
  \draw[-] (\x,-.1) -- (\x,+.1);
  \draw (\x, 0) node[below] {\tiny $\x$};
}
\draw (3.7, 0) node {$\cdots$};
\fill[darkred] (2,0.1) circle (0.1);
\fill[blue] (1,0.1) circle (0.1);
\fill[UQpurple] (1,0.3) circle (0.1);
\end{tikzpicture}
&
(\rho_1 x_i)(\rho_2 x_i)(\rho_3 x_i) & \cdot 
\begin{tikzpicture}[scale=0.7,baseline=0cm]
\draw[-] (-0.2,0) -- (3.2,0);
\foreach \x in {0,...,3} {
  \draw[-] (\x,-.1) -- (\x,+.1);
  \draw (\x, 0) node[below] {\tiny $\x$};
}
\draw (3.7, 0) node {$\cdots$};
\fill[darkred] (2,0.1) circle (0.1);
\fill[blue] (2,0.3) circle (0.1);
\fill[UQpurple] (1,0.1) circle (0.1);
\end{tikzpicture}
\end{align*}
where an arrow denotes a blocked particle.
\end{ex}

Now let us look at the dynamics for the geometric distribution given by Case~C.
The proof is similar to the above except we instead replace $e_k(\UU) \mapsto h_k(\UU)$ and $h_k(\bal) \mapsto e_k(\bbe)$, as well as specialize the operators to $\UU := \UU^{(0,\bbe)}$ with $\beta_j = \pi_{j+1}$.
So our time evolution operator is $\mcT_C = \sum_{k=0}^{\infty} h_k(x_i \UU)$.
Note that the operators are applied in reverse order for $h_k(\UU)$, encoding that we are now going from largest-to-smallest in the update order.

For the alternative proof using the combinatorial description of $\G_{\lambda\ds\mu}(\xx; \bbe)$, some slightly more detailed analysis about the motion of the particle is needed.
This is discussed in Section~\ref{sec:combinatorial_C}.
Note that it only depends on $\lambda$ and $\mu$, not on the motion of any other particles.

\begin{ex}
\label{ex:blocking_C}
We will follow the setup in Example~\ref{ex:blocking_B} with three particles, so we take $\pi_j = 0$ for all $j > 3$, with initial positions $\mu = (1, 1)$.
Using~\eqref{eq:ncsym_h123}, we compute
\begin{align*}
h_1(\UU_3) \cdot \mu & = \ydiagram{2,1} + \beta_1 \ydiagram{1,1} + \ydiagram{1,1,1}\,,
\\ h_2(\UU_3) \cdot \mu & = \ydiagram{3,1} + \beta_1 \ydiagram{2,1} + \ydiagram{2,1,1} + \beta_1^2 \ydiagram{1,1}+ \beta_1 \ydiagram{1,1,1} + \beta_2 \ydiagram{1,1,1}\,,
\\ h_3(\UU_3) \cdot \mu & = \ydiagram{4,1} + \beta_1 \ydiagram{3,1} + \ydiagram{3,1,1} + \beta_1^2 \ydiagram{2,1}+ \beta_1 \ydiagram{2,1,1} + \beta_2 \ydiagram{2,1,1}\,
\\ & \hspace{20pt} + \beta_1^3 \ydiagram{1,1} + \beta_1^2 \ydiagram{1,1,1} + \beta_1 \beta_2 \ydiagram{1,1,1} + \beta_2^2 \ydiagram{1,1,1}\,.
\end{align*}
Therefore, we have
\begin{align*}
{}^{[\bbe]} \bra{\mcT_C \cdot \mu} & = (1 + \pi_2 x_i + (\pi_2 x_i)^2 + (\pi_2 x_i)^3 + \cdots) \cdot {}^{[\bbe]} \bra{1,1}
\\ & \hspace{20pt} + x_i (1 + \pi_2 x_i + (\pi_2 x_i)^2 + \cdots) \cdot {}^{[\bbe]} \bra{2,1}
\\ & \hspace{20pt} + x_i (1 + (\pi_2 + \pi_3) x_i + (\pi_2^2 + \pi_2 \pi_3 + \pi_3^2) x_i^2 + \cdots) \cdot {}^{[\bbe]} \bra{1,1,1}
\\ & \hspace{20pt} + x_i^2 (1 + \pi_2 x_i + (\pi_2 x_i)^2 + \cdots) \cdot {}^{[\bbe]} \bra{3,1}
\\ & \hspace{20pt} + x_i^2 (1 + (\pi_2 + \pi_3) x_i + (\pi_2^2 + \pi_2 \pi_3 + \pi_3^2) x_i^2 + \cdots) \cdot {}^{[\bbe]} \bra{2,1,1}
\\ & \hspace{20pt} + x_i^3 (1 + (\pi_2 + \pi_3) x_i + (\pi_2^2 + \pi_2 \pi_3 + \pi_3^2) x_i^2 + \cdots) \cdot {}^{[\bbe]} \bra{3,1,1} + \cdots
\allowdisplaybreaks
\\ & = \frac{1}{1 - \pi_2 x_i} \cdot {}^{[\bbe]} \bra{1,1} + \frac{x_i}{1 - \pi_2 x_i} \cdot {}^{[\bbe]} \bra{2,1} + \frac{x_i}{(1 - \pi_2 x_i)(1 - \pi_3 x_i)} \cdot {}^{[\bbe]} \bra{1,1,1}
\\ & \hspace{20pt} + \frac{x_i^2}{1 - \pi_2 x_i} \cdot {}^{[\bbe]} \bra{3,1} + \frac{x_i^2}{(1 - \pi_2 x_i)(1 - \pi_3 x_i)} \cdot {}^{[\bbe]} \bra{2,1,1}
\\ & \hspace{20pt} + \frac{x_i^3}{(1 - \pi_2 x_i)(1 - \pi_3 x_i)} \cdot {}^{[\bbe]} \bra{3,1,1} + \cdots
\allowdisplaybreaks
\\ & = \frac{1}{1 - \pi_2 x_i} \cdot {}^{[\bbe]} \bra{1,1} + \frac{\pi_1 x_i}{(1 - \pi_2 x_i) \bpi^{(2,1) / \mu}} \cdot {}^{[\bbe]} \bra{2,1}
\\ & \hspace{20pt} + \frac{\pi_3 x_i}{(1 - \pi_2 x_i)(1 - \pi_3 x_i)\bpi^{(1,1,1) / \mu}} \cdot {}^{[\bbe]} \bra{1,1,1} + \frac{(\pi_1 x_i)^2}{(1 - \pi_2 x_i)\bpi^{(3,1) / \mu}} \cdot {}^{[\bbe]} \bra{3,1}
\\ & \hspace{20pt} + \frac{(\pi_1 x_i)(\pi_3 x_i)}{(1 - \pi_2 x_i)(1 - \pi_3 x_i)\bpi^{(2,1,1) / \mu}} {}^{[\bbe]} \bra{2,1,1}
\\ & \hspace{20pt} + \frac{(\pi_1 x_i)^2(\pi_3 x_i)}{(1 - \pi_2 x_i)(1 - \pi_3 x_i) \bpi^{(3,1,1)/\mu}} \cdot {}^{[\bbe]} \bra{3,1,1} + \cdots.
\end{align*}
Ignoring the normalization factor only for the first particle $C = 1 - \pi_1 x_i$, we see that some of the possible states with probabilities are
\begin{align*}
\frac{1}{1 - \pi_2 x_i} & \cdot 
\begin{tikzpicture}[scale=0.7,baseline=0cm]
\draw[-] (-0.2,0) -- (3.2,0);
\foreach \x in {0,...,3} {
  \draw[-] (\x,-.1) -- (\x,+.1);
  \draw (\x, 0) node[below] {\tiny $\x$};
}
\draw (3.7, 0) node {$\cdots$};
\fill[darkred] (1,0.1) circle (0.1);
\fill[blue] (1,0.3) circle (0.1);
\fill[UQpurple] (0,0.1) circle (0.1);
\draw[->,blue] (1.5,0+.3) -- (1.2,0+.3);
\end{tikzpicture}
&
\frac{\pi_1 x_i}{1 - \pi_2 x_i} & \cdot 
\begin{tikzpicture}[scale=0.7,baseline=0cm]
\draw[-] (-0.2,0) -- (3.2,0);
\foreach \x in {0,...,3} {
  \draw[-] (\x,-.1) -- (\x,+.1);
  \draw (\x, 0) node[below] {\tiny $\x$};
}
\draw (3.7, 0) node {$\cdots$};
\fill[darkred] (2,0.1) circle (0.1);
\fill[blue] (1,0.1) circle (0.1);
\fill[UQpurple] (0,0.1) circle (0.1);
\draw[->,blue] (1.5,0+.1) -- (1.2,0+.1);
\end{tikzpicture}
\\
\frac{\pi_3 x_i}{(1 - \pi_2 x_i)(1 - \pi_3 x_i)} & \cdot 
\begin{tikzpicture}[scale=0.7,baseline=0cm]
\draw[-] (-0.2,0) -- (3.2,0);
\foreach \x in {0,...,3} {
  \draw[-] (\x,-.1) -- (\x,+.1);
  \draw (\x, 0) node[below] {\tiny $\x$};
}
\draw (3.7, 0) node {$\cdots$};
\fill[darkred] (1,0.1) circle (0.1);
\fill[blue] (1,0.3) circle (0.1);
\fill[UQpurple] (1,0.5) circle (0.1);
\draw[->,blue] (1.5,0+.3) -- (1.2,0+.3);
\draw[->,UQpurple] (1.5,0+.5) -- (1.2,0+.5);
\end{tikzpicture}
&
\frac{(\pi_1 x_i)^2}{1 - \pi_2 x_i} & \cdot 
\begin{tikzpicture}[scale=0.7,baseline=0cm]
\draw[-] (-0.2,0) -- (3.2,0);
\foreach \x in {0,...,3} {
  \draw[-] (\x,-.1) -- (\x,+.1);
  \draw (\x, 0) node[below] {\tiny $\x$};
}
\draw (3.7, 0) node {$\cdots$};
\fill[darkred] (3,0.1) circle (0.1);
\fill[blue] (1,0.1) circle (0.1);
\fill[UQpurple] (0,0.1) circle (0.1);
\draw[->,blue] (1.5,0+.1) -- (1.2,0+.1);
\end{tikzpicture}
\\
\frac{(\pi_1 x_i)(\pi_3 x_i)}{(1 - \pi_2 x_i)(1 - \pi_3 x_i)} & \cdot 
\begin{tikzpicture}[scale=0.7,baseline=0cm]
\draw[-] (-0.2,0) -- (3.2,0);
\foreach \x in {0,...,3} {
  \draw[-] (\x,-.1) -- (\x,+.1);
  \draw (\x, 0) node[below] {\tiny $\x$};
}
\draw (3.7, 0) node {$\cdots$};
\fill[darkred] (2,0.1) circle (0.1);
\fill[blue] (1,0.1) circle (0.1);
\fill[UQpurple] (1,0.3) circle (0.1);
\draw[->,blue] (1.5,0+.1) -- (1.2,0+.1);
\draw[->,UQpurple] (1.5,0+.3) -- (1.2,0+.3);
\end{tikzpicture}
&
\frac{(\pi_1 x_i)^2(\pi_3 x_i)}{(1 - \pi_2 x_i)(1 - \pi_3 x_i)} & \cdot 
\begin{tikzpicture}[scale=0.7,baseline=0cm]
\draw[-] (-0.2,0) -- (3.2,0);
\foreach \x in {0,...,3} {
  \draw[-] (\x,-.1) -- (\x,+.1);
  \draw (\x, 0) node[below] {\tiny $\x$};
}
\draw (3.7, 0) node {$\cdots$};
\fill[darkred] (3,0.1) circle (0.1);
\fill[blue] (1,0.1) circle (0.1);
\fill[UQpurple] (1,0.3) circle (0.1);
\draw[->,blue] (1.5,0+.1) -- (1.2,0+.1);
\draw[->,UQpurple] (1.5,0+.3) -- (1.2,0+.3);
\end{tikzpicture}
\end{align*}
where an arrow denotes a blocked particle.
\end{ex}

\section{Bijective description}
\label{sec:bijection}

In this section, we provide a bijective proof of Theorem~\ref{thm:transition_prob}.
For Cases~A and~D, this is essentially translating the description of the noncommutative operators $\uu$ into tableaux through how the particles evolve.
For Cases~B and~C, a little more care is needed as the number of tableaux is not in bijection with the number of intermediate states.
However, as in Section~\ref{sec:operator_dynamics}, we reduce the general case to matching the behavior under one time step evolution.

\subsection{Case A: Geometric pushing}
\label{sec:bijection_A}

This case was discussed in~\cite{MS20} by going through the last passage percolation (LPP) model.
To make this combinatorially explicit, we simply note that the resulting matrix $[G(k,n)]_{k,n}$ is the analog of a Gelfand--Tsetlin pattern for the reverse plane partition.
More precisely, the $n$-th column gives the shape of the entries at most $n$.
From this description, we essentially have a combinatorially proof of Case~A of Theorem~\ref{thm:transition_prob}.
The only other ingredient needed is to note that in $\bpi^{\lambda/\mu} \dG_{\lambda/\mu}(\xx; \bpi^{-1})$, we get a contribution of $\pi_j x_i$ for a box with a $i$ in row $j$; recalling that we align the entries at the bottom of merged cells.
In terms of the more classical description of reverse plane partitions, we are only counting boxes with a $i$ that is not above another box with label $i$.
The remaining factor of $\prod_{j=1}^{\ell} \prod_{i=1}^n (1 - \pi_j x_i)$ comes from the normalization factor in the geometric distribution.

We can make this very precise with a direct correlation between the movement of particles and entries in the reverse plane partition.
An entry $i$ in row $j$ (that is not a merged box) corresponds to the $j$-th particle moving a step at time $i$.
Thus (ignoring the common normalization factor), by conditioning the $j$-th particle at time $i$ moving $k$ steps, then the reverse plane partition has $k$ entries with value $i$ at row $j$ (with no $i$ directly below it).
The general case of multiple particles moving follows fundamental facts of conditional probability.
Hence, we have shown the one step transition probability at time $i$
\[
\prob_{A,1}(\lambda | \mu) = \prod_{j=1}^{\ell}  (1 - \pi_j x_i) \bpi^{\lambda/\mu} \prod_{j=1}^{\ell-1} \pi_j^{-r_j} \wt(T)
=\prod_{j=1}^{\ell}  (1 - \pi_j x_i) \bpi^{\lambda/\mu} g_{\lambda/\mu}(x_i;\bpi^{-1})
,
\]
where $T$ is the \emph{unique} reverse plane partition of skew shape $\lambda / \mu$ with all boxes filled with $i$.
In some more detail about the expression,
recall $r_j$ is the number of boxes of $T$ in row $j$ that have been merged with the box below,
and those boxes correspond to the moves of the $j$-th particle, automatically pushed from the particle behind it.
This means that the number $r_j$ corresponds to the distance coming from the push and
has to be subtracted from the actual total distance $\lambda_j-\mu_j$ of the $j$-th particle's move in the power of $\pi_j$.
This gives the factor $
\prod_{j=1}^{\ell-1} \pi_j^{\lambda_j-\mu_j-r_j} \times \pi_\ell^{\lambda_\ell-\mu_\ell}=\bpi^{\lambda/\mu} \prod_{j=1}^{\ell-1} \pi_j^{-r_j}
$.
We also have a factor which is a power of $x_i$, and the power is equal to the total degree of $\bpi^{\lambda/\mu} \prod_{j=1}^{\ell-1} \pi_j^{-r_j}
$. We conclude the precise factor is $x_i^{|\lambda/\mu|-\sum_{j=1}^{\ell-1} r_j}$.
Since all boxes of the reverse plane partition are filled with the same number, we note $|\lambda/\mu|-\sum_{j=1}^{\ell-1} r_j$
is equal to the total number of columns of the skew shape $\lambda/\mu$,
and $x_i^{|\lambda/\mu|-\sum_{j=1}^{\ell-1} r_j}$ is exactly $\wt(T)$ for one variable case (recall we have fused boxes for the weight).

Hence, we have the reverse plane partition exactly encodes the movement of all of the particles at time $i = 0, 1, \dotsc, n$ by the branching rules (Proposition~\ref{prop:branching_rules}) and the Markov property as described at the beginning of Section~\ref{sec:operator_dynamics} (or basic facts of conditional probability).

\begin{ex}
We consider Case~A with $\lambda = 31$ and $n = 2$.
Hence, we are considering $\ell$ particles the move over two time steps.
Since all but the first two particles are fixed, we can ignore them.
We have the following reverse plane partitions and states, where we have drawn the merged boxes in gray.
An arrow denotes that a particle was pushed.
\[
\ytableausetup{boxsize=1.2em}
\begin{array}{*{8}{@{\hspace{5pt}}c}}
\ytableaushort{111,2}
&
\ytableaushort{112,2}
&
\ytableaushort{122,2}
&
\ytableaushort{{\color{gray}1}11,1}
&
\ytableaushort{{\color{gray}1}12,1}
&
\ytableaushort{{\color{gray}1}22,1}
&
\ytableaushort{{\color{gray}2}22,2}
\\[15pt]
\begin{tikzpicture}[scale=0.6]
\foreach \i in {0,1,2} {
  \draw[-] (-0.2,-\i) -- (3.2,-\i);
  \foreach \x in {0,1,2,3}
    \draw[-] (\x,-\i-.1) -- (\x,-\i+.1);
}
\fill[darkred] (0,-0+0.1) circle (0.1);
\fill[blue] (0,0.3) circle (0.1);
\fill[darkred] (3,-1+0.1) circle (0.1);
\fill[blue] (0,-1+0.1) circle (0.1);
\fill[darkred] (3,-2+0.1) circle (0.1);
\fill[blue] (1,-2+0.1) circle (0.1);
\end{tikzpicture}
&
\begin{tikzpicture}[scale=0.6]
\foreach \i in {0,1,2} {
  \draw[-] (-0.2,-\i) -- (3.2,-\i);
  \foreach \x in {0,1,2,3}
    \draw[-] (\x,-\i-.1) -- (\x,-\i+.1);
}
\fill[darkred] (0,-0+0.1) circle (0.1);
\fill[blue] (0,0.3) circle (0.1);
\fill[darkred] (2,-1+0.1) circle (0.1);
\fill[blue] (0,-1+0.1) circle (0.1);
\fill[darkred] (3,-2+0.1) circle (0.1);
\fill[blue] (1,-2+0.1) circle (0.1);
\end{tikzpicture}
&
\begin{tikzpicture}[scale=0.6]
\foreach \i in {0,1,2} {
  \draw[-] (-0.2,-\i) -- (3.2,-\i);
  \foreach \x in {0,1,2,3}
    \draw[-] (\x,-\i-.1) -- (\x,-\i+.1);
}
\fill[darkred] (0,-0+0.1) circle (0.1);
\fill[blue] (0,0.3) circle (0.1);
\fill[darkred] (1,-1+0.1) circle (0.1);
\fill[blue] (0,-1+0.1) circle (0.1);
\fill[darkred] (3,-2+0.1) circle (0.1);
\fill[blue] (1,-2+0.1) circle (0.1);
\end{tikzpicture}
&
\begin{tikzpicture}[scale=0.6]
\foreach \i in {0,1,2} {
  \draw[-] (-0.2,-\i) -- (3.2,-\i);
  \foreach \x in {0,1,2,3}
    \draw[-] (\x,-\i-.1) -- (\x,-\i+.1);
}
\fill[darkred] (0,-0+0.1) circle (0.1);
\fill[blue] (0,0.3) circle (0.1);
\draw[->,darkred] (0.2,-1+.1) -- (0.8,-1+.1);
\fill[darkred] (3,-1+0.1) circle (0.1);
\fill[blue] (1,-1+0.1) circle (0.1);
\fill[darkred] (3,-2+0.1) circle (0.1);
\fill[blue] (1,-2+0.1) circle (0.1);
\end{tikzpicture}
&
\begin{tikzpicture}[scale=0.6]
\foreach \i in {0,1,2} {
  \draw[-] (-0.2,-\i) -- (3.2,-\i);
  \foreach \x in {0,1,2,3}
    \draw[-] (\x,-\i-.1) -- (\x,-\i+.1);
}
\fill[darkred] (0,-0+0.1) circle (0.1);
\fill[blue] (0,0.3) circle (0.1);
\draw[->,darkred] (0.2,-1+.1) -- (0.8,-1+.1);
\fill[darkred] (2,-1+0.1) circle (0.1);
\fill[blue] (1,-1+0.1) circle (0.1);
\fill[darkred] (3,-2+0.1) circle (0.1);
\fill[blue] (1,-2+0.1) circle (0.1);
\end{tikzpicture}
&
\begin{tikzpicture}[scale=0.6]
\foreach \i in {0,1,2} {
  \draw[-] (-0.2,-\i) -- (3.2,-\i);
  \foreach \x in {0,1,2,3}
    \draw[-] (\x,-\i-.1) -- (\x,-\i+.1);
}
\fill[darkred] (0,-0+0.1) circle (0.1);
\fill[blue] (0,0.3) circle (0.1);
\draw[->,darkred] (0.2,-1+.1) -- (0.8,-1+.1);
\fill[darkred] (1,-1+0.1) circle (0.1);
\fill[blue] (1,-1+0.3) circle (0.1);
\fill[darkred] (3,-2+0.1) circle (0.1);
\fill[blue] (1,-2+0.1) circle (0.1);
\end{tikzpicture}
&
\begin{tikzpicture}[scale=0.6]
\foreach \i in {0,1,2} {
  \draw[-] (-0.2,-\i) -- (3.2,-\i);
  \foreach \x in {0,1,2,3}
    \draw[-] (\x,-\i-.1) -- (\x,-\i+.1);
}
\fill[darkred] (0,-0+0.1) circle (0.1);
\fill[blue] (0,0.3) circle (0.1);
\fill[darkred] (0,-1+0.1) circle (0.1);
\fill[blue] (0,-1+0.3) circle (0.1);
\draw[->,darkred] (0.2,-2+.1) -- (0.8,-2+.1);
\fill[darkred] (3,-2+0.1) circle (0.1);
\fill[blue] (1,-2+0.1) circle (0.1);
\end{tikzpicture}
\\
\pi_1^3 x_1^3 \cdot \pi_2 x_2
&
\pi_1^2 x_1^2 \cdot \pi_1 \pi_2 x_2^2
&
\pi_1 x_1 \cdot \pi_1^2 \pi_2 x_2^3
&
\pi_1^2 \pi_2 x_1^3 \cdot 1
&
\pi_1 \pi_2 x_1^2 \cdot \pi_1 x_2
&
\pi_2 x_1 \cdot \pi_1^2 x_2^2
&
1 \cdot \pi_1^2 \pi_2 x_2^3
\end{array}
\]
Below each configuration, we have written the conditional probability 
except without the normalization constant in the geometric distribution $(1 - \pi_j x_i)$.
Note that these are precisely the terms appearing in $\bpi^{\lambda} \dG_{\lambda}(\xx; \bpi^{-1})$.
\end{ex}

\subsection{Case D: Bernoulli pushing}

Algebraically, this case is simply applying the involution $\omega$ defined by $\omega s_{\lambda} = s_{\lambda'}$.
As such, we should expect the movement of the $j$-th particle to correspond to the entries in the $j$-th column, as opposed to the $j$-th row in Case~A.
Indeed, this is the case, but we need to reformulate the parameters for the particle process.
Recalling from~\cite[Thm.~5.11]{PP16} (see also~\cite[Prop.~3.4]{Yel17}), we can write $\wG_{\lambda/\mu}(\xx; \alpha) = \G_{\lambda/\mu}\bigl(\xx; \alpha / (1 + \alpha \xx) \bigr)$, it becomes natural to instead consider $\pi_j x_i$ as a rate the particle moves rather than the success probability, which becomes $\frac{\pi_j x_i}{1 + \pi_j x_i}$.
We also see this in the normalization factor as
\[
\omega \left( \prod_{j=1}^{\ell} \prod_{i=1}^n (1 - \pi_j x_i) \right) = \prod_{j=1}^{\ell} \prod_{i=1}^n (1 + \pi_j x_i)^{-1}.
\]
Since we are considering it as a rate, we use a different set of parameters $\brho$ since we simply require $\rho_j x_i > 0$ for all $i$ and $j$, as opposed to $\pi_j x_i \in (0, 1)$.
The remainder of the proof is that after we factor our the denominators, we make the same observation in Case~A that an entry $i$ in column $j$ corresponds to the $j$-th particle moving one step at time $i$ and contributes $\rho_j x_i$ to the (unnormalized) probability.

\begin{ex}
Let us consider Case~D for $\lambda = 31$ with $n = 2$.
Thus, three particles (all others are fixed) move over two time steps.
The possible terms, states, and valued-set tableaux are
\[
\begin{array}{*{8}{@{\hspace{2pt}}c}}
\begin{tikzpicture}[scale=0.5,baseline=-30]
\foreach \i/\r in {0/3,1/3,2/1}
    \draw (0,-\i) -- (\r,-\i);
\foreach \i in {0,1}
  \draw (\i,0) -- (\i,-2);
\foreach \i in {2,3}
  \draw (\i,0) -- (\i,-1);
\draw (0.5,-1.5) node {$\mathbf{\color{darkred}2}$};
\foreach \i in {0,1,2}
  \draw (\i+.5,-0.5) node {$\mathbf{\color{darkred}1}$};
\end{tikzpicture}
&
\begin{tikzpicture}[scale=0.5,baseline=-30]
\foreach \i/\r in {0/3,1/3,2/1}
    \draw (0,-\i) -- (\r,-\i);
\foreach \i in {0,1}
  \draw (\i,0) -- (\i,-2);
\foreach \i in {2,3}
  \draw (\i,0) -- (\i,-1);
\draw (0.5,-1.5) node {$\mathbf{\color{darkred}2}$};
\foreach \i in {0,1}
  \draw (\i+.5,-0.5) node {$\mathbf{\color{darkred}1}$};
\foreach \i in {2}
  \draw (\i+.5,-0.5) node {$\mathbf{\color{darkred}2}$};
\end{tikzpicture}
&
\begin{tikzpicture}[scale=0.5,baseline=-30]
\foreach \i/\r in {0/3,1/3,2/1}
    \draw (0,-\i) -- (\r,-\i);
\foreach \i in {0,1}
  \draw (\i,0) -- (\i,-2);
\foreach \i in {2,3}
  \draw (\i,0) -- (\i,-1);
\draw (0.5,-1.5) node {$\mathbf{\color{darkred}2}$};
\foreach \i in {0}
  \draw (\i+.5,-0.5) node {$\mathbf{\color{darkred}1}$};
\foreach \i in {1,2}
  \draw (\i+.5,-0.5) node {$\mathbf{\color{darkred}2}$};
\end{tikzpicture}
&
\begin{tikzpicture}[scale=0.5,baseline=-30]
\foreach \i/\r in {0/3,1/3,2/1}
    \draw (0,-\i) -- (\r,-\i);
\draw (0,0) -- (0,-2);
\draw (1,-1) -- (1,-2);
\foreach \i in {2,3}
  \draw (\i,0) -- (\i,-1);
\draw (0.5,-1.5) node {$\mathbf{\color{darkred}2}$};
\draw (0.5,-0.5) node[gray] {$1$};
\draw (1.5,-0.5) node {$\mathbf{\color{darkred}1}$};
\draw (2.5,-0.5) node {$\mathbf{\color{darkred}1}$};
\end{tikzpicture}
&
\begin{tikzpicture}[scale=0.5,baseline=-30]
\foreach \i/\r in {0/3,1/3,2/1}
    \draw (0,-\i) -- (\r,-\i);
\draw (0,0) -- (0,-2);
\draw (1,-1) -- (1,-2);
\foreach \i in {2,3}
  \draw (\i,0) -- (\i,-1);
\draw (0.5,-1.5) node {$\mathbf{\color{darkred}2}$};
\draw (0.5,-0.5) node[gray] {$1$};
\draw (1.5,-0.5) node {$\mathbf{\color{darkred}1}$};
\draw (2.5,-0.5) node {$\mathbf{\color{darkred}2}$};
\end{tikzpicture}
&
\begin{tikzpicture}[scale=0.5,baseline=-30]
\foreach \i/\r in {0/3,1/3,2/1}
    \draw (0,-\i) -- (\r,-\i);
\draw (0,0) -- (0,-2);
\draw (1,0) -- (1,-2);
\draw (3,0) -- (3,-1);
\draw (0.5,-1.5) node {$\mathbf{\color{darkred}2}$};
\draw (0.5,-0.5) node {$\mathbf{\color{darkred}1}$};
\draw (1.5,-0.5) node[gray] {$1$};
\draw (2.5,-0.5) node {$\mathbf{\color{darkred}1}$};
\end{tikzpicture}
&
\begin{tikzpicture}[scale=0.5,baseline=-30]
\foreach \i/\r in {0/3,1/3,2/1}
    \draw (0,-\i) -- (\r,-\i);
\draw (0,0) -- (0,-2);
\draw (1,0) -- (1,-2);
\draw (3,0) -- (3,-1);
\draw (0.5,-1.5) node {$\mathbf{\color{darkred}2}$};
\draw (0.5,-0.5) node {$\mathbf{\color{darkred}1}$};
\draw (1.5,-0.5) node[gray] {$2$};
\draw (2.5,-0.5) node {$\mathbf{\color{darkred}2}$};
\end{tikzpicture}
&
\begin{tikzpicture}[scale=0.5,baseline=-30]
\foreach \i/\r in {0/3,1/3,2/1}
    \draw (0,-\i) -- (\r,-\i);
\draw (0,0) -- (0,-2);
\draw (1,-1) -- (1,-2);
\draw (3,0) -- (3,-1);
\draw (0.5,-1.5) node {$\mathbf{\color{darkred}2}$};
\foreach \i in {0,1}
  \draw (\i+.5,-0.5) node[gray] {$1$};
\draw (2.5,-0.5) node {$\mathbf{\color{darkred}1}$};
\end{tikzpicture}
\\
\begin{tikzpicture}[scale=0.7]
\foreach \i in {0,1,2} {
  \draw[-] (-0.2,-\i) -- (2.2,-\i);
  \foreach \x in {0,1,2}
    \draw[-] (\x,-\i-.1) -- (\x,-\i+.1);
}
\foreach \t in {0,1,2}
  \fill[darkred] (\t,-\t+0.1) circle (0.1);
\fill[blue] (0,0.3) circle (0.1);
\fill[UQpurple] (0,0.5) circle (0.1);
\fill[blue] (1,-1+0.3) circle (0.1);
\fill[UQpurple] (1,-1+0.5) circle (0.1);
\fill[blue] (1,-2+0.1) circle (0.1);
\fill[UQpurple] (1,-2+0.3) circle (0.1);
\end{tikzpicture}
&
\begin{tikzpicture}[scale=0.7]
\foreach \i in {0,1,2} {
  \draw[-] (-0.2,-\i) -- (2.2,-\i);
  \foreach \x in {0,1,2}
    \draw[-] (\x,-\i-.1) -- (\x,-\i+.1);
}
\foreach \t in {0,1,2}
  \fill[darkred] (\t,-\t+0.1) circle (0.1);
\fill[blue] (0,0.3) circle (0.1);
\fill[UQpurple] (0,0.5) circle (0.1);
\fill[blue] (1,-1+0.3) circle (0.1);
\fill[UQpurple] (0,-1+0.1) circle (0.1);
\fill[blue] (1,-2+0.1) circle (0.1);
\fill[UQpurple] (1,-2+0.3) circle (0.1);
\end{tikzpicture}
&
\begin{tikzpicture}[scale=0.7]
\foreach \i in {0,1,2} {
  \draw[-] (-0.2,-\i) -- (2.2,-\i);
  \foreach \x in {0,1,2}
    \draw[-] (\x,-\i-.1) -- (\x,-\i+.1);
}
\foreach \t in {0,1,2}
  \fill[darkred] (\t,-\t+0.1) circle (0.1);
\fill[blue] (0,0.3) circle (0.1);
\fill[UQpurple] (0,0.5) circle (0.1);
\fill[blue] (0,-1+0.1) circle (0.1);
\fill[UQpurple] (0,-1+0.3) circle (0.1);
\fill[blue] (1,-2+0.1) circle (0.1);
\fill[UQpurple] (1,-2+0.3) circle (0.1);
\end{tikzpicture}
&
\begin{tikzpicture}[scale=0.7]
\foreach \i in {0,1,2} {
  \draw[-] (-0.2,-\i) -- (2.2,-\i);
  \foreach \x in {0,1,2}
    \draw[-] (\x,-\i-.1) -- (\x,-\i+.1);
}
\foreach \t in {0,1,2}
  \fill[darkred] (\t,-\t+0.1) circle (0.1);
\fill[blue] (0,0.3) circle (0.1);
\draw[->,darkred] (0.2,-1+.1) -- (0.8,-1+.1);
\fill[UQpurple] (0,0.5) circle (0.1);
\fill[blue] (1,-1+0.3) circle (0.1);
\fill[UQpurple] (1,-1+0.5) circle (0.1);
\fill[blue] (1,-2+0.1) circle (0.1);
\fill[UQpurple] (1,-2+0.3) circle (0.1);
\end{tikzpicture}
&
\begin{tikzpicture}[scale=0.7]
\foreach \i in {0,1,2} {
  \draw[-] (-0.2,-\i) -- (2.2,-\i);
  \foreach \x in {0,1,2}
    \draw[-] (\x,-\i-.1) -- (\x,-\i+.1);
}
\foreach \t in {0,1,2}
  \fill[darkred] (\t,-\t+0.1) circle (0.1);
\fill[blue] (0,0.3) circle (0.1);
\fill[UQpurple] (0,0.5) circle (0.1);
\draw[->,darkred] (0.2,-1+.1) -- (0.8,-1+.1);
\fill[blue] (1,-1+0.3) circle (0.1);
\fill[UQpurple] (0,-1+0.1) circle (0.1);
\fill[blue] (1,-2+0.1) circle (0.1);
\fill[UQpurple] (1,-2+0.3) circle (0.1);
\end{tikzpicture}
&
\begin{tikzpicture}[scale=0.7]
\foreach \i in {0,1,2} {
  \draw[-] (-0.2,-\i) -- (2.2,-\i);
  \foreach \x in {0,1,2}
    \draw[-] (\x,-\i-.1) -- (\x,-\i+.1);
}
\foreach \t in {0,1,2}
  \fill[darkred] (\t,-\t+0.1) circle (0.1);
\fill[blue] (0,0.3) circle (0.1);
\fill[UQpurple] (0,0.5) circle (0.1);
\draw[->,blue] (0.2,-1+.3) -- (0.8,-1+.3);
\fill[blue] (1,-1+0.3) circle (0.1);
\fill[UQpurple] (1,-1+0.5) circle (0.1);
\fill[blue] (1,-2+0.1) circle (0.1);
\fill[UQpurple] (1,-2+0.3) circle (0.1);
\end{tikzpicture}
&
\begin{tikzpicture}[scale=0.7]
\foreach \i in {0,1,2} {
  \draw[-] (-0.2,-\i) -- (2.2,-\i);
  \foreach \x in {0,1,2}
    \draw[-] (\x,-\i-.1) -- (\x,-\i+.1);
}
\foreach \t in {0,1,2}
  \fill[darkred] (\t,-\t+0.1) circle (0.1);
\fill[blue] (0,0.3) circle (0.1);
\fill[UQpurple] (0,0.5) circle (0.1);
\fill[blue] (0,-1+0.1) circle (0.1);
\fill[UQpurple] (0,-1+0.3) circle (0.1);
\draw[->,blue] (0.2,-2+.1) -- (0.8,-2+.1);
\fill[blue] (1,-2+0.1) circle (0.1);
\fill[UQpurple] (1,-2+0.3) circle (0.1);
\end{tikzpicture}
&
\begin{tikzpicture}[scale=0.7]
\foreach \i in {0,1,2} {
  \draw[-] (-0.2,-\i) -- (2.2,-\i);
  \foreach \x in {0,1,2}
    \draw[-] (\x,-\i-.1) -- (\x,-\i+.1);
}
\foreach \t in {0,1,2}
  \fill[darkred] (\t,-\t+0.1) circle (0.1);
\fill[blue] (0,0.3) circle (0.1);
\fill[UQpurple] (0,0.5) circle (0.1);
\draw[->,darkred] (0.2,-1+.1) -- (0.8,-1+.1);
\draw[->,blue] (0.2,-1+.3) -- (0.8,-1+.3);
\fill[blue] (1,-1+0.3) circle (0.1);
\fill[UQpurple] (1,-1+0.5) circle (0.1);
\fill[blue] (1,-2+0.1) circle (0.1);
\fill[UQpurple] (1,-2+0.3) circle (0.1);
\end{tikzpicture}
\\
\begin{array}{c@{}c@{}c} \pi_{11} & \pi_{21} & \pi_{31} \\ \pi_{12} & \sigma_{22} & \sigma_{32} \end{array}
&
\begin{array}{c@{}c@{}c} \pi_{11} & \pi_{21} & \sigma_{31} \\ \pi_{12} & \sigma_{22} & \pi_{32} \end{array}
&
\begin{array}{c@{}c@{}c} \pi_{11} & \sigma_{21} & \sigma_{31} \\ \pi_{12} & \pi_{22} & \pi_{32} \end{array}
&
\begin{array}{c@{}c@{}c} \sigma_{11} & \pi_{21} & \pi_{31} \\ \pi_{12} & \sigma_{22} & \sigma_{32} \end{array}
&
\begin{array}{c@{}c@{}c} \sigma_{11} & \pi_{21} & \sigma_{31} \\ \pi_{12} & \sigma_{22} & \pi_{32} \end{array}
&
\begin{array}{c@{}c@{}c} \pi_{11} & \sigma_{21} & \pi_{31} \\ \pi_{12} & \sigma_{22} & \sigma_{32} \end{array}
&
\begin{array}{c@{}c@{}c} \pi_{11} & \sigma_{21} & \sigma_{31} \\ \pi_{12} & \sigma_{22} & \pi_{32} \end{array}
&
\begin{array}{c@{}c@{}c} \sigma_{11} & \sigma_{21} & \pi_{31} \\ \pi_{12} & \sigma_{22} & \sigma_{32} \end{array}
\end{array}
\]
where $\pi_{ji} = \frac{\rho_j x_i}{1 + \rho_j x_i}$ and $\sigma_{ji} = 1 - \pi_{ji} = \frac{1}{1 + \rho_j x_i}$.
Once we factor out the denominators, we have $\brho^{\lambda'/\mu'} \dwG_{\lambda/\mu}(\xx; \brho^{-1})$.
In the factors considered below each state, taking it as a $\{0,1\}$-matrix by setting $\pi_{ij} = 1$ and $\sigma_{ij} = 0$, we obtain the transposed LPP $\{0,1\}$-matrix.
\end{ex}

Furthermore, the above description of the weight contributions makes a clear connection with $01$-matrices in~\cite{DW08}.
Indeed, we can equate each state with a $01$-matrix $[M_{ij}]_{i,j}$ by setting $M_{ij} = 1$ if and only if we use $\pi_{ij}$ (and necessarily $M_{ij} = 0$ corresponds to $\sigma_{ij}$).
Since all entries of the matrix and particle motions are done independently, the aforementioned correspondence means the transition probabilities are equal.

\subsection{Case C: Geometric blocking}
\label{sec:combinatorial_C}

Now we consider the TASEP with the more classical blocking behavior, and as before, we will proceed by conditioning on the motion of particles.
As we have the blocking behavior, we expect the first particle to behave differently than all of the subsequent particles.
This justifies why the normalization factor only involves $\pi_1$ rather than all $\pi_j$.
Like in Case~A, we will identify entries $i$ in row $j$ corresponding to the $j$-th particle moving at time $i$.
As the first particle will never be blocked, it simply moves at times $i_1 \leq i_2 \leq \cdots \leq i_{\lambda_1}$.

Next, let us consider the movement of the $j$-th particle for $j > 1$.
Suppose the first (resp.\ second) particle at time $i-1$ is $p_{j-1}$ (resp.\ $p_j$).
Let $p$ be the position of the second particle at time $i$.
Recall that we update the position of the $j$-th particle \emph{before} the $(j-1)$-th particle.
Therefore if $p_j = p_{j-1}$, then the second particle does not move, which can be phrased as the probability the second particle is at position $p = p_j$ at time $i$ is
$
1 = \sum_{k=0}^{\infty} P_G(w_{j,i} = k).
$
Now suppose $p_j < p_{j-1}$, and if $p < p_{j-1}$, then this occurs with probability $(1 - \pi_j x_i)(\pi_j x_i)^{p-p_j}$ as there is no blocking behavior.
Lastly, if $p = p_{j-1}$, then the probability is
\[
(1 - \pi_j x_i) \sum_{k=p_{j-1}-p_j}^{\infty} (\pi_j x_i)^k = (\pi_j x_i)^{p_{j-1}-p_j}.
\]

Therefore, we want to identify the motion of the particles with semistandard tableaux like for the classical TASEP (with geometric jumping).
Yet, whenever the $j$-th particle does \emph{not} move the maximal possible distance it can, there are two terms contributing to the probability.
We split this into two separate terms that we encode into tableaux, and we do so by having the $-(\pi_j x_i)^{p_{j-1} - p_j + 1}$ term corresponds to adding an extra $i$ to a box in row $j-1$ (since we take $\beta_{j-1} = \pi_j$, recalling $j > 1$).
As such, the motion of the particles is constructed from a set-valued tableau $T$ by using the semistandard tableau $\min(T)$ built from the smallest entries in each box of $T$.
Indeed, we can only add an $i$ a box in the $j$-th row whenever $\Lambda^{(i)}_{j-1} > \Lambda^{(i)}_j$, where $\Lambda^{(i)}$ is the positions of the particles at time $i$, which is equivalent to $(\Lambda^{(i)})_{i=1}^n$ being the Gelfand--Tselin pattern corresponding to $\min(T)$.
Hence, we still have the normalization factor $\prod_{i=1}^n (1 - \pi_1 x_i)$, which completes the proof of Theorem~\ref{thm:transition_prob} in this case.

As another way to see this, consider evolution of the particles from time $t=i-1$ with positions $(\mu_1,\dots,\mu_\ell)$ to time $t=i$ with positions $(\lambda_1,\dots,\lambda_\ell)$.
The preceding discussion means that when the $j$-th particle ($j=2,\dots,\ell$) is blocked by the $(j-1)$-th particle, which corresponds to the case $\lambda_j=\mu_{j-1}$, summing up conditional probabilities give $(\pi_j x_i)^{\lambda_j-\mu_j}$.
If it is not blocked ($\lambda_j \neq \mu_{j-1}$), the conditional probability is $(\pi_j x_i)^{\lambda_j-\mu_j}(1-\pi_j x_i)$.
The unified expression for the factor coming from the move of the $j$-th particle is $(\pi_j x_i)^{\lambda_j-\mu_j}(1-(1-\delta_{\lambda_j, \mu_{j-1}}) \pi_j x_i)$,
where $\delta_{ij}$ is the Kronecker delta: $\delta_{ij}=1$ if $i \neq j$ and 0 otherwise.
As for the first particle, there is nothing to block its motion and we have the factor $(1-\pi_1 x_i)(\pi_1 x_i)^{\lambda_1-\mu_1}$.
Hence we have
\begin{align*}
\prob_{C,1}(\lambda | \mu)&=
(1-\pi_1 x_i)(\pi_1 x_i)^{\lambda_1-\mu_1} \prod_{j=2}^\ell (\pi_j x_i)^{\lambda_j-\mu_j}(1-(1-\delta_{\lambda_j, \mu_{j-1}}) \pi_j x_i) \\
&=(1-\pi_1 x_i) \bpi^{\lambda/\mu} x_i^{|\lambda/\mu|}
\prod_{j=2}^\ell (1-(1-\delta_{\lambda_j, \mu_{j-1}}) \pi_j x_i).
 \end{align*}
We also have the refinement of~\cite[Prop.~8.8]{Yel17}:
\begin{equation}
\label{eq:single_var_double_slash}
 G_{\lambda \ds \mu}(x_i;\bpi)=x_i^{|\lambda/\mu|} \prod_{j=2}^\ell (1-(1-\delta_{\lambda_j, \mu_{j-1}}) \pi_j x_i).
\end{equation}
We can see~\eqref{eq:single_var_double_slash} holds by the analogous refinement of the combinatorial description in~\cite[Thm.~4.6]{Yel19} by using $\beta_j$ if a contribution to $\beta$ occurs on the $j$-th row.
Alternatively, we can deduce~\eqref{eq:single_var_double_slash} by applying~\eqref{eq:double_to_single_slash} and the combinatorial description of $G_{\lambda/\mu}(x_i; \bpi)$ (where there is a unique \emph{semistandard} tableau).

\begin{ex}
Consider $\lambda = 631$ with $n = 5$.
A minimal tableau for particle motions is
\[
\ytableausetup{boxsize=1.8em}
\min(T) = \ytableaushort{111245,244,4}
\qquad \longleftrightarrow \qquad
\begin{tikzpicture}[scale=0.7,baseline=-1.9cm]
\foreach \i in {0,1,2,3,4,5} {
  \draw[-] (-0.2,-\i) -- (6.2,-\i);
  \foreach \x in {0,1,2,3,4,5,6}
    \draw[-] (\x,-\i-.1) -- (\x,-\i+.1);
}
\foreach \y in {0,1,...,5}
  \draw (7.3,-\y) node {$i = \y$};
\fill[darkred] (0,0+0.1) circle (0.1);
\fill[blue] (0,0.3) circle (0.1);
\fill[UQpurple] (0,0.5) circle (0.1);
\fill[dgreencolor] (0,0.7) circle (0.1);
\fill[darkred] (3,-1+0.1) circle (0.1);
\fill[blue] (0,-1+0.1) circle (0.1);
\fill[UQpurple] (0,-1+0.3) circle (0.1);
\fill[dgreencolor] (0,-1+0.5) circle (0.1);
\fill[darkred] (4,-2+0.1) circle (0.1);
\fill[blue] (1,-2+0.1) circle (0.1);
\fill[UQpurple] (0,-2+0.1) circle (0.1);
\fill[dgreencolor] (0,-2+0.3) circle (0.1);
\fill[darkred] (4,-3+0.1) circle (0.1);
\fill[blue] (1,-3+0.1) circle (0.1);
\fill[UQpurple] (0,-3+0.1) circle (0.1);
\fill[dgreencolor] (0,-3+0.3) circle (0.1);
\fill[darkred] (5,-4+0.1) circle (0.1);
\fill[blue] (3,-4+0.1) circle (0.1);
\fill[UQpurple] (1,-4+0.1) circle (0.1);
\fill[dgreencolor] (0,-4+0.1) circle (0.1);
\fill[darkred] (6,-5+0.1) circle (0.1);
\fill[blue] (3,-5+0.1) circle (0.1);
\fill[UQpurple] (1,-5+0.1) circle (0.1);
\fill[dgreencolor] (0,-5+0.1) circle (0.1);
\end{tikzpicture}
\]
The set-valued tableau $T$ with the largest degree with this minimal tableau $\min(T)$ is
\[
\ytableausetup{boxsize=1.8em}
T = \ytableaushort{11{1\mathbf{\color{darkred}2}}{2\mathbf{\color{darkred}34}}{4\mathbf{\color{darkred}5}}5,{2\mathbf{\color{darkred}3}}4{4\mathbf{\color{darkred}5}},{4\mathbf{\color{darkred}5}}}\,,
\]
Every other set-valued tableau $T'$ with $\min(T') = \min(T)$ is formed by removing some of the extra (bold) entries in $T$;
in other words, each of these bold entires can be chosen to be added independently to yield a valid set-valued tableau $T'$ with $\min(T') = \min(T)$.
Note that each of the bold entries correspond to a case when a particle that does not move its maximal possible distance.
Note that at time $i = 4$, the third particle has moved its maximum possible distance since the update order is left-to-right and it becomes blocked by the second particle.
Thus, we do not have an bold $\mathbf{\color{darkred}4}$ appearing in the second row.
We can also consider infinitely other particles, which are all blocked, and so they do not contribute to the probability.
\end{ex}

\begin{remark}
This can also be compared with the solvable vertex model from~\cite{MS13,MS14} with $\bbe = \beta$ to see this choice.
We remark that the refined $\bbe$ version can be constructed from a ``trivially'' colored lattice model where the colored paths do not cross (which is different than the one used in~\cite[Thm.~3.6]{BSW20}) with $\beta_i$ corresponding to color $i$.
\end{remark}

\subsection{Case B: Bernoulli blocking}

Essentially this is modifying Case~C in the same way as we modified Case~A to obtain Case~D.
While in this case, things are seemingly very different since $\wG_{\lambda}(\xx_n; \bal)$ is always a formal power series, we see this on the probability side by considering the power series expansion of the rate
\[
\frac{1}{1 + \rho_j x_i} = \sum_{k=0}^{\infty} (-\rho_j x_i)^k.
\]
Thus, if there are $k$ extra entries of $i$ in column $j-1$, then this corresponds to choosing $(-\rho_j x_i)^k$ in the above expansion.
Like Case~C, the movement of the particles for a multiset-valued tableau $T$ is described by the semistandard tableaux $\min(T)$ formed by taking the smallest entries in each box.
In contrast to Case~C, we can always repeat any entry $i$ in $\min(T)$ (within its box) an arbitrary number of times and the result remains a multiset-valued tableau; this reflects that we multiply \emph{every} entry by $(1 + \rho_j x_i)^{-1}$.

\begin{ex}
Let us take $\lambda = 4311$ with $n =5$.
Then an example of the correspondence between a semistandard tableau $\min(T)$ and the particle motions is
\[
\ytableausetup{boxsize=1.8em}
\min(T) = \ytableaushort{1224,245,4,5}
\qquad \longleftrightarrow \qquad
\begin{tikzpicture}[scale=0.7,baseline=-1.9cm]
\foreach \i in {0,1,2,3,4,5} {
  \draw[-] (-0.2,-\i) -- (6.2,-\i);
  \foreach \x in {0,1,2,3,4,5,6}
    \draw[-] (\x,-\i-.1) -- (\x,-\i+.1);
}
\foreach \y in {0,1,...,5}
  \draw (7.3,-\y) node {$i = \y$};
\fill[darkred] (0,0+0.1) circle (0.1);
\fill[blue] (0,0.3) circle (0.1);
\fill[UQpurple] (0,0.5) circle (0.1);
\fill[dgreencolor] (0,0.7) circle (0.1);
\fill[darkred] (1,-1+0.1) circle (0.1);
\fill[blue] (0,-1+0.1) circle (0.1);
\fill[UQpurple] (0,-1+0.3) circle (0.1);
\fill[dgreencolor] (0,-1+0.5) circle (0.1);
\fill[darkred] (2,-2+0.1) circle (0.1);
\fill[blue] (1,-2+0.1) circle (0.1);
\fill[UQpurple] (1,-2+0.3) circle (0.1);
\fill[dgreencolor] (0,-2+0.1) circle (0.1);
\fill[darkred] (2,-3+0.1) circle (0.1);
\fill[blue] (1,-3+0.1) circle (0.1);
\fill[UQpurple] (1,-3+0.3) circle (0.1);
\fill[dgreencolor] (0,-3+0.1) circle (0.1);
\fill[darkred] (3,-4+0.1) circle (0.1);
\fill[blue] (2,-4+0.1) circle (0.1);
\fill[UQpurple] (1,-4+0.1) circle (0.1);
\fill[dgreencolor] (1,-4+0.3) circle (0.1);
\fill[darkred] (4,-5+0.1) circle (0.1);
\fill[blue] (2,-5+0.1) circle (0.1);
\fill[UQpurple] (2,-5+0.3) circle (0.1);
\fill[dgreencolor] (1,-5+0.1) circle (0.1);
\end{tikzpicture}
\]
Any multiset valued tableau with $\min(T)$ will have a corresponding set-valued tableau (which does not change $\min(T)$) whose entries are a subset of each box of
\[
\ytableausetup{boxsize=1.8em}
\overline{T} = \ytableaushort{{\mathbf{\color{darkred}1}}{\mathbf{\color{darkred}2}}{\mathbf{\color{darkred}234}}{\mathbf{\color{darkred}45}},{\mathbf{\color{darkred}23}}{\mathbf{\color{darkred}45}}{\mathbf{\color{darkred}5}},{\mathbf{\color{darkred}4}},{\mathbf{\color{darkred}5}}}\,.
\]
We have written every entry in bold to indicate (and emphasize) that we can repeat \emph{every} entry as many times as we desire.
\end{ex}

\section{Multi-point distributions}
\label{sec:multipoint}

In this section, we will compute certain multi-point distributions at a single time associated for the TASEPs we consider here.
We will specifically focus on the Case~A and Case~C as the other cases can be shown by applying the $\omega$ involution.
We give determinant formulas for the multi-point distributions for the general cases.
When starting from the step initial condition, we show our formulas reduce to specializations of (dual) Grothendieck polynomials.

\subsection{Pushing}

We begin by stating a straightforward extension of~\cite[Cor.~3.14]{MS20}, which can be proven using the lattice model given therein.
However, we will sketch a proof using our free fermion presentation.
To state the claim, we need the flagged Schur function, which we denote by $s_{\lambda,\phi}(\xx)$ for a flagging $\phi$.
Let $\lambda$ be a partition and $\phi$ a flagging (a sequence of integers).
The flagged Schur function
$s_{\lambda,\phi}(\mathbf{x})$
is defined by
\[
s_{\lambda,\phi}(\mathbf{x}) = \sum_T \wt(T),
\]
where the sum is over all semistandard Young tableaux $T$ of shape $\lambda$ such that the entries in row $i$ are at most $\phi_i$.

\begin{prop}
\label{prop:schur_branching}
Let $\ell = \ell(\lambda)$.
We have
\begin{equation}
\label{eq:schur_branching}
s_{\lambda,\phi}(\xx, \bbe_{\ell}) = \sum_{\mu \subseteq \lambda} \bbe^{\lambda-\mu} \dG_{\mu}(\xx; \bbe),
\end{equation}
where $\phi$ is the flagging such that the entries in the $i$-th row is at most $\beta_i$.
\end{prop}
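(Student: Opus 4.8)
The plan is to reduce the identity, via the free-fermion presentation, first to a fermionic identity and then to a tableau bijection. Since $\ket{\emptyset}^{[0, \bbe]} = \ket{0}$, the specialization $\bal = 0$ of~\eqref{eq:free_fermion_grothendiecks} reads $\dG_{\mu}(\xx; \bbe) = \bra{0} e^{H(\xx)} \ket{\mu}_{[\bbe]}$, so the right-hand side of~\eqref{eq:schur_branching} equals $\bra{0} e^{H(\xx)} \ket{W}$ where $\ket{W} := \sum_{\mu \subseteq \lambda} \bbe^{\lambda - \mu} \ket{\mu}_{[\bbe]}$ is a finite sum in $\mcF^{0}$. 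On the other hand, a tableau counted by $s_{\lambda, \phi}$ uses every variable of $\xx$ in every row, while row $i$ may use the additional variables $\beta_{1}, \dotsc, \beta_{i}$; applying the standard fermionic description of flagged Schur functions---obtained by iterating the Schur branching rule (the case $\bal = \bbe = 0$ of Proposition~\ref{prop:branching_rules}) over the nested alphabets $\xx \subseteq \xx \sqcup \{\beta_{1}\} \subseteq \xx \sqcup \{\beta_{1}, \beta_{2}\} \subseteq \cdots$---gives
\[
s_{\lambda, \phi}(\xx, \bbe_{\ell}) = \bra{0} e^{H(\xx)} \ket{V}, \qquad \ket{V} := \Bigl( \prod_{1 \leq i \leq \ell}^{\rightarrow} e^{H(\beta_{i})} \psi_{\lambda_{i} - i} \Bigr) \ket{-\ell}.
\]
Since $\ket{v} \mapsto \bra{0} e^{H(\xx)} \ket{v}$ is injective on $\mcF^{0}$ (it is the isomorphism of~\cite[Cor.~4.2]{IMS22}), it suffices to prove the fermionic identity $\ket{V} = \ket{W}$.

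To prove $\ket{V} = \ket{W}$ I would expand both sides in the ``plain'' basis $\{\ket{\nu}\}_{\nu \in \mcP}$. Rewriting the half vertex operators in $\ket{V}$ via~\eqref{eq:eH_commute} and then reordering the resulting products of fermions (using $\psi_{a}^{2} = 0$ and $\psi_{a} \psi_{b} = -\psi_{b} \psi_{a}$), one finds that the coefficient of $\ket{\nu}$ in $\ket{V}$ is the flagged skew Schur function $s_{\lambda / \nu, \phi}(\bbe_{\ell})$ in the variables $\beta_{1}, \dotsc, \beta_{\ell}$ with row $i$ bounded by $\beta_{i}$---this is exactly the flagged Jacobi--Trudi (Lindström--Gessel--Viennot) computation. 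Expanding instead each $\ket{\mu}_{[\bbe]}$ in the plain basis, which is the Schur expansion of $\dG_{\mu}(\xx; \bbe)$ given by the rim-border/elegant-filling description of~\cite{HJKSS21} (refining~\cite{LamPyl07, Yel17}), the coefficient of $\ket{\nu}$ in $\ket{W}$ is $\sum_{\nu \subseteq \mu \subseteq \lambda} \bbe^{\lambda - \mu} g_{\mu / \nu}(\bbe)$, where $g_{\mu / \nu}(\bbe)$ is the generating function of elegant fillings of $\mu / \nu$, i.e.\ semistandard tableaux whose entries in row $i$ lie in $\{\beta_{1}, \dotsc, \beta_{i-1}\}$, weighted by the product of their entries. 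Thus $\ket{V} = \ket{W}$ is equivalent to the combinatorial identity
\[
s_{\lambda / \nu, \phi}(\bbe_{\ell}) = \sum_{\nu \subseteq \mu \subseteq \lambda} \bbe^{\lambda - \mu}\, g_{\mu / \nu}(\bbe), \qquad \nu \subseteq \lambda.
\]

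This last identity I would establish by a weight-preserving bijection. Given a flagged semistandard tableau $S$ of shape $\lambda / \nu$ with row $i$ bounded by $\beta_{i}$, let $\lambda / \mu$ be the set of cells whose entry equals the maximal letter allowed in its row. In such a tableau a maximal letter in row $i$ forces the rest of that row to be the same letter, and forces the cell directly below it to carry the maximal letter of row $i + 1$; hence these cells propagate weakly to the right and down, $\mu$ is a partition with $\nu \subseteq \mu \subseteq \lambda$, the filling of $\lambda / \mu$ contributes exactly $\bbe^{\lambda - \mu}$, and the restriction $E := S|_{\mu / \nu}$ is an elegant filling of $\mu / \nu$, so $\wt(S) = \bbe^{\lambda - \mu}\, \wt(E)$. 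The inverse reattaches the forced cells, and summing over $S$ gives the identity. The only real obstacle is checking that this decomposition is well defined and bijective---that $\mu$ is always a partition and $E$ always satisfies the semistandard and elegance conditions---which is exactly where the shape of the flag ``$\leq \beta_{i}$ in row $i$'' is used. Alternatively one can bypass the tableaux and prove $\ket{V} = \ket{W}$ directly by induction on $\ell$, peeling off the top row and using the commutation relations~\eqref{eq:eH_relations}; the sign and partition bookkeeping is then the same as in the bijection, and this is the route parallel to the lattice-model proof of~\cite[Cor.~3.14]{MS20}.
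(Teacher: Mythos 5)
Your proof is correct but takes a genuinely different route from the paper's. Both start from the fermionic formulation $s_{\lambda,\phi}(\xx,\bbe_\ell) = \bra{\emptyset} e^{H(\xx)}\ket{\lambda}_{(\bbe)}$ and reduce Proposition~\ref{prop:schur_branching} to a vector identity in $\mcF^0$; the difference is in the basis used to establish it. The paper expands $\ket{\lambda}_{(\bbe)}$ in the dual Grothendieck basis $\{\ket{\mu}_{[\bbe]}\}$ by computing the pairings ${}_{[\bbe]}\braket{\mu}{\lambda}_{(\bbe)}$, and the rectification lemma~\cite[Lemma~3.6]{IMS22} shows the resulting Wick determinant is upper triangular with $i$-th diagonal entry $\beta_i^{\lambda_i - \mu_i}$, giving the coefficient $\bbe^{\lambda-\mu}$ at once. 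You instead expand your $\ket{V}$ and $\ket{W}$ in the plain Schur basis $\{\ket{\nu}\}_{\nu \in \mcP}$, turning the vector identity into a tableau identity between a flagged skew Schur polynomial in the $\bbe$-alphabet (row $i$ bounded by $\beta_i$) and a $\bbe$-weighted sum of elegant-filling generating functions, which you then settle with an explicit weight-preserving bijection. I checked the bijection closes: in a flagged SSYT of shape $\lambda/\nu$, the cells carrying the maximal allowed letter propagate weakly to the right (row-weak increase plus the flag bound) and down the columns (column strictness, the flag bound, and $\beta_i < \beta_{i+1}$), so the complement has a partition outer boundary $\mu$ with $\nu \subseteq \mu \subseteq \lambda$, the restriction to $\mu/\nu$ is an elegant filling, and the only potentially problematic column configuration in the inverse---a cell of $\lambda/\mu$ directly above a cell of $\mu/\nu$---cannot occur once $\mu$ is a partition. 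Your route is longer and presupposes the elegant-filling Schur expansion of $\dG_\mu(\xx;\bbe)$, but it avoids the Wick/determinant step, yields a clean combinatorial identity of independent interest, and aligns with the bijective style of Section~\ref{sec:bijection}; the paper's route is shorter and stays entirely inside the free-fermion calculus.
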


\begin{proof}
From~\cite[Ex.~2.6]{Iwao21}, the flagged Schur function can be written as
\[
s_{\lambda,\phi}(\xx_n, \bbe_{\ell}) = \bra{\emptyset} e^{H(\xx_n)} \ket{\lambda}_{(\bbe)},
\qquad\qquad \text{ where } \ket{\lambda}_{(\bbe)} := \prod^{\rightarrow}_{1 \leq i \leq \ell} \left( e^{H(\beta_i)} \psi_{\lambda_i-i} \right) \ket{-\ell}.
\]
We want to show that
\begin{equation}
\label{eq:schur_dg_expansion}
\ket{\lambda}_{(\bbe)} = e^{H(\beta_1)} \ket{\lambda}_{[\vec{\bbe}]} = \sum_{\mu \subseteq \lambda} \bbe^{\lambda / \mu} \ket{\mu}_{[\bbe]},
\end{equation}
where $\vec{\bbe} = (\beta_2, \beta_3, \ldots)$ denotes the shifted parameters and the last equality is the branching rule~\cite[Cor.~3.19]{MS20} (which we will provide a free fermionic proof).
The first equality is immediate from the definitions.
We will prove the last equality in~\eqref{eq:schur_dg_expansion} by using~\eqref{eq:orthonormal_basis}, which reduces the claim to showing ${}_{[\bbe]} \braket{\mu}{\lambda}_{(\bbe)} = \bbe^{\lambda - \mu}$ for $\mu \subseteq \lambda$ and $0$ otherwise.
Indeed, the claim follows from a similar proof to~\cite[Thm.~3.10]{IMS22} except we now have the $i$-th diagonal entry equal to $\beta_i^{\lambda_i - \mu_i}$ due to the shift.
Hence, Equation~\eqref{eq:schur_dg_expansion} yields
\begin{equation}
\label{eq:beta_schur_g}
s_{\lambda,\phi}(\xx_n, \bbe_{\ell}) = \bra{\emptyset} e^{H(\xx_n)} \ket{\lambda}_{(\bbe)}
= \sum_{\mu \subseteq \lambda} \bbe^{\lambda - \mu}  \bra{\emptyset} e^{H(\xx_n)} \ket{\mu}_{[\bbe]}
= \sum_{\mu \subseteq \lambda} \bbe^{\lambda-\mu} \dG_{\mu}(\xx_n; \bbe),
\end{equation}
where we used~\eqref{eq:free_fermion_grothendiecks} with the fact ${}_{[\bbe]} \bra{\emptyset} = \bra{\emptyset}$ for the last equality.
\end{proof}

\begin{remark}
Our proofs of Proposition~\ref{prop:schur_branching} can be trivially generalized to give a formula for the expansion of $s_{\lambda,\phi}(\xx, \bbe_{\ell})$ into $\dG_{\mu}(\xx_n; \bbe)$ for an arbitrary flag $\phi$ with the coefficients both as a combinatorial formula (from~\cite{MS20}) or as a determinant (from Wick's theorem or the LGV lemma).
\end{remark}

Next we compute formulas for the multi-point distribution.
Using the fact that we must have $G(i, n) \geq G(i+1, n)$, for $\ell$ particles, the $k$-point distribution is equivalent to the $\ell$-point distribution.
In particular, we consider $1 = i_1 \geq i_2 \geq \cdots \geq i_k$, and we have
\[
\prob(G(i_k,n) \leq \lambda_{i_k}, \dotsc, G(i_1,n) \leq \lambda_{i_1}) = \prob(G(\ell,n) \leq \lambda_{\ell}, \dotsc, G(1,n) \leq \lambda_1),
\]
where $\lambda_j = \lambda_{i_k}$ with $k$ maximal such such $i_k \leq j$.
The ordering on $\{i_k\}$ does not lose any generality, but we do require knowledge about the maximum distance the first particle can move.
As such, we will use the notation
\[
\prob_{\leq,n}(\lambda | \nu) := \prob(G(\ell,n) \leq \lambda_{\ell}, \dotsc, G(1,n) \leq \lambda_1 | \bG(0) = \nu).
\]

\begin{thm}
\label{thm:mp_dual_g}
For Case~A, the multi-point distribution with $\ell$ particles is given by
\[
\prob_{\leq,n}(\lambda | \nu) = \bpi^{\lambda/\nu} \prod_{i=1}^{\ell} \prod_{j=1}^n (1 - \pi_i x_j) \cdot \det \bigl[ h_{\lambda_i - \nu_j + j - i}(\xx \sqcup \bpi_i^{-1} / \bpi_{j-1}^{-1}) \bigr]_{i,j=1}^{\ell}.
\]
\end{thm}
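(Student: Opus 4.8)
The plan is to write $\prob_{\leq,n}(\lambda|\nu)$ as a sum of the transition kernels $\prob_{A,n}(\mu|\nu)$ over the interval $\nu\subseteq\mu\subseteq\lambda$, invoke Case~A of Theorem~\ref{thm:transition_prob}, and then identify the resulting sum of dual Grothendieck functions with a flagged skew Schur function whose (flagged) Jacobi--Trudi determinant is produced by Wick's theorem in the free-fermion presentation.

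First I would observe that, since the particles stay in order and only move to the right, under $\bG(0)=\nu$ the event $\{G(\ell,n)\leq\lambda_\ell,\dotsc,G(1,n)\leq\lambda_1\}$ is the disjoint union over partitions $\mu$ with $\nu\subseteq\mu\subseteq\lambda$ of the events $\{\bG(n)=\mu\}$. Hence $\prob_{\leq,n}(\lambda|\nu)=\sum_{\nu\subseteq\mu\subseteq\lambda}\prob_{A,n}(\mu|\nu)$. Substituting Case~A of Theorem~\ref{thm:transition_prob}, using that $\ell(\mu)\leq\ell(\lambda)\leq\ell$ for every $\mu$ occurring (so the factor $\prod_{i=1}^{\ell}\prod_{j=1}^{n}(1-\pi_i x_j)$ is common to all terms), and using $\bpi^{\mu/\nu}=\bpi^{\lambda/\nu}\,\bpi^{-\lambda/\mu}$, reduces the claim to the purely symmetric-function identity
\[
\sum_{\nu\subseteq\mu\subseteq\lambda}\bpi^{-\lambda/\mu}\,\dG_{\mu/\nu}(\xx_n;\bpi^{-1})
=\det\bigl[h_{\lambda_i-\nu_j+j-i}(\xx_n\sqcup\bpi_i^{-1}/\bpi_{j-1}^{-1})\bigr]_{i,j=1}^{\ell}.
\]
When $\nu\not\subseteq\lambda$ both sides vanish (the sum is empty and the Jacobi--Trudi determinant has a zero row by the usual degree argument), so I may assume $\nu\subseteq\lambda$.

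To prove this identity, set $\bbe=\bpi^{-1}$. The skew branching identity $\ket{\lambda}_{(\bbe)}=\sum_{\mu\subseteq\lambda}\bbe^{\lambda/\mu}\ket{\mu}_{[\bbe]}$ established inside the proof of Proposition~\ref{prop:schur_branching} (Equation~\eqref{eq:schur_dg_expansion}), paired on the left with ${}_{[\bbe]}\bra{\nu}e^{H(\xx_n)}$ and using~\eqref{eq:free_fermion_grothendiecks} together with $\dG_{\mu/\nu}=0$ unless $\nu\subseteq\mu$, gives
\[
\sum_{\nu\subseteq\mu\subseteq\lambda}\bbe^{\lambda/\mu}\,\dG_{\mu/\nu}(\xx_n;\bbe)
={}_{[\bbe]}\bra{\nu}\,e^{H(\xx_n)}\,\ket{\lambda}_{(\bbe)};
\]
interchanging the graded (finite-in-each-degree) sum with the pairing is harmless. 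This is the skew-shape analogue of the flagged Schur expression in~\cite[Ex.~2.6]{Iwao21}. I would then extract the determinant via Wick's theorem: writing ${}_{[\bbe]}\bra{\nu}=(\ket{\nu}_{[\bbe]})^{*}$ and $\ket{\lambda}_{(\bbe)}$ explicitly as products of the modes $\psi^{*}_{\nu_j-j}$, $\psi_{\lambda_i-i}$ and half-vertex operators, one moves every $e^{H}$ and $e^{H^{*}}$ past the fermionic modes using~\eqref{eq:eH_relations}, reducing the right-hand side to a vacuum expectation of $\ell$ creation and $\ell$ annihilation operators, which Wick's theorem writes as $\det[M_{ij}]_{i,j=1}^{\ell}$ with $M_{ij}$ the two-point contraction of $\psi^{*}_{\nu_j-j}$ against $\psi_{\lambda_i-i}$ through the intervening vertex operators. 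Tracking those operators, $e^{H(\xx_n)}$ dresses every contraction by $\xx_n$; the operators $e^{H(\beta_1)},\dotsc,e^{H(\beta_i)}$ lying to the left of $\psi_{\lambda_i-i}$ in $\ket{\lambda}_{(\bbe)}$ contribute the alphabet $\bbe_i=(\beta_1,\dotsc,\beta_i)$; and the operators $e^{H^{*}(\beta_1)},\dotsc,e^{H^{*}(\beta_{j-1})}$ adjacent to $\psi^{*}_{\nu_j-j}$ in ${}_{[\bbe]}\bra{\nu}$ contribute $\bbe_{j-1}$ with a supersymmetric minus sign. This yields $M_{ij}=h_{\lambda_i-\nu_j+j-i}(\xx_n\sqcup\bbe_i/\bbe_{j-1})$, and substituting $\bbe=\bpi^{-1}$ gives the claimed determinant. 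This final computation is essentially the one carried out for~\cite[Thm.~3.10]{IMS22}.

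I expect the main obstacle to be the bookkeeping in this Wick computation: verifying that the flag is precisely ``$\bbe_i$ in row $i$, $\bbe_{j-1}$ in column $j$'', and in particular that the creation side contributes these alphabets with a plus sign while the annihilation side contributes them with a supersymmetric minus sign, uniformly across the two regimes $i\geq j-1$ (where $\xx_n\sqcup\bbe_i/\bbe_{j-1}$ is an honest alphabet $\xx_n\sqcup(\beta_j,\dotsc,\beta_i)$) and $i<j-1$ (where it is the virtual alphabet $\xx_n/(\beta_{i+1},\dotsc,\beta_{j-1})$). Once the correct form of $M_{ij}$ is pinned down, and the harmless padding-independence in $\ell$ is checked, the rest is formal.
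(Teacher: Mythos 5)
Your proposal takes essentially the same route as the paper's primary proof: decompose $\prob_{\leq,n}(\lambda|\nu)$ as a sum of transition probabilities over $\nu\subseteq\mu\subseteq\lambda$, invoke Case~A of Theorem~\ref{thm:transition_prob}, use the branching identity~\eqref{eq:schur_dg_expansion} to rewrite the sum as ${}_{[\bbe]}\bra{\nu}e^{H(\xx_n)}\ket{\lambda}_{(\bbe)}$, and then extract the determinant via Wick's theorem (the paper also sketches a second proof via the skew Cauchy formula, which you do not need). One bookkeeping slip deserves a flag, since it would derail the Wick computation if carried out literally: you write ${}_{[\bbe]}\bra{\nu}=(\ket{\nu}_{[\bbe]})^{*}$, but by the paper's conventions ${}_{[\bbe]}\bra{\nu}=(\ket{\nu}^{[\bbe]})^{*}$ (the dual basis element to $\ket{\nu}_{[\bbe]}$ under the orthonormality~\eqref{eq:orthonormal_basis} is the adjoint of the \emph{other} vector). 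Consequently the half-vertex operators appearing in ${}_{[\bbe]}\bra{\nu}$ are $e^{-H(\beta_k)}$, not $e^{H^{*}(\beta_k)}$; the former raise the $\psi^{*}$-mode index and produce $h_m(\emptyset/\bbe_{j-1})$ in the annihilation factor $P_j$, which is what yields the closed supersymmetric expression $h_{\lambda_i-\nu_j+j-i}(\xx_n\sqcup\bbe_i/\bbe_{j-1})$, whereas $e^{H^{*}(\beta_k)}$ would lower the index and give a genuinely different (in fact, non-terminating) contraction. Your stated final answer is nonetheless correct, so this is a notational inconsistency to repair rather than a conceptual error.
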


\begin{proof}
Using~\eqref{eq:schur_dg_expansion}, we compute
\begin{align*}
\prob_{\leq,n}(\lambda|\nu) & = \prod_{i=1}^{\ell} \prod_{j=1}^n (1 - \pi_i x_j) \sum_{\nu \subseteq \mu \subseteq \lambda} \bpi^{\mu / \nu} \dG_{\mu/\nu}(\xx_n; \bpi^{-1})
\\ & = \bpi^{\lambda/\nu} \prod_{i=1}^{\ell} \prod_{j=1}^n (1 - \pi_i x_j) \cdot {}_{[\bpi^{-1}]} \bra{\nu} e^{H(\xx_n)} \ket{\lambda}_{(\bpi^{-1})}.
\end{align*}
Next we apply Wick's theorem to compute ${}_{[\bpi^{-1}]} \bra{\nu} e^{H(\xx_n)} \ket{\lambda}_{(\bpi^{-1})} = \det[ \bra{-\ell} P_j Q_i \ket{-\ell} ]_{i,j=1}^{\ell}$, where
\begin{align*}
P_j & := e^{H(\bpi_{j-1}^{-1})} \psi^*_{\nu_j-j}e^{-H(\bpi_{j-1}^{-1})} = \sum_{m=0}^{\infty} h_m(\emptyset / \bpi_{j-1}^{-1}) \psi^*_{\nu_j-j+m},
\\
Q_i & := e^{H(\xx \sqcup \bpi_i^{-1})} \psi_{\lambda_i-i} e^{-H(\xx \sqcup \bpi_i^{-1})} = \sum_{k=0}^{\infty} h_k(\xx \sqcup \bpi_i^{-1}) \psi_{\lambda_i-i-k}.
\end{align*}
Therefore, the entries in the determinant are (\textit{cf.}~\cite[Prop.~2.4]{Iwao21})
\begin{align*}
\bra{-\ell} P_j Q_i \ket{-\ell}
& = \sum_{m=0}^{\infty} \sum_{k=0}^{\infty} h_m(\emptyset / \bpi_{j-1}^{-1}) h_k(\xx \sqcup \bpi_i^{-1}) \cdot \bra{-\ell} \psi^*_{\nu_j-j+m} \psi_{\lambda_i-i-k} \ket{-\ell}
\\ & = \sum_{k=0}^{\infty} h_{\lambda_i - \nu_j + j - i - k}(\emptyset / \bpi_{j-1}^{-1}) h_k(\xx \sqcup \bpi_i^{-1})
\\ & = h_{\lambda_i - \nu_j + j - i}(\xx \sqcup \bpi_i^{-1} / \bpi_{j-1}^{-1}),
\end{align*}
yielding the claim.
\end{proof}

\begin{proof}[Alternative proof]
We give another proof of Theorem~\ref{thm:mp_dual_g} using the skew Cauchy formula (Theorem~\ref{thm:skew_cauchy}).
If we take $\mu = \emptyset$ and the specializations $\bal = 0$ and $\xx = \bpi^{-1}$, we obtain
\[
\sum_{\lambda \supseteq \nu} \bpi^{\lambda} \dG_{\lambda/\nu}(\yy; \bpi) = \prod_i \frac{1}{1 - \pi_1 y_i} \dG_{\nu}(\xx; \bbe) \bpi^{\nu}
\]
since $\G_{\lambda}(\bpi^{-1}; \bpi) = \bpi^{\lambda}$ by directly examining the bi-alternate formula~\cite{HJKSS24,HJKSS25} (see also~\cite{FNS23} for a description of other ways to verify this identity).
The claim follows from the Jacobi--Trudi formula (see, \textit{e.g.},~\cite[Thm.~6.1]{HJKSS24} or~\cite[Thm.~4.1]{IMS22}).
\end{proof}

Unless otherwise stated, we will henceforth use the flagging $\phi$ from Proposition~\ref{prop:schur_branching} for our flagged Schur functions.
As a special case of Theorem~\ref{thm:mp_dual_g} using Proposition~\ref{prop:schur_branching}, we obtain
\begin{align*}
\prob_{\leq,n}(\lambda | \emptyset) & = \prod_{i=1}^{\ell} \prod_{j=1}^n (1 - \pi_i x_j) \sum_{\mu \subseteq \lambda} \bpi^{\mu} \dG_{\mu}(\xx_n; \bpi^{-1})
= \bpi^{\lambda} \prod_{i=1}^{\ell} \prod_{j=1}^n (1 - \pi_i x_j) s_{\lambda,\phi}(\xx_n, \bpi_{\ell}^{-1}).
\end{align*}
Furthermore, this recovers~\cite[Thm.~4.26]{MS20} by taking $\lambda$ to be an $\ell \times m$ rectangle, which allows us to forget about the flagging $\phi$.

\begin{remark}
\label{rem:JR_comparison}
We want to compare Theorem~\ref{thm:mp_dual_g} to~\cite[Thm.~2]{JR20}.
We begin by following~\cite[Thm.~4.18]{IMS22} with the substitution $w \mapsto z^{-1}$ to write the entries of the determinant as the integral
\[
h_{\lambda_i-\nu_j-i+j}(\xx_n \sqcup \bpi_i^{-1} / \bpi_{j-1}^{-1}) = \frac{1}{2\pi\ii} \oint_{\gamma_r} \frac{\prod_{k=1}^{j-1} (1 - \pi_k^{-1} z^{-1})}{\prod_{k=1}^i (1 - \pi_k^{-1} z^{-1})} \frac{z^{\lambda_i - \mu_j - i + j}}{\prod_{k=1}^n (1 - x_k z^{-1})} \, \frac{dz}{z},
\]
where $\gamma$ is a counterclockwise oriented circle of radius $r > \abs{x_1}, \abs{\pi_1^{-1}}, \abs{x_2}, \abs{\pi_2^{-1}}, \ldots$ and $\ii = \sqrt{-1}$.
Hence, we can express $\prob(G(\ell,n) \leq \lambda_{\ell}, \dotsc, G(1,n) \leq \lambda_1 | \bG(0) = \nu) = \det [F_{ij}]_{i,j=1}^{\ell}$, where
\begin{equation}
\label{eq:integral_mp_dual_g}
F_{ij} = \frac{1}{2\pi\ii} \oint_{\gamma_r} \frac{(\pi_i z)^{\lambda_i}}{(\pi_j z)^{\mu_j}} \frac{\prod_{k=1}^{j-1} (z - \pi_k^{-1})}{\prod_{k=1}^i (z - \pi_k^{-1})} \prod_{k=1}^n \frac{1 - x_k \pi_i}{1 - x_k z^{-1}} \, dz.
\end{equation}
This is the transpose of the expression in~\cite[Thm.~2]{JR20} after noting that their requirement is $G(i,n) < \lambda_i + 1$ (this is an equivalent condition since $\lambda_i \in \ZZ_{\geq 0}$) and they use weakly \emph{increasing} sequences (\textit{i.e.}, the order of the particles is reversed).
In particular, we must multiply the integrand by $\prod_{k=1}^{\ell} \frac{z - \pi_k^{-1}}{z - \pi_k^{-1}}$ and reindex the particles and matrix by $j \mapsto \ell + 1 - j$ (so $\pi_j \mapsto \pi_{\ell+1-j}$ as well).
A similar transformation shows that~\cite[Thm.~1]{JR20} is equivalent to the integral formula from~\cite[Thm.~4.18]{IMS22} with Theorem~\ref{thm:transition_prob}.
\end{remark}

\begin{ex}
Let us consider $\lambda = (2,1)$ and $\nu = \emptyset$ for Case~A.
Thus we take $\ell = 2$, and we assume that $n \geq 2$.
In this computation, we will essentially ignore the normalization factor $C = \prod_{i=1}^{\ell} \prod_{j=1}^n (1 - \pi_i x_j)$, as that factor clearly cancels.
By the definition and Theorem~\ref{thm:transition_prob},
\begin{align*}
C^{-1} \cdot \prob_{\leq, n}(\lambda | \emptyset)
& = C^{-1} \cdot \prob(G(2,n) \leq \lambda_2, G(1,n) \leq \lambda_1)
\\ & = C^{-1} \cdot \bigl( \prob(\bG(n) = (2,1)) + \prob(\bG(n) = (2)) + \prob(\bG = (1,1))
\\ & \hspace{40pt} + \prob(\bG(n) = (1)) + \prob(\bG(n) = \emptyset) \bigr)
\\ & = \pi_1^2 \pi_2 \dG_{21}(\xx_n, \bpi^{-1}) + \pi_1^2 \dG_{2}(\xx_n, \bpi^{-1}) + \pi_1 \pi_2 \dG_{11}(\xx_n, \bpi^{-1})
\\ & \hspace{20pt}  + \pi_1 \dG_{1}(\xx_n, \bpi^{-1}) + \dG_{\emptyset}(\xx_n, \bpi^{-1})
\\ & = \pi_1^2 \pi_2 (s_{21} + \pi_1^{-1} s_2) + \pi_1^2 s_2 + \pi_1 \pi_2 (s_{11} + \pi_1^{-1} s_1) + \pi_1 s_1 + 1
\\ & = \pi_1^2 \pi_2 s_{21} + (\pi_1 \pi_2 + \pi_1^2) s_2 + \pi_1 \pi_2 s_{11} + (\pi_1 + \pi_2) s_1 + 1,
\end{align*}
where we have written $s_{\lambda} = s_{\lambda}(\xx_n)$ for brevity.
Next, by the branching rule for flagged Schur functions, we see that
\begin{align*}
\pi_1^2 \pi_2 s_{21,\phi}(\xx_n, \bpi_2^{-1}) & = \pi_1^2 \pi_2 \bigl( s_{21}(\xx_n) s_{21/21,\phi}(\bpi_2^{-1}) + s_{2}(\xx_n) s_{21/2,\phi}(\bpi_2^{-1}) + s_{11}(\xx_n) s_{21/11,\phi}(\bpi_2^{-1})
\\ & \hspace{40pt} + s_{1}(\xx_n) s_{21/1,\phi}(\bpi_2^{-1}) + s_{\emptyset}(\xx_n) s_{21/\emptyset,\phi}(\bpi_2^{-1}) \bigr)
\\ & = \pi_1^2 \pi_2 \bigl( s_{21} + (\pi_1^{-1} + \pi_2^{-1}) s_{2} + \pi_1^{-1}s_{11} + (\pi_1^{-2} + \pi_1^{-1} \pi_2^{-1}) s_{1} + \pi_1^{-2} \pi_2^{-1} \bigr)
\\ & = \pi_1^2 \pi_2 s_{21} + (\pi_1 \pi_2 + \pi_1^2) s_2 + \pi_1 \pi_2 s_{11} + (\pi_1 + \pi_2) s_1 + 1.
\end{align*}
Finally, let us compute the determinant from Theorem~\ref{thm:mp_dual_g}:
\begin{align*}
\pi_1^2 \pi_2 \det \begin{bmatrix}
h_2(\xx, \pi_1^{-1}) & h_3(\xx) \\
h_0(\xx, \pi_1^{-1}, \pi_2^{-1}) & h_1(\xx, \pi_2^{-1})
\end{bmatrix}
& = 
\pi_1^2 \pi_2 ( h_2 + \pi_1^{-1} h_1 + \pi_1^{-2}) \cdot (h_1 + \pi_2^{-1}) - \pi_1^2 \pi_2 h_3 \cdot 1
\\ & = \pi_1^2 \pi_2 (h_{21} - h_3) + \pi_1 \pi_2 h_{11} + (\pi_2 + \pi_1) h_1 + \pi_1^2 h_2 + 1
\\ & = \pi_1^2 \pi_2 s_{21} + (\pi_1 \pi_2 + \pi_1^2) s_2 + \pi_1 \pi_2 s_{11} + (\pi_2 + \pi_1) s_1 + 1.
\end{align*}
\end{ex}

\begin{thm}
\label{thm:mp_dual_weak}
For Case~D, the multi-point distribution with $\ell$ particles is given by
\[
\prob_{\leq,n}(\lambda | \nu) = \brho^{\lambda/\nu} \prod_{i=1}^{\ell} \prod_{j=1}^n (1 - \rho_i x_j)^{-1} \cdot \det \bigl[ e_{\lambda_i - \nu_j + j - i}(\xx \sqcup -\brho_{j-1}^{-1} / {-\brho_i^{-1}}) \bigr]_{i,j=1}^{\ell}.
\]
\end{thm}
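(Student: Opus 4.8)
The plan is to rerun the proof of Theorem~\ref{thm:mp_dual_g} with the half vertex operator $e^{H(\xx_n)}$ replaced by its transpose $e^{J(\xx_n)}$ and the (dual) Grothendieck functions replaced by their weak analogues throughout; the appearance of the elementary $e_m$ in place of the homogeneous $h_m$ in the final determinant is then forced by \eqref{eq:exp_current_sum}, where $e^{J}$ is built from the polynomials $E_k$ in the currents $a_1,a_2,\dotsc$ rather than from the $H_k$.

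First I would reduce to one-step transition probabilities exactly as in the discussion preceding Theorem~\ref{thm:mp_dual_g}. Since Bernoulli pushing keeps the particles ordered, $G(i,n)\ge G(i+1,n)$ for all $i$, so the event $\{G(\ell,n)\le\lambda_\ell,\dotsc,G(1,n)\le\lambda_1\}$ coincides with $\{\nu\subseteq\bG(n)\subseteq\lambda\}$, giving
\[
\prob_{\leq,n}(\lambda\mid\nu)=\sum_{\nu\subseteq\mu\subseteq\lambda}\prob_{D,n}(\mu\mid\nu).
\]
Now I would substitute the Case~D formula of Theorem~\ref{thm:transition_prob}, use the multiplicativity $\brho^{\lambda/\nu}=\brho^{\lambda/\mu}\brho^{\mu/\nu}$ to pull out the global factor $\brho^{\lambda/\nu}$, and rewrite $\dwG_{\mu'/\nu'}(\xx_n;\brho^{-1})={}_{[\brho^{-1}]}\bra{\nu}e^{J(\xx_n)}\ket{\mu}_{[\brho^{-1}]}$ via \eqref{eq:dual_free_fermion_grothendiecks}. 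By \eqref{eq:orthonormal_basis}, the sum over $\mu$ collapses into a single matrix element ${}_{[\brho^{-1}]}\bra{\nu}e^{J(\xx_n)}\ket{\lambda}_{(\brho^{-1})}$ against the flagged vector $\ket{\lambda}_{(\brho^{-1})}$ of Proposition~\ref{prop:schur_branching} (the identity $\ket{\lambda}_{(\brho^{-1})}=\sum_{\mu\subseteq\lambda}\brho^{-\lambda/\mu}\ket{\mu}_{[\brho^{-1}]}$ is already proved there), up to the normalizing product. Under $e^{J(\xx_n)}$ this matrix element becomes the $\omega$-image of the flagged Schur function of Proposition~\ref{prop:schur_branching}, i.e.\ a column-flagged Schur function, which is the combinatorial object that the left-hand side of the theorem describes.

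It then remains to evaluate ${}_{[\brho^{-1}]}\bra{\nu}e^{J(\xx_n)}\ket{\lambda}_{(\brho^{-1})}$ by Wick's theorem, exactly as in the proof of Theorem~\ref{thm:mp_dual_g}, but commuting the factors $e^{\pm J(\cdot)}$ past $\psi$ and $\psi^{\ast}$ through the $\omega$-conjugate of \eqref{eq:eH_relations}; this turns every $h$ into an $e$ and yields the $\ell\times\ell$ determinant $\det\!\bigl[e_{\lambda_i-\nu_j+j-i}(\xx\sqcup-\brho_{j-1}^{-1}/-\brho_i^{-1})\bigr]$ once the normalizing products are collected into $\prod_{i,j}(1-\rho_ix_j)^{-1}$. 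A shorter alternative is to apply the involution $\omega$ directly to the symmetric-function identity behind Theorem~\ref{thm:mp_dual_g}, together with the rate reparametrization $\wG_{\lambda/\mu}(\xx;\alpha)=\G_{\lambda/\mu}\!\bigl(\xx;\alpha/(1+\alpha\xx)\bigr)$ recorded in Section~\ref{sec:block_op_proof}, using $\omega h_m=e_m$, $\omega\dG_\mu=\dwG_\mu$, and $\omega p_m=(-1)^{m-1}p_m$.

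I expect the genuine obstacle to be the parameter and sign bookkeeping in this last step: confirming that conjugation by $\omega$ together with the Bernoulli reparametrization sends the geometric data $\bigl(\prod_{i,j}(1-\pi_ix_j),\,\bpi^{-1}\bigr)$ to $\bigl(\prod_{i,j}(1-\rho_ix_j)^{-1},\,-\brho^{-1}\bigr)$, with the roles of the indices $i$ and $j-1$ in the supersymmetric arguments exchanged relative to Theorem~\ref{thm:mp_dual_g}, and that the row flag $\phi$ becomes the correct column flag. Once these identifications are settled, the Wick evaluation is a mechanical variant of the Case~A computation, so no new ideas beyond those already used for Theorem~\ref{thm:mp_dual_g} are needed.
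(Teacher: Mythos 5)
Your proposal matches the paper's own (extremely terse) proof, which simply says the argument is ``analogous to the proof of Theorem~\ref{thm:mp_dual_g} except we are computing ${}_{[\brho^{-1}]}\bra{\nu}e^{J(\xx_n)}\ket{\lambda}_{(\brho^{-1})}$ and use~\eqref{eq:dual_free_fermion_grothendiecks}.'' You correctly identify the chain: expand $\prob_{\leq,n}$ as $\sum_{\nu\subseteq\mu\subseteq\lambda}\prob_{D,n}(\mu|\nu)$, substitute Case~D of Theorem~\ref{thm:transition_prob}, collapse the sum via the flagged vector of Proposition~\ref{prop:schur_branching}, and run Wick's theorem with $e^{J}$ in place of $e^{H}$ so that the entries come out as elementary rather than complete supersymmetric functions; your flag on the sign/index bookkeeping (the extra minus signs and the swap of $i$ with $j-1$ in the two alphabets) is exactly the only place where care is needed. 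One small terminological slip: you describe the first reduction as passing to ``one-step'' transition probabilities, but what you actually (and correctly) write is the decomposition into $n$-step transition probabilities $\prob_{D,n}(\mu|\nu)$ summed over $\mu$; the reduction to a single time step plays no role in this theorem.
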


\begin{proof}
Analogous to the proof of Theorem~\ref{thm:mp_dual_g} except we are computing ${}_{[\brho^{-1}]} \bra{\nu} e^{J(\xx_n)} \ket{\lambda}_{(\brho^{-1})}$ and use~\eqref{eq:dual_free_fermion_grothendiecks} to obtain $\dwG_{\lambda/\nu}(\xx; \brho^{-1})$.
\end{proof}

\begin{ex}
For this example, we consider the Case~D TASEP.
Consider $\lambda = (2, 1, 1)$ and $\nu = (1)$, and so we take $\ell = 3$ and $n \geq 3$.
Note that $\lambda' = (3,1)$ and $\nu' = (1)$,
We will (again) ignore the normalization factor $C = \prod_{i=1}^{\ell} \prod_{j=1}^n (1 - \rho_i x_j)^{-1}$ as it clearly is present in all formulas.
By the definition and Theorem~\ref{thm:transition_prob}, we have
\begin{align*}
C^{-1} \cdot \prob_{\leq, n}(\lambda | \nu) & = \rho_1 \rho_2 \rho_3 \dwG_{31/1}(\xx; \brho^{-1}) + \rho_1 \rho_2 \dwG_{21/1}(\xx; \brho^{-1}) + \rho_2 \rho_3 \dwG_{3/1}(\xx; \brho^{-1})
\\ & \hspace{20pt} + \rho_2 \dwG_{2/1}(\xx; \brho^{-1}) + \rho_1 \dwG_{11/1}(\xx; \brho^{-1}) + \dwG_{1/1}(\xx; \brho^{-1})
\\ & = \rho_1 \rho_2 \rho_3 (s_3 + s_{21} + \rho_2^{-1} s_2 + \rho_2^{-1} s_{11}) + \rho_1 \rho_2 (s_2 + s_{11})
\\ & \hspace{20pt} + \rho_2 \rho_3 (s_2 + \rho_2^{-1} s_1) + \rho_2 s_1 + \rho_1 s_1 + 1
\\ & = \rho_1 \rho_2 \rho_3 s_3 + \rho_1 \rho_2 \rho_3 s_{21} + (\rho_1 \rho_2 + \rho_1 \rho_3 + \rho_2 \rho_3) s_2
\\ & \hspace{20pt} + (\rho_1 \rho_2 + \rho_1 \rho_3) s_{11} + (\rho_1 + \rho_2 + \rho_3) s_1 + 1.
\end{align*}
The determinant formula from Theorem~\ref{thm:mp_dual_weak} yields
\begin{align*}
& \rho_1 \rho_2 \rho_3 \det \begin{bmatrix}
e_1(\xx / (-\rho_1^{-1})) & e_3(\xx) & e_4(\xx, -\rho_2^{-1}) \\
e_{-1}(\xx / {-\brho_2^{-1}}) & e_1(\xx / (-\rho_2^{-1})) & e_2(\xx) \\
e_{-2}(\xx / {-\brho_3^{-1}}) & e_0(\xx / (-\rho_2^{-1}, -\rho_3^{-1})) & e_1(\xx / (-\rho_3^{-1}) \\
\end{bmatrix}
\\ & \qquad
= \rho_1 \rho_2 \rho_3 \det \begin{bmatrix}
e_1 + \rho_1^{-1} & e_3 & e_4 - \rho_2^{-1} e_3 \\
0 & e_1 + \rho_2^{-1} & e_2 \\
0 & 1 & e_1 + \rho_3^{-1} \\
\end{bmatrix}
\\ & \qquad
= \rho_1 \rho_2 \rho_3 (e_1 + \rho_1^{-1}) (e_1 + \rho_2^{-1}) (e_1 + \rho_3^{-1}) - \rho_1 \rho_2 \rho_3 e_2 (e_1 + \rho_1^{-1}) \cdot 1
\\ & \qquad
= \rho_1 \rho_2 \rho_3 e_{111} + (\rho_1 \rho_2 + \rho_1 \rho_3 + \rho_2 \rho_3) e_{11} + (\rho_1 + \rho_2 + \rho_3) e_1 + 1 - \rho_1 \rho_2 \rho_3  e_{21} - \rho_2 \rho_3 e_2
\\ & \qquad
= \rho_1 \rho_2 \rho_3 (s_3 + 2s_{21} + s_{111}) + (\rho_1 \rho_2 + \rho_1 \rho_3 + \rho_2 \rho_3) (s_2 + s_{11}) + (\rho_1 + \rho_2 + \rho_3) s_1 + 1
\\ & \qquad \hspace{20pt} - \rho_1 \rho_2 \rho_3  (s_{21} + s_{111}) - \rho_2 \rho_3 s_{11}
\\ & \qquad
= \rho_1 \rho_2 \rho_3 s_3 + \rho_1 \rho_2 \rho_3 s_{21} + (\rho_1 \rho_2 + \rho_1 \rho_3 + \rho_2 \rho_3) s_2 + (\rho_1 \rho_2 + \rho_1 \rho_3) s_{11} + (\rho_1 + \rho_2 + \rho_3) s_1 + 1.
\end{align*}
\end{ex}

Analogously to Equation~\eqref{eq:integral_mp_dual_g}, in Case~D we have $\prob_{\leq,n}(\lambda | \nu) = \det [F_{ij}]_{i,j=1}^{\ell}$ with
\begin{equation}
F_{ij} = \frac{1}{2\pi\ii} \oint_{\gamma_r} \frac{(\rho_i z)^{\lambda_i}}{(\rho_j z)^{\mu_j}} \frac{\prod_{k=1}^{j-1} (z - \rho_k^{-1})}{\prod_{k=1}^i (z - \rho_k^{-1})} \prod_{k=1}^n \frac{1 - x_k z^{-1}}{1 - x_k \rho_i} \, dz.
\end{equation}

\subsection{Blocking}

Next, let us consider Case~C, and recall that $\beta_i = \pi_{i+1}$.
We begin with some preparatory formulas.

Note that $\dG_{\lambda}(\pi_1; \bbe) = \bpi^{\lambda}$ from the combinatorial description.
Taking the skew Cauchy formula (Theorem~\ref{thm:skew_cauchy}) with the specializations $\bal = 0$ and $\yy = \bpi_1$, we obtain
\begin{equation}
\label{eq:specialized_skew_cauchy}
\sum_{\lambda \supseteq \nu, \mu} \G_{\lambda\ds\mu}(\xx; \bbe) \bpi^{\lambda/\nu} = \prod_i \frac{1}{1 - \pi_1 x_i} \sum_{\eta \subseteq \nu \cap \mu} \G_{\nu\ds\eta}(\xx; \bbe) \bpi^{\mu/\eta}.
\end{equation}

We will use the notation
\[
\prob_{\geq,n}(\nu | \mu) := \prob(G(\ell,n) \geq \nu_{\ell}, \dotsc, G(1,n) \geq \nu_1 | \bG(0) = \mu).
\]
Using Equation~\eqref{eq:specialized_skew_cauchy}, we obtain an expression for the multi-point distribution for Case~C as
\begin{subequations}
\begin{align}
\prob_{\geq,n}(\nu | \mu) 
 & =
\prod_{j=1}^n (1 - \pi_1 x_j) \sum _{\lambda \supseteq \nu,\mu} \bpi^{\lambda/\mu} \G_{\lambda \ds \mu}(\xx_n; \bbe) \label{eq:mp_G_larger}
\\ & = \sum _{\eta \subseteq \nu \cap \mu} \bpi^{\nu/\eta} \G_{\nu \ds \eta}(\xx_n; \bbe). \label{eq:mp_G_smaller}
\end{align}
\end{subequations}
Note that if $\nu = \mu$, then we obtain a Cauchy--Littlewood type identity with Grothendieck polynomials from the first equality~\eqref{eq:mp_G_larger} since the total probability is $1$.
That is, we have
\begin{equation}
\label{eq:specialized_skew_pieri}
\sum_{\lambda} \G_{\lambda \ds \mu}(\xx_n; \bbe) \bpi^{\lambda} = \prod_{i=1}^n \frac{1}{1 - \pi_1 x_i} \bpi^{\mu},
\end{equation}
which is also the skew Pieri rule~\cite[Eq.~(4.7a)]{IMS22} (which refines~\cite[Thm.~7.10]{Yel19}) with the skew Pieri $\nu = \emptyset$ and the same specializations $\bal = 0$ and $\yy = \bpi_1$ we used for the skew Cauchy formula.
Another special case is when $\mu = \emptyset$, where we obtain a single $\bpi^{\nu} \G_{\nu}(\xx_n; \bbe)$ in~\eqref{eq:mp_G_smaller}.
This can be expressed as a determinant by the Jacobi--Trudi formula~\cite[Thm.~4.1]{IMS22}.
Moreover, this is just the specialization $\bal = 0$ and $\yy = \bpi_1$ of the skew Pieri formula~\cite[Eq.~(4.7b)]{IMS22}.

Noting that we have used the skew Cauchy identity in computing~\eqref{eq:mp_G_smaller}, we want to evaluate
\begin{equation}
\label{eq:skew_cauchy_free_fermion}
{}^{[\bbe]} \bra{\mu} e^{H^*(\pi_1)} e^{H(\xx_n)} \ket{\nu}^{[\bbe]} 
= \prod_{i=1}^{\ell} \prod_{j=1}^n \frac{1}{1-\beta_i x_j} \cdot {}^{[\bbe]} \bra{\mu} e^{H^*(\pi_1 / \bbe_{\ell})} e^{H(\xx_n)} e^{H^*(\bbe_{\ell})} \ket{\nu}^{[\bbe]}.
\end{equation}
By applying Wick's theorem similar to the proof of Theorem~\ref{thm:mp_dual_g} (\textit{cf.}\ the proof of~\cite[Thm.~4.1]{IMS22}), we obtain the following formula for the general case.

\begin{thm}
\label{thm:mp_case_C}
For Case~C, the multi-point distribution with $\ell$ particles is given by
\[
\prob_{\geq,n}(\nu|\mu) = 
\prod_{j=2}^{\ell} \prod_{i=1}^n (1-\pi_j x_i)^{-1} \det \big[ h_{\nu_i - \mu_j - i + j}\bigl(\xx \ds (\bpi_i / \bbe_j \bigr) \bigr]_{i,j=1}^{\ell}.
\]
\end{thm}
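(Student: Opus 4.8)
The plan is to follow the free-fermionic strategy used for Theorem~\ref{thm:mp_dual_g}, transported to the blocking side. The starting point is the expression~\eqref{eq:mp_G_smaller} for the multi-point distribution, namely $\prob_{\geq,n}(\nu \mid \mu) = \sum_{\eta \subseteq \nu \cap \mu} \bpi^{\nu/\eta}\, \G_{\nu \ds \eta}(\xx_n; \bbe)$, which was obtained from the skew Cauchy formula (Theorem~\ref{thm:skew_cauchy}) through the specialization~\eqref{eq:specialized_skew_cauchy}. Using the free-fermionic presentation~\eqref{eq:free_fermion_grothendiecks} and the operator form of the same skew-Cauchy manipulation recorded in~\eqref{eq:skew_cauchy_free_fermion}, this sum equals the matrix element ${}^{[\bbe]}\bra{\mu}\, e^{H^*(\pi_1)}\, e^{H(\xx_n)}\, \ket{\nu}^{[\bbe]}$, which in turn equals $\prod_{i=1}^{\ell}\prod_{j=1}^{n}(1-\beta_i x_j)^{-1}$ times ${}^{[\bbe]}\bra{\mu}\, e^{H^*(\pi_1/\bbe_{\ell})}\, e^{H(\xx_n)}\, e^{H^*(\bbe_{\ell})}\, \ket{\nu}^{[\bbe]}$. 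Thus the proof reduces to evaluating the matrix element on the right; along the way one checks, by a branching computation of the type in~\eqref{eq:schur_dg_expansion}, that inserting $e^{H^*(\pi_1)}$ against ${}^{[\bbe]}\bra{\mu}$ indeed reproduces the $\bpi^{\nu/\eta}$-weighted sum of~\eqref{eq:mp_G_smaller}.

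The evaluation proceeds by expanding ${}^{[\bbe]}\bra{\mu}$ and $\ket{\nu}^{[\bbe]}$ in their defining products of $\ell$ fermion fields conjugated by half-vertex operators, and pushing all of the $e^{H}$ and $e^{H^*}$ factors through these fields via~\eqref{eq:eH_relations}. Exactly as in the proof of Theorem~\ref{thm:mp_dual_g} (compare the proof of~\cite[Thm.~4.1]{IMS22}), this presents the matrix element as $\bra{-\ell}\bigl(P_1 \cdots P_{\ell}\bigr)\bigl(Q_1 \cdots Q_{\ell}\bigr)\ket{-\ell}$, where $P_j = \sum_{b \geq 0} h_b\bigl(\text{twist from }\bbe_j\bigr)\psi^*_{\mu_j - j + b}$ and $Q_i = \sum_{a \geq 0} h_a\bigl(\xx_n \text{ with the twist from }\bpi_i\bigr)\psi_{\nu_i - i - a}$, the accumulated alphabets on the $j$-th dual field and the $i$-th field being dictated by the nesting in ${}^{[\bbe]}\bra{\mu}$ and in $e^{H^*(\pi_1/\bbe_{\ell})}e^{H(\xx_n)}e^{H^*(\bbe_{\ell})}\ket{\nu}^{[\bbe]}$. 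Wick's theorem then yields $\det\bigl[\bra{-\ell}P_j Q_i\ket{-\ell}\bigr]_{i,j=1}^{\ell}$, and the two-point function $\bra{-\ell}\psi^*_c\psi_d\ket{-\ell} = \delta_{cd}$ collapses each entry, via $h_m(\xx \ds \yy) = \sum_{a-b=m}h_a[X]h_b[Y]$, to $h_{\nu_i - \mu_j - i + j}\bigl(\xx \ds (\bpi_i/\bbe_j)\bigr)$. Together with the prefactor, and using $\beta_i = \pi_{i+1}$ to rewrite $\prod_{i=1}^{\ell}\prod_{j=1}^{n}(1-\beta_i x_j)^{-1}$ as $\prod_{j=2}^{\ell}\prod_{i=1}^{n}(1-\pi_j x_i)^{-1}$ (the truncation of the index range being the routine bookkeeping point that the $\ell$-th field carries no effective twist, consistent with the $\prod_{j=1}^{\ell-1}$ ranges of the combinatorial formulas in Section~\ref{sec:background}), this gives the claimed determinant.

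I expect the main obstacle to be precisely this conjugation bookkeeping: arranging that the $i$-th row of $\nu$ accumulates exactly the alphabet $\bpi_i = (\pi_1,\dotsc,\pi_i)$ while the $j$-th row of $\mu$ accumulates exactly $\bbe_j = (\beta_1,\dotsc,\beta_j)$, which is what produces the asymmetric ratio $\bpi_i/\bbe_j$ inside $h_{\nu_i - \mu_j - i + j}$. This requires tracking the order of the three half-vertex operators $e^{H^*(\pi_1/\bbe_{\ell})}$, $e^{H(\xx_n)}$, $e^{H^*(\bbe_{\ell})}$ relative to the nested $\psi$'s and confirming the upper-triangular pattern of the accumulation (earlier rows seeing fewer parameters); once that is in place, the remaining steps are the same Wick-determinant expansion already carried out for Theorem~\ref{thm:mp_dual_g}.
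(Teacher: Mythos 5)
Your proposal takes exactly the route the paper takes: start from the skew-Cauchy specialization~\eqref{eq:mp_G_smaller}, pass to the free-fermionic matrix element and use the rewriting~\eqref{eq:skew_cauchy_free_fermion}, then run Wick's theorem as in the proof of Theorem~\ref{thm:mp_dual_g}. The paper's own proof is a single sentence pointing to this, so your elaboration adds useful detail and is on the right track. No alternative method or key lemma is involved.

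One point needs tightening. You assert that the scalar $\prod_{i=1}^{\ell}\prod_{j=1}^{n}(1-\beta_i x_j)^{-1}$ appearing in~\eqref{eq:skew_cauchy_free_fermion} becomes $\prod_{j=2}^{\ell}\prod_{i=1}^{n}(1-\pi_j x_i)^{-1}$ under $\beta_j = \pi_{j+1}$, attributing the dropped index to ``the $\ell$-th field carries no effective twist, consistent with the $\prod_{j=1}^{\ell-1}$ ranges of the combinatorial formulas.'' This is not quite a justification. Literally substituting gives $\prod_{j=2}^{\ell+1}\prod_{i=1}^{n}(1-\pi_j x_i)^{-1}$, so you must account for a leftover $\prod_{i=1}^{n}(1-\beta_\ell x_i)^{-1}$. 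Your phrase ``no effective twist'' concerns the supersymmetric alphabets entering the Wick determinant entries, not the overall boson-commutation scalar; and the $j\le\ell-1$ index range you cite from Section~\ref{sec:background} is a fact about the reverse-plane-partition formula for $\dG$, not about $\G_{\nu\ds\eta}$. The correct bookkeeping here is fermionic: when you reorder the half-vertex operators, only those $e^{H^*}$-factors that actually cross $e^{H(\xx_n)}$ contribute a $(1-\beta_i x_j)^{\pm 1}$ scalar, and the placement of $e^{-H^*(\beta_\ell)}$ at the extreme right of $\ket{\nu}^{[\bbe]}$ (against $\ket{-\ell}$) together with $\bra{-\ell}e^{H^*(\beta_\ell)} = \bra{-\ell}$ on the bra side is what drops one factor. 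You should check this directly rather than inferring it from the combinatorial formula for a different function. Similarly, when you claim that the $i$-th ket field accumulates exactly $\bpi_i$ and the $j$-th bra field exactly $\bbe_j$, be aware that a naive pushing of $e^{H^*(\bbe_\ell)}$ rightward through the nested $\psi_{\nu_i-i}e^{-H^*(\beta_i)}$ factors gives the \emph{descending} ranges $(\beta_i,\dots,\beta_\ell)$ at first pass; the ascending alphabets in the theorem only emerge after also accounting for the $e^{H^*(\pi_1/\bbe_\ell)}$ twist and the order of the boson commutations, so the triangular accumulation is genuinely subtle here and worth writing out explicitly rather than asserting by analogy with Theorem~\ref{thm:mp_dual_g}, whose ket $\ket{\lambda}_{(\bpi^{-1})}$ has a different nesting structure.
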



We remark that in the left hand side of~\eqref{eq:skew_cauchy_free_fermion} is using the vector obtained by applying the $*$ anti-involution to~\eqref{eq:schur_dg_expansion}:
\[
{}^{(\bpi)} \bra{\nu} := \sum_{\lambda \subseteq \nu} \bpi^{\nu/\lambda} \cdot {}^{[\bpi]} \bra{\lambda}.
\]
However our computation for Theorem~\ref{thm:mp_case_C} is not simply the $*$ version of Proposition~\ref{prop:schur_branching} as we need to take the pairing with $\ket{\mu}^{[\bbe]}$, not $\ket{\mu}^{[\bpi]}$.

We also provide another determinant formula for the multipoint distribution by using the standard probability theoretic computation to sum over determinants with matrix elements given by integrals.

\begin{thm}
\label{thm:mp_case_C_alt}
For Case~C, the multi-point distribution with $\ell$ particles is given by
\begin{gather*}
\mathrm{P}_{\ge,n}(\nu|\mu) = \prod_{i=1}^\ell \prod_{j=1}^n (1-\pi_i x_j) \bpi^{\nu/\mu} \det [ C_{ij} ]_{i,j=1}^\ell,
\\
\text{where } C_{ij} = \begin{cases}
\displaystyle
\oint_{\widetilde{\gamma}_r}
\frac{1 }{ (1-\beta_1 w^{-1}) \prod_{k=1}^{j-1} (1-\beta_k w^{-1}) \prod_{m=1}^n (1-x_m w) w^{\nu_1-\mu_j-1+j}   }
 \frac{dw}{2 \pi {\bf i} w} & \text{if } i = 1, \\[15pt]
\displaystyle
\oint_{\widetilde{\gamma}_r}
\frac{\prod_{k=1}^{i-2} (1-\beta_k w^{-1}) }{ \prod_{k=1}^{j-1} (1-\beta_k w^{-1}) \prod_{m=1}^n (1-x_m w) w^{\nu_i-\mu_j-i+j}   }
 \frac{dw}{2 \pi {\bf i} w} & \text{if } i \ge 2,
\end{cases}
\end{gather*}
with the contour $\widetilde{\gamma}_r$ being a circle centered at the origin with radius $r$ satisfying $0 < r < \abs{x_m^{-1}}$ for $m = 1, \dotsc, n$ and $r > \abs{\beta_1}, \abs{\beta_2}, \ldots$.
\end{thm}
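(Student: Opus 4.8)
The plan is to obtain the formula by summing the one‑time‑step transition probability, written as a determinant of contour integrals, over all terminal configurations compatible with the event $\{G(i,n)\geq\nu_i:1\leq i\leq\ell\}$, and then collapsing the resulting sum of determinants into a single determinant.

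\textbf{Step 1: a determinantal integral form for the one‑step kernel.} Combining Case~C of Theorem~\ref{thm:transition_prob} with the Jacobi--Trudi formula for $\G_{\lambda\ds\mu}(\xx_n;\bbe)$ (\cite[Thm.~4.1]{IMS22}) and the contour‑integral presentation of its entries (\cite[Thm.~4.18]{IMS22}, exactly as carried out for Case~A in Remark~\ref{rem:JR_comparison}), I would write
\[
\prob_{C,n}(\lambda|\mu) = \prod_{i=1}^{\ell}\prod_{j=1}^{n}(1-\pi_i x_j)\,\bpi^{\lambda/\mu}\,\det\bigl[A_{ij}(\lambda_i)\bigr]_{i,j=1}^{\ell},
\]
where each $A_{ij}(\lambda_i)$ is an integral over $\widetilde{\gamma}_r$ whose dependence on the row datum $\lambda_i$ is only through the monomial $w^{-\lambda_i}$, the remaining $w$‑dependent factors splitting into a row part $\prod_{k\leq i-2}(1-\beta_k w^{-1})$, a column part $\prod_{k\leq j-1}(1-\beta_k w^{-1})^{-1}$ carrying the shift $w^{\mu_j-j}$, and the common factor $\prod_{m}(1-x_m w)^{-1}$; the first row additionally carries $(1-\beta_1 w^{-1})^{-1}$, reflecting that the leading particle is never blocked (Section~\ref{sec:combinatorial_C}). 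Equivalently one may start directly from \eqref{eq:mp_G_larger}, in which case only the integral Jacobi--Trudi form of $\bpi^{\lambda/\mu}\G_{\lambda\ds\mu}(\xx_n;\bbe)$ is needed.

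\textbf{Step 2: summing over the event.} Since particles only move to the right (so only $\lambda\supseteq\mu$ contribute), we have $\prob_{\geq,n}(\nu|\mu)=\sum_{\lambda}\prob_{C,n}(\lambda|\mu)$, the sum over partitions $\lambda$ with $\lambda_i\geq\nu_i$ for all $i\leq\ell$. Pushing this sum inside the determinant by multilinearity in the rows — or, after an Andr\'{e}ief‑type step, by separating the integration variables — concentrates the $\lambda$‑dependence in the monomials $w^{-\lambda_i}$, so the inner summations are geometric series. The hypotheses $r>|\beta_1|,|\beta_2|,\dots$ and $0<r<|x_m^{-1}|$ are precisely what make these series converge on $\widetilde{\gamma}_r$ and keep the poles at $w=\beta_k$ inside and those at $w=x_m^{-1}$ outside. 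Carrying out the summations should produce the prefactor $\bpi^{\nu/\mu}$, the $w^{-\nu_i}$ shifts in the exponents, and — from the geometric denominators — promote the normalization from the $\prod_{j=1}^{n}(1-\pi_1 x_j)$ of \eqref{eq:mp_G_larger} to the full product $\prod_{i=1}^{\ell}\prod_{j=1}^{n}(1-\pi_i x_j)$, leaving $\det[C_{ij}]$ with $C_{ij}$ as stated.

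\textbf{Main obstacle.} The crux is the partition‑ordering constraint $\lambda_1\geq\lambda_2\geq\cdots\geq\lambda_\ell$: summing each $\lambda_i$ independently over $\{\lambda_i\geq\nu_i\}$ over‑counts, while summing each $\lambda_i$ only up to $\lambda_{i-1}$ creates boundary terms mixing adjacent rows. The standard resolution is to use the antisymmetry of the determinant: after expanding over permutations (or after the Andr\'{e}ief step), these boundary terms either telescope or cancel under the permutation sum, so that effectively each row is summed over the half‑line, at the price of the asymmetric row structure recorded in $C_{ij}$. I expect the delicate point to be verifying that this collapse yields exactly the stated $C_{ij}$ — in particular that the special $i=1$ row (an unobstructed geometric sum for the leading particle) and the one‑step mismatch between $\prod_{k=1}^{i-2}$ and $\prod_{k=1}^{j-1}$ appear correctly — together with the pole/contour bookkeeping; comparison with the analogous single‑determinant integral formulas for TASEP in the literature (e.g.\ \cite{JR20,Johansson00,MS20}) is a useful consistency check. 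The remaining manipulations (tracking exponents of $w$ and the factor $\bpi^{\nu/\mu}$) are routine.
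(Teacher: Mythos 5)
Your plan is essentially the paper's proof: start from the integral Jacobi--Trudi determinant for $\prob_{C,n}(\lambda|\mu)$, insert the nested sums $\sum_{\lambda_\ell=\nu_\ell}^{\lambda_{\ell-1}}\cdots\sum_{\lambda_1=\nu_1}^\infty$ into the rows by multilinearity, and collapse them row by row. The one refinement worth recording is the precise cancellation mechanism you hedged on: each finite geometric sum in row $i$ (for $i\geq 2$) produces the desired lower-boundary term plus an upper-boundary term that is a scalar multiple of row $i-1$ and hence drops out of the determinant, and iterating from $i=\ell$ down to $i=1$ (the last row giving a single infinite geometric series) yields the stated $C_{ij}$; no permutation-expansion or Andr\'eief step is needed.
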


\begin{proof}
Using Theorem~\ref{thm:transition_prob} and the integral formula~\cite[Thm.~4.19]{IMS22}, we write the multi-point distribution as
\begin{align*}
\mathrm{P}_{\ge,n}(\nu|\mu) & = \prod_{i=1}^\ell \prod_{j=1}^n (1-\pi_i x_j) \sum_{\lambda_1=\nu_1}^\infty \sum_{\lambda_2=\nu_2}^{\lambda_1} \cdots \sum_{\lambda_\ell=\nu_\ell}^{\lambda_{\ell-1}} \bpi^{-\mu} \\
& \hspace{20pt} \times \det \Bigg[ 
\oint_{\widetilde{\gamma}_r}
\frac{\pi_i^{\lambda_i}  \prod_{k=1}^{i-1} (1-\beta_k w^{-1}) }{ \prod_{k=1}^{j-1} (1-\beta_k w^{-1}) \prod_{m=1}^n (1-x_m w) w^{\lambda_i-\mu_j-i+j}   }
 \frac{dw}{2 \pi {\bf i} w} \Bigg]_{i,j=1}^\ell.
\end{align*}
Inserting the sum $\displaystyle \sum_{\lambda_\ell=\nu_\ell}^{\lambda_{\ell-1}}$ into the $\ell$-th row,
the matrix element in the $j$-th column becomes
\begin{align}
\sum_{\lambda_\ell=\nu_\ell}^{\lambda_{\ell-1}}
\oint_{\widetilde{\gamma}_r} &
\frac{\pi_\ell^{\lambda_\ell}  \prod_{k=1}^{\ell-1} (1-\beta_k w^{-1}) }{ \prod_{k=1}^{j-1} (1-\beta_k w^{-1}) \prod_{m=1}^n (1-x_m w) w^{\lambda_\ell-\mu_j-\ell+j}   }
 \frac{dw}{2 \pi {\bf i} w} \nonumber
\\ = & \oint_{\widetilde{\gamma}_r}
\frac{\pi_\ell^{\nu_\ell}  \prod_{k=1}^{\ell-2} (1-\beta_k w^{-1}) }{ \prod_{k=1}^{j-1} (1-\beta_k w^{-1}) \prod_{m=1}^n (1-x_m w) w^{\nu_\ell-\mu_j-\ell+j}   }
 \frac{dw}{2 \pi {\bf i} w} \nonumber
\\ & \hspace{20pt} - \oint_{\widetilde{\gamma}_r}
\frac{\pi_\ell^{\lambda_{\ell-1}+1}  \prod_{k=1}^{\ell-2} (1-\beta_k w^{-1}) }{ \prod_{k=1}^{j-1} (1-\beta_k w^{-1}) \prod_{m=1}^n (1-x_m w) 
w^{\lambda_{\ell-1}-\mu_j-(\ell-1)+j}   }
 \frac{dw}{2 \pi {\bf i} w}. \label{onesum}
\end{align}
The second term in \eqref{onesum} can be eliminated using the $(\ell-1)$-th row, hence the matrix elements in the $\ell$-th row after performing the first sum can be written as
\[
\oint_{\widetilde{\gamma}_r}
\frac{\pi_\ell^{\nu_\ell}  \prod_{k=1}^{\ell-2} (1-\beta_k w^{-1}) }{ \prod_{k=1}^{j-1} (1-\beta_k w^{-1}) \prod_{m=1}^n (1-x_m w) w^{\nu_\ell-\mu_j-\ell+j}   }
 \frac{dw}{2 \pi {\bf i} w}.
\]
Iterating this process, we obtain our claim.
\end{proof}

For Case~B, we again apply the $\omega$ involution, which replaces $e^{H(\xx_n)}$ with $e^{J(\xx_n)}$, but otherwise the proof is similar (compare the proofs of Theorem~\ref{thm:mp_dual_g} and Theorem~\ref{thm:mp_dual_weak}).
Therefore, we obtain the following.

\begin{thm}
For Case~B, the multi-point distribution with $\ell$ particles is given by
\[
\prob_{\geq,n}(\nu|\mu) = 
\prod_{i=2}^{\ell} \prod_{j=1}^n (1-\rho_i x_j) \det \big[ e_{\nu_i - \mu_j - i + j}(\xx_n \ds ({-\brho_j}/{-\brho_i})) \bigr]_{i,j=1}^{\ell}.
\]
\end{thm}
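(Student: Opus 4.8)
The plan is to run the argument behind Theorem~\ref{thm:mp_case_C} for Case~C after applying the involution $\omega$ throughout; at the level of free fermions this amounts to replacing the half-vertex operator $e^{H(\xx_n)}$ by $e^{J(\xx_n)}$ and, accordingly, turning the complete homogeneous symmetric functions in the determinant into elementary ones. First I would sum the single-time transition kernel of Theorem~\ref{thm:transition_prob} for Case~B over all $\lambda$ with $\lambda \supseteq \nu$ and $\lambda \supseteq \mu$, obtaining the $\omega$-analogue of~\eqref{eq:mp_G_larger},
\[
\prob_{\geq,n}(\nu|\mu) = \prod_{j=1}^n (1 + \rho_1 x_j)^{-1} \sum_{\lambda \supseteq \nu, \mu} \brho^{\lambda/\mu}\, \wG_{\lambda'\ds\mu'}(\xx_n; \bal), \qquad \alpha_j = \rho_{j+1}.
\]
Using $\brho^{\lambda/\mu} = \brho^{\lambda/\nu}\brho^{\nu/\mu}$ together with the $\omega$-image of the specialized skew Cauchy formula~\eqref{eq:specialized_skew_cauchy} — under which the Cauchy kernel $\prod_i(1-\rho_1 x_i)^{-1}$ becomes $\prod_i(1+\rho_1 x_i)$ and $\G_{\lambda\ds\mu}(\xx;\bal)$ becomes $\wG_{\lambda'\ds\mu'}(\xx;\bal)$ — the sum over $\lambda$ collapses to the analogue of~\eqref{eq:mp_G_smaller}, namely $\prob_{\geq,n}(\nu|\mu) = \sum_{\eta \subseteq \nu \cap \mu} \brho^{\nu/\eta}\, \wG_{\nu'\ds\eta'}(\xx_n;\bal)$.

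Next I would present this closed form as a single fermionic matrix element. Applying the $\ast$ anti-involution to~\eqref{eq:dual_free_fermion_grothendiecks} (in place of~\eqref{eq:free_fermion_grothendiecks}) and running the skew-Cauchy computation in $\mcF$ exactly as for~\eqref{eq:skew_cauchy_free_fermion}, but with $e^{J(\xx_n)}$ replacing $e^{H(\xx_n)}$, identifies $\prob_{\geq,n}(\nu|\mu)$ with a normalization times ${}^{[\bal]}\bra{\mu}\, e^{H^*(\rho_1)}\, e^{J(\xx_n)}\, \ket{\nu}^{[\bal]}$; commuting the half-vertex operators past one another (as in~\eqref{eq:skew_cauchy_free_fermion}, using~\eqref{eq:eH_relations} and the completeness of the basis~\eqref{eq:orthonormal_basis}) produces the Cauchy-type factor $\prod_{i=1}^{\ell}\prod_{j=1}^n(1+\alpha_i x_j)$ and leaves a matrix element in which the relevant parameter blocks have been shifted by $\alpha_j = \rho_{j+1}$. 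The final step is Wick's theorem, applied verbatim as in the proofs of Theorems~\ref{thm:mp_dual_g}, \ref{thm:mp_dual_weak} and~\ref{thm:mp_case_C}: conjugating $\psi^*_{\mu_j-j}$ and $\psi_{\nu_i-i}$ by the appropriate products of half-vertex operators gives one-particle operators $P_j, Q_i$ whose generating series are now the elementary symmetric functions — this is the single place where the $\omega$-version of~\eqref{eq:eH_relations}, i.e.\ $e^{J(\xx/\yy)}\psi_k e^{-J(\xx/\yy)} = \sum_{i\ge 0} e_i(\xx/\yy)\psi_{k-i}$, enters — and evaluating $\bra{-\ell}P_j Q_i\ket{-\ell}$ by the same vacuum-expectation bookkeeping as in Theorem~\ref{thm:mp_dual_g} produces the entry $e_{\nu_i-\mu_j-i+j}\bigl(\xx_n \ds (-\brho_j/{-\brho_i})\bigr)$, with the prefactor assembling into the stated product over $i \ge 2$.

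The step I expect to be the main obstacle is exactly this last bookkeeping under $\omega$: since $\omega$ acts only on the $\xx$-alphabet while the parameters $\brho$ enter as fixed evaluation points, one must check carefully that the Cauchy kernel and each truncated parameter block transform as claimed and that no stray sign survives — recall $\omega p_m(\xx) = (-1)^{m-1}p_m(\xx)$, so the interaction of $\ds$ with negated parameters must be tracked as in the supersymmetric identities recalled earlier — and, as in the technical remark for Case~C, $\wG_{\lambda'\ds\mu'}(\xx_n;\bal)$ is only a formal power series, so the sum over $\lambda$ and the resulting determinant entries should be read as formal power series in the $x_i$ (or one imposes $\rho_j x_i \in (0,1)$ for genuine convergence). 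Once these points are settled, the determinant obtained is precisely the one in the statement.
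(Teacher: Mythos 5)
Your proposal is correct and follows essentially the same approach as the paper: apply $\omega$ to the Case~C argument, replacing $e^{H(\xx_n)}$ by $e^{J(\xx_n)}$, so that the Wick-expansion entries become elementary rather than complete homogeneous supersymmetric functions. The paper's proof of this theorem is precisely that one-line remark pointing back to Theorems~\ref{thm:mp_dual_g}, \ref{thm:mp_dual_weak}, and~\ref{thm:mp_case_C}; your writeup is the natural unwinding of it.
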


In both of these cases, we can obtain contour integral formulas for the entries in matrices of the multi-point distribution determinants by following~\cite[Thm.~4.19]{IMS22} (see also Remark~\ref{rem:JR_comparison}).

\section{Continuous time limit}
\label{sec:continuous}

In this section, we will examine the continuous time limit of these processes.
In order to do this, we will take the geometric jumping with $\xx_{\lfloor t/p \rfloor} = p$ as $p \to 0$, which takes the geometric distribution with rate $\pi_j x_i$ to an exponential distribution with rate $\pi_j$.
As an example, compare Figure~\ref{fig:discrete_samples} with Figure~\ref{fig:continuous_samples}.
We will be able to see from the other integral formulas in~\cite[Sec.~4.8]{IMS22}, that using the geometric jumping will result in the same limit with Bernoulli rates using Theorem~\ref{thm:transition_prob} with noting the positions of the particles (in the bosonic form) are given by the conjugate shapes.
Therefore, we only consider the geometric jumping cases; that is, we only consider Case~A and Case~C.

\begin{figure}
\[
\includegraphics[width=.45\textwidth]{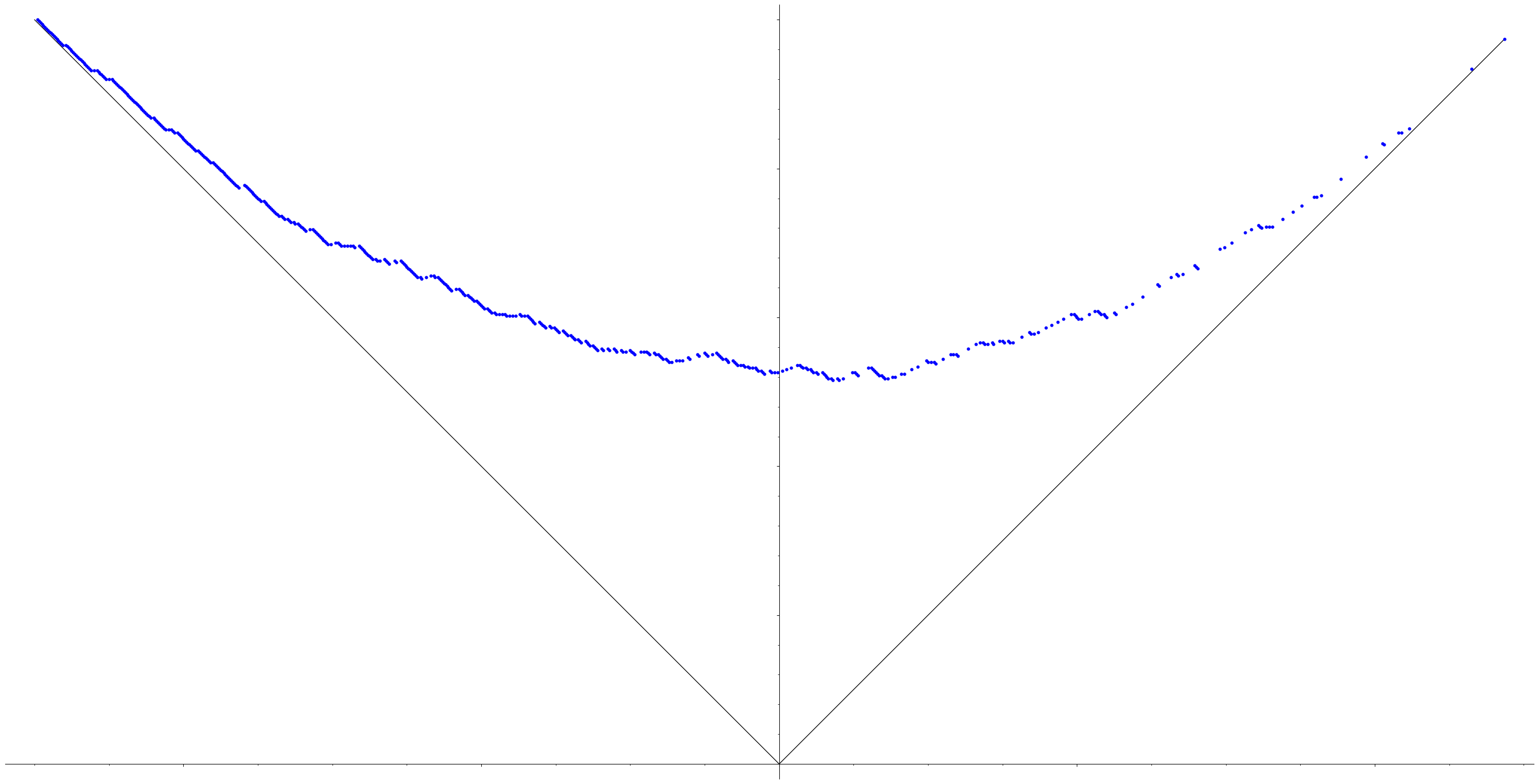}
\qquad
\includegraphics[width=.45\textwidth]{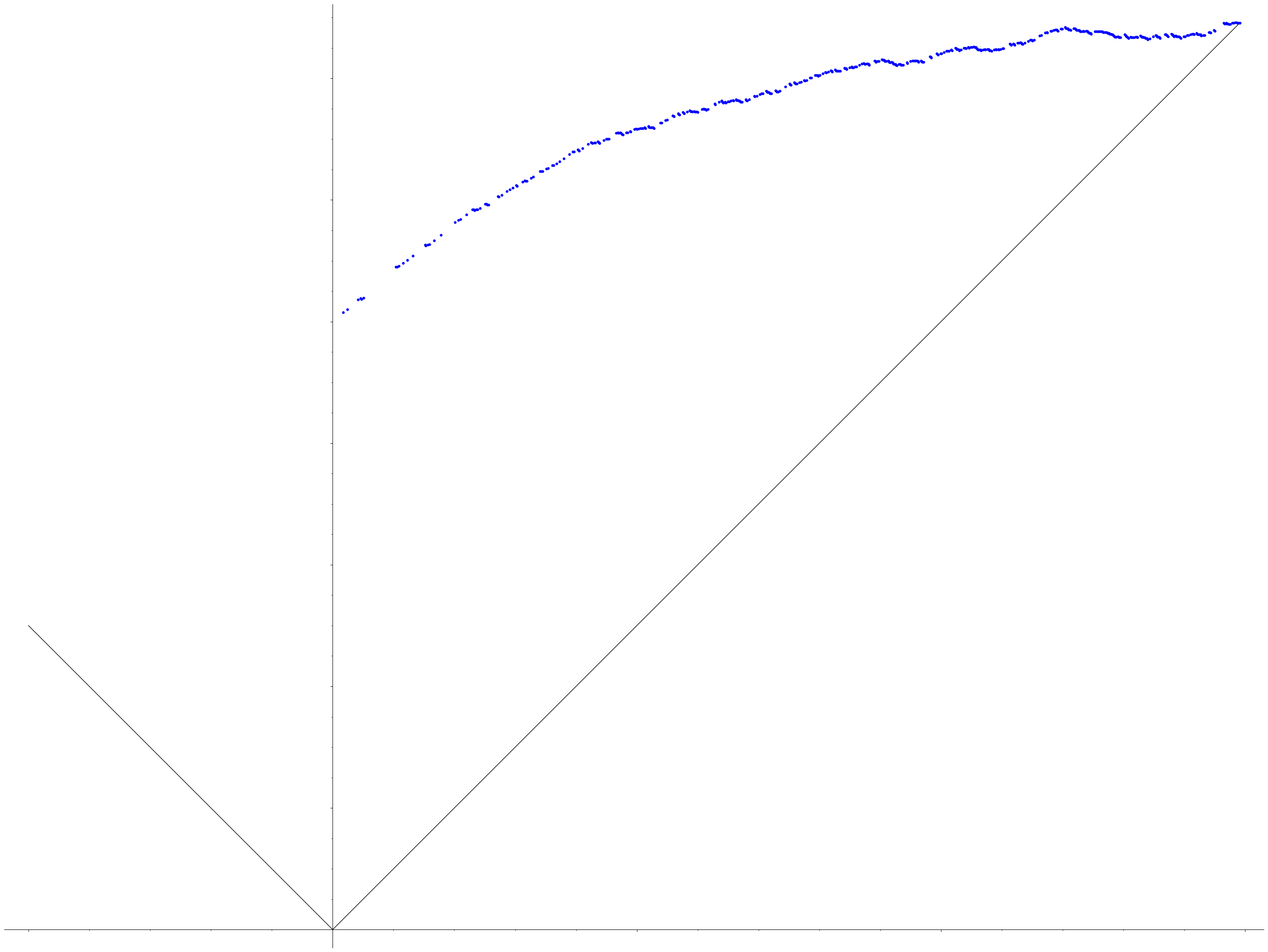}
\]
\caption{Samples of the continuous time limit TASEP with $\ell = 500$ particles at time $t = 500$ with rate $\bpi = 1$ under the blocking (left) and pushing (right) behavior.}
\label{fig:continuous_samples}
\end{figure}

\begin{figure}
\[
\includegraphics[width=.45\textwidth]{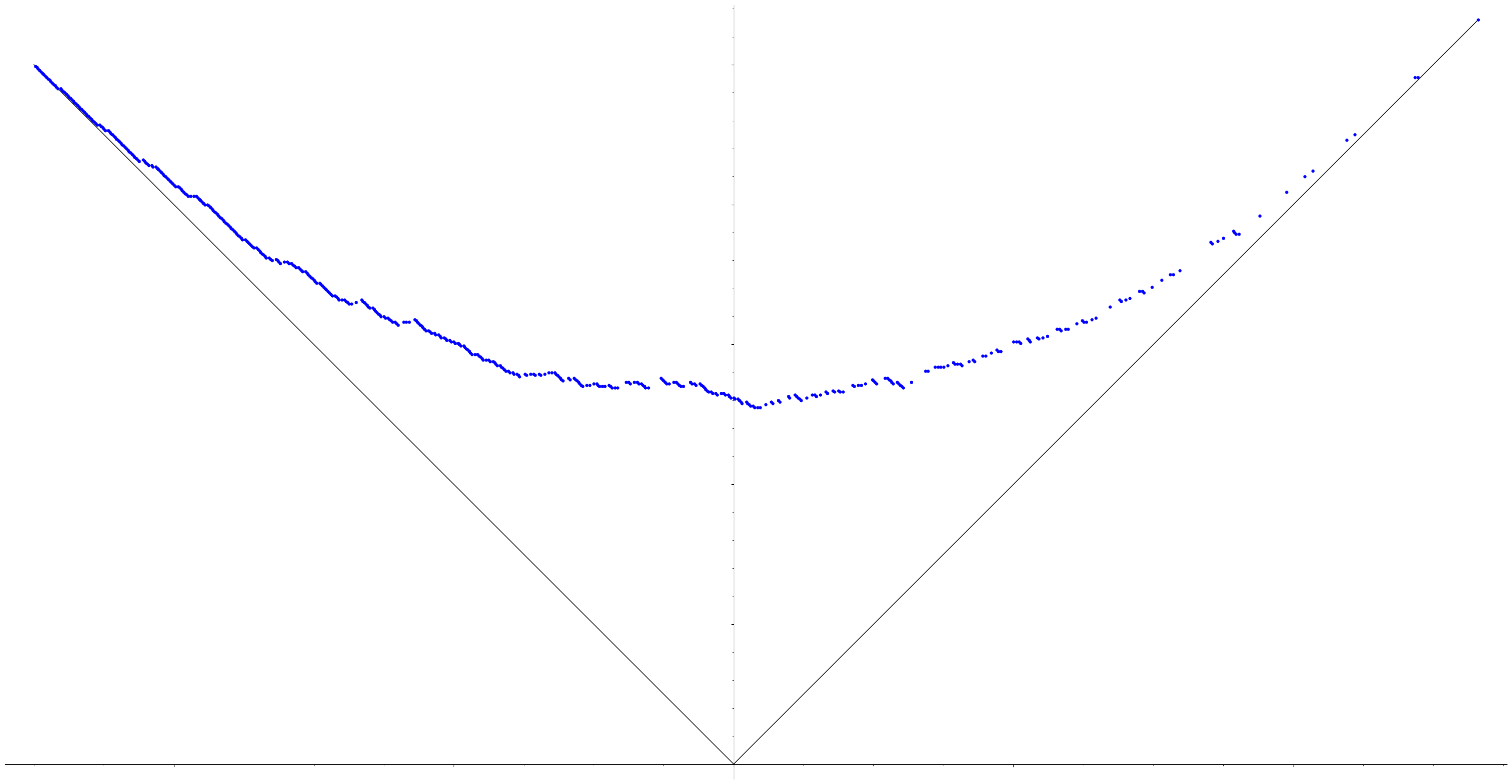}
\qquad
\includegraphics[width=.45\textwidth]{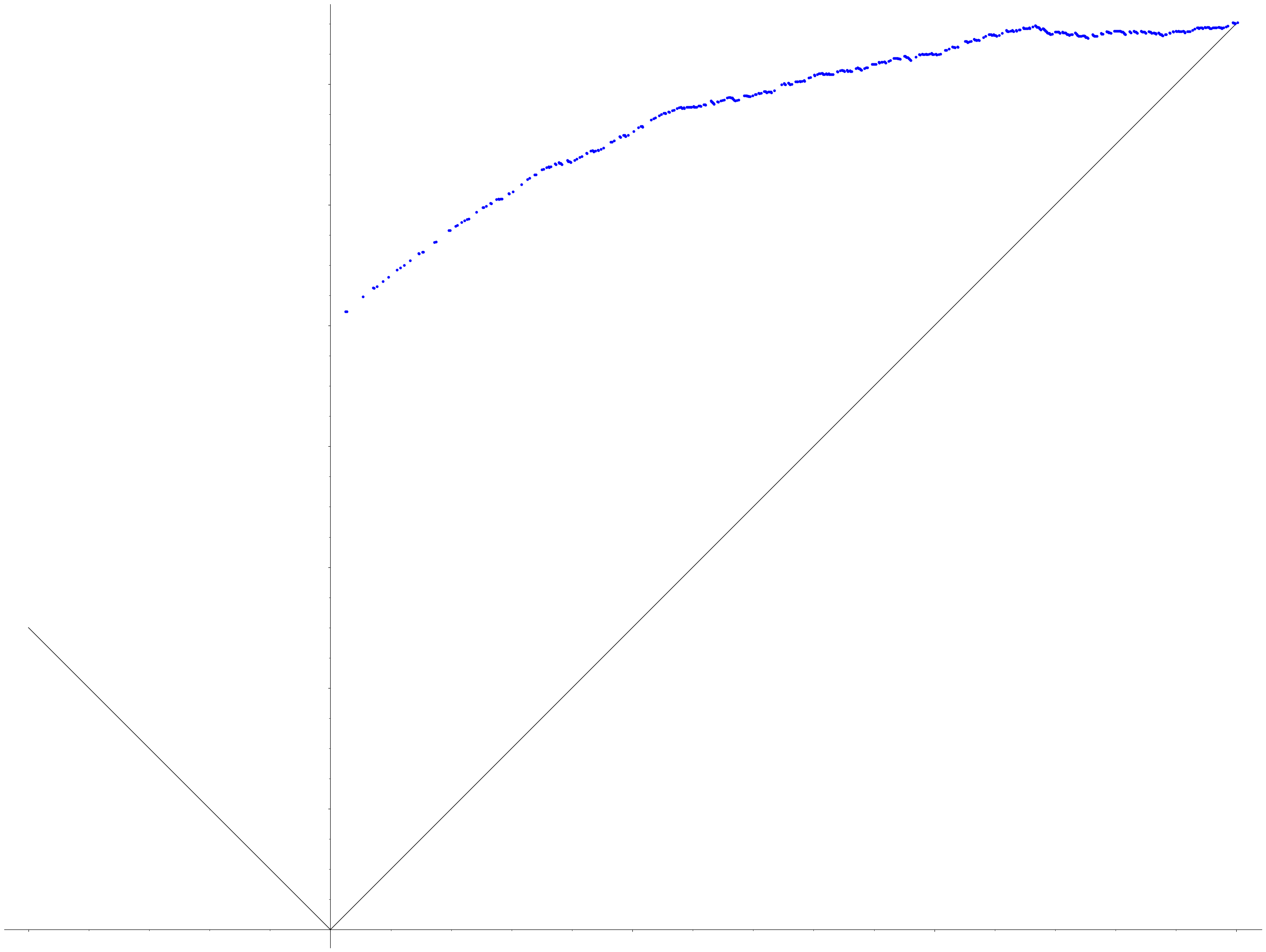}
\]
\caption{Samples of TASEP with $\ell = 500$ particles with $\bpi = 1$ and $\xx = p = 0.01$ with $n = \lfloor 500 / p \rfloor$ under the blocking behavior (left) and pushing behavior (right).}
\label{fig:discrete_samples}
\end{figure}

\subsection{Blocking}

We will consider continuous Markov process where particles have independent exponential clocks, where 
the $j$-th particle's clock has rate $\pi_j$, and when the clock rings, if the $(j-1)$-th particle is not at the same site, then the $j$-th particle jumps one step to the right.
We take the limit $p \to 0$ in the integral formula~\cite[Thm.~4.19]{IMS22}, and using the classical formula $e^x = \lim_{n\to\infty} (1 + x/n)^n$, we obtain the following.

\begin{cor}
\label{cor:continuous_C}
The continuous time limit of Case~C, for $t \in (0, \infty)$, is
\[
\prob_C(\bG(t) = \lambda| \bG(0) = \mu) = \prod_{j=1}^{\ell} e^{-\pi_j t} \bpi^{\lambda/\mu} \det\left[ \oint_{\widetilde{\gamma}_r} \frac{\prod_{k=1}^{i-1} (1 - \beta_k w^{-1}) e^{tw}}{\prod_{k=1}^{j-1} (1 - \beta_k w^{-1}) w^{(\lambda_i - i) - (\mu_j - j)}} \frac{dw}{2\pi\ii w} \right]_{i,j=1}^{\ell},
\]
where the contour $\widetilde{\gamma}_r$ is a circle centered at the origin with radius $r$ satisfying $r > \abs{\beta_1}, \abs{\beta_2}, \ldots$.
\end{cor}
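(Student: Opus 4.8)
The plan is to derive Corollary~\ref{cor:continuous_C} from Theorem~\ref{thm:mp_case_C_alt} (more precisely, from the integral form of the transition probability $\prob_{C,n}(\lambda \mid \mu)$ that underlies it, namely the $\nu = \lambda$ case together with the single-$\lambda$ specialization of the integral formula~\cite[Thm.~4.19]{IMS22}) by taking the scaling limit in which $\xx_m = p$ for all $1 \le m \le n$ and $n = \lfloor t/p \rfloor$, then letting $p \to 0$. Under this specialization, the geometric waiting with parameter $\pi_j x_i = \pi_j p$ at each of the $n \approx t/p$ time steps converges to an exponential clock of rate $\pi_j$, so the resulting process is exactly the continuous-time TASEP with blocking we want. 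Concretely, I would start from
\[
\prob_{C,n}(\lambda \mid \mu) = \prod_{i=1}^n (1 - \pi_1 x_i) \bpi^{\lambda/\mu} \G_{\lambda \ds \mu}(\xx_n; \bbe),
\]
and use the contour-integral Jacobi--Trudi expression for $\G_{\lambda \ds \mu}(\xx_n; \bbe)$ from~\cite[Thm.~4.19]{IMS22}, which writes this as $\det[C_{ij}^{(p)}]_{i,j=1}^{\ell}$ with $C_{ij}^{(p)}$ an integral over $\widetilde{\gamma}_r$ whose integrand contains the factor $\prod_{m=1}^n (1 - x_m w)^{-1} = (1 - pw)^{-\lfloor t/p\rfloor}$ and the monomial $w^{-(\lambda_i - i) + (\mu_j - j)}$, together with the ratio $\prod_{k=1}^{i-1}(1 - \beta_k w^{-1}) / \prod_{k=1}^{j-1}(1 - \beta_k w^{-1})$, which is independent of $p$.

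The core of the argument is then the elementary limit $\lim_{p \to 0} (1 - pw)^{-\lfloor t/p \rfloor} = e^{tw}$, valid uniformly for $w$ on the fixed contour $\widetilde{\gamma}_r$ once $p$ is small enough that $|pw| < 1$ there (note the contour $\widetilde{\gamma}_r$ has radius $r$ independent of $p$, chosen with $r > |\beta_1|, |\beta_2|, \ldots$, and we can shrink $r$ below $1/r$-type thresholds as needed since the $x_m$-poles recede to infinity as $p\to 0$). Likewise the prefactor: $\prod_{i=1}^n (1 - \pi_1 x_i) = (1 - \pi_1 p)^{\lfloor t/p \rfloor} \to e^{-\pi_1 t}$. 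I would then need to supply the missing factors $\prod_{j=2}^{\ell} e^{-\pi_j t}$; these arise exactly as in Theorem~\ref{thm:mp_case_C_alt} / the discrete multi-point computation, where passing from $\G_{\lambda \ds \mu}$ to the determinant form introduces the normalizing factors $\prod_{j=2}^{\ell}\prod_{i=1}^n (1 - \pi_j x_i)^{-1}$ absorbed into the matrix entries, balanced against an overall $\prod_{j=2}^{\ell}\prod_{i=1}^n(1-\pi_j x_i)$; in the limit this contributes $\prod_{j=2}^{\ell} e^{-\pi_j t}$, giving the stated $\prod_{j=1}^{\ell} e^{-\pi_j t}$. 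Passing the limit inside the finite-size determinant is immediate (the determinant is a polynomial in its entries), and passing the limit inside each contour integral is justified by dominated convergence, since the integrands are uniformly bounded on the compact contour for all sufficiently small $p$.

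The main obstacle I anticipate is bookkeeping rather than anything deep: one must check that the $p$-dependence of the contour itself (the radius condition $0 < r < |x_m^{-1}| = p^{-1}$ in Theorem~\ref{thm:mp_case_C_alt}) is harmless, i.e.\ that one may fix a single $r > \max_k |\beta_k|$ once and for all and then all the $x_m$-poles at $w = p^{-1}$ lie outside $\widetilde{\gamma}_r$ for small $p$ — this is why in the limit the contour condition reduces merely to $r > |\beta_1|, |\beta_2|, \ldots$, and there is no lower bound. A secondary point is verifying that the shift from positions $\lambda_i$ to the fermionic positions $\lambda_i - i$ in the exponent $w^{(\lambda_i-i)-(\mu_j-j)}$ is consistent with the discrete formula's exponent $w^{\lambda_i - \mu_j - i + j}$; this is a direct identification. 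Once these are in place, the corollary follows by substituting $e^{tw}$ and $e^{-\pi_j t}$ for their $p$-dependent approximants in the determinant of Theorem~\ref{thm:transition_prob}'s integral realization, and no further estimates are needed.
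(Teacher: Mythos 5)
Your proposal is correct and is essentially the paper's derivation: set $\xx_m = p$ for all $m$ with $n = \lfloor t/p\rfloor$, apply the contour-integral Jacobi--Trudi formula of~\cite[Thm.~4.19]{IMS22} to $\prob_{C,n}(\lambda|\mu)$ from Theorem~\ref{thm:transition_prob}, and pass to the limit $p\to 0$ using $(1-pw)^{-n}\to e^{tw}$ and $(1-\pi_j p)^n\to e^{-\pi_j t}$, noting that the contour radius constraint $r < p^{-1}$ becomes vacuous. One small clarification on the bookkeeping point you flagged: the factors $\prod_{j=2}^{\ell} e^{-\pi_j t}$ are not produced by any cancellation ``absorbed into the matrix entries''; rather, the integral formula of~\cite[Thm.~4.19]{IMS22} already carries an overall scalar prefactor $\prod_{j=2}^{\ell}\prod_{i=1}^n(1-\pi_j x_i)$ in front of a determinant whose entries are exactly the pure contour integrals appearing in the statement, and combining that prefactor with the $\prod_{i=1}^n(1-\pi_1 x_i)$ from Theorem~\ref{thm:transition_prob} gives the full $\prod_{j=1}^{\ell}\prod_{i=1}^n(1-\pi_j x_i) \to \prod_{j=1}^{\ell} e^{-\pi_j t}$ directly.
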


In Corollary~\ref{cor:continuous_C}, if we substitute $w^{-1} = z$, note the $j$-th particle in the fermionic positioning is at $\nu_j - j$ from the bosonic positions $\nu$, reindexing the particles $j \mapsto \ell + 1 - j$ (as in Remark~\ref{rem:JR_comparison}), and taking the transpose of the determinant, we recover~\cite[Thm.~1]{RS06}.
Furthermore, by taking the same limit of the multi-point distribution from Theorem~\ref{thm:mp_case_C_alt}, we recover~\cite[Cor.]{RS06}.
On the other hand, we can also derive the TASEP master equation~\cite[Eq.~(3)]{RS06} from Theorem~\ref{thm:transition_prob} and verify that the boundary conditions~\cite[Eq.~(2)]{RS06} hold in the limit.

\begin{thm}
\label{thm:continuous_blocking}
The continuous time limit of Case~C $\prob_C(\bG(t) = \lambda| \bG(0) = \mu) $ satisfies
\begin{gather}
\label{eq:master_TASEP}
\frac{d}{dt} \prob_C(\bG(t) = \lambda | \bG(0) = \mu) = \sum_{s=1}^{\ell} \pi_s \prob_C(\bG(t) = \lambda | \bG(0) = \mu) - \sum_{s=1}^{\ell} \pi_s \prob_C(\bG(t) = \lambda - \epsilon_s | \bG(0) = \mu),
\\
\label{eq:boundary_condition_C}
\pi_s \prob_C(\bG(t) = \lambda - \epsilon_s | \bG(0) = \mu) = \pi_{s+1} \prob_C(\bG(t) = \lambda | \bG(0) = \mu)
\qquad \text{ if } \lambda_s = \lambda_{s+1}.
\end{gather}
\end{thm}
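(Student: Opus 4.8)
The plan is to work directly from the closed contour--integral formula of Corollary~\ref{cor:continuous_C}, reading it as the definition of $\prob_C(\bG(t) = \lambda \mid \bG(0) = \mu)$ for \emph{every} integer sequence $\lambda$, not only for partitions: for partitions it is the genuine transition probability, while for non-partitions it is the analytic object that makes sense of \eqref{eq:boundary_condition_C}. Write $P(t,\lambda) = \bigl(\prod_{j=1}^{\ell} e^{-\pi_j t}\bigr)\,\bpi^{\lambda/\mu}\, D(\lambda,t)$, where $D(\lambda,t)$ is the $\ell\times\ell$ determinant in Corollary~\ref{cor:continuous_C}, so that its $(i,j)$ entry carries the factors $e^{tw}$, a power $w^{-(\lambda_i-i)+(\mu_j-j)}$, the numerator $\prod_{k=1}^{i-1}(1-\beta_k w^{-1})$, and the denominator $\prod_{k=1}^{j-1}(1-\beta_k w^{-1})$. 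The theorem then splits into two short manipulations of $D$: the differential equation \eqref{eq:master_TASEP} and the rectification identity \eqref{eq:boundary_condition_C}.

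For \eqref{eq:master_TASEP}, I differentiate $P(t,\lambda)$ in $t$. The scalar prefactor contributes the decay term $-\bigl(\sum_{s=1}^{\ell}\pi_s\bigr)P(t,\lambda)$. Inside the determinant the factor $e^{tw}$ occurs in every entry of a given row and $\partial_t e^{tw} = w\,e^{tw}$; since this extra $w$ depends only on the row index $i$ and not on the column $j$, multilinearity of $\det$ in its rows gives $\partial_t D(\lambda,t) = \sum_{i=1}^{\ell} D^{(i)}(\lambda,t)$ with $D^{(i)}$ the determinant obtained by inserting one extra $w$ into row $i$. Because $w\cdot w^{-(\lambda_i-i)+(\mu_j-j)} = w^{-((\lambda_i-1)-i)+(\mu_j-j)}$, that modified row is precisely row $i$ of $D(\lambda-\epsilon_i,t)$, whence $D^{(i)}(\lambda,t) = D(\lambda-\epsilon_i,t)$. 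Finally $\bpi^{\lambda/\mu} = \pi_i\,\bpi^{(\lambda-\epsilon_i)/\mu}$ converts $\bigl(\prod_j e^{-\pi_j t}\bigr)\bpi^{\lambda/\mu}D(\lambda-\epsilon_i,t)$ into $\pi_i\,P(t,\lambda-\epsilon_i)$, so assembling the pieces produces the gain term $\sum_{s=1}^{\ell}\pi_s P(t,\lambda-\epsilon_s)$; together with the decay term this is \eqref{eq:master_TASEP}.

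For \eqref{eq:boundary_condition_C}, fix a partition $\lambda$ with $\lambda_s = \lambda_{s+1}$, so that $\lambda-\epsilon_s$ is not a partition. Using $\bpi^{\lambda/\mu} = \pi_s\,\bpi^{(\lambda-\epsilon_s)/\mu}$ again, the relation \eqref{eq:boundary_condition_C} is equivalent to $D(\lambda-\epsilon_s,t) = \beta_s\,D(\lambda,t)$ (recall $\beta_s = \pi_{s+1}$). The matrices defining $D(\lambda)$ and $D(\lambda-\epsilon_s)$ agree in every row except row $s$; moreover, writing $\prod_{k=1}^{s}(1-\beta_k w^{-1}) = (1-\beta_s w^{-1})\prod_{k=1}^{s-1}(1-\beta_k w^{-1})$ in their common row $s+1$ and absorbing the $\beta_s w^{-1}$ into the power of $w$ shows that row $s+1$ equals $(\text{row }s\text{ of }D(\lambda-\epsilon_s)) - \beta_s\,(\text{row }s\text{ of }D(\lambda))$. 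A single elementary row operation in $D(\lambda-\epsilon_s)$ (subtract row $s$ from row $s+1$) followed by the transposition of rows $s$ and $s+1$ then converts $D(\lambda-\epsilon_s)$ into $\beta_s\,D(\lambda)$, which is the claim. This is exactly the determinantal incarnation of the rectification relation in Lemma~\ref{lemma:big_rectify_lambda}, equivalently of the divisibility $\G_{(\lambda-\epsilon_s)\ds\mu}(\xx;\bbe) = \beta_s\,\G_{\lambda\ds\mu}(\xx;\bbe)$ already valid in the discrete model, which gives an alternative route through Theorem~\ref{thm:transition_prob}.

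The computations themselves are routine; the main point requiring care is that the contour $\widetilde{\gamma}_r$ must remain a legitimate \emph{common} contour for all the shifted sequences $\lambda-\epsilon_i$ appearing above. This is automatic: the integrands are meromorphic with poles only at $w=0$ (of orders that change under multiplication by $w^{\pm1}$) and at $w=\beta_1,\beta_2,\dotsc$, and $\widetilde{\gamma}_r$ is chosen as a circle enclosing all of these, so no residue at infinity is created and the identity $\oint_{\widetilde{\gamma}_r} w^{-1}[\cdots]_{\lambda-\epsilon_s} = \oint_{\widetilde{\gamma}_r}[\cdots]_{\lambda}$ used above is exact. As a consistency check, the small-step expansion $\prob_{Ge}(w_{ji}=0) = 1-\pi_j x_i + O(x_i^2)$ and $\prob_{Ge}(w_{ji}=1) = \pi_j x_i + O(x_i^2)$ identifies the $p\to 0$ limit of Case~C with the continuous-time TASEP in which particle $s$ attempts a unit right jump at rate $\pi_s$ under the blocking rule, and \eqref{eq:master_TASEP}--\eqref{eq:boundary_condition_C} are precisely its Kolmogorov forward equations written in the Rákos--Schütz form, so the hard part is the bookkeeping rather than any new idea.
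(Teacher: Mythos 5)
Your proof is correct and takes a genuinely different route from the paper for the master equation: the paper forms the discrete difference quotient \eqref{eq:derivative_defn} with the discrete-time integrand $(1 - pw)^{-n}$, splits it into two terms in \eqref{eq:split_derivative}, and evaluates each as $p \to 0$ with $pn \to t$, whereas you differentiate the continuous-time contour formula of Corollary~\ref{cor:continuous_C} directly in $t$, using $\partial_t e^{tw} = w\, e^{tw}$ together with row multilinearity to read off that the extra $w$ turns row $i$ of $D(\lambda,t)$ into row $i$ of $D(\lambda - \epsilon_i, t)$. Your version is cleaner and dispenses with the $O(p)$ bookkeeping, at the modest cost of invoking Corollary~\ref{cor:continuous_C} as an established formula rather than deriving the ODE and the formula simultaneously from the discrete picture. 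For the boundary condition the idea is the same as the paper's: when $\lambda_s = \lambda_{s+1}$, the factor $(1 - \beta_s w^{-1})$ forces row $s+1$ of $D(\lambda)$ to equal $\bigl(\text{row }s\text{ of }D(\lambda - \epsilon_s)\bigr) - \beta_s\bigl(\text{row }s\text{ of }D(\lambda)\bigr)$. The paper packages this by writing $D(\lambda - \epsilon_s) - \beta_s D(\lambda)$ as a single determinant, multilinear in row $s$, whose rows $s$ and $s+1$ then coincide, hence it vanishes. Your row-operation description is one step short: subtracting row $s$ from row $s+1$, extracting $-\beta_s$, and swapping rows $s$ and $s+1$ produces $\beta_s \det\bigl[\ldots, D(\lambda)_{(s)}, D(\lambda - \epsilon_s)_{(s)}, \ldots\bigr]$, and you still need one more subtraction (of $\beta_s$ times the new row $s$ from the new row $s+1$, using $D(\lambda - \epsilon_s)_{(s)} = D(\lambda)_{(s+1)} + \beta_s D(\lambda)_{(s)}$) to land on $\beta_s D(\lambda)$. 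Your closing remark that the identity is the rectification relation of Lemma~\ref{lemma:big_rectify_lambda} at the discrete level, pushed through the limit, is a perfectly good and arguably cleaner alternative. One caution for the record: your computation, like the paper's own proof, yields $\frac{d}{dt} \prob_C(\lambda) = -\sum_s \pi_s \prob_C(\lambda) + \sum_s \pi_s \prob_C(\lambda - \epsilon_s)$, which is the correct Kolmogorov forward equation; the displayed right-hand side of \eqref{eq:master_TASEP} carries the opposite overall sign, which is a typo in the statement and not a flaw in either argument.
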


\begin{proof}
Define $t_n := pn$, and we will take the limit $n \to \infty$ such that $t_n \to t$, which necessarily means that $p \to 0$.
Consider the quantity
\begin{equation}
\label{eq:derivative_defn}
\frac{\prob_C(\bG(t_{n+1}) = \lambda \mid \bG(0) = \mu) - \prob_C(\bG(t_n) = \lambda \mid \bG(0) = \mu)}{t_{n+1} - t_n},
\end{equation}
and under the limit $n \to \infty$, this converges to
\begin{equation}
\label{eq:derivative_limit}
\frac{d}{dt} \prob_C(\bG(t) = \lambda \mid \bG(0) = \mu)
\end{equation}
as $t_{n+1} - t_n = p \to 0$.
Since we are taking our time scale to be $p$, Theorem~\ref{thm:transition_prob} with~\cite[Thm.~4.19]{IMS22} yields
\[
\prob_C(\bG(t_n) = \lambda | \bG(0) = \mu) = \prod_{i=1}^{\ell} (1 - \pi_i p)^n \bpi^{\lambda/\mu} \det \left[ \oint_{\widetilde{\gamma}_r} F_{ij}(n) \frac{dw}{2\pi\ii w} \right]_{i,j=1}^{\ell},
\]
where
\begin{equation}
\label{eq:integrand_caseC}
F_{ij}(n) = \frac{\prod_{k=1}^{i-1} (1 - \beta_k w^{-1})}{\prod_{k=1}^{j-1} (1 - \beta_k w^{-1})} \frac{(1 - p w)^{-n}}{w^{(\lambda_i-i)-(\mu_j-j)}}.
\end{equation}
(Note that we have \emph{not} included the integral in the definition of $F_{ij}(n)$.)
We can rewrite
\begin{equation}
\label{eq:split_derivative}
\begin{aligned}
\eqref{eq:derivative_defn} & = p^{-1} \left( \prod_{i=1}^{\ell} (1 - \pi_i p) - 1) \right) \prod_{i=1}^{\ell} (1 - \pi_i p)^n \bpi^{\lambda/\mu} \det \left[ \oint_{\widetilde{\gamma}_r} F_{ij}(n+1) \frac{dw}{2\pi\ii w} \right]_{i,j=1}^{\ell}
\\
&\hspace{20pt} + p^{-1} \prod_{i=1}^{\ell} (1 - \pi_i p)^n \bpi^{\lambda/\mu} \left( \det \left[ \oint_{\widetilde{\gamma}_r} F_{ij}(n+1)  \frac{dw}{2\pi\ii w} \right]_{i,j=1}^{\ell} - \det \left[ \oint_{\widetilde{\gamma}_r} F_{ij}(n) \frac{dw}{2\pi\ii w} \right]_{i,j=1}^{\ell} \right).
\end{aligned}
\end{equation}

Now we can take the limit $n \to \infty$ and $p \to 0$ in the first term of~\eqref{eq:split_derivative} as
\begin{align*}
p^{-1} \left( \prod_{i=1}^{\ell} (1 - \pi_i p) - 1) \right) = -\sum_{s=1}^{\ell} \pi_s + O(p) \xrightarrow{p\to0} -\sum_{s=1}^{\ell} \pi_s,
\\
(1 - \pi_i p)^n = (1 - \pi_i t / n)^n \xrightarrow{n\to\infty} e^{-\pi_i t}.
\end{align*}
and so the first term in the limit becomes
\begin{equation}
\label{eq:blocking_first_term}
-\sum_{s=1}^{\ell} \pi_i \prob_C(\bG(t) = \lambda | \bG(0) = \mu).
\end{equation}
Using the multilinearity of the determinant with $(1 - pw)^{-n} = (1- pw)(1 - pw)^{-n-1}$, we can rewrite the second term of~\eqref{eq:split_derivative} as
\[
\sum_{s=0}^{\ell} \prod_{i=1}^{\ell} (1 - \pi_i p)^n \bpi^{\lambda/\mu} \det \left[ \oint_{\widetilde{\gamma}_r} w^{\delta_{is}} F_{ij}(n+1) \frac{dw}{2\pi\ii w} \right]_{i,j=1}^{\ell} + O(p),
\]
which in the limit becomes
\begin{equation}
\label{eq:blocking_second_term}
\sum_{s=1}^{\ell} \pi_s \prob_C(\bG(t) = \lambda - \epsilon_s | \bG(0) = \mu).
\end{equation}
Therefore, evaluating~\eqref{eq:derivative_defn} in two different ways from~\eqref{eq:derivative_limit},~\eqref{eq:blocking_first_term}, and~\eqref{eq:blocking_second_term} yields the claim~\eqref{eq:master_TASEP}.

By the multilinearity of the determinant, we can rewrite the boundary condition~\eqref{eq:boundary_condition_C} as a single determinant after applying Corollary~\ref{cor:continuous_C} (which we want to show is equal to $0$).
The result is exactly the determinant for $\prob_C(\bG(t) = \lambda | \bG(0) = \mu)$ from Corollary~\ref{cor:continuous_C} except in the $s$-th row, where the entries are
\begin{align*}
& \oint_{\widetilde{\gamma}_r} \frac{\prod_{k=1}^{s-1} (1 - \beta_k w^{-1}) e^{tw}}{\prod_{k=1}^{j-1} (1 - \beta_k w^{-1})} \left( \frac{1}{w^{(\lambda_s - 1 - s) - (\mu_j - j)}} - \frac{\pi_{s+1}}{w^{(\lambda_s - s) - (\mu_j - j)}} \right) \frac{dw}{2\pi\ii w}
\\ & \hspace{50pt} = \oint_{\widetilde{\gamma}_r} \frac{\prod_{k=1}^{s-1} (1 - \beta_k w^{-1}) e^{tw}}{\prod_{k=1}^{j-1} (1 - \beta_k w^{-1})} \frac{1 - \beta_s w^{-1}}{w^{(\lambda_s - (s+1)) - (\mu_j - j)}} \frac{dw}{2\pi\ii w}.
\end{align*}
Since $\lambda_{s+1} = \lambda_s$, we see that the $s$-th row is precisely the $(s+1)$-th row, and hence the determinant is $0$ as desired.
\end{proof}

\subsection{Pushing}

We will consider continuous Markov process where particles have independent exponential clocks, where the $j$-th particle's clock has rate $\pi_j$, and when the clock rings, all particles $j' \leq j$ at the site of the $j$-th particle move one step to the right.
We can similarly describe the transition probability as a determinant of contour integrals for the pushing behavior.

\begin{cor}
The continuous time limit of Case~A, for $t \in (0, \infty)$, is
\[
\prob_A(\bG(t) = \lambda| \bG(0) = \mu) = \prod_{j=1}^{\ell} e^{-\pi_j t} \bpi^{\lambda/\mu} \det\left[ \oint_{\gamma_r} \frac{\prod_{k=1}^{j-1} (1 - \pi_k^{-1} w) e^{tw}}{\prod_{k=1}^{i-1} (1 - \pi_k^{-1} w) w^{(\lambda_i - i) - (\mu_j - j)}} \frac{dw}{2\pi\ii w} \right]_{i,j=1}^{\ell}.
\]
\end{cor}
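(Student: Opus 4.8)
The plan is to obtain the formula as the $p \to 0$ limit of the discrete Case~A transition probability, exactly parallel to the derivation of Corollary~\ref{cor:continuous_C} for the blocking behavior but using the pushing integral formula~\cite[Thm.~4.18]{IMS22} in place of~\cite[Thm.~4.19]{IMS22}. First I would combine Theorem~\ref{thm:transition_prob} with the contour-integral Jacobi--Trudi formula for $\dG_{\lambda/\mu}$ from~\cite[Thm.~4.18]{IMS22}, and the substitution $w \mapsto z^{-1}$ already carried out in Remark~\ref{rem:JR_comparison}, to write, for any $\ell \geq \ell(\lambda)$,
\[
\prob_{A,n}(\lambda \mid \mu) = \prod_{j=1}^{\ell} \prod_{i=1}^n (1 - \pi_j x_i) \, \bpi^{\lambda/\mu} \det\left[ \oint_{\gamma_r} \frac{\prod_{k=1}^{j-1}(1 - \pi_k^{-1} w)}{\prod_{k=1}^{i-1}(1 - \pi_k^{-1} w)} \frac{1}{\prod_{i'=1}^n (1 - x_{i'} w)} \frac{dw}{2\pi\ii\, w^{(\lambda_i - i) - (\mu_j - j) + 1}} \right]_{i,j=1}^{\ell},
\]
where $\gamma_r$ is a counterclockwise circle of radius $r$ large enough to enclose the origin and all the points $\pi_k^{-1}$ that appear.

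Next, following the setup of Section~\ref{sec:continuous}, I would specialize $\xx_n = (p, p, \dotsc, p)$ with $n = \lfloor t/p \rfloor$, so that $\prod_{i=1}^n (1 - \pi_j x_i) = (1 - \pi_j p)^n$ and $\prod_{i'=1}^n (1 - x_{i'} w)^{-1} = (1 - pw)^{-n}$, and then let $p \to 0$ (hence $n \to \infty$ with $np \to t$), using the two elementary limits
\[
(1 - \pi_j p)^{\lfloor t/p \rfloor} \xrightarrow{\ p \to 0\ } e^{-\pi_j t}, \qquad (1 - pw)^{-\lfloor t/p \rfloor} \xrightarrow{\ p \to 0\ } e^{tw},
\]
the second holding uniformly for $w$ on the fixed compact contour $\gamma_r$ once $p < r^{-1}$ (write $-\lfloor t/p\rfloor \log(1 - pw) = \lfloor t/p\rfloor (pw + O(p^2 w^2))$ and estimate uniformly on $\gamma_r$). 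Uniform convergence of the integrand on $\gamma_r$ lets me pass the limit inside each contour integral, and continuity of the determinant (a fixed polynomial in the $\ell^2$ entries) passes it through the determinant, producing the stated formula with $\prod_{k=1}^n(1 - x_k w)$ replaced by $e^{tw}$ and the overall normalization by $\prod_{j=1}^{\ell} e^{-\pi_j t}$.

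The main obstacle is the bookkeeping needed to justify that a single radius $r$ works for all small $p$ and that the limit genuinely commutes with the contour integral: this is handled by observing that the only pole of the integrand depending on $p$ is the one at $w=0$, while the remaining poles sit at the $p$-independent points $\pi_k^{-1}$, so $r$ may be fixed once and for all, and on the resulting compact contour $(1 - pw)^{-n}$ is uniformly bounded and converges uniformly to $e^{tw}$ (so dominated convergence, or simply uniform convergence, applies). A secondary point is checking that the placement of the numerator and denominator products and the exponent shift match the claimed statement after the $w \mapsto z^{-1}$ substitution, but this is exactly the transformation already recorded in Remark~\ref{rem:JR_comparison}, so no new computation is required. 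Finally, as with Corollary~\ref{cor:continuous_C}, the limit is independent of the auxiliary bound $\ell \geq \ell(\lambda)$, so the result is well defined as a process on partitions (equivalently on $\mcF$).
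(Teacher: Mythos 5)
Your approach is the intended one and matches what the paper does for Corollary~\ref{cor:continuous_C}: combine the one-step transition formula from Theorem~\ref{thm:transition_prob} with the contour-integral Jacobi--Trudi formula, specialize $\xx_n = (p,\dotsc,p)$ with $n = \lfloor t/p\rfloor$, and send $p\to 0$ using $(1-\pi_j p)^n \to e^{-\pi_j t}$ and $(1-pw)^{-n} \to e^{tw}$ uniformly on a fixed compact contour. The paper records essentially this much (via the word ``similarly'') and you have correctly filled in the uniform-convergence justification.

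Two local errors in your write-up are worth fixing, though neither undermines the strategy. First, the $p$-dependent poles of the integrand are not at $w=0$: they are the simple poles of $\prod_{i'=1}^n(1-x_{i'}w)^{-1}$ at $w = x_{i'}^{-1} = p^{-1}$, which recede to infinity as $p\to 0$; the pole at $w=0$ has fixed order $(\lambda_i - i) - (\mu_j - j) + 1$. Second, the remaining singularities of the $w$-integrand are at $w=\pi_k$ (from $1-\pi_k^{-1}w$ in the denominator), not at $\pi_k^{-1}$, and for the integral to extract the Taylor coefficient of the generating function expanded around $w=0$ they must lie \emph{outside} the contour: the correct condition is $r < \abs{\pi_k}$ for all $k$ appearing (together with $r < p^{-1}$, which holds automatically as $p\to 0$). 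Contrast this with Corollary~\ref{cor:continuous_C}, where the factors $1-\beta_k w^{-1}$ force an annulus $\abs{w} > \abs{\beta_k}$, hence the poles at $\beta_k$ lie \emph{inside} $\widetilde\gamma_r$; in the pushing case the opposite inequality is required. You may have been led astray by reading off the radius condition in Remark~\ref{rem:JR_comparison}, which is stated for the $z=w^{-1}$ contour, not the $w$ contour.
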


By the same computations as in the blocking behavior case, we can also show the continuous time limit of Case~A also satisfies the master equation~\cite[Eq.~(3)]{RS06}, but the boundary conditions are different.
The proof of the boundary condition is again using the multilinearity of the determinant instead combining the $(s+1)$-th rows (but still showing it equals the $s$-th row).

\begin{thm}
\label{thm:continuous_pushing}
The continuous time limit $\prob_A(\bG(t) = \lambda | \bG(0) = \mu)$ satisfies~\eqref{eq:master_TASEP} but with the boundary conditions
\begin{equation*}
\label{eq:boundary_condition_A}
\pi_s \prob_A(\bG(t) = \lambda + \epsilon_{s+1} | \bG(0) = \mu) = \pi_{s+1} \prob_A(\bG(t) = \lambda | \bG(0) = \mu)
\qquad \text{ if } \lambda_s = \lambda_{s+1}.
\end{equation*}
\end{thm}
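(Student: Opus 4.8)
The plan is to follow the proof of Theorem~\ref{thm:continuous_blocking} line by line, substituting the pushing integral formula for the blocking one. First I would set $t_n := pn$ and let $n \to \infty$, $p \to 0$ with $t_n \to t$. Combining Theorem~\ref{thm:transition_prob} with the integral formula underlying the Corollary above (namely \cite[Thm.~4.18]{IMS22}, \textit{cf.}\ Remark~\ref{rem:JR_comparison}), one writes
\[
\prob_A(\bG(t_n) = \lambda \mid \bG(0) = \mu) = \prod_{i=1}^{\ell}(1-\pi_i p)^n\, \bpi^{\lambda/\mu} \det\left[ \oint_{\gamma_r} \frac{\prod_{k=1}^{j-1}(1-\pi_k^{-1}w)}{\prod_{k=1}^{i-1}(1-\pi_k^{-1}w)}\, \frac{(1-pw)^{-n}}{w^{(\lambda_i-i)-(\mu_j-j)}}\, \frac{dw}{2\pi\ii w} \right]_{i,j=1}^{\ell},
\]
and letting $(1-pw)^{-n}\to e^{tw}$ and $(1-\pi_i p)^n\to e^{-\pi_i t}$ recovers the displayed formula in the Corollary preceding the theorem.

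For the master equation, the point is that the $n$-dependence of the integrand enters only through the factor $(1-pw)^{-n}$ and through the scalar prefactor $\prod_i(1-\pi_i p)^n$, exactly as in Case~C. Hence the passage \eqref{eq:derivative_defn}--\eqref{eq:split_derivative} transcribes verbatim with $\prob_C$ replaced by $\prob_A$: the first term of \eqref{eq:split_derivative} converges to $-\sum_{s=1}^{\ell}\pi_s\,\prob_A(\bG(t)=\lambda\mid\bG(0)=\mu)$, and applying multilinearity of the determinant to $(1-pw)^{-n} = (1-pw)(1-pw)^{-n-1}$ turns the second term into $\sum_{s=1}^{\ell}\pi_s\,\prob_A(\bG(t)=\lambda-\epsilon_s\mid\bG(0)=\mu)$ in the limit; as in the blocking case the symbol $\prob_A$ evaluated at a non-weakly-decreasing sequence stands for the analytic expression furnished by the Corollary. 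This gives \eqref{eq:master_TASEP} with $\prob_C$ replaced by $\prob_A$ throughout.

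The genuinely new content is the boundary condition, and this is the step I expect to require the most care. Assume $\lambda_s = \lambda_{s+1}$. Apply the Corollary to both sides, use $\bpi^{(\lambda+\epsilon_{s+1})/\mu} = \pi_{s+1}\,\bpi^{\lambda/\mu}$ to factor out the common nonzero quantity $\pi_{s+1}\prod_i e^{-\pi_i t}\,\bpi^{\lambda/\mu}$, and invoke multilinearity of the determinant in its $(s+1)$-st row to write $\pi_s\,\prob_A(\bG(t)=\lambda+\epsilon_{s+1}\mid\bG(0)=\mu) - \pi_{s+1}\,\prob_A(\bG(t)=\lambda\mid\bG(0)=\mu)$ as that quantity times a single determinant which coincides with the one for $\prob_A(\bG(t)=\lambda\mid\bG(0)=\mu)$ except in row $s+1$, whose $j$-th entry is
\[
\oint_{\gamma_r} \frac{\prod_{k=1}^{j-1}(1-\pi_k^{-1}w)\, e^{tw}}{\prod_{k=1}^{s}(1-\pi_k^{-1}w)} \left( \frac{\pi_s}{w^{(\lambda_s-s)-(\mu_j-j)}} - \frac{1}{w^{(\lambda_s-(s+1))-(\mu_j-j)}} \right) \frac{dw}{2\pi\ii w}.
\]
Using $\frac{\pi_s}{w}-1 = \frac{\pi_s(1-\pi_s^{-1}w)}{w}$ and $\frac{1-\pi_s^{-1}w}{\prod_{k=1}^{s}(1-\pi_k^{-1}w)} = \frac{1}{\prod_{k=1}^{s-1}(1-\pi_k^{-1}w)}$, this entry collapses to
\[
\pi_s \oint_{\gamma_r} \frac{\prod_{k=1}^{j-1}(1-\pi_k^{-1}w)\, e^{tw}}{\prod_{k=1}^{s-1}(1-\pi_k^{-1}w)\, w^{(\lambda_s-s)-(\mu_j-j)}} \frac{dw}{2\pi\ii w},
\]
which is precisely $\pi_s$ times the $j$-th entry of row $s$; here the hypothesis $\lambda_s=\lambda_{s+1}$ is exactly what makes the exponent of $w$ match. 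Thus row $s+1$ equals $\pi_s$ times row $s$, the determinant vanishes, and the boundary condition follows. The main obstacle is purely the bookkeeping of the $w$-exponents and of the truncated products $\prod_{k=1}^{i-1}(1-\pi_k^{-1}w)$: one must verify that the multilinearity reduction carries row $s+1$ onto row $s$ (and not some other row), and this is also what makes transparent why the pushing boundary condition naturally features $\lambda+\epsilon_{s+1}$ whereas the blocking one features $\lambda-\epsilon_s$.
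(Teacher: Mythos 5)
Your proof is correct and follows exactly the approach the paper gestures at: the master equation part transcribes verbatim from the blocking case, and for the boundary condition you perform the multilinear combination in row $s+1$ of the determinant and show it collapses to $\pi_s$ times row $s$ using $\pi_s - w = \pi_s(1-\pi_s^{-1}w)$ to cancel a factor in $\prod_{k=1}^s(1-\pi_k^{-1}w)$, which is precisely what the paper's one-line remark describes. The paper gives only a terse sketch of this argument, and your write-up is a faithful and accurate expansion of it, including the correct observation that $\prob_A$ at the non-partition sequence $\lambda+\epsilon_{s+1}$ is to be read as the analytic continuation furnished by the contour-integral formula.
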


\section{Canonical particle process}
\label{sec:canonical_process}

The goal of this section is to describe the particle process whose transition kernel naturally uses the canonical Grothendieck polynomials.
We will start with explicitly defining the stochastic process, and then we will show how to interpret it using the noncommutative operators $\UU^{(\bal,\bbe)}$ (in contrast to Section~\ref{sec:operator_dynamics}).

Recall that $G(j, i)$ denotes the position of the $j$-th particle at time $i$.
The positions of the particles is defined recursively by the formula
\begin{equation}
\label{eq:geometric_blocking_rule}
G(j,i) = \min\bigl( G(j,i-1) + w_{ji}, G(j-1,i-1) \bigr),
\end{equation}
by convention $G(0,i-1) := \infty$, where the random variable $w_{ij}$ --- which now depends on $G(j, i-1)$ --- is determined by the \defn{inhomogeneous geometric distribution} defined by
\begin{equation}
\label{eq:refined_geometric}
\prob_{\mcG}(w_{ji} = m' \mid G(j,i-1) = m) := \frac{1 - \pi_j x_i}{1 + \alpha_{m+m'} x_i} \prod_{k=m}^{m+m'-1} \frac{(\alpha_k + \pi_j) x_i}{1 + \alpha_k x_i}.
\end{equation}
In other words, the $j$-th particle at time $i$ attempts to jump $w_{ji}$ steps, but can be blocked by the $(j-1)$-th particle, which updates its position after the $j$-th particle moves.

Let us digress slightly on why~\eqref{eq:refined_geometric} is called an inhomogeneous geometric distribution.
We can realize it as the waiting time for a failure in sequence of Bernoulli variables (\textit{i.e.}, weighted coin flips), but the $k$-th trial given a probability of success $(\alpha_k + \pi_j) x_i (1 + \alpha_k x_i)^{-1}$.
Indeed, we note that the probability of a failure is
\[
1 - \frac{\alpha_k x_i + \pi_j x_i}{1 + \alpha_k x_i} = \frac{1 - \pi_j x_i}{1 + \alpha_k x_i}.
\]
Hence, this gives us a sampling algorithm for the distribution $\prob_{\mcG}$.
We illustrate the effectiveness of this sampling in Figure~\ref{fig:sample_comparison}.
This perspective also allows us to easily see that we have a probability measure on $\ZZ_{\geq m}$ for any fixed $m$.
The case when $\bpi = 0$ can also be seen as a projection of the Warren--Windridge dynamics~\cite{WW09}; see also~\cite[Sec.~2.2]{Assiotis23}.

\begin{figure}
\[
\includegraphics[width=.9\textwidth]{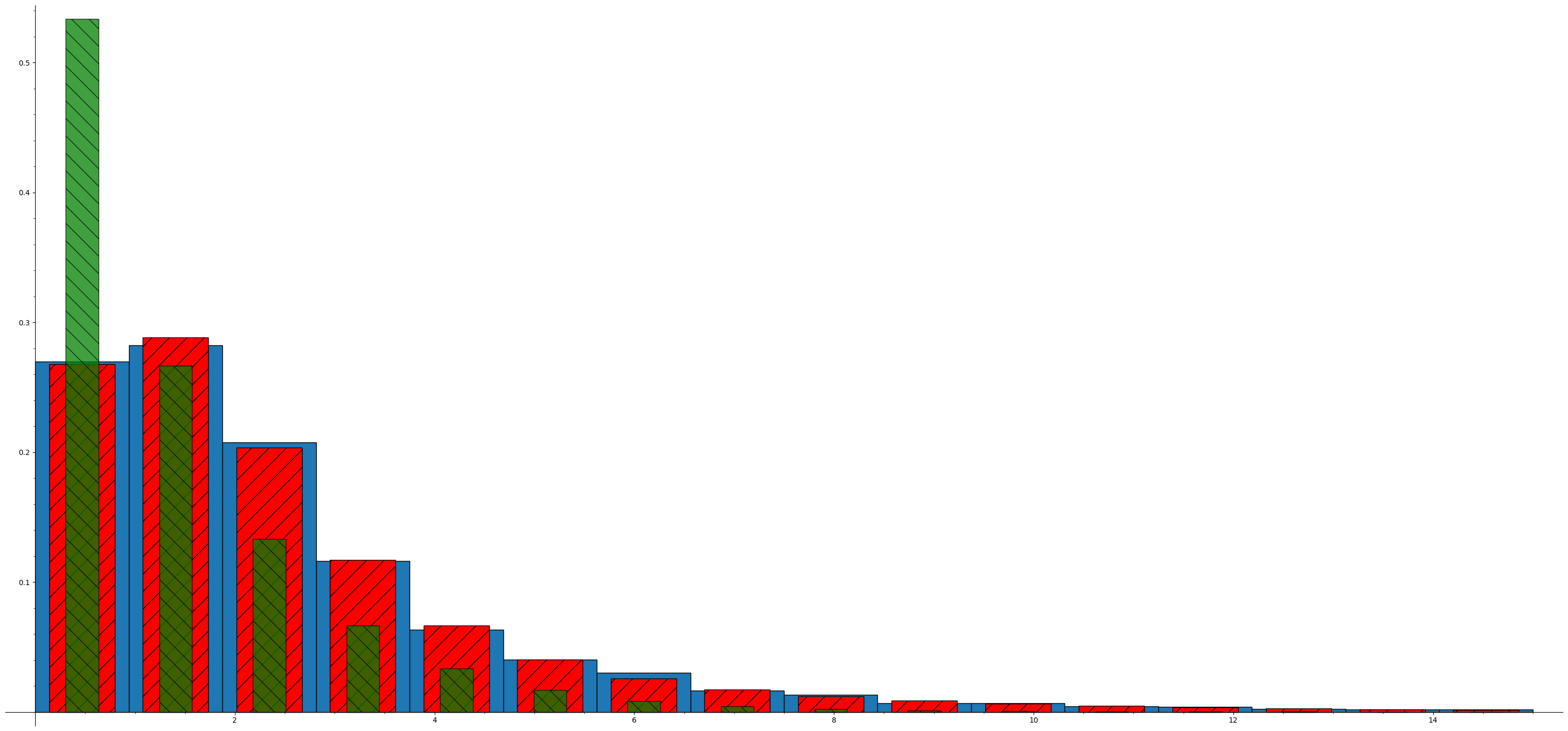}
\]
\caption{A sampling using $10000$ samples of the modified geometric distribution $\prob_{\mcG}$ for $x_i = 1$, $\pi_j = .5$, and $\alpha_k = 1 - k e^{-k/2}$ (blue) under the exact distribution (red), which is under the geometric distribution (green).}
\label{fig:sample_comparison}
\end{figure}

We will give some remarks on the meaning of the $\bal$ parameters.
From the behavior of the operators $\UU^{(\bal,\bbe)}$, it would be tempting to consider the $\bal$ parameters as a viscosity, but for $\bal > 0$, we have $\prob_{\mcG}(w_{ji} = k) > \prob_{Ge}(w_{ji} = k)$.
Thus, in this case, the $\bal$ parameters act as a current being applied to the system, the strength (and direction) of which can vary at each position.
On the other hand, when $\bal < 0$, we have $\prob_{\mcG}(w_{ji} = k) < \prob_{Ge}(w_{ji} = k)$, and so indeed $\bal$ then acts as (position-based) viscosity.
See Figure~\ref{fig:discrete_canonical} and compare with Figure~\ref{fig:discrete_samples}(left).
We can also introduce locations where certain particles must stop by having $-\alpha_k = \pi_j$ since this would have $\mcP_{\mcG}(w_{ij} = k') = 0$ for all $k'$ that would move the $j$-th particle past position $k$.

\begin{figure}
\[
\includegraphics[width=.45\textwidth]{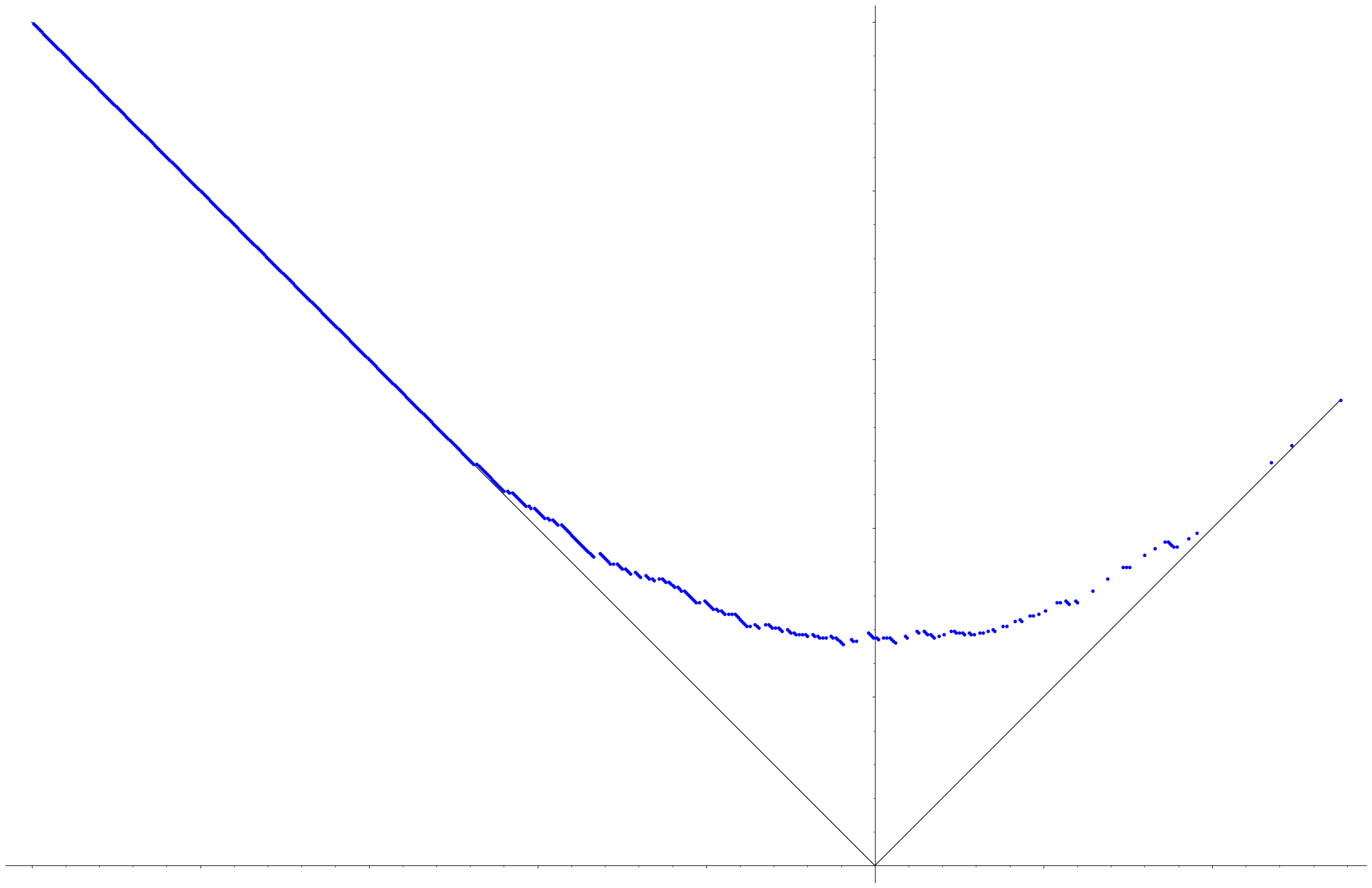}
\qquad
\includegraphics[width=.45\textwidth]{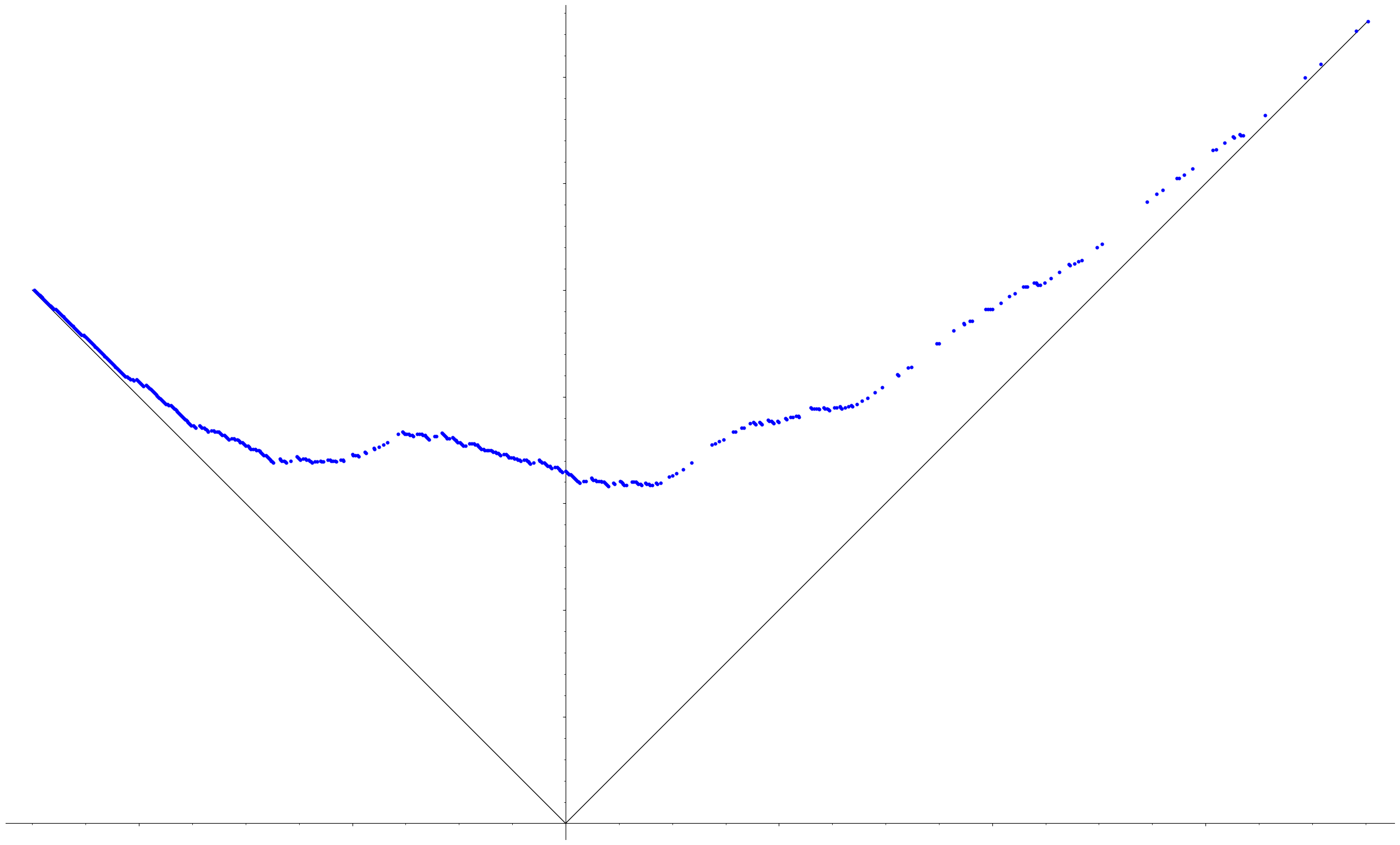}
\]
\caption{Samples of blocking TASEP with $\ell = 500$ particles after $n = 50000$ time steps with (left) $\bpi = 1$, $\xx = 0.01$, and $\bal = -0.5$; (right) $\bpi = 0.5$, $\xx = .2$, and $\alpha_k = 0.5 \sin(k/50)^6$.}
\label{fig:discrete_canonical}
\end{figure}

To see how to obtain this process using the noncommutative operators $\UU^{(\bal,\bbe)}$, we initiate by taking the skew Cauchy formula (Theorem~\ref{thm:skew_cauchy}) with $\nu = \emptyset$ and with the specializations $\yy = \bpi_1$ and $\beta_j = \pi_{j+1}$, yielding
\begin{equation}
\label{eq:skew_cauchy_specialized}
\sum_{\lambda} \G_{\lambda\ds\mu}(\xx_n; \bal, \bbe) \dG_{\lambda}(\bpi_1; \bal, \bbe) = \prod_i (1 - \pi_1 x_i)^{-1} g_{\mu}(\bpi_1; \bal, \bbe).
\end{equation}
In particular, if we let $\widehat{\lambda}_i = \lambda_i - 1$ for all $1 \leq i \leq \ell(\lambda)$, then from the combinatorial description of~\cite[Thm.~4.2]{HJKSS25}, we have
\[
g_{\lambda}(\bpi_1; \bal, \bbe) = \bpi^{1^{\ell(\lambda)}} \prod_{(i,j) \in \widehat{\lambda}} (\alpha_i + \pi_j).
\]
Hence, Equation~\eqref{eq:skew_cauchy_specialized} can be considered a Littlewood-type identity for canonical Grothendieck polynomials.
Dividing this by the factor on the right hand side and taking the term corresponding to $\lambda$, we obtain a probability distribution for $n$ step random growth process (since we must have $\mu \subseteq \lambda$ and currently the interpretation we have described is only on partitions) given by
\begin{equation}
\label{eq:canonical_prob}
\prob_{\mcC,n}(\lambda | \mu) = \prod_{i=1}^n (1 - \pi_1 x_i) \bpi^{1^{\ell(\lambda)} / 1^{\ell(\mu)}} \prod_{(i,j) \in \widehat{\lambda} / \widehat{\mu}} (\alpha_i + \pi_j) \G_{\lambda\ds\mu}(\xx_n; \bal, \bbe).
\end{equation}
Note that Equation~\eqref{eq:skew_cauchy_specialized} is equivalent to $\sum_{\lambda} \prob_{\mcC,n}(\lambda | \mu) = 1$ for any fixed $\mu$ and $n$.

Rephrasing Equation~\eqref{eq:canonical_prob} and adding an $\alpha_0 = 0$ parameter in order to simplify the product in $\dG_{\lambda}(\bpi_1; \bal, \bbe)$, what we have computed are coefficients
\[
C_{\lambda\mu} = \prod_{i=1}^n (1 - \pi_1 x_i) (\vec{\bal} + \bbe)^{\lambda/\mu}, \qquad \text{where } (\vec{\bal} + \bbe)^{\lambda/\mu} := \prod_{(i,j) \in \lambda / \mu} (\alpha_{i-1} + \pi_j)
\]
that is defined to be $0$ if $\lambda \not\supseteq \mu$, such that
\begin{equation}
\label{eq:canonical_prob_bra}
C_{\lambda\mu} \cdot {}^{[\bal,\bbe]} \bra{\mu} e^{H(\xx_n)} \ket{\lambda}^{[\bal,\bbe]} = \prob_{\mcC,n}(\lambda|\mu)
\quad \Longleftrightarrow \quad
{}^{[\bal,\bbe]}\bra{\mu} e^{H(\xx_n)} = \sum_{\lambda \supseteq \mu} \frac{\prob_{\mcC,n}(\lambda|\mu)}{C_{\lambda\mu}} \cdot {}^{[\bal,\bbe]} \bra{\lambda},
\end{equation}
where the equivalence of the two formulas is given by the orthonormality~\eqref{eq:orthonormal_basis}.

We now restrict ourselves to a single timestep at time $i$ in order to encode the growth process as a particle process by using the operators $\UU^{(\bal,\bbe)}$.
This incurs no loss of generality as $\prob_{\mcC,n+n'}(\lambda|\mu) = \sum_{\nu} \prob_{\mcC,n}(\lambda|\nu) \prob_{\mcC,n'}(\nu|\mu)$ by the branching rules (Proposition~\ref{prop:branching_rules}) and we have a Markov process.
Recall the operator $\mcT_C$ from Section~\ref{sec:block_op_proof}, and we define $\mcT_{\mcC}$ as $\mcT_C$ except using the operators $\UU^{(\bal,\bbe)}$.
Since Theorem~\ref{thm:noncommutative_blocking} holds for $\UU^{(\bal,\bbe)}$, we have
\[
{}^{[\bal,\bbe]}\bra{\mu} e^{H(x_i)} = \prod_{j=2}^{\infty} \frac{1}{1 - \pi_j x_i} \cdot {}^{[\bal,\bbe]} \bra{\mcT_{\mcC} \cdot \mu}
\]
by the same argument in Section~\ref{sec:block_op_proof}.
Thus, if we consider the expansion
\begin{equation*}
\bra{\mcT_{\mcC} \cdot \mu} = \sum_{\lambda} B_{\lambda\mu} \cdot {}^{[\bal,\bbe]} \bra{\lambda},
\end{equation*}
and matching coefficients in~\eqref{eq:canonical_prob_bra} (equivalently, pairing with $\ket{\lambda}^{[\bal,\bbe]}$), we obtain
\[
\prob_{\mcC}(\lambda|\mu) = \frac{B_{\lambda\mu}}{(\vec{\bal} + \bbe)^{\lambda/\mu}} \prod_{j=1}^{\infty} (1 - \pi_j x_i)^{-1}.
\]

\begin{ex}
\label{ex:canonical_op_prob}
\ytableausetup{boxsize=.6em}%
We will redo the computation in Example~\ref{ex:blocking_C} except now using the general $\UU^{(\bal,\bbe)}$ operators.
Recall that $\mu = (1, 1)$ and $\pi_j = 0$ for all $j > 3$.
Using~\eqref{eq:ncsym_h123} and recalling we consider $\alpha_0 = 0$, we compute
\begin{align*}
h_1(\UU_3) \cdot \mu & = \left(-\alpha_1 \ydiagram{1,1} + \ydiagram{2,1}\right) + \pi_1 \ydiagram{1,1} + \ydiagram{1,1,1}\,,
\\ h_2(\UU_3) \cdot \mu & = \left(\alpha_1^2 \ydiagram{1,1} - (\alpha_1 + \alpha_2) \ydiagram{2,1} + \ydiagram{3,1} \right) 
 + \pi_1 \left( -\alpha_1 \ydiagram{1,1} + \ydiagram{2,1} \right)  
 + \left( -\alpha_1 \ydiagram{1,1,1} + \ydiagram{2,1,1}\right) 
\\ & \hspace{20pt} + \pi_1^2 \ydiagram{1,1}  
 + \pi_1 \ydiagram{1,1,1}  
 + \pi_2 \ydiagram{1,1,1}\,,  
\\ h_3(\UU_3) \cdot \mu & = \left(-\alpha_1^3 \ydiagram{1,1} + (\alpha_1^2 + \alpha_1 \alpha_2 + \alpha_2^2) \ydiagram{2,1} - (\alpha_1 + \alpha_2 + \alpha_3) \ydiagram{3,1} + \ydiagram{4,1} \right) 
\\ & \hspace{20pt} + \pi_1 \left( \alpha_1^2 \ydiagram{1,1} - (\alpha_1 + \alpha_2) \ydiagram{2,1} + \ydiagram{3,1} \right)  
 + \left( \alpha_1^2 \ydiagram{1,1,1} - (\alpha_1 + \alpha_2) \ydiagram{2,1,1} + \ydiagram{3,1,1} \right) 
\\ & \hspace{20pt} + \pi_1^2 \left( -\alpha_1 \ydiagram{1,1} + \ydiagram{2,1} \right)  
 + \pi_1 \left( -\alpha_1 \ydiagram{1,1,1} + \ydiagram{2,1,1} \right)  
 + \pi_2 \left( -\alpha_1 \ydiagram{1,1,1} + \ydiagram{2,1,1} \right)  
\\ & \hspace{20pt} + \pi_1^3 \ydiagram{1,1}  
 + \pi_1^2 \ydiagram{1,1,1}  
 + \pi_1 \pi_2 \ydiagram{1,1,1}  
 + \pi_2^2 \ydiagram{1,1,1}\,.  
 \end{align*}
 Recall that $A_k = -\bal_k$.
Therefore, we have
\begin{align*}
{}^{[\bal,\bbe]} \bra{\mcT_{\mcC} \cdot \mu} & = (1 + h_1(\bbe_1 \sqcup A_1) x_i + h_2(\bbe_1 \sqcup A_1) x_i^2 + h_3(\bbe_1 \sqcup A_1) x_i^3 + \cdots) \cdot {}^{[\bal,\bbe]} \bra{1,1}
\\ & \hspace{20pt} + x_i (1 + h_1(\bbe_1 \sqcup A_2) x_i + h_2(\bbe_1 \sqcup A_2) x_i^2 + \cdots) \cdot {}^{[\bal,\bbe]} \bra{2,1}
\\ & \hspace{20pt} + x_i (1 + h_1(\bbe_2 \sqcup A_1) x_i + h_2(\bbe_2 \sqcup A_1) x_i^2 + \cdots) \cdot {}^{[\bal,\bbe]} \bra{1,1,1}
\\ & \hspace{20pt} + x_i^2 (1 + h_1(\bbe_1 \sqcup A_3) x_i  + \cdots) \cdot {}^{[\bal,\bbe]} \bra{3,1}
\\ & \hspace{20pt} + x_i^2 (1 + h_1(\bbe_2 \sqcup A_2) x_i  \cdots) \cdot {}^{[\bal,\bbe]} \bra{2,1,1} + \cdots
\allowdisplaybreaks
\\ & = \frac{(1 + \alpha_1 x_i)^{-1}}{1 - \pi_2 x_i} \cdot {}^{[\bal,\bbe]} \bra{1,1} + \frac{(\alpha_1 x_i + \pi_1 x_i) (1 + \alpha_1 x_i)^{-1} (1 + \alpha_2 x_i)^{-1}}{(1 - \pi_2 x_i) (\vec{\bal} + \bpi)^{(2,1) / \mu}} \cdot {}^{[\bal,\bbe]} \bra{2,1}
\\ & \hspace{20pt} + \frac{(\alpha_0 x_i + \pi_3 x_i) (1 + \alpha_1 x_i)^{-1}}{(1 - \pi_2 x_i)(1 - \pi_3 x_i) (\vec{\bal} + \bpi)^{(1,1,1) / \mu}} \cdot {}^{[\bal,\bbe]} \bra{1,1,1}
\\ & \hspace{20pt} + \frac{(\alpha_1 x_i + \pi_1 x_i) (\alpha_2 x_i + \pi_1 x_i) (1 + \alpha_1 x_i)^{-1}(1 + \alpha_2 x_i)^{-1}(1 + \alpha_3 x_i)^{-1}}{(1 - \pi_2 x_i) (\vec{\bal} + \bpi)^{(3,1) / \mu}} \cdot {}^{[\bal,\bbe]} \bra{3,1}
\\ & \hspace{20pt} + \frac{(\alpha_1 x_i + \pi_1 x_i) (\alpha_0 x_i + \pi_3 x_i) (1 + \alpha_1 x_i)^{-1} (1 + \alpha_2 x_i)^{-1}}{(1 - \pi_2 x_i)(1 - \pi_3 x_i) (\vec{\bal} + \bpi)^{(2,1,1) / \mu}} {}^{[\bal,\bbe]} \bra{2,1,1} + \cdots.
\end{align*}
If we include $\alpha_0$ in the $\UU^{(\bal,\bbe)}$ operators, then all terms will be multiplied by $(1 + \alpha_0 x_i)^{-1}$ since the third particle can move from position $0$.
With this factor, some of the transition probabilities are
\begin{align*}
\prob_{\mcC}(1,1|\mu) & = \frac{(1 - \pi_1 x_i) (1 - \pi_3 x_i)}{(1 + \alpha_0 x_i) (1 + \alpha_1 x_i)},
\allowdisplaybreaks \\
\prob_{\mcC}(2,1|\mu) & = \frac{(\alpha_1 x_i + \pi_1 x_i) (1 - \pi_1 x_i) (1 - \pi_3 x_i)}{(1 + \alpha_0 x_i) (1 + \alpha_1 x_i) (1 + \alpha_2 x_i)},
\allowdisplaybreaks \\
\prob_{\mcC}(1,1,1|\mu) & = \frac{(\alpha_0 x_i + \pi_3 x_i) (1 - \pi_1 x_i)}{(1 + \alpha_0 x_i) (1 + \alpha_1 x_i)},
\allowdisplaybreaks \\
\prob_{\mcC}(3,1|\mu) & = \frac{(\alpha_1 x_i + \pi_1 x_i) (\alpha_2 x_i + \pi_1 x_i) (1 - \pi_1 x_i) (1 - \pi_3 x_i)}{(1 + \alpha_0 x_i) (1 + \alpha_1 x_i) (1 + \alpha_2 x_i) (1 + \alpha_3 x_i)},
\allowdisplaybreaks \\
\prob_{\mcC}(2,1,1|\mu) & = \frac{(\alpha_1 x_i + \pi_1 x_i) (\alpha_0 x_i + \pi_3 x_i) (1 - \pi_1 x_i)}{(1 + \alpha_0 x_i) (1 + \alpha_1 x_i) (1 + \alpha_2 x_i)}.
\end{align*}
\end{ex}

Like at $\bal = 0$, which is the Case~C blocking behavior with geometric jumps, any individual (free) particle motion is (up to changing $\pi_j \mapsto \pi_1$) equivalent to the first particle's motion.
Thus, let us consider $\lambda$ with $\ell(\lambda) = 1$, and a straightforward computation (say, at time $i$) using either the operators $\UU^{(\bal,\bbe)}$ or the combinatorial description of $\G_{\lambda\ds\mu}(x_i; \bal, \bbe)$ yields
\[
\prob_{\mcC}\bigl( m'| m \bigr) = \frac{1 - \pi_j x_i}{1 + \alpha_{m'+m} x_i} \prod_{k=m}^{m+m'-1} \frac{(\alpha_k + \pi_j) x_i}{1 + \alpha_k x_i},
\]
which is precisely the measure specified in~\eqref{eq:canonical_prob}.
By~\eqref{eq:canonical_prob}, for any fixed $m$ this is a probability measure for all $\alpha_k + \pi_j \geq 0$ with the natural assumptions $0 \leq \pi_j x_i < 1$ and $\alpha_k x_i \geq -1$.
This can also be extended to include generic parameters $(\alpha_k)_{k \in \ZZ}$ by shifting the parameters $\alpha_k \mapsto \alpha_{k\pm1}$.
Therefore, we can perform the same analysis as in Section~\ref{sec:block_op_proof} to show the following.

\begin{thm}
\label{thm:canonical_process}
Suppose $\ell(\lambda) \leq \ell$, $\pi_j x_i \in (0, 1)$, $\alpha_k x_i > -1$, and $\alpha_k + \pi_j \geq 0$ for all $i, j, k$.
Set $\beta_j = \pi_{j+1}$.
Let $\prob_{\mcC,n}(\lambda|\mu)$ denote the $n$-step transition probability for the Case~C particle system except using the distribution~\eqref{eq:refined_geometric} for the jump probability of the particles, as given by~\eqref{eq:geometric_blocking_rule}.
Then the $n$-step transition probability is given by
\[
\prob_{\mcC,n}(\lambda|\mu) = \prod_{i=1}^n (1 - \pi_1 x_i) (\vec{\bal} + \bpi)^{\lambda/\mu} G_{\lambda\ds\mu}(\xx_n; \bal, \bbe).
\]
\end{thm}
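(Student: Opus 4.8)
The plan is to mimic, almost verbatim, the argument of Section~\ref{sec:block_op_proof} (the $\bal=0$ case), the only genuinely new ingredient being the position-dependence carried by the $\alpha_k$ parameters in~\eqref{eq:refined_geometric}. First I would reduce to a single time step. The process defined by~\eqref{eq:geometric_blocking_rule}--\eqref{eq:refined_geometric} is Markov by construction, and Equation~\eqref{eq:canonical_prob} does give a bona fide transition kernel ($\sum_\lambda\prob_{\mcC,n}(\lambda|\mu)=1$ by the Littlewood-type identity~\eqref{eq:skew_cauchy_specialized}); the claimed product formula is multiplicative under composition of timesteps, since $(\vec{\bal}+\bpi)^{\lambda/\mu} = (\vec{\bal}+\bpi)^{\lambda/\nu}(\vec{\bal}+\bpi)^{\nu/\mu}$ for $\mu\subseteq\nu\subseteq\lambda$ while the branching rule of Proposition~\ref{prop:branching_rules} supplies $\G_{\lambda\ds\mu}(\xx,\yy;\bal,\bbe) = \sum_{\mu\subseteq\nu\subseteq\lambda}\G_{\lambda\ds\nu}(\yy;\bal,\bbe)\G_{\nu\ds\mu}(\xx;\bal,\bbe)$. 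Hence it suffices to prove, for one timestep at time $i$, that the transition kernel of the particle process equals $(1-\pi_1 x_i)(\vec{\bal}+\bpi)^{\lambda/\mu}\G_{\lambda\ds\mu}(x_i;\bal,\bbe)$.

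Next I would realize the single-timestep dynamics through the operators $\UU^{(\bal,\bbe)}$. Updating the particles largest-to-smallest, with the $j$-th particle attempting an (inhomogeneous) geometric jump and being blocked by its predecessor, is encoded by the evolution operator $\mcT_{\mcC} = \sum_{k\ge0} h_k(x_i\UU^{(\bal,\bbe)})$, where the reversed multiplication order inside $h_k(\UU^{(\bal,\bbe)})$ records the update order, exactly as for $\mcT_C$ in Section~\ref{sec:block_op_proof}. The combinatorial heart of the matter is then to read off, from $U_j^{(\bal,\bbe)} = \schur_j + \Theta_j$, the contribution of each particle: in a nonzero term of $h_k(x_i\UU^{(\bal,\bbe)})\cdot\mu$ a particle advances via $\schur_j$ (each successful step weighted $x_i$) or, while still below its predecessor, suffers a ``failed'' step via $\Theta_j$ at position $p$ weighted $-\alpha_p x_i$, until it either stops freely or catches up to the preceding particle, where every further application contributes $\beta_{j-1}x_i = \pi_j x_i$. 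Summing the resulting (position-weighted) geometric series over $k$ --- just as in Section~\ref{sec:combinatorial_C} --- should produce, for each particle, exactly the weight
\[
\frac{1}{1+\alpha_{\lambda_j}x_i}\prod_{k=\mu_j}^{\lambda_j-1}\frac{(\alpha_k+\pi_j)x_i}{1+\alpha_k x_i}
\]
(with the evident blocked analogue), which is precisely $\prob_{\mcG}$ of~\eqref{eq:refined_geometric}. Multiplying by $(\vec{\bal}+\bpi)^{\lambda/\mu}$ then cancels all the numerators $(\alpha_k+\pi_j)x_i$ and leaves exactly the operator coefficient $B_{\lambda\mu}$ of ${}^{[\bal,\bbe]}\bra{\lambda}$ in $\mcT_{\mcC}\cdot\mu$, so that $\prob_{\mcC}(\lambda|\mu) = B_{\lambda\mu}(\vec{\bal}+\bpi)^{-\lambda/\mu}\prod_j(1-\pi_jx_i)^{-1}$, matching the discussion preceding Example~\ref{ex:canonical_op_prob}. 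The base case $\ell(\lambda)=1$ against~\eqref{eq:refined_geometric} is a direct expansion and anchors the induction.

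Finally I would convert the operator identity into the symmetric-function identity. Since Theorem~\ref{thm:noncommutative_blocking} holds verbatim for $\UU^{(\bal,\bbe)}$, combining it with~\eqref{eq:exp_current_sum} and~\eqref{eq:free_fermion_grothendiecks} gives ${}^{[\bal,\bbe]}\bra{\mu}e^{H(x_i)} = \prod_{j\ge2}(1-\pi_jx_i)^{-1}\cdot{}^{[\bal,\bbe]}\bra{\mcT_{\mcC}\cdot\mu}$, hence $\G_{\lambda\ds\mu}(x_i;\bal,\bbe) = {}^{[\bal,\bbe]}\bra{\mu}e^{H(x_i)}\ket{\lambda}^{[\bal,\bbe]} = \prod_{j\ge2}(1-\pi_jx_i)^{-1}B_{\lambda\mu}$; comparing with the previous paragraph and iterating over $i = 1,\dots,n$ via the branching rule yields the theorem. \textbf{The main obstacle} is the second step: one must rigorously justify that conditioning on the intermediate particle configurations and resumming the now position-weighted geometric series reproduces exactly the recursion~\eqref{eq:geometric_blocking_rule} with law~\eqref{eq:refined_geometric} --- equivalently, that the $\Theta$-terms of $U_j^{(\bal,\bbe)}$ do the inhomogeneous blocking bookkeeping correctly --- which needs the same conditional-probability argument as ordinary Case~C in Section~\ref{sec:combinatorial_C}, with the extra subtlety that a step's weight depends on the site from which it is taken. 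Everything else is a transcription of the $\bal=0$ arguments.
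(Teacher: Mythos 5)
Your proposal follows essentially the same route as the paper: reduction to one timestep via the branching rule and Markov property, realization of the single-step dynamics by $\mcT_{\mcC}=\sum_k h_k(x_i\UU^{(\bal,\bbe)})$, matching against $\G_{\lambda\ds\mu}$ via Theorem~\ref{thm:noncommutative_blocking}, using the Littlewood specialization of the skew Cauchy formula for normalization, and anchoring the bookkeeping on the $\ell(\lambda)=1$ computation against~\eqref{eq:refined_geometric}. The only cosmetic discrepancy is that the per-particle weight you display omits the $(1-\pi_j x_i)$ normalizing factor of $\prob_{\mcG}$; this is accounted for separately in the paper (through the plethysm $E_i\mapsto e_i(\UU/\bbe)$ and the explicit $\prod(1-\pi_1 x_i)$ prefactor), so the argument is sound.
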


\begin{remark}
\label{rem:boson_pos_param}
Since the $\bal$ parameters used, and hence the probabilities, now depend on the positions of the particles, we can only work with the bosonic model.
Indeed, switching to the fermionic model will require us to introduce additional parameters $\alpha_k$ for $k < 0$, in which case Theorem~\ref{thm:canonical_process} no longer holds, or to account for the shifting of positions by replacing $\alpha_k \mapsto \alpha_{k+j}$ for the $j$-th particle distribution $\prob_{\mcG}$.
\end{remark}

We could also prove Theorem~\ref{thm:canonical_process} by using the combinatorics of hook-valued tableaux~\cite{HJKSS24,HJKSS25,Yel17} as in Section~\ref{sec:combinatorial_C}.
The key observation is that we have a factor $x_i (1 - \alpha_k x_i)^{-1}$ for every box in the $k$-th column that would normally contain an $i$ in the set-valued tableaux (over all $k$).
In more detail, we take the minimal entries of each hook (the corner entry) in the tableau to describe the basic motion of the particles.
The leg (the column part except for the corner) corresponds to the choice between $1$ and $-\pi_i x_j$ in the numerator of the normalization constant as before.
The arm (the row part except for the corner) comes from waiting at that particular position and contributes an $-\alpha x_i$, which contributes a factor of $(1 + \alpha x_i)^{-1}$ as in the Case~B combinatorial proof.
The associated combinatorics when $\bbe = \beta = -\alpha = -\bal$, where no particles will move, was studied in~\cite[Sec.~13.4]{Yel17}.

From~\cite[Thm.~4.1]{IMS22}, we obtain determinant formulas for $\prob_{\mcC,n}(\lambda|\mu)$, where we can write the entries of the matrix as contour integrals~\cite[Thm.~4.19]{IMS22}.
We can also redo he computation in Theorem~\ref{thm:mp_case_C} at this level of generality to obtain a multi-point distribution for this process.

\begin{thm}
\label{thm:mp_canonical}
The multi-point distribution for Case~C inhomogeneous process with $\ell$ particles is given by
\[
\prob_{\geq,n}(\nu|\mu) = 
\prod_{j=2}^{\ell} \prod_{i=1}^n (1-\pi_j x_i)^{-1} \det \big[ h_{\nu_i - \mu_j - i + j}\bigl(\xx \ds (A_{(\mu_j, \nu_i]} \sqcup \bpi_i / \bbe_j \bigr) \bigr]_{i,j=1}^{\ell}.
\]
\end{thm}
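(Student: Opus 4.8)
The plan is to run the argument for Theorem~\ref{thm:mp_case_C} with Case~C of Theorem~\ref{thm:transition_prob} replaced by Theorem~\ref{thm:canonical_process}, which gives $\prob_{\mcC,n}(\lambda|\mu) = \prod_{i=1}^n(1-\pi_1 x_i)(\vec{\bal}+\bpi)^{\lambda/\mu}\,\G_{\lambda\ds\mu}(\xx_n;\bal,\bbe)$. Since $G(i,n)\geq G(i+1,n)$ always, $\prob_{\geq,n}(\nu|\mu) = \sum_{\lambda\supseteq\nu}\prob_{\mcC,n}(\lambda|\mu)$ (the summand already vanishes unless $\lambda\supseteq\mu$), so
\[
\prob_{\geq,n}(\nu|\mu) = \prod_{i=1}^n(1-\pi_1 x_i)\sum_{\lambda\supseteq\nu,\mu}(\vec{\bal}+\bpi)^{\lambda/\mu}\,\G_{\lambda\ds\mu}(\xx_n;\bal,\bbe).
\]
First I would establish the skew one-variable evaluation $\dG_{\lambda/\nu}(\bpi_1;\bal,\bbe) = (\vec{\bal}+\bpi)^{\lambda/\nu}$ (with the formal convention as in Theorem~\ref{thm:canonical_process}), the skew analogue of the evaluation $\dG_\lambda(\bpi_1;\bal,\bbe) = g_\lambda(\bpi_1;\bal,\bbe)$ from \cite[Thm.~7.2]{HJKSS21} used in~\eqref{eq:skew_cauchy_specialized} (the $\bal=0$ instance is what underlies~\eqref{eq:specialized_skew_cauchy}). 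Feeding this into the skew Cauchy formula (Theorem~\ref{thm:skew_cauchy}) with $\yy = \bpi_1$ and $\beta_j = \pi_{j+1}$, then multiplying by the formal factor $(\vec{\bal}+\bpi)^{\nu/\mu}$ and using $\prod_{i=1}^n(1-\pi_1 x_i)^{-1}$ to cancel the prefactor, I obtain the closed form
\[
\prob_{\geq,n}(\nu|\mu) = \sum_{\eta\subseteq\nu\cap\mu}(\vec{\bal}+\bpi)^{\nu/\eta}\,\G_{\nu\ds\eta}(\xx_n;\bal,\bbe),
\]
the canonical analogue of~\eqref{eq:mp_G_smaller}.

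Next I would pass to free fermions via~\eqref{eq:free_fermion_grothendiecks}, rewriting the sum as a vacuum expectation value of the type appearing in~\eqref{eq:skew_cauchy_free_fermion}, namely ${}^{[\bal,\bbe]}\bra{\mu}\,e^{H^*(\pi_1)}\,e^{H(\xx_n)}\,\ket{\nu}^{[\bal,\bbe]}$, where the dressed covector ${}^{[\bal,\bbe]}\bra{\mu}e^{H^*(\pi_1)}$ now also absorbs the $\bal$-data (it plays the role that ${}^{(\bpi)}\bra{\nu}$ did for $\bal=0$). Conjugating $e^{H^*(\pi_1)}$ past $e^{H(\xx_n)}$ exactly as in~\eqref{eq:skew_cauchy_free_fermion} splits off the normalization factor $\prod_{j=2}^\ell\prod_{i=1}^n(1-\pi_j x_i)^{-1}$ appearing in the claim and leaves an expectation of the form ${}^{[\bal,\bbe]}\bra{\mu}\,e^{H^*(\pi_1/\bbe_\ell)}\,e^{H(\xx_n)}\,e^{H^*(\bbe_\ell)}\,\ket{\nu}^{[\bal,\bbe]}$.

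Then, following the proof of Theorem~\ref{thm:mp_dual_g} (cf.\ the proof of \cite[Thm.~4.1]{IMS22}), I would apply Wick's theorem to write this as $\det[\bra{-\ell}P_j Q_i\ket{-\ell}]_{i,j=1}^\ell$, with $P_j$ (resp.\ $Q_i$) the conjugate of $\psi^*_{\mu_j-j}$ (resp.\ $\psi_{\nu_i-i}$) by the relevant half-vertex operators. Expanding $P_j$ and $Q_i$ by~\eqref{eq:eH_commute}--\eqref{eq:seH_commute}, each $(i,j)$ entry collapses, via the plethystic addition identities, to a single homogeneous supersymmetric function. The point, new compared to Theorem~\ref{thm:mp_case_C}, is that the factors $e^{\pm H^*(A_\bullet)}$ built into $\ket{\nu}^{[\bal,\bbe]}$ and ${}^{[\bal,\bbe]}\bra{\mu}$ contribute $\alpha$-blocks indexed up to $\nu_i$ on the $i$-th strand and up to $\mu_j$ on the $j$-th strand, and the internal $e^{-H^*(A_{\nu_i})}$ cancels against the next strand's $e^{H^*(A_{\nu_{i+1}})}$; what survives this telescoping is exactly the window $A_{(\mu_j,\nu_i]}$, so the entry becomes $h_{\nu_i-\mu_j-i+j}\bigl(\xx \ds (A_{(\mu_j,\nu_i]}\sqcup\bpi_i/\bbe_j)\bigr)$, yielding the stated determinant.

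The main obstacle is exactly this $\alpha$-bookkeeping in the Wick step. Unlike in Theorem~\ref{thm:mp_case_C}, the $\bal$-dependence of $\ket{\nu}^{[\bal,\bbe]}$ is indexed by the individual parts $\nu_i$ rather than being uniform across strands, so one must verify carefully that the strand-by-strand cancellations leave precisely $A_{(\mu_j,\nu_i]}$ in entry $(i,j)$ and nothing more; relatedly, one must pin down the canonical analogue of the covector ${}^{(\bpi)}\bra{\nu}$ (the $\bal$-version of~\eqref{eq:schur_dg_expansion}) and the skew evaluation $\dG_{\lambda/\nu}(\bpi_1;\bal,\bbe) = (\vec{\bal}+\bpi)^{\lambda/\nu}$ that make the skew Cauchy reduction legitimate. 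Once these are in place, the determinant evaluation and the normalization check are routine as in Theorem~\ref{thm:mp_case_C}, and the contour-integral form of the matrix entries follows from \cite[Thm.~4.19]{IMS22}.
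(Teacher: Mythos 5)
Your proposal matches the paper's own approach: the paper offers no detailed proof of this theorem, stating only that one should ``redo the computation in Theorem~\ref{thm:mp_case_C} at this level of generality,'' and you carry out exactly that program — replace the Case~C transition kernel by the one from Theorem~\ref{thm:canonical_process}, rerun the skew-Cauchy reduction to the closed form $\sum_{\eta}(\vec{\bal}+\bpi)^{\nu/\eta}\G_{\nu\ds\eta}(\xx_n;\bal,\bbe)$, pass to free fermions as in~\eqref{eq:skew_cauchy_free_fermion}, and apply Wick's theorem. You also correctly isolate the two points that actually require work in the canonical case, namely establishing the skew one-variable evaluation $\dG_{\lambda/\nu}(\bpi_1;\bal,\bbe)=(\vec{\bal}+\bpi)^{\lambda/\nu}$ used to feed the skew Cauchy identity, and tracking the $\bal$-telescoping across strands in the Wick determinant so that exactly the window $A_{(\mu_j,\nu_i]}$ survives in entry $(i,j)$.
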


We can give another, more simple, proof for the case when $\bal = \alpha$.
This will follow from a straightforward generalization of the unrefined case~\cite[Prop.~3.4]{Yel17}, noting our sign convention means we need to substitute $-\alpha$.

\begin{prop}
We have
\begin{equation}
\label{eq:canonical_from_regular}
G_{\lambda}(\xx; \alpha, \bbe) = G_{\lambda}(\xx/(1 + \alpha \xx); 0, \alpha + \bbe),
\end{equation}
where we substitute $x_i \mapsto x_i / (1 + \alpha x_i)$ and $\beta_i \mapsto \alpha + \beta_i$.
\end{prop}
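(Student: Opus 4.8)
The plan is to establish \eqref{eq:canonical_from_regular} by directly manipulating the hook-valued-tableau generating function defining $\G_{\lambda}(\xx;\bal,\bbe)$, generalizing the unrefined identity \cite[Prop.~3.4]{Yel17}. Here $\bal=(\alpha,\alpha,\dots)$, so a hook $\bbb$ occupying a cell in column $j$ contributes the weight $(-\alpha)^{a(\bbb)}(-\beta_j)^{b(\bbb)}\wt(\bbb)$, with $a(\bbb)$ and $b(\bbb)$ its arm and leg. The first step is to group the hook-valued tableaux $T$ of shape $\lambda$ by their \emph{support tableau} $S$, where $S_{ij}$ is the set of distinct entries of the hook $\bbb_{ij}$. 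Every local constraint in the definition of a hook-valued tableau --- $\max(\ta)\le\min(\tb)$ for horizontally adjacent cells $\ta,\tb$ and $\max(\ta)<\min(\tc)$ for vertically adjacent cells $\ta,\tc$ --- refers only to the largest and smallest entries of each hook, which are $\max S_{ij}$ and $\min S_{ij}$. Consequently the data of $T$ is the same as the data of a set-valued tableau $S$ of shape $\lambda$ together with, independently for each cell $(i,j)$, an arbitrary hook-shaped tableau with support exactly $S_{ij}$, and
\[
\G_{\lambda}(\xx;\bal,\bbe)=\sum_{S}\ \prod_{(i,j)\in\lambda}\Bigl(\ \sum_{\substack{\bbb\text{ a hook}\\ \supp\bbb=S_{ij}}}(-\alpha)^{a(\bbb)}(-\beta_j)^{b(\bbb)}\wt(\bbb)\ \Bigr),
\]
the outer sum being over set-valued tableaux of shape $\lambda$.

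The crux is the per-cell evaluation: for a finite set $S=\{v_1<\dots<v_m\}$ and a column index $j$,
\[
\sum_{\substack{\bbb\text{ a hook}\\ \supp\bbb=S}}(-\alpha)^{a(\bbb)}(-\beta_j)^{b(\bbb)}\wt(\bbb)=\bigl(-(\alpha+\beta_j)\bigr)^{m-1}\prod_{r=1}^{m}\frac{x_{v_r}}{1+\alpha x_{v_r}}.
\]
If $\supp\bbb=S$ then the corner of $\bbb$ must be $v_1$. The hook is then determined by its leg (a strictly increasing word in entries $>v_1$, i.e.\ a subset $L\subseteq\{v_2,\dots,v_m\}$, of weight $(-\beta_j)^{|L|}\prod_{v\in L}x_v$) together with its arm (a weakly increasing word over the alphabet $S$ which, together with the corner $v_1$ and the leg $L$, must cover all of $S$, i.e.\ must use every value of $R:=\{v_2,\dots,v_m\}\setminus L$ at least once). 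Since the generating function of weakly increasing words over $S$ weighted by $(-\alpha)^{\mathrm{length}}\wt$ is $\prod_{v\in S}(1+\alpha x_v)^{-1}$, requiring that each value of $R$ appear replaces the factor $(1+\alpha x_v)^{-1}$ by $(-\alpha x_v)(1+\alpha x_v)^{-1}$ for $v\in R$; summing over $L$ and using $S\setminus R=\{v_1\}\cup L$, the corner factor $x_{v_1}$ and the factors $(1+\alpha x_v)^{-1}$ regroup so that each $v_r$ with $r\ge2$ contributes either $-\beta_j x_{v_r}(1+\alpha x_{v_r})^{-1}$ or $-\alpha x_{v_r}(1+\alpha x_{v_r})^{-1}$ according to $v_r\in L$ or $v_r\in R$, and the sum over these two possibilities collapses to $-(\alpha+\beta_j)x_{v_r}(1+\alpha x_{v_r})^{-1}$, which proves the identity.

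Substituting this lemma into the displayed expansion and writing $y_v:=x_v/(1+\alpha x_v)$ gives
\[
\G_{\lambda}(\xx;\bal,\bbe)=\sum_{S}\prod_{(i,j)\in\lambda}\bigl(-(\alpha+\beta_j)\bigr)^{|S_{ij}|-1}\prod_{v\in S_{ij}}y_v,
\]
which is precisely the set-valued tableau formula for $\G_{\lambda}(\yy;0,\alpha+\bbe)$, i.e.\ the right-hand side of \eqref{eq:canonical_from_regular}; everything is an identity of formal power series in $\xx$ with $\alpha$ and $\bbe$ treated as parameters, so the substitution $x_i\mapsto x_i/(1+\alpha x_i)$ is well defined. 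The routine portion is the reduction to one cell at a time; the one step that needs care is the per-cell lemma, where the leg and the arm interact through the requirement that their entries together with the corner exhaust $S$ --- but the inclusion--exclusion over $R$ makes this telescope into the stated product, exactly mirroring the $\alpha\mapsto-\alpha$ version of the computation in \cite[Prop.~3.4]{Yel17}.
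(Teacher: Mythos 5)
Your proof is correct and is precisely the argument the paper alludes to: the paper offers no proof beyond citing Yeliussizov's Prop.~3.4 and declaring the refined case a "straightforward generalization," and your computation is exactly that generalization carried out in detail (support-tableau decomposition, per-cell generating function over hooks, collapse of the $L$-sum to $-(\alpha+\beta_j)$ per entry). The one step worth emphasizing — that the local hook-valued constraints refer only to $\max S_{ij}$ and $\min S_{ij}$, so the choice of hook per cell factors independently over a set-valued tableau $S$ — is handled correctly, and the signs match the paper's convention $(-\alpha_i)^{a}(-\beta_j)^{b}$ throughout.
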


Indeed, under this substitution, we have
\begin{equation}
\label{eq:canonical_substitution}
\pi_j x_i \longmapsto \frac{(\alpha + \pi_j) x_i}{1 + \alpha x_i}.
\end{equation}
Hence, the geometric distribution $\prob_{Ge}$ transforms to the distribution $\prob_{\mcG}$ in~\eqref{eq:refined_geometric} with $\bal = \alpha$.
Moreover, in our formula for $\prob_{C,n}$ from Theorem~\ref{thm:transition_prob}, the total $\xx$ degree and total $\bpi$ degree in each term of $\bpi^{\lambda/\mu} G_{\lambda \ds \mu}(\xx; \bbe)$ are equal, and so we can perform the substitution~\eqref{eq:canonical_substitution}.
Thus, we obtain Theorem~\ref{thm:canonical_process} in the case $\bal = \alpha$.

\begin{remark}
\label{rem:canonical_KPS_comparison}
Let us discuss the relationship between this model and the doubly geometric inhomogeneous corner growth model defined in~\cite{KPS19}.
In their corresponding TASEP model, there is an additional set of position-dependent parameters $\bnu$ that are only involved after the initial movement of the particle (akin to static friction).
Yet, if we set $\bnu = 0$, then the model in~\cite{KPS19} is the fermionic realization of our model (\textit{cf.}~Remark~\ref{rem:boson_pos_param}) at $\bbe = 0$ with their parameters $(\mathbf{a}, \bbe)$ equaling our parameters $(\bal, \xx)$.
Hence, we end up with another TASEP version that is equivalent to Case~B.
It would be interesting to see if the model in~\cite{KPS19} can be recovered from the free fermionic description such as by using a specialization of the skew Cauchy identity.

We also remark that our model with $\bpi = 0$ was studied in~\cite{Assiotis23}, but using very different techniques based on Toeplitz matrices and Markov semigroups.
Therefore, from the specialization of the canonical Grothendieck polynomials, it is essentially Case~B as before, with a more probabilistic link being made by~\cite[Thm.~2.43]{Assiotis23}.
\end{remark}

We can similarly define a Bernoulli process extending Case~B with the Bernoulli probability depending on the positions as
\begin{equation}
\label{eq:pos_bernoulli}
\prob_{\mcB}(w_{ji} = 1 \mid G(j, i-1) = m) := \frac{(\rho_j + \beta_m) x_i}{1 + \rho_j x_i}.
\end{equation}
Analogously to Theorem~\ref{thm:canonical_process} (including its proof), we have the following.

\begin{thm}
\label{thm:conjugate_canonical_process}
Suppose $\lambda_1 \leq \ell$, $\beta_k x_i \in (0, 1)$, $\rho_j x_i > -1$, and $\rho_j + \beta_k \geq 0$ for all $i, j, k$.
Set $\alpha_j = \rho_{j+1}$.
Let $\prob_{\mcB,n}(\lambda|\mu)$ denote the $n$-step transition probability for the Case~B particle system except using the distribution~\eqref{eq:pos_bernoulli} for the jump probability of the particles.
Then the $n$-step transition probability is given by
\[
\prob_{\mcB,n}(\lambda|\mu) = \frac{(\vec{\beta} + \brho)^{\lambda/\mu}}{\prod_{i=1}^n (1 + \rho_1 x_i)} G_{\lambda'\ds\mu'}(\xx_n; \bal, \bbe).
\]
\end{thm}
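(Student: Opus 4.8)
The plan is to run the argument of Theorem~\ref{thm:canonical_process} with the involution $\omega$ inserted throughout: homogeneous noncommutative symmetric functions are replaced by elementary ones, $e^{H}$ by $e^{J}$, the free-fermion formula~\eqref{eq:free_fermion_grothendiecks} by its transposed counterpart~\eqref{eq:dual_free_fermion_grothendiecks}, and one uses the conjugation symmetry $\omega\G_{\lambda\ds\mu}(\xx;\bal,\bbe)=\G_{\lambda'\ds\mu'}(\xx;\bbe,\bal)$ of canonical Grothendieck polynomials together with the branching rules of Proposition~\ref{prop:branching_rules}. First I would note that the process governed by~\eqref{eq:pos_bernoulli} is Markov, since the jump law of the $j$-th particle at time $i$ depends only on its current position $G(j,i-1)$. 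Applying $\omega$ to the second line of Proposition~\ref{prop:branching_rules} gives $\G_{\lambda'\ds\mu'}(\xx,\yy;\bal,\bbe)=\sum_{\mu\subseteq\nu\subseteq\lambda}\G_{\lambda'\ds\nu'}(\yy;\bal,\bbe)\,\G_{\nu'\ds\mu'}(\xx;\bal,\bbe)$, and since the monomial $(\vec{\beta}+\brho)^{\lambda/\mu}$ is multiplicative along chains $\mu\subseteq\nu\subseteq\lambda$, this matches the Chapman--Kolmogorov identity $\prob_{\mcB,n+n'}(\lambda|\mu)=\sum_{\nu}\prob_{\mcB,n'}(\nu|\mu)\,\prob_{\mcB,n}(\lambda|\nu)$. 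Hence it suffices to establish the case $n=1$.

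For a single step at time $i$ I would encode the update by the operators of Section~\ref{sec:noncomm_operators}, using the time-evolution operator $\mcT=\sum_{k\ge0}x_i^{k}e_k(\UU^{(\bal,\bbe)})$ with $\alpha_j=\rho_{j+1}$; elementary symmetric functions appear because the Bernoulli law moves each particle at most one step, and the smallest-to-largest update order fixes the order of the factors, exactly as for Case~B in Section~\ref{sec:block_op_proof} but now keeping the second parameter $\bbe$ active rather than setting $\bbe=0$. Since $\UU^{(\bal,\bbe)}$ satisfies the weak Knuth relations, Theorem~\ref{thm:noncommutative_blocking} applies; combining it with the expansion $e^{J(x_i)}=\sum_{m\ge0}x_i^{m}E_m(a_1,a_2,\dots)$ from~\eqref{eq:exp_current_sum}, the identity $e_m(\UU^{(\bal,\bbe)}/\bbe)=\sum_{k}(-1)^{m-k}h_{m-k}(\bbe)\,e_k(\UU^{(\bal,\bbe)})$, and the resummation $\sum_{l\ge0}(-x_i)^{l}h_l(\bbe)=\prod_{j}(1+\beta_j x_i)^{-1}$ yields
\[
{}^{[\bal,\bbe]}\bra{\mu}\,e^{J(x_i)}=\prod_{j\ge1}(1+\beta_j x_i)^{-1}\cdot{}^{[\bal,\bbe]}\bra{\mcT\cdot\mu}.
\]
Pairing with $\ket{\lambda}^{[\bal,\bbe]}$, and using the orthonormality~\eqref{eq:orthonormal_basis} and the transposed free-fermion formula~\eqref{eq:dual_free_fermion_grothendiecks}, this identifies the coefficient of ${}^{[\bal,\bbe]}\bra{\lambda}$ in ${}^{[\bal,\bbe]}\bra{\mcT\cdot\mu}$ with $\prod_{j}(1+\beta_j x_i)\cdot G_{\lambda'\ds\mu'}(x_i;\bal,\bbe)$.

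The main obstacle, and the only place real care is needed, is matching this coefficient with the true one-step law~\eqref{eq:pos_bernoulli}. Here I would read off the action of a single $U_j^{(\bal,\bbe)}$ on a bosonic configuration: particle $j$ attempts a step and either advances, remains in place with a position-dependent weight coming from the operator (which, after resummation over the particles that are forced to stay, produces the $\beta$-dependent factors of~\eqref{eq:pos_bernoulli}), or is blocked when $\mu_j=\mu_{j-1}$, contributing a scalar; collecting the resulting geometric series converts the residual $\prod_j(1+\beta_j x_i)^{-1}$ into the single normalization $(1+\rho_1 x_i)^{-1}$ while the movement across positions supplies the numerators $(\rho_j+\beta_m)x_i$ and the monomial $(\vec{\beta}+\brho)^{\lambda/\mu}$ --- this is the same bookkeeping as the ``Alternatively'' computation of Section~\ref{sec:block_op_proof}, now carrying the extra $\bbe$-weights. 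As an independent check, in the special case $\bal=\alpha$ one may bypass the operator computation: applying the $\omega$-image of the substitution identity~\eqref{eq:canonical_from_regular} to the Case~B formula of Theorem~\ref{thm:transition_prob} transforms the Bernoulli rate $\rho_j x_i$ into exactly the rate~\eqref{eq:pos_bernoulli} with $\bal=\alpha$, and the coincidence of the total $\xx$-degree and total $\bbe$-degree in each term of $\brho^{\lambda/\mu}\wG_{\lambda'\ds\mu'}(\xx_n;\bal)$ licenses the substitution; alternatively one can redo the hook-valued-tableaux argument of Section~\ref{sec:combinatorial_C} verbatim, with the arm of each hook producing the position-dependent $(1+\beta_m x_i)^{-1}$ factors and the leg producing the choice between $1$ and $-\rho_j x_i$ in the normalization.
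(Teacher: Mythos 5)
Your overall strategy matches the paper's intended proof exactly: the paper's own justification is a single sentence, ``Analogously to Theorem~\ref{thm:canonical_process} (including its proof),'' and your proposal spells out precisely what ``analogously'' must mean --- replace $h_k$ by $e_k$, $e^{H}$ by $e^{J}$, invoke Theorem~\ref{thm:noncommutative_blocking} together with the branching rule and the Markov property to reduce to one step, and do the one--step bookkeeping. The consistency check via the substitution identity is also the same check the paper does in Section~\ref{sec:canonical_process}.

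There is, however, a bookkeeping slip in the operator step that would propagate into a wrong normalization if carried out literally. In the operator $U_i^{(\bal,\bbe)}$ the \emph{first} slot is position--indexed (it enters as $\alpha_{\lambda_i}$) and the \emph{second} slot is particle--indexed (it enters as $\beta_{i-1}$); this also matches how $[\bal,\bbe]$ appears inside the defining product of $\ket{\lambda}_{\sigma}^{[\bal,\bbe]}$. For Theorem~\ref{thm:conjugate_canonical_process}, with the theorem's identifications $\alpha_j=\rho_{j+1}$ (a particle--indexed family) and $\bbe$ being the position--indexed rates of~\eqref{eq:pos_bernoulli}, the time--evolution operator must therefore be built from $\UU^{(\bbe,\bal)}$, not $\UU^{(\bal,\bbe)}$, and Theorem~\ref{thm:noncommutative_blocking} then divides by the \emph{second} slot, producing $\prod_{j}(1+\alpha_j x_i)^{-1}=\prod_{j\ge2}(1+\rho_j x_i)^{-1}$. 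This combines with the overall Bernoulli normalization $\prod_{j}(1+\rho_j x_i)^{-1}$ to leave exactly the $(1+\rho_1 x_i)^{-1}$ claimed in the theorem --- no further ``geometric--series conversion'' of a position--indexed product $\prod_j(1+\beta_j x_i)^{-1}$ is needed or available. As written, your $\prod_j(1+\beta_j x_i)^{-1}$ would be an infinite product over positions and has no reason to collapse to $(1+\rho_1 x_i)^{-1}$. (The same slot convention is what makes the paper's Case~B computation in Section~\ref{sec:block_op_proof}, with $U_i^{(0,\bal)}$ and the factor $\prod_j(1+\alpha_j x_i)^{-1}$, come out right.) Once you put the particle--indexed parameters in the second slot and the position--indexed ones in the first, your argument goes through and reproduces the paper's result.
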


If we set $\bal = 0$ in this position-dependent version of Case~B, then we end up with a Bernoulli random variable version of~\cite{KPS19} at $\bnu = 0$.

Next, we consider the analogous particle processes with pushing behavior, but we will only consider the geometric distribution case (the analog of Case~A) as the Bernoulli case is entirely parallel.
In this case, the transition probabilities not given by the dual canonical Grothendieck polynomials as one would expect; this essentially comes from the fact that $(\alpha_k + \pi_j)^{-1} \neq \alpha_k^{-1} + \pi_j^{-1}$ (in general).
Despite this, the combinatorial description of Case~A in Section~\ref{sec:bijection_A} defines a $\overline{\dG}_{\lambda/\mu}(\xx; \bal, \bpi)$ as the sum over reverse plane partitions so the $n$-step transition probability satisfies
\[
\prob_{\mcA,n}(\lambda | \mu) = \prod_{j=1}^{\ell} \prod_{i=1}^n (1 - \pi_j x_i) (\vec{\bal} + \bpi)^{\lambda/\mu} \overline{\dG}_{\lambda/\mu}(\xx_n; \bal, \bpi).
\]
We note that $\dG_{\lambda/\mu}(\xx; \bal, \bpi)$ can be defined as a sum over reverse plane partitions but with the weights now depending also on the $\bal$ parameters similar to~\cite{Yel17} (contrast this with~\cite{HJKSS24,HJKSS25}).
For example, the weight $(\alpha_k + \pi_j)$ in $\dG_{\lambda/\mu}(\xx; \bal, \bpi)$ would be replaced by $(\alpha_k + \pi_j)^{-1}$ in $\overline{\dG}_{\lambda/\mu}(\xx_n; \bal, \bpi)$.
Therefore, we end up with new functions, but studying these functions is outside the scope of this paper.
We also remark that $\dG_{\lambda/\mu}(\xx_n;\bal, \bpi)$ does not appear in these transition probabilities is likely tied to the failure of $\uu^{(\bal,\bbe)}$ to satisfy the Knuth relations.

The continuous limit version of this has also been studied in~\cite{Assiotis20,Petrov20}, but as mentioned in the introduction, it is an open problem to go from our results to theirs.

\section{Concluding remarks}
\label{sec:conclusion}

Let us consider what would happen if we swapped the update rules.
We consider the geometric distribution with smallest-to-largest updating first.
In this case for the blocking behavior, the analysis is more subtle as the number of steps that the $j$-th particle can do depends not only on the position of the $(j-1)$-th particle, but also how many steps the $(j-1)$-th particle takes.
(Contrast this last part with the Bernoulli case, where we never have to consider this because each particle can move at most one step.)
The pushing behavior also has the same difficulty added to the computations.
As a result, we do not expect any nice formulas.

On the other hand, for the Bernoulli distribution with largest-to-smallest updating, the analysis is the same as for the geometric case except the particles can only move one step.
If we consider the blocking behavior case, this agrees with the classical simultaneous update for discrete TASEP (which can be encoded by the LPP for Case~A; see, \textit{e.g.},~\cite{Johansson01,MS20}).
However, this might cause some slight complications for the blocking behavior as we have to now consider when particles can freely move, in contrast to the geometric case where they are always constrained (except for the largest particle).
Despite this, a natural guess for the combinatorics would be to use suitably modified increasing tableaux and consider their generating functions.

Another construction to consider based on~\cite{Iwao21} is replacing $\ket{\lambda}_{[\bbe]}$ and $\ket{\lambda}^{[\bbe]}$ by the vectors
\begin{align*}
\ket{\lambda}_{\langle \bbe \rangle} 
& := \prod^{\rightarrow}_{1 \leq i \leq \ell} \left( \psi_{\lambda_i-i} e^{J(\beta_i)} \right) \ket{-\ell},
&
\ket{\lambda}^{\langle \bbe \rangle} 
& :=
\prod^{\rightarrow}_{1 \leq i \leq \ell} \left( \psi_{\lambda_i-i} e^{-J^*(\beta_i)} \right) \ket{-\ell},
\end{align*}
respectively.
Then we use these vectors (and their $*$ versions) to encode such dynamics of a particle system.
It would be interesting to see what properties the resulting functions
\[
k_{\lambda/\mu}(\xx_n; \bbe) := {}_{\langle \bbe \rangle} \bra{\mu} e^{H(\xx_n)} \ket{\lambda}_{\langle \bbe \rangle},
\qquad\qquad
K_{\lambda \ds \mu}(\xx_n; \bbe) := {}^{\langle \bbe \rangle} \bra{\mu} e^{H(\xx_n)} \ket{\lambda}^{\langle \bbe \rangle},
\]
have compared with (dual) Grothendieck polynomials.
Note that at $\bbe = 0$, these reduce to a (skew) Schur function, so they cannot encode the Bernoulli distribution with largest-to-smallest updating since the first particle can only move one step from the step initial condition $\mu = \emptyset$.

Next, we compute an alternative form of our vector $\ket{\lambda}^{[\bbe]}$.
From~\cite[Thm.~5.3]{MS20}, we can write a Grothendieck polynomial as a multiSchur function of~\cite{Lascoux03}
\begin{equation}
\label{eq:G_multiSchur}
\G_{\lambda}(\xx_n; \bbe) = (-1)^{\binom{n}{2}} \bbe^{\rho_n} s_{\widetilde{\lambda}}(\xx_n, \xx_n/\bbe_1^{-1}, \dotsc, \xx_n/\bbe_{n-1}^{-1}),
\end{equation}
where $\rho_n = (n-1, n-2, \dotsc, 1, 0)$ and $\widetilde{\lambda} = (\lambda_1, \lambda_2 + 1, \dotsc, \lambda_n + n - 1)$.
The precise definition of a multiSchur function is not needed as we will immediately use~\cite{Iwao21} to write Equation~\eqref{eq:G_multiSchur} in terms of free fermions:
\begin{align*}
\G_{\lambda}(\xx_n; \bbe)
& = (-1)^{\binom{n}{2}} \bbe^{\rho_n} \bra{\emptyset} e^{H(\xx_n)} \psi_{\lambda_1-1} e^{-H(\beta_1^{-1})} \psi_{\lambda_2-1} e^{-H(\beta_2^{-1})} \dotsm \psi_{\lambda_n-1} e^{-H(\beta_{n-1}^{-1})} \ket{-n}
\\ & = (-1)^{\binom{n}{2}} \bbe^{\rho_n} \bra{\emptyset} e^{H(\xx_n)} \ket{\widetilde{\lambda}}_{[\emptyset/\bbe^{-1}]}.
\end{align*}
Therefore, the orthonormality~\eqref{eq:orthonormal_basis} implies
\begin{equation}
\label{eq:alt_G_vector}
\ket{\lambda}^{[\bbe]} = (-1)^{\binom{n}{2}} \bbe^{\rho_n} \ket{\widetilde{\lambda}}_{[\emptyset/\bbe^{-1}]} = (-\bbe)^{\rho_n} \ket{\widetilde{\lambda}}_{(-\bbe^{-1})}.
\end{equation}
By applying Wick's theorem, we recover~\cite[Thm.~5.12]{MS20}, which refines~\cite[Thm.~1.10]{Kirillov16}.
Using the expressions in~\eqref{eq:alt_G_vector}, it could be possible to derive some new additional formulas involving $G_{\lambda \ds \mu}(\xx_n; \bbe)$ and $G_{\lambda / \mu}(\xx_n; \bbe)$.
For example, a multipoint distribution formula with the partitions contained with $\lambda$ (as opposed to containing $\lambda$ in Theorem~\ref{thm:mp_case_C}) by using a modification of Proposition~\ref{prop:schur_branching}.


\subsection*{Declarations}

This manuscript has no associated datasets.

The authors have no competing interests to declare that are relevant to the content of this article.


\appendix
\section{\texorpdfstring{\textsc{SageMath}}{SageMath} code}

We include our \textsc{SageMath}~\cite{sage} code used to generate Figure~\ref{fig:discrete_samples} and Figure~\ref{fig:continuous_samples}.

\begin{lstlisting}
def sample_continuous(t, ell, p=1, push=False):
    r"""
    Sample from TASEP with ``ell`` particles moving with rate ``p`` after
    ``t`` time with either blocking or pushing behavior.
    """
    def exp_gen(la):
        return -1/la * ln(1-random())

    # We implement the priority queue in reverse order since lists are better
    #  manipulated at the back than the front.
    priority = [[i, exp_gen(p)] for i in range(ell)]
    priority.sort(key=lambda x: x[1], reverse=True)
    pos = [-i for i in range(ell)]
    time = 0
    while time < t:
        # Get the next particle to move
        i, dt = priority.pop()
        time += dt

        # Reinsert it into the priority queue (recall the order is reversed)
        nt = exp_gen(p)  # next time
        add_pos = 0
        for x in priority:
            x[1] -= dt
            if x[1] > nt:
                add_pos += 1
        priority.insert(add_pos, [i, nt])
        assert priority == sorted(priority, key=lambda x: x[1], reverse=True)

        # Update the particle
        if push:
            pos[i] += 1
            while i > 0 and pos[i] == pos[i-1]:
                i -= 1
                pos[i] += 1
        else:
            if i == 0 or pos[i] != pos[i-1] - 1:
                pos[i] += 1

    return pos

def sample_discrete(t, ell, p=1, a=0, scale=0.01, push=False):
    r"""
    Sample from discrete TASEP with ``ell`` particles moving with rate ``p``
    after ``t`` time with either blocking or pushing behavior with the
    scaling parameter ``scale`` (which should be close to `0` to estimate
    the exponential distribution) with parameter ``a``.
    """
    n = floor(t / scale)
    pos = [-i for i in range(ell)]

    try:
        a = RR(a)
        from numpy.random import geometric
        def sample(i):
            return geometric(1-(a+p)*scale/(1+a*scale))  # numpy uses (1-p)^k p)
    except (ValueError, TypeError):
        def sample(i):
            return modified_geometric(a, p, scale, pos[i] + i)

    for _ in range(n):
        # Update the particles in reverse order
        for i in range(ell-1, -1, -1):
            k = sample(i)
            for __ in range(k-1):  # numpy is supported on positive integers
                # Update the particle
                if push:
                    pos[i] += 1
                    while i > 0 and pos[i] == pos[i-1]:
                        i -= 1
                        pos[i] += 1
                else:
                    if i == 0 or pos[i] != pos[i-1] - 1:
                        pos[i] += 1
    return pos

def modified_geometric(a, p, x, start=0):
    """
    Sample from the modified geometric distribution.
    """
    k = start
    al = RR(a(k))
    while random() < (al + p)*x / (1 + al*x):
        k += 1
        al = a(k)
    return k - start

def get_distribution(a, p, x, maxval, start=0):
    ret = [RR((1-p*x) / (1+a(start)*x))]
    for k in range(start, maxval):
        ret.append( ret[-1] * RR((a(k) + p)*x / (1 + a(k+1)*x)) )
    return ret

def plot_sample(sample):
    ell = len(sample)
    P = line2d([(-ell, ell), (0,0), (sample[0], sample[0])], aspect_ratio=1, color='black')
    P += scatter_plot([(val, 2*i+val) for i,val in enumerate(sample)], marker='.', edgecolor=None, facecolor='blue')
    P.tick_label_color('white')  # to hide them
    return P
\end{lstlisting}

\bibliographystyle{alpha}
\bibliography{grothendiecks}{}
\end{document}